\documentclass[10pt, letterpaper,reqno]{amsart}
\usepackage{amssymb,a4wide,amsthm,amsmath,amsfonts,xcolor}

\usepackage{mathtools}
\usepackage{mathrsfs}

\usepackage{amscd}
\usepackage{marginnote}
\usepackage{verbatim}
\usepackage[active]{srcltx}
\usepackage{enumerate}
\usepackage{bbm}
\newcommand{\Span}{\mbox{Span}}
\newcommand{\Lie}{\mbox{Lie}}

\theoremstyle{plain}
\newtheorem{thm}{Theorem}[section]
\theoremstyle{plain}

\newtheorem{lem}[thm]{Lemma}
\newtheorem{prop}[thm]{Proposition}
\newtheorem{cor}[thm]{Corollary}

\theoremstyle{definition}
\newtheorem{defi}[thm]{Definition}
\newtheorem{rem}[thm]{Remark}

\numberwithin{equation}{section}

\newcommand{\bx}{\textbf{x}}
\newcommand{\mm}{\mathbf{M}}

\newcommand{\NN}{\mathbb{N}}
\newcommand{\lk}{\left}
\newcommand{\lqq}{\lefteqn}
\newcommand{\rk}{\right}

\newcommand{\scmbox}[3]{\scalebox{#1}{\begin{minipage}{#2}{#3}\end{minipage}}}
\newcommand{\DEQS}{\begin{eqnarray*}}
	\newcommand{\EEQS}{\end{eqnarray*}}
\newcommand{\DEQSZ}{\begin{eqnarray}}
	\newcommand{\EEQSZ}{\end{eqnarray}}
\newcommand{\DEQ}{\begin{eqnarray}}
	\newcommand{\EEQ}{\end{eqnarray}}

\newcounter{lil1}

\newcounter{lil2}
\newenvironment{stepp}
{\begin{list} { \bf Step (\roman{lil2})}
{ \usecounter{lil2}
\setlength{\leftmargin}{0.0cm}
\setlength{\topsep}{0.2cm}
\setlength{\itemsep}{0.0cm}
\setlength{\parsep}{0.1cm}
\setlength{\itemindent}{0.8cm}
\setlength{\parskip}{0.0cm}}}
{\end{list}}
\newcommand\fahim[1]{{\color{blue} #1}}

\newcommand{\bn}{\vec{\bf 0}}

\newcommand{\bfm}{ {\bf\mathfrak{m}}}

\newcommand{\bff}{\textbf{f}}
\newcommand{\fm}{ {\mathfrak{m}}}

\newcommand{\bM}{\textbf{M}}
\newcommand{\bm}{\textbf{m}}

\newcommand{\Ltwo}{L^2}
\newcommand{\RR}{{\mathbb{R}}}

\newcommand{\ZZ}{\mathbb{Z}}

\newcommand{\CG}{{\mathcal G}}
\newcommand{\CO}{{\mathcal O}}

\newcommand{\lambdam}{{\lambda_m}}

\def\epsilon{\varepsilon}

\def\v{\mathbf{v}}

\def\Hh{\mathbb{H}^1}

\def\R3{\mathbb{R}^3}
\def\RN{\mathbb{R}^N}
\def\L{\mathbb{L}^2}

\newcommand{\be}{{\bf{e}}}
\newcommand{\bv}{{\bf{v}}}

\newcommand\del[1]{}

\newcommand{\A}{\mathcal{A}}
\newcommand{\Rr}{\mathbb{R}}
\newcommand{\Na}{\mathbb{N}}

\newcommand{\M}{\mathbf{M}}

{%
\setcounter{enumi}{0}

\begin{enumerate}}
{\end{enumerate} }

{
\setcounter{enumi}{0}

\begin{enumerate}}
{\end{enumerate} }

\usepackage{esvect}

\def\e{{\text{e}}}

\numberwithin{equation}{section} \allowdisplaybreaks

\newcommand{\rA}{\mathrm{ A}}
\newcommand{\rV}{\mathrm{ V}}

\newcommand\vectinf[2]{\left(#1\right)_{#2=0}^{\infty}}

\newcommand\vectinfeins[2]{\left(#1\right)_{#2=1}^{\infty}}
\newcommand\vectinfzwei[2]{\left(#1\right)_{#2=2}^{\infty}}

\usepackage[textwidth=22mm]{todonotes}

\usepackage{xargs}
\usepackage{xcolor}

\newcommandx{\intodofahim}[2][1=]{\todo[linecolor=red,backgroundcolor=red!25,bordercolor=red,#1]{\textbf{Fahim: }#2}}
\usepackage{bigdelim}
\usepackage{multirow}

\newcommand{\sunderb}[2]{\mathclap{\underbrace{\makebox[#1]{$\vdots$}}_{#2}}}

\newcommand{\combin}[2]{
	\text{C}^{#1}_{#2}
}

\renewcommand{\widehat}{}

\newcommand{\hbfm}{\hat {\bfm}}

\newcommand{\hbm}{\hat {\bm}}
\begin{document}

\title[Controllability for Sphere-Constrained Flows with Degenerate Low-Mode]{
 Sphere Constraints and Harmonic Map Flow: Controllability and Reachability by 
Low-Mode Forcing }

 %Navier–Stokes Equations: Controllability by Means of Low
%Modes ForcingCONTROLLABILITY AND REACHBILITY BY DEGENERATED AND LOW MODES FORCING by 
%\author[Mrinmay Biswas]{Mrinmay Biswas}
%
%\address{%
%   Department of Mathematics,
%	Montanuniversit\"at Leoben,
%	Austria.}
%\email{mrinmay.biswas@unileoben.ac.at}
%

%%%%% REVIEW [W10]: Verify the spelling/order of the author's name (given name vs. surname): 'Fahim Kistosil'.

\author[Debopriya Mukherjee]{Debopriya Mukherjee}

\address{%
	Department of Mathematics,
	Indian Institute of Technology Indore (IITI), India}
\email{debopriya@iiti.ac.in}

\author[Kistosil Fahim]{ Kistosil Fahim}

\address{%
	Department of Mathematics, Institut Teknologi Sepuluh Nopember, Kampus ITS Sukolilo, Surabaya,
	60111, Indonesia }
\email{fahim@its.ac.id}

\author{Erika Hausenblas}
   \address{%
   Chair of Applied Mathematics, Montanuniversit\"at Leoben, Austria.}
\email{erika.hausenblas@unileoben.ac.at}

\thanks{The first author acknowledges the financial support provided by the Anusandhan National Research Foundation (ANRF) under the Early Career Research Grant (ECRG), Grant No. ANRF/ECRG/2025/002863/PMS. The third author was  supported from the Austrian Science Fund under Project No. P32295. The second and third authors is supported by the ASEAN-European Academic University Network (ASEA-UNINET).}

%\author[Ben Goldys ??]{Ben Goldys?}

\date{\today}

\begin{abstract}
We study the controllability and reachability of sphere-constrained evolution equations under degenerate (low-mode) forcing, with the harmonic map heat flow as the principal application. Exploiting the underlying geometric structure, we reformulate the problem as an infinite-dimensional control-affine system in Fourier variables and analyze the Lie algebra generated by the controlled vector fields. We prove that iterated Lie brackets generate new admissible directions, providing a mechanism through which finitely many control modes propagate their influence across infinitely many Fourier components. The results provide a Lie-algebraic framework for controlling manifold-valued evolution equations.

\end{abstract}

%\subjclass{60H15; 60H30; 82D40}

\maketitle

\textbf{Keywords and phrases:} {
Sphere-constrained flows, harmonic map heat flow, controllability, reachability, geometric control theory, Lie bracket generating property, degenerate low-mode forcing, non-holonomic systems.}

\textbf{AMS subject classification (2020):} {Primary 93B05, 93C20, 58E20; Secondary 35K55, 53C44, 35A35.}

\section{Introduction}
Harmonic maps and manifold-valued evolution equations occupy a central position in differential geometry, geometric analysis, and mathematical physics, providing a rich interaction between nonlinear partial differential equations and the geometry of the underlying target manifold. Since the pioneering work of Eells and Sampson \cite{EellsSampson1964}, who introduced the harmonic map heat flow as a gradient flow for the Dirichlet energy, substantial progress has been made in understanding the existence, regularity, and singularity formation of harmonic maps and their associated evolution equations. Fundamental contributions by Eells and Lemaire \cite{EellsLemaire}, Schoen and Uhlenbeck \cite{SchoenUhlenbeck}, Struwe \cite{Struwe1985, ChenStruwe1989}, Chen and Ding \cite{ChenDing}, together with the comprehensive monograph of Lin and Wang \cite{LinWang}, have established harmonic map flow as a prototype geometric evolution equation. Beyond its intrinsic geometric significance, the harmonic map heat flow serves as a model for several constrained dynamical systems arising in ferromagnetism, liquid crystals, and continuum mechanics, where the unknown evolves on the nonlinear manifold $\mathbb{S}^2$ rather than in a linear vector space. This manifold constraint fundamentally alters both the analytical structure of the equation and its controllability properties, making the study of geometric control and reachability for sphere-constrained flows a natural and challenging problem.

%\textsc{Eells and Sampson} first studied harmonic maps between two Riemannian manifolds in their seminal 1964 work \cite{}. Here, they also introduced the {\bf harmonic map heat flow}, which is regarded as the parabolic analogue of the harmonic map equation and is used to establish the existence of harmonic maps for certain classes of Riemannian manifolds. Harmonic maps have numerous deep applications in differential geometry and topology. For comprehensive accounts of the theory and its developments, we refer the reader to the works of textsc{James Eells and Luc Lemaire, Frédéric Hélein, Michael Struwe, Richard Schoen and Karen Uhlenbeck, and the monograph by Fanghua Lin and Changyou Wang (2008)}.

\subsection*{Geometric evolution equations with values in the sphere}

In what follows, a number of evolution equations arising in geometry, mathematical physics,
and materials science share the common feature that the unknown
is a map
\[
\bM: [0,\infty)\times \mathcal{O}\longrightarrow \mathbb S^{2},
\]
where $\mathcal{O} \subset \mathbb R^{d}$ is a bounded or unbounded spatial domain
and the values are constrained to lie on the unit sphere
$\mathbb S^{2} \subset \mathbb R^{3}$.
%Prominent examples include the harmonic map heat flow, the
%Landau--Lifshitz--Gilbert equation, Schrödinger maps, and wave maps into the
%sphere.

What unifies these systems is their intrinsic geometric structure.
The state does not evolve in a linear vector space, but on a nonlinear
manifold, and the dynamics are generated by vector fields that are tangent
to the sphere at every point.
As a consequence, the pointwise constraint $|m(x)|=1$, $x\in\CO$, is not merely an
algebraic condition, but must be enforced dynamically by the evolution
equation itself.
This geometric constraint plays a decisive role in the analytical and
control-theoretic properties of the system.

While these models differ in the balance between conservative and
dissipative effects, they can all be interpreted as evolution equations on
the infinite-dimensional manifold $L^{2}(\CO;\mathbb S^{2})$.
The harmonic map heat flow corresponds to a purely dissipative gradient
flow, Schrödinger maps describe a Hamiltonian evolution, and the
Landau--Lifshitz--Gilbert equation combines both mechanisms.
This common geometric framework makes these equations a natural testing 
ground for control-theoretic concepts in infinite dimensions, in particular
for the study of non-holonomic effects and a Lie-bracket-generated dynamics.

\subsection*{Controllability and reachability for geometric evolution equations.}
The controllability of manifold-valued evolution equations has attracted increasing attention in recent years, motivated by applications in micromagnetics, geometric analysis, and nonlinear control theory. Recently, Coron and Xiang proved global controllability of the harmonic map heat flow from the circle to the sphere by steering the dynamics toward harmonic maps \cite{CoronXiang2025}, while Liu investigated control problems for the harmonic map heat flow with external forcing \cite{Liu2018}. In the stochastic setting, controllability and reachability properties for infinite-dimensional systems driven by boundary noise have been studied in \cite{Erika+Paul}. In contrast to these works, the present paper develops a Lie-algebraic approach to sphere-constrained evolution equations under degenerate low-mode forcing, showing how nonlinear mode interactions generate new admissible directions and enlarge the reachable set through iterated Lie brackets of the controlled vector fields.

In our work, we first omit the drift term and reformulate the infinite-dimensional system 
in a way that makes its underlying control structure explicit, rather than treating the equation purely as a partial differential equation.
To be more precise, we take only into account the control part and expand the solution of the affine control system in its Fourier basis, and identify the dynamics with an infinite-dimensional system of coupled ordinary differential equations for the Fourier
coefficients.
This point of view allows us to interpret the evolution as a control system
on an infinite-dimensional state space, where nonlinear interactions between
modes play the role of geometric constraints. 
Due to the multiplicative structure of the control, the system is non-holonomic when $\bM$ is a constant function.

A growing body of work has also demonstrated that geometric and structural properties of infinite-dimensional control systems can be effectively analyzed even in the presence of stochastic perturbations and boundary forcing. In particular, the controllability and qualitative behavior of stochastic partial differential equations driven by boundary Lévy noise were investigated in \cite{Erika+Paul}, illustrating how probabilistic techniques and geometric control ideas can be combined to study infinite-dimensional dynamical systems. The present work follows a different direction by exploiting Lie-bracket-generated admissible directions for manifold-valued evolution equations under degenerate low-mode forcing.

Our main objective is to investigate how a control acting only on finitely many
modes can influence the global dynamics through nonlinear mode couplings. 
In analogy with finite-dimensional non-holonomic systems, we analyze whether
Lie brackets of the control vector fields generate additional directions in
state space, thereby enlarging the reachable set.
Working at the level of Fourier coefficients makes these interactions
transparent and allows for an explicit computation of Lie brackets, at least
on finite-dimensional truncations.

Then, in the second part of the paper, we add the dynamics of the harmonic heat flow and investigate its controllability. In fact, we show as a corollary under which conditions the system is approximately controllable and which sets are reachable.

%
%The approach adopted here is inspired by the program of controlling
%infinite-dimensional systems through finitely many modes, as developed for
%example in the works of Agrachev and Sarychev.
%However, in contrast to purely PDE-based controllability results, our analysis
%emphasizes the explicit non-holonomic structure of the induced control system
%on the Fourier coefficients and the role of Lie brackets in generating
%additional directions.

%
%
%In a second step, dissipative effects such as the heat flow will be incorporated
%as a drift term.
%This enables us to investigate how parabolic smoothing interacts with the
%non-holonomic control mechanism, and to what extent controllability properties
%persist in the presence of dissipation.
%Such a two-step approach provides a clear conceptual framework and avoids
%mixing geometric control phenomena with analytic regularization effects.

\subsection*{Geometric control theory and Lie-bracket generation.}

%The study of controllability for nonlinear systems is deeply rooted in geometric control theory, where the Lie algebra generated by the controlled vector fields plays a fundamental role in characterizing reachable sets. Starting from the pioneering bracket-generating theorem of Chow \cite{Chow} and the Lie-bracket controllability results of Sussmann \cite{Sussmann}, this geometric viewpoint has been systematically developed by Agrachev and Sarychev through reduction techniques and geometric methods for finite- and infinite-dimensional control systems \cite{Agra_Sary.1,Agra_Sary.1.1,Agra_Sary.2}. In parallel, the controllability theory for evolution equations has undergone significant development through analytical approaches based on Carleman estimates, observability inequalities, and semigroup methods; we refer in particular to the monograph of Fursikov and Imanuvilov \cite{FursikovImanuvilov} for a comprehensive treatment of controllability of evolution equations. The interaction between geometric control ideas and infinite-dimensional dynamics has subsequently led to remarkable advances for degenerate forcing problems, especially for fluid equations where low-mode controls generate additional admissible directions through nonlinear interactions \cite{Agra_Sary.3,AKSS}. The present work continues this line of research by developing a Lie-algebraic framework for sphere-constrained evolution equations, where admissible directions arise from tangent vector fields and are propagated across Fourier modes by iterated Lie brackets.

One of the fundamental principles of nonlinear control theory is that controllability is determined not only by the prescribed control directions but also by the additional directions generated through iterated Lie brackets of the associated vector fields. This idea originates in the celebrated theorem of \textsc{Chow} \cite{Chow}, which characterizes accessibility through bracket-generating distributions, and was subsequently developed into a systematic geometric framework by \textsc{Sussmann} \cite{Sussmann}, who established Lie-bracket conditions for local controllability of nonlinear systems. The geometric approach was further extended by Agrachev and Sarychev through a series of influential works on reduction techniques, geometric control, and infinite-dimensional systems \cite{Agra_Sary.1,Agra_Sary.1.1,Agra_Sary.2}, laying the foundation for the analysis of degenerate control mechanisms governed by Lie algebras. These ideas have proved particularly successful in the controllability of fluid equations under low-mode forcing, where nonlinear interactions generate new admissible directions and enlarge the reachable set \cite{Agra_Sary.3,AKSS}, and have become a cornerstone of modern geometric control theory; see also the monographs by \textsc{Jurdjevic} \cite{Jurdjevic}, \textsc{Coron} \cite{Coron}, \textsc{Sontag} \cite{Sontag1990}, and the survey by \textsc{Boscain and Sigalotti} \cite{BoSi}. 
In parallel, the controllability theory for evolution equations has undergone significant development through analytical approaches based on Carleman estimates, observability inequalities, and semigroup methods; we refer in particular to the monograph of Fursikov and Imanuvilov \cite{FursikovImanuvilov} for a comprehensive treatment of controllability of evolution equations. Motivated by this perspective, the present work investigates whether an analogous Lie-bracket generation mechanism governs sphere-constrained infinite-dimensional evolution equations, where the admissible directions are tangent to the manifold and arise through nonlinear mode interactions induced by degenerate low-mode controls. In contrast to these works, our approach emphasizes the explicit non-holonomic structure of the induced control system on the Fourier coefficients and the role
of Lie brackets in generating additional directions.

%The idea of controlling an infinite-dimensional system through a finite number
%of Fourier modes goes back to the seminal works of Agrachev and Sarychev
%\cite{Agra_Sary.3}, where approximate controllability of
%Navier--Stokes equations was established.

\subsection{Novelties and Contribution of the Paper}

The principal novelty of this work lies in the development of a geometric control framework for sphere-constrained infinite-dimensional evolution equations under degenerate (low-mode) forcing. Unlike classical parabolic control problems, the dynamics evolve on the nonlinear manifold $\mathbb{H}^1(\mathcal O;\mathbb{S}^2)$, and the admissible velocities are tangent to the sphere at every point. Consequently, the controlled distribution is generated by the vector fields
\begin{equation*}%\label
	\Delta(\bfm )=\Span\lk\{  {\bf{G}}^{k,l}  : (k,l)\in \mathcal{K} \rk\}.
\end{equation*}
%	\[
%\mathcal{F}(\M):=\operatorname{span}\{Y(\M):\,Y\in \Lie(X_1,\dots,X_m)\}\subset T_\M M,
%\]
%where $\mathcal{M}$ denotes the (Galerkin) state space of Fourier coefficients.
%
%In our Fourier setting, 
%$$
%\Delta(m)=\hbox{span}\{{G_{k,\ell}(m):(k,\ell)\in K}\},
%$$
defined in equation \eqref{eq:Delta_def}, rather than by linear control directions. The first main contribution is the explicit characterization of these admissible directions and their higher-order Lie extensions, leading to the enlarged distribution
%$$ \Delta (m)
%= \text{span} \Big\{ Y(m): Y \in \hbox{Lie}\{G_{k,\ell}\} \Big\},$$
$$
	\Delta(\bfm ):=\Span\lk\{  Y(\bm): Y\in \Lie\lk\{  {\bf{G}}^{k,l}  : (k,l)\in \mathcal{K} \rk\} \rk\} .
$$

which governs the accessibility of the system.

A second fundamental contribution is the reformulation of the sphere-constrained control problem as an infinite-dimensional control-affine system in Fourier variables. Starting from the PDE \eqref{M1.N}, we derive the coupled system of ordinary differential equations \eqref{my_system}--\eqref{eq:llgeinm}, where each Fourier mode interacts with its neighbours through the nonlinear cross-product structure. The admissible directions are therefore precisely the vector fields $G_{k,\ell}$, whose explicit representation is given in equation \eqref{def_map_g}. This Fourier realization makes the geometric propagation of control directions completely transparent and provides a new viewpoint for manifold-valued evolution equations.

The third novelty consists in the complete computation of the Lie algebra generated by the degenerate controls. Using the recursive construction \eqref{def_lie_rec}, together with the identities satisfied by the elementary vector fields in equation \eqref{eq:identitiesg}, we prove that successive commutators generate new admissible directions connecting neighbouring Fourier cells. In particular, Step~(iii) of the proof of \textbf{Theorem~\ref{app.cont1}} shows that
\begin{eqnarray*}
	\operatorname{Lie}\left(\left\{ \mathbf{G}^{0,1}, \mathbf{G}^{0,2},\mathbf{G}^{1,1} \right\}\right)&=&\text{Span}\left\{\mathcal{H}_{\textbf{g}^\ast_{1},k},\mathcal{H}_{\textbf{g}^\ast_{2},k},\mathcal{H}_{\textbf{g}^\ast_{3},k}:k\in\NN_0   \right\}.
\end{eqnarray*}
%\begin{align*}
%\hbox{Lie}\{\bf{G}^{0,1},\bf{G}^{0,2},\bf{G}^{1,1}\}
%=
%%\operatorname  {\bf{G}} \mathcal{H}_{\textbf{g}_l^\ast,i_2-1}(\tilde {\bfm})
%\hbox{span}\{ \mathcal{H}_{k}^{\textbf{g}_1}, \mathcal{H}_{k}^{\textbf{g}_2}, \mathcal{H}_{k}^{\textbf{g}_3}:
%k\ge0
%\},
%\end{align*}
thereby producing infinitely many admissible directions that are absent from the original control set. This explicit Lie-algebra expansion appears to be new for sphere-constrained infinite-dimensional systems.

The first principal result of the paper is stated in \textbf{Theorem~\ref{app.cont1}}. For the degenerate control set
\[\mathcal K=\{(0,1),(0,2),(1,1)\}, \] the theorem establishes a sharp dichotomy between constant and non-constant initial data. When the initial state is constant, the Lie-generated admissible directions fail to span the tangent space, and therefore certain finite-dimensional projections remain unreachable. On the other hand, for every non-constant initial configuration, the iterated admissible directions generated by the Lie algebra span every prescribed finite-dimensional projection, yielding finite-dimensional reachability. This demonstrates that controllability is determined not only by the control modes but also by the intrinsic geometry of the initial state.

The second principal contribution is contained in \textbf{Theorem~\ref{L2.app.cont2}}, where the control set
\[\mathcal K=
\{(1,1),(1,2),(1,3),
(2,1),(2,2),(2,3)
\}
\]
contains only higher Fourier modes. Despite the absence of the zero mode, the Lie-generated admissible directions propagate through the nonlinear coupling mechanism and generate the entire tangent distribution. Consequently, \textbf{Theorem~\ref{L2.app.cont2}} establishes global controllability of the system. To the best of our knowledge, this is the first result showing that purely higher-frequency degenerate forcing can generate complete controllability for a sphere-constrained infinite-dimensional evolution equation.

An essential technical contribution of the paper is the derivation of explicit recursive formulas for the admissible directions through the auxiliary families $\{\mathcal{H}_{\textbf{g}^\ast_{1},k},\mathcal{H}_{\textbf{g}^\ast_{2},k},\mathcal{H}_{\textbf{g}^\ast_{3},k} \}$ constructed in Appendix~\ref{app:expandLie} and generated recursively by Lemma~\ref{lem:identityH}. Combined with the Lie identities of Lemma~\ref{lem:identity1} and Lemma~\ref{eq:exentity}, these formulas describe precisely how information propagates from one Fourier cell to neighbouring cells through successive commutators. Hence, the admissible distribution enlarges recursively from local rotations within a single cell to global interactions among infinitely many Fourier modes.

Another important contribution is the identification of the geometric obstruction responsible for loss of controllability. Lemma~\ref{lem:dimspan} characterizes the Fourier structure of constant sphere-valued maps and shows that, in this case, the Lie-generated admissible directions remain confined to a proper subspace of the tangent bundle. Conversely, for every non-constant map, the same lemma guarantees the existence of sufficiently many independent Fourier coefficients to initiate the recursive Lie-bracket propagation mechanism, thereby generating all admissible directions required for finite-dimensional controllability.

Finally, the geometric analysis developed for the drift-free system serves as the foundation for the controllability theory of the harmonic map heat flow. By combining the explicit characterization of admissible directions with the geometric support framework, the paper establishes approximate controllability of the full dissipative evolution under degenerate low-mode forcing. The methodology developed here provides a unified Lie-algebraic approach to manifold-valued evolution equations and is expected to be applicable to other geometric PDEs, including the Landau--Lifshitz--Gilbert equation, Schr\"odinger maps, and wave maps, where admissible directions arise from nonlinear tangent vector fields rather than linear forcing.

\subsection*{Open Perspectives}
%The harmonic map heat flow and Landau-Lifshitz-Gilbert equation are prototypical examples of manifold-valued evolution equations, where the state takes values in the sphere $ \mathbb S^2$. Closely related systems include the Schrödinger map and the wave map into the sphere, all of which describe dynamics constrained to a nonlinear manifold and evolve via a vector field being tangent to $\mathbb {S} ^2$. These models share a common geometric structure and differ mainly in the balance between conservative and dissipative effects.

%\subsection*{Open Perspectives}

The Lie-algebraic framework developed in this paper opens several interesting directions for future research. A natural extension is to establish exact or approximate controllability for other sphere-constrained geometric flows, such as the Landau--Lifshitz--Gilbert equation, Schr\"odinger maps, and wave maps, where the admissible directions are generated by nonlinear tangent vector fields on the target manifold. Another challenging problem is to characterize minimal sets of forcing modes whose iterated Lie brackets satisfy an infinite-dimensional Lie Algebra Rank Condition, thereby yielding controllability with the smallest possible number of controls. It would also be of interest to investigate stochastic versions of these systems and understand how random perturbations interact with the Lie-generated admissible directions and the corresponding support of the solution. Finally, extending the present geometric control framework to general compact target manifolds and to higher-dimensional domains remains a significant open problem with potential applications to manifold-valued partial differential equations and infinite-dimensional geometric control theory.

\subsection*{Organization of the paper.}

The remainder of the paper is organized as follows. In Section~\ref{sec:control-affine}, we reformulate the sphere-constrained evolution equation as an infinite-dimensional control-affine system in Fourier variables, introduce the notion of degenerate low-mode forcing, and state the main controllability and reachability results. Section~\ref{sec:heatflow_intro} is devoted to the harmonic map heat flow, where the geometric control framework developed for the drift-free system is applied to establish approximate controllability under low-mode forcing. 
%The proofs of the principal controllability theorems are presented in Section~4 through an explicit Lie-algebraic analysis of the generated admissible directions and nonlinear mode interactions. Finally, the appendices collect the technical results, including the Fourier formulation, recursive Lie-bracket computations, auxiliary geometric identities, and the proofs of several supporting propositions and lemmas used throughout the paper.
%

For the convenience of the reader, a number of technical arguments are collected in the appendices. Appendix A recalls elementary properties of the vector cross product and derives the Fourier representation of the control system. Appendix B contains the derivation of the infinite-dimensional system satisfied by the Fourier coefficients, together with the explicit construction of the controlled vector fields. Appendix C develops auxiliary algebraic identities and proves several structural propositions concerning the generation of admissible directions within individual Fourier cells. Appendix D is devoted to explicit Lie-bracket computations and recursive commutator formulas that form the basis of the controllability analysis. Appendix E constructs the families of recursively generated admissible directions and establishes the propagation mechanism across neighbouring Fourier modes. Finally, Appendix F collects the geometric properties of sphere-valued maps in Fourier variables, including structural lemmas on non-constant configurations and the proofs of the technical results required in the proofs of the main controllability and reachability theorems.
\section{The control-affine infinite-dimensional system without drift}\label{sec:control-affine}

In this section, we focus on the purely controlled system without dissipative
effects.
This separation enables us to study the geometric control properties of the
system independently of analytic regularization.
In a subsequent step, dissipative terms such as the heat flow will be added
as a drift.
This will make it possible to analyze how the non-holonomic spreading induced
by Lie brackets competes with parabolic smoothing, and to what extent
reachability and controllability properties persist in the presence of
dissipation.

To introduce the mathematical setting, let us fix the open bounded domain $\CO=(0,1)\subset\mathbb {R}$. %with $1 \leq d \leq 3$. 
Let us consider the mapping
\[
\M : [0,\infty) \times \mathcal{O} \rightarrow \mathbb{S}^2\subset \RR^3,
\]
where %\footnote{$\mathbb{S}^2 = \{ x \in \mathbb{R}^3 : \|x\| = 1\}$.}  
$\mathbb{S}^2$ denotes the two-dimensional unit sphere in
 $\mathbb{R}^3$. 
 Let $\bM$ %:[0,\infty)\times(0,1)\to \mathbb{S}^2\subset\mathbb{R}^3$ 
 denote a
 sphere--valued state that is perturbed by a family of controls $\{\bv_i:[0,T]\to \RR \mid i=1,\dots,N\}$, subject to the
 constraint that the solution $\bM$ remains on the unit sphere and satisfies the Neumann boundary conditions.
 In particular, we have an equation
\begin{align}\label{M1.N}
	\left\{\begin{array}{ll}
	\frac d{dt}	\M(t,x) & =\sum_{i=1}^N  \M(t,x) \times\v_i(t,x),  \qquad t\in (0,T],\,\, x\in {(0,1)}, \\
		\M_x(t,0) & =\M_x(t,1)=0,\,\, t\in (0,T],  \\
		\M(0,x) & =\M_0(x),\,\,x \in{(0,1)},
	\end{array}\right.
\end{align} 
 with
 \begin{equation*}
 	\M_0\in\mathbb{H}^1(0,1;\mathbb{S}^2):=\Bigl\{\M \in \mathrm{H}^1(0,1; \mathbb R^3) \mbox{ such that }|\M(x)|_{\R3}=1  \mbox{ for a.a. } x \in {(0,1)}  \Bigr\}.
 \end{equation*}
Our objective is to understand to what extent a control acting on only a finite number of Fourier modes can influence the global dynamics through nonlinear mode interactions. Therefore, we formulate the dynamics in Fourier space (i.e., in terms of the Fourier coefficients).
%Since we analyze frequency control, we begin by rewriting system~\eqref{M1.N} in terms of Fourier modes.
 In particular, we expand the solution in the eigenfunctions of the Laplacian subject to Neumann boundary conditions.
 To introduce these eigenfunctions, we begin with the one-dimensional case, where a complete normalised system of the Lebesgue space, ~$\Ltwo(\mathcal{O}):=\Ltwo(\mathcal{O},\RR)$ is given by the cosine functions.

 \begin{equation}
 	\varphi_k(x)= {\sqrt{2}}\, \cos\big( \,2\pi{k} x\big),~\text{ for }k=0,1,2,\cdots. 
 \end{equation}
 The corresponding eigenvalues are given by
 {\begin{equation}
 		\label{eq:lambda}
 		\lambdam = - \, 4{\pi^2}  m^2\,, \quad m \in \NN_0. %= (m_1, \dots, m_d) \in \ZZ^d\,.
 \end{equation}}
Let us denote the basis vectors in $\RR^3$ by 
$$\be_1=(1,0,0)^T, \quad \be_2=(0,1,0)^T, \mbox{ and } \be_3=(0,0,1)^T.
$$
%We rewrite 
%the system \fahim{\eqref{M1.N}}  in terms of the Fourier coefficients.
%
%$$
%S:=\mbox{span}\{\varphi_i\be_j|\,
%(i,j)\in\mathcal{G}\}\subset \Hh.
%$$
Here, let  us identify the solution ${\M}(t) $ with  a vector $ {\mathfrak{m}}(t)\in \times_{l=0}^\infty \RR^{3 }$, i.e.,
\DEQSZ\label{defbm}
{\mathfrak{m}}(t)=\Big( \underbrace{m_0^1(t), m_0^2(t), m_0^3(t)}_{=: {\textbf{m}_0(t)}},\underbrace{m_1^1(t), m_1^2(t), m_1^3(t)}_{=: {\textbf{m}_1(t)}},
\quad \ldots\quad  , \underbrace{m_K^1(t), m_K^2(t), m_K^3(t)}_{=: {\textbf{m}_K}(t)},\ldots \Big)^T.
\EEQSZ 
Later on, if we want to describe that a control action stays within a single Fourier mode $k$ - that is, it affects only
$\bm_k=(m_k^1,m_k^2,m_k^3)$ and does not influence any other mode
$\bm_\ell=(m_\ell^1,m_\ell^2,m_\ell^3)$ with $\ell\neq k$ - then we introduce the term \emph{cell}.
%Later on,
% If we want to describe that an action stays at one Fourier mode, that means only affects $\bm_k$ and no other elements $\bm_\ell$, $\ell\not k$, then we introduce the terminus cell.

The process $\M$ %with set of controls $\mathcal{K}$ 
is now given by
$$
\M(t)= \sum_{k\in\NN_0}\textbf{m}_k(t) \varphi_k ,\qquad t\in[0,T].
$$

Next,  let us define the set of controls.
Let $\mathcal{K}\subset \mathbb{N}_0\times \{1,2,3\}$ be a finite subset and
$v_k^j \in L^\infty(0,T)$, $(k,j)\in\mathcal{K}$, be the set of control functions.
In our main result, we  analyse the reachability and controllability with respect to a degenerate set of controls,
where   $\v$ will be  of the form (see \cite{Agra_Sary.3} and references cited therein)
%Then our control $\v$ will have the following form
\begin{align}\label{control.1}
	\v(t,x)=\sum_{(k,j)\in \mathcal{K}}v_k^j(t)\varphi_k(x) \be_j,\quad t\in [0,T],\,\,x\in [0,1].
\end{align}
%where $\bphi_i=\lk(\phi_i,\phi_i,\phi_i\rk)^T$.
To be more precise, the system \eqref{M1.N} in terms of the Fourier 
expansion is given\footnote{For more details, see Appendix \ref{sec:fouriermode},
	equation \eqref{my_system}.} for \(t \in (0,T]\) by 
	\begin{align}
		\frac{d}{dt} \bm_i(t) &
		=\sum_{(k,l)\in \mathcal{K}} \sum_{\substack{n \in\NN_0\\ n+k=i\text{ or }|n-k|=i}} \frac{1}{\sqrt{2}}\,  \textbf{m}_n(t)\times \be_l \, v_k^l(t)\,.
		\label{my_system}
	\end{align}	
For a fixed control input \(\bv\) and initial condition $\M_0$ 
with $	\M_0\in\mathbb{H}^1(0,1;\mathbb{S}^2)$,
 we denote the corresponding solution of~\eqref{my_system} by
 $\M(t,\bv,\M_0)$ and the corresponding trajectory  \(\M(\bv,\M_0) = \{\M(t,\bv,\M_0) : t \geq 0\}\).
 
 \medskip 

%We denote by $\mathbf{v}$ the control at our disposal. 
Throughout this work, we shall refer to this control as a \emph{degenerate forcing}, meaning that $\mathbf{v}$ is restricted to a finite-dimensional subspace. More precisely, we assume that 
\[
\mathbf{v}(t,\cdot) \in \mathrm{span}\{\varphi_k e_j : (k,j)\in \mathcal{K}\},
\]
for some finite subset $\mathcal{K}\subset \mathbb{N}_0\times \{1,2,3\}$, so that $\mathbf{v}$ acts on the system through a finite number of Fourier modes, typically corresponding to low frequencies.
We study the distribution generated by iterated Lie brackets of the controlled vector fields.
%To be more precise, 
%let $X_1,\dots,X_m$ be the control vector fields and define
%\[
%\Delta^{1}(x):=\mbox{Span}\{X_1(x),\dots,X_m(x)\}\subset T_xM,
%\]
%and recursively
%\[
%\Delta^{k+1}(x):=\Delta^{k}(x)\;+\;\mbox{Span}\{[Y,Z](x):\, Y\in \Gamma(\Delta^{k}),\ Z\in \Gamma(\Delta^{1})\},
%\qquad k\ge 1.
%\]
%The bracket-generated (Chow) distribution is then
%\[
%\mbox{Lie}(X_1,\dots,X_m)(x):=\bigcup_{k\ge 1}\Delta^{k}(x)
%=\mbox{Span}\{X_i(x),\ [X_i,X_j](x),\ [X_i,[X_j,X_k]](x),\dots\}.
%\]
%We compute the growth of $\dim\Delta^{k}(x)$ (the growth vector) and determine the smallest $r$
%for which $\Delta^{r}(x)=\Delta^{r+1}(x)$ (stabilization), in particular whether
%$\mbox{Lie}(X_1,\dots,X_m)(x)=T_xM$ (bracket-generating condition).
Within this framework, our objectives are twofold: first, to establish controllability properties of the nonlinear system under such degenerate (low-mode) forcing; and second, to analyse reachability from prescribed initial states. In particular, we ask which configurations can be attained within a finite time horizon when the control is confined to a finite number of low modes.

\subsection{Controllability and reachability of the  system}\label{sec:control and reach}

Our purpose is to analyze the reachability and controllability of the control system \eqref{my_system}
through perturbations of the frequency. In particular, we address the question of whether it is possible to drive the system out of a steady state or transfer it from one configuration to another by acting only on a finite number of modes.
We will analyse  later on the spaces  
$$\mathcal{K}=\{ (0,1), (0,2),(1,1)\}\quad \mbox{and}\quad \mathcal{K}=\{ (1,1),
 (1,2),(1,3),(2,1),(2,2),(2,3)\},
 $$ 
 and analyse their controllability and reachability properties.
Note that the first example involves the zero mode, meaning that the external field also has a constant component.
If the initial condition is non-constant and belongs to $H^1$, we have reachability on finite-dimensional projections in the first case.
The second control involves only higher-frequency modes, but in all directions. Here we can also show reachability for finite-dimensional projections for all initial conditions in $H^1$.
We conjecture that a control with
$\mathcal{K}=\{ (m,1),
(m,2),(m,3),(m+1,1),(m+1,2),(m+1,3)\}$ will have a similar effect.
% i.e.$H_N:=\{ \phi_j:j=1,\ldots N\}$.
%
\del{ o be more precise, fix a finite dimensional subset $\mathcal{G} \subset \mathbb{N}_0 \times \{1,2,3\}$ and introduce let us introduce  the linear space $H_\mathcal{G}$ spanned by the
harmonics by 
$$
H_\mathcal{G}:=\mbox{span}\{\varphi_i\be_j|\,
(i,j)\in\mathcal{G}_K\}\subset \Hh.
$$
For a set $\mathcal{G}$ let  $\Pi_K$ the orthogonal projection of $L^2(\mathcal{O})$ onto $H_\mathcal{G}$ and $E_\mathcal{G}$ the natural embedding.}

To be mathematically precise, we now provide formal definitions of reachability and controllability.
To this end, let us consider an infinite-dimensional system \eqref{M1.N}.
%\begin{align}\label{M1.N}
%\frac d{dt}\bM(t) = %F_0(\bM(t)) +
%\sum_{(j\in\mathcal J}
% {\bf V}_{j}(\bM(t)) \bv_j(t),\quad t>0,\quad \bM(0)=\bM_0,
%\end{align}
%where $\{ {\bf V}_j:j\in \mathcal J\}$ is a vector field and $\bv_j$, $j\in\mathcal J$ the corresponding controls.
%
Before introducing the exact definition of controllability and reachability, let us 
introduce for a subset $\mathcal{G}\subset \mathbb{N}_0\times \{1,2,3\}$, the  corresponding vector space
$$
H_\mathcal{G}:=\Span\{\varphi_i\be_j|\,
(i,j)\in\mathcal{G}\}\subset \Hh.
$$
%
%$$
%
%
%Let  $\Pi_\CG$ be the \coma{orthogonal projection of $H^1(\mathcal{O})$} onto $H_\mathcal{G}$ and $E_\mathcal{G}$ the natural embedding. 
%Let  
%$$
%\Span\{\lk\{ g:[0,1]\to\RR^3: \,\exists \, \alpha_{j,k}\in\RR \mbox{ such that } g(x)=\sum_{(k,j)\in \mathcal{G}} \alpha_{j,k}\varphi_j\be_k \rk\}.
%$$
%\todo{what is the exact definition - what can we show - ??? what happens with the condition $f\in\mathbb{S}$}
%Let $\Pi_\CG$ be the orthogonal  projection of $H^1_2(0,1;\mathbb{S}^2)$ onto $L^2(0,1;\RR^3)$.
In addition, let us define the mapping 
 $\Pi_\CG:\mathbb{H}^1(0,1;\mathbb{S}^2)\to \RR^{3|\mathcal{G}|}$, where 
$$
\Pi_\CG(\M):=(m_k^j: (k,j)\in\mathcal{G}) ,\quad \M\stackrel{\sim} =\bfm=(\bm^\top_1,\bm^\top _2,\ldots,), \quad \bm_i=(m_i^1,m_i^2,m_i^3)^\top.
$$
Finally,
let
$
H^{\mathbb{S}^2}_\mathcal{G}=\lk\{ g:[0,1]\to\mathbb{S}^2: g\in \mathbb{H}^1\rk\}\cap H_\mathcal{G}$.
\begin{defi}
The system {\eqref{M1.N}} is $\mathcal{G}$-\emph{controllable} by a set $\mathcal{K}$,
if for any two points $\M_0, \M_1\in  \mathbb{H}^1(0,1;\mathbb{S}^2)$,
%\hbox{span}\{\varphi_k \be_j: (k,j) \in \mathbb{N}_0\times \{1,2,3\}\}$,
%$\M_1\in H_\CG$,  
there exists a time $T>0$ and a control $\v$ of the form
\begin{align*} %\label{control.1}
\v(t,x)=\sum_{(k,j)\in \mathcal{K}}v_k^j(t)\varphi_k(x)\,  \be_j,\quad t\in [0,T],\,\,x\in [0,1],
\end{align*}
%\\eqref{control.1}
which steers the solution of system  {\eqref{M1.N}}
from
$\M_0$ to $\tilde \M_1$, where $\Pi_{\CG}\tilde \M_1=\Pi_{\CG}\M_1$, in time $T$. In particular,  
$\Pi_{\CG}\M(T,\v,\M_0)=\Pi_{\CG}\M_1$.

\del{ Let $\mathcal{G}\subset \NN\times \{1,2,3\}$.
 The system  \eqref{M1.N} is  $\mathcal{G}$-controllable in time $T$, if for all $M_0, \M_1\in \hbox{span}\{\varphi_k \be_j: (k,j) \in \mathcal{G}\}$ the system is controllable.}
\end{defi}

{\begin{defi}
	Fix an initial state $ \M_0 $, and let $ \mathcal{K} \subset \mathbb{N}_0 \times \{1,2,3\} $ denote a prescribed set of controlled forcing modes, and let  $ \mathcal{G} \subset \mathbb{N}_0 \times \{1,2,3\} $ be a given subset of modes. Consider the control system governed by \eqref{M1.N}. The \emph{time-$T$ reachable set} (or \emph{attainable set}) with respect to $\CG$, from $ \M_0 $ using controls supported on $ \mathcal{K} $ is the set of all states $ \M_1 $ for which there exists a control
	$
	\bv \in \operatorname{span} \left\{ \varphi_i \be_j \mid (i,j) \in \mathcal{K} \right\}
	$
	such that the corresponding solution $ \M(\bv, T,\M_0) $ to \eqref{M1.N} satisfies
%	$ 	\M(\bv,0) = \M_0\text{ and }\M(\bv,T) = \M_1. 	$
%	In particular,  
$\Pi_{\CG}\M(T,\v,\M_0)=\Pi_{\CG}\M_1$.
	\end{defi}
	%\todo{Here it is meant that we include all the space, whose projection on the modes given by the set $\CG$ is equal - that means it does not need an orthogonal projection, as was done.}
	%
%However, there may exist subspaces that are not accessible. To account for this possibility, we introduce the following definition.
% Let $\mathcal{G}$
%be a subset of $\mathbb{N}\times \{1,2,3\}$
%let us we define the associated vector subspace of $\mathbb{L}^2$ by
%
%S_\mathcal{G}:=\mbox{span}\{\varphi_i\be_j|\,
%(i,j)\in\mathcal{G}\}    %\subset \Hh.
%$$

\begin{defi}%[$\mathbb{L}^2$-approximately controllability]
	Let $\mathcal{G} \subset \mathbb{N}_0 \times \{1,2,3\}$ be a finite set. 
	The system \eqref{M1.N} 
	is $\mathcal{G}$-reachable for a set $\mathcal{K}$ of controls, if for any $\mm_0,\mm_1 \in \mathbb{H}^1(0,1;\mathbb{S}^2)$,  $\mm_1\in H^{\mathbb{S}^2}_\CG$, there exists a $T > 0$ and a control $\v \in \mathbb{L}^2$ of the form
	\begin{align*} %\label{control.1}
		\v(t,x)=\sum_{(k,j)\in \mathcal{K}}v_k^j(t)\varphi_k(x) \, \be_j,\quad t\in [0,T],\,\,x\in [0,1],
	\end{align*}
such that $\Pi_\CG \M(T,\bv,\M_0)=\Pi_\CG \M_1$. If $T>0$ is fixed, one speaks of  \emph{time-$T$ reachable set} (or \emph{attainable set}) with respect to $\CG$.
\end{defi}
%}

%\begin{defi}	The \emph{reachable set} (or \emph{attainable set}) from $ \M_0 $ using controls supported on $ \mathcal{K} $ is defined as the set of all states $ \M_1 $ for which there exists a control
%	$
%	\bv \in \operatorname{span} \left\{ \varphi_i \be_j \mid (i,j) \in \mathcal{K} \right\}
%	$
%	and a time $ T > 0 $ such that the solution satisfies
%	$
%	\M(T,\bv,\M_0) = \M_1. %text{ and }\M(\bv,T) = \M_1.
%	$
%\end{defi}}

Recall that $\bm$ is defined in~\eqref{my_system}. With this notation, straightforward calculations gives that $\bm$ satisfies
\begin{eqnarray}
	\label{eq:llgeinm}\frac d{dt}\,{ \bfm}(t)= \sum_{(k,l)\in \mathcal{K}} {\bf{G}}^{k,l}( \bfm(t))\,  v_k^l(t),\qquad t\in[0,T],\,
\end{eqnarray}
where the vector ${\bf{G}}^{k,l}$ is given by the entries
$$
{\bf{G}}^{k,l}=\left( [{\bf{G}}^{k,l}]_1,[{\bf{G}}^{k,l}]_2,\cdots\right)^\top :\times_{l=0}^\infty \RR^{3 }\to\times_{l=0}^\infty \RR^{3 },
$$
with
\begin{align*}
	\left[{\bf{G}}^{k,l}(\bfm)\right]_i=\left\{
	\begin{array}{ll}
		\frac{1}{\sqrt{2}} \textbf{g}^\ast_{l} (\bm_i),&k=0,\\
		\frac{1}{\sqrt{2}} \textbf{g}^\ast_{l} (\bm_{i+k})+\frac{1}{\sqrt{2}} \textbf{g}^\ast_{l} (\bm_{i-k}),&0<k<i,\\
		\frac{1}{\sqrt{2}} \textbf{g}^\ast_{l} (\bm_{0})+\frac{1}{\sqrt{2}} \textbf{g}^\ast_{l} (\bm_{2k}),&0<k=i,\\
		\frac{1}{\sqrt{2}} \textbf{g}^\ast_{l} (\bm_{i+k})+\frac{1}{\sqrt{2}} \textbf{g}^\ast_{l} (\bm_{k-i}),&0<i<k,\\
		\frac{1}{\sqrt{2}} \textbf{g}^\ast_{l} (\bm_k),&0=i<k,
	\end{array}
	\right.
\end{align*}
and
\begin{align}\label{def_map_g}
\textbf{g}^\ast_{1}(\textbf{x}) %\begin{pmatrix} x^1\\x^2\\x^3\end{pmatrix}
= \begin{pmatrix}0\\x_3\\-x_2\end{pmatrix},\quad\textbf{g}^\ast_{2}(\textbf{x}) %\begin{pmatrix} x^1\\x^2\\x^3\end{pmatrix}
= \begin{pmatrix}-x_3\\0\\x_1\end{pmatrix},\quad \mbox{and}\quad \textbf{g}^\ast_{3}(\textbf{x}) %\begin{pmatrix} x^1\\x^2\\x^3\end{pmatrix}
= \begin{pmatrix}x_2\\-x_1\\0\end{pmatrix},\text{ for }\textbf{x}=\begin{pmatrix}
	x_1\\
	x_2\\
	x_3
\end{pmatrix}\in\RR^3.
\end{align}

In order to formulate criteria for reachability and controllability, we first recall how a control-affine system
induces a family of vector fields on the state space. 
For any $\vec{m}\in\mathbb S^2$, the tangent space of
$\mathbb S^2$ at $\vec{m}$ is
\[
T_{\vec{m}}\mathbb S^2
=\{\,\xi\in\mathbb R^3:\ \xi\cdot \vec{m}=0\,\}
= m^\perp .
\]
In our setting, $f:(0,1)\to\mathbb S^2$, so at each point $x\in(0,1)$ the tangent space along the
curve $f$ is
\[
T_{f(x)}\mathbb S^2
=\{\,\xi\in\mathbb R^3:\ \xi\cdot f(x)=0\,\}.
\]
Equivalently, the \emph{tangent bundle along $f$} can be viewed as the set
\[
T_f\mathbb S^2
:=\bigl\{(x,\xi)\in(0,1)\times\mathbb R^3:\ \xi\cdot f(x)=0\bigr\},
\]
and a (measurable/smooth) tangent vector field along $f$ is a map
$\eta:(0,1)\to\mathbb R^3$ satisfying $\eta(x)\in T_{f(x)}\mathbb S^2$ for a.e.\ $x$, i.e.,
\[
\eta(x)\cdot f(x)=0 \quad\text{for a.e.\ }x\in(0,1).
\]
In our setting, $f$ is  identified with  its cosine Fourier expansion
\[
f(x)=\sum_{k\ge0}\bm_k\cos(2\pi kx)
\]
with its coefficient vector $\bfm =(\bm_k)_{k\ge0}\in\mathcal X$\footnote{$\mathcal X$ is defined by \[
	\mathcal X := \ell^2(\mathbb N_0;\mathbb R^3)
	= \Bigl\{\mathbf \bfm =(\bm_0,\bm_1,\bm_2,\dots):\ \bm_k\in\mathbb R^3,\ 
	\sum_{k=0}^\infty |\bm_k|_{\mathbb R^3}^2<\infty\Bigr\}.
	\]}. If no confusion arises from the correspondence, we will write $f$ instead of $f_{\mathbf m}$.
The sphere constraint $|f(x)|_{\RR^3}=1$ for a.e.\ $x$ defines the subset
\[
\mathcal M:=\bigl\{\mathbf \bfm \in\mathcal X:\ f_{\mathbf m}(x):=\sum_{k\ge0}\bm_k\cos(2\pi kx)\in\mathbb S^2
\ \text{for a.e.\ }x\in(0,1)\bigr\}.
\]
\newcommand{\bhm}{{\bf m}}
Let us introduce the tangent space at a point $\mathbf m \in\mathcal M$ by
\[
T_{\mathbf m}\mathcal M
=
\Bigl\{\mathbf f:[0,1]\to \RR^3: f(x)\cdot
\sum_{k\ge0}\bhm_k\cos(2\pi k x)=0\ \text{for a.e.\ }x\in(0,1)\Bigr\}.
\]
%
%
%
 %Writing the controlled dynamics in the form \eqref{eq:llgeinm}.
  Accordingly, we introduce the \emph{controlled distribution}
\begin{equation}\label{eq:Delta_def}
	\Delta(\bfm )=\Span\lk\{  {\bf{G}}^{k,l}  : (k,l)\in \mathcal{K} \rk\}   \subset T_\bm\mathcal{M}.
\end{equation}
%	\[
%\mathcal{F}(\M):=\operatorname{span}\{Y(\M):\,Y\in \Lie(X_1,\dots,X_m)\}\subset T_\M M,
%\]
%where $\mathcal{M}$ denotes the (Galerkin) state space of Fourier coefficients.
%
%In our Fourier setting, 
Consider points of the form
\[
\fm=(\bm_0,\bm_1,\bm_2,\ldots)^{\top},
\qquad \bm_0\in\mathbb{S},\quad \bm_k=(0,0,0)^{\top}\ \text{for all }k\ge1.
\]
At such points, the family of controlled directions is typically \emph{degenerate} in the sense that
\[
\Delta(\fm)\neq T_{\fm}(\mathbb{S}^2),
\]
and in particular one cannot generate arbitrary perturbations of the state by \emph{first-order}
variations of the controls. 
This phenomenon is analogous to classical non-holonomic systems (such as the
unicycle), where certain directions become reachable only through \emph{second- and higher-order} effects.
To capture these higher-order directions, one has to consider iterated Lie brackets of the controlled
vector fields.  
$$
	\Delta(\bfm ):=\Span\lk\{  Y(\bm): Y\in \Lie\lk\{  {\bf{G}}^{k,l}  : (k,l)\in \mathcal{K} \rk\} \rk\}  \subset T_\bm\mathcal{M}.
$$
%Denote by $\Lie(X_0,X_1,\dots,X_m)$ the Lie algebra generated by
%$X_0,X_1,\dots,X_m$ and define the associated \emph{bracket-generated distribution} at $z$ by
%\begin{equation}\label{eq:Lie_dist_def}
%	\Lie_z := \Span\{Y(z):\, Y\in \Lie(X_0,X_1,\dots,X_m)\}\subset T_z\mathcal{X}.
%\end{equation}
%The Lie Algebra Rank Condition (LARC) requires that $\dim \Lie_z=\dim \mathcal{X}$.
In finite dimensions, the Lie Algebra Rank Condition (LARC)  provides a standard sufficient condition for accessibility (and, under
additional regularity/analyticity assumptions, it is also necessary); see, e.g., the
Chow--Rashevskii Theorem and its control-theoretic consequences.
For more details, see Appendix~\ref{app_geo_cont}.

Motivated by this principle, we next investigate which forcing modes must be included in the set of
controls $\mathcal{K}$ in order to achieve global controllability. In particular, we show that if we only take two directions, then it is
necessary to include the zero mode. However, if we take more directions and a high number of modes, we do not need to include the zero mode.
%
%This leads to the following result.

%
To be more precise, Theorem~\ref{app.cont1} establishes the reachability of the system~\eqref{M1.N} on a finite set~$\mathcal{G}$ by a set of controls only consisting of three components, but including the zero mode.
Theorem \ref{L2.app.cont2} establishes the reachability of the system~\eqref{M1.N} on a finite set~$\mathcal{G}$ by a set of controls only consisting of six components, but not including the zero modes. It shows that, carefully chosen, the perturbation can push the system out of equilibrium.
%
%
%

%In this section, our main results are the following:
In the case where $\M:[0,1]\to \mathbb{R}$ is constant, say $\M(x)=c \in \mathbb{S}^2$, 
the corresponding element $\mathfrak{m}$ is represented by the vector 
\[
\mathfrak{m} = (\bm_0^1, \bm_0^2, \bm_0^3, 0, 0, 0,\, \ldots\,\,)^T, \quad (\bm_0^1, \bm_0^2, \bm_0^3)^T\in\mathbb{S}^2 .
\]
Now we can formulate our result on the finite-dimensional reachability result. 
%$\mathbb{L}^2$-approximate controllability.
%
\begin{thm}\label{app.cont1}
Consider the system~\eqref{M1.N} with initial condition $\M_0\in\mathbb{H}^1(0,1;\mathbb{S}^2)$, and let the set of controlled forcing modes be
$$
\mathcal{K} = \{(0,1),\, (0,2),\, (1,1)\}.
$$
Then, the following statements hold.
{\begin{itemize}
%%%%% REVIEW [D4]: Garbled/inverted statement: 'there exists states G, not being reachable ... In particular, that Pi_G M(v,T,M_0)=Pi_G M_1 in any time T.' Intended meaning: there EXISTS a target M_1 such that NO control gives Pi_G M(...)=Pi_G M_1 for ANY T. Rephrase; 'G' is a projection set, not a state.
	\item If $\M_0$ is constant, there exists a point $\M_1$, such that there exists no control $\bv$ steering $\M_0$ to $ \M_1$ with respect to $\CG$ such that  $\Pi_\CG\M(\bv,T,\M_0)=\Pi_\CG\M_1$ in any time $T$.
	%system is not globally controllable.
	\item If $\M_0$ is non-constant, all finite-dimensional states are reachable. In particular, for any finite set $\mathcal{G}\subset \{1,2,3\}\times \mathbb{N}$, 
	any point belonging $\M\in H_{\mathcal{G}}\cap \mathbb H^1$ is reachable.
	%the system is globally controllable.
\end{itemize}}
\end{thm}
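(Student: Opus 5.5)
The plan is to exploit the pointwise structure of \eqref{M1.N}. For $\mathcal{K}=\{(0,1),(0,2),(1,1)\}$ the control field is
\[
\v(t,x)=v_0^1(t)\varphi_0\be_1+v_0^2(t)\varphi_0\be_2+v_1^1(t)\varphi_1(x)\be_1 .
\]
Hence $\M(t,x)=R(t,x)\M_0(x)$, where $R(t,x)\in SO(3)$ solves a linear ODE in $\mathfrak{so}(3)$ whose generators are $x$-dependent. In Fourier variables, \eqref{eq:llgeinm} is therefore a linear (hence analytic) control-affine system. Its Lie algebra can be computed from the identities \eqref{eq:identitiesg}, Lemma~\ref{lem:identity1}, Lemma~\ref{eq:exentity} and the recursion of Lemma~\ref{lem:identityH}.

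The first step is to compute that Lie algebra. The bracket $[\mathbf{G}^{0,1},\mathbf{G}^{0,2}]$ produces the $\be_3$-rotation in mode $0$. Bracketing $\mathbf{G}^{1,1}$ with $\mathbf{G}^{0,2}$ produces the $\be_3$-rotation multiplied by $\varphi_1$, and then the $\be_2$-rotation multiplied by $\varphi_1$. Brackets of two $\varphi_1$-fields carry the product $\varphi_1\varphi_1$, which by $\cos^2=\tfrac12(1+\cos 2\cdot)$ brings in $\varphi_2$. Inducting on $k$ with Lemma~\ref{lem:identityH} yields
\[
\operatorname{Lie}\{\mathbf G^{0,1},\mathbf G^{0,2},\mathbf G^{1,1}\}
=\Span\{\mathcal H_{\textbf g^\ast_j,k}: j=1,2,3,\ k\in\NN_0\},
\]
that is, all fields $\bfm\mapsto \M\times \varphi_k\be_j$.

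The second step evaluates this distribution at the given initial state and compares it with $T_{\bfm}\mathcal M$ projected onto a finite set $\mathcal G$. For the first bullet we take $\M_0\equiv c$, so $\bfm=(c,0,0,\dots)$. We use Lemma~\ref{lem:dimspan} to show that the projected evaluated directions
\[
\Pi_{\mathcal G}\,\mathcal H_{\textbf g^\ast_j,k}(\bfm)
\]
span a proper subspace of $\RR^{3|\mathcal G|}$ for a suitable $\mathcal G$. The system is linear in the rotation variables, so the Lie algebra is closed and the orbit is an analytic submanifold. We pick $\M_1$ whose projection lies off the analytic orbit through $\bfm$. The orbit theorem (Sussmann/Nagano, recalled in Appendix~\ref{app_geo_cont}) then shows that this projection is never attained, for any $T$.

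For the second bullet, $\M_0$ is non-constant. Lemma~\ref{lem:dimspan} supplies enough linearly independent Fourier coefficients $\bm_n$, with $n\ge 1$, not parallel to $\bm_0$. Consequently the vectors
\[
\Pi_{\mathcal G}\,\mathcal H_{\textbf g^\ast_j,k}(\bfm),\qquad k\le K,
\]
span all of $\RR^{3|\mathcal G|}$ once $K$ is large, by a triangular (Vandermonde-type) argument in $k$. Because the fields are analytic, the rank is full on an open dense set of the orbit. We then apply Chow--Rashevskii to the finite-dimensional projected system, using a Galerkin truncation at level $K+\max\mathcal G$ with the tail controlled by $\M_0\in\mathbb H^1$. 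This gives local surjectivity of the end-point map onto $\Pi_{\mathcal G}$. Globalization follows from the drift-free symmetry: reversing controls gives symmetric orbits. Combined with connectedness of $H^{\mathbb S^2}_{\mathcal G}\cap\mathbb H^1$, local surjectivity yields reachability of every target.

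I expect the main obstacle to be the passage from an infinite-dimensional Lie algebra to exact finite-dimensional reachability, namely keeping the rank of the projected distribution full along the whole trajectory and not only at $\M_0$. I would first try to prove that non-constancy is preserved by the pointwise rotation flow, since $R(t,x)$ is invertible. Lemma~\ref{lem:dimspan} would then apply at every point of the orbit, and the full rank would hold uniformly.
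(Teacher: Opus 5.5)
\textbf{First bullet.} Your first step is fine and matches Part~I of the paper: the Lie algebra consists of the fields $\mathbf M\mapsto\mathbf M\times\omega$ with $\omega$ an $\mathbb R^3$-valued cosine polynomial, i.e.\ the span of the $\mathcal H_{\mathbf g^\ast_j,k}$. The first bullet does not follow from it, however. A rank drop of $\Pi_{\mathcal G}$ on the evaluated directions at the constant state does not place any value of $\Pi_{\mathcal G}$ off the orbit. At $\mathbf M_0\equiv c$ the evaluated algebra $\{x\mapsto c\times\omega(x)\}$ is pointwise all of $T_c\mathbb S^2$, just as large as $\{x\mapsto\mathbf M(x)\times\omega(x)\}$ at any other state. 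What degenerates is only $\Pi_{\mathcal G}$ restricted to the orbit: mode by mode, $\Pi_{\mathcal G}(c\times\omega)$ has no component along $c$. So constant maps are critical points of $\Pi_{\mathcal G}$ on the orbit, not a separate thin orbit; the orbit theorem controls the evaluated Lie algebra along an orbit, not the rank of its $\Pi_{\mathcal G}$-image. Concretely, a constant $v_0^2$ turns $\mathbf e_1$ into $\mathbf e_3$, and then $v_1^1$ produces the non-constant state $\mathbf M'(x)=\cos(\alpha\cos 2\pi x)\,\mathbf e_3+\sin(\alpha\cos 2\pi x)\,\mathbf e_2$. Everything reachable from $\mathbf M'$ is therefore reachable from $\mathbf e_1$, and your second-bullet argument applied to $\mathbf M'$ would make every target reachable from it. Hence any witness $\mathbf M_1$ for the first bullet is also unreachable from the non-constant state $\mathbf M'$. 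The two bullets cannot both hold for a common class of targets and index sets $\mathcal G$, and whatever obstructs $\mathbf M_1$ is not the constancy of $\mathbf M_0$. Trivial witnesses do exist, e.g.\ $\Pi_{\mathcal G}\mathbf M_1$ violating the Bessel bound $\sum_k\|\varphi_k\|_{L^2}^2|\mathbf m_k|^2\le\|\mathbf M\|_{L^2}^2=1$, which no initial state reaches. The paper treats this case by the same first-order observation: it exhibits $(\mathbf e_1^\top,0,0,\dots)^\top\notin\operatorname{Lie}_{\mathbf M_0}$, a direction normal to the sphere constraint at $\mathbf M_0$. So it does not supply the missing step either.

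\textbf{Second bullet.} Here you follow the paper's route: full rank of the projected directions at $\mathbf M_0$, which the paper obtains from the indices $i_1,i_2$ of Lemma~\ref{lem:dimspan} and Proposition~\ref{lem:al2} before stopping. The globalization you add is where the argument breaks. Your proposed repair is false: $R(t,x)$ depends on $x$, so invertibility does not preserve non-constancy, and running the $v_1^1$-control above backwards takes $\mathbf M'$ to the constant $\mathbf e_3$. The projected rank can therefore drop to its constant-map value along admissible trajectories, and first-order rank at $\mathbf M_0$ gives at most a neighbourhood of $\Pi_{\mathcal G}\mathbf M_0$. The connectedness step also runs on the wrong set. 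By Lemma~\ref{lem:dimspan}, $H^{\mathbb S^2}_{\mathcal G}$ contains only constant maps, whereas you need the attainable values to exhaust the bounded set $\Pi_{\mathcal G}(\mathbb H^1(0,1;\mathbb S^2))$; in particular the claim for every point of $H_{\mathcal G}$ is false as stated. Even this corrected claim fails in general. Write $\mathbf M(T,x)=\Phi(T,x)\mathbf M_0(x)$ with $\Phi(T,\cdot)$ real-analytic in $x$. Take $W=\varphi_1\mathbf e_1+\varphi_2\mathbf e_2$, which never vanishes on $[0,1]$, and $\mathbf M_1=W/|W|$. Every sphere-valued $\mathbf M$ satisfies $m_1^1+m_2^2=\int_0^1\mathbf M\cdot W\,dx\le\int_0^1|W|\,dx$, with equality only for $\mathbf M=\mathbf M_1$. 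Hence $\Pi_{\{(1,1),(2,2)\}}\mathbf M_1$ is attained only if $\mathbf M_0=\Phi(T,\cdot)^\top\mathbf M_1$ is real-analytic, so it is unreachable from every non-constant $\mathbf M_0\in\mathbb H^1(0,1;\mathbb S^2)$ that is not real-analytic. No open--closed argument can give the second bullet as stated; it must be weakened, and openness must be proved at every reached state. A natural tool is the rank of the finite-dimensional end-point map $v\mapsto\Pi_{\mathcal G}\mathbf M(T,v,\mathbf M_0)$. Chow--Rashevskii on a Galerkin truncation is not available, since no truncation is invariant when $\mathbf G^{1,1}$ couples mode $i$ to modes $i\pm1$.
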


\begin{proof}
%The proof consists of several steps. In step (i) we show that given $\M_0$ is constant,
%	the system~\eqref{M1.N} is not globally 

 The proof is organised in two parts. First, we investigate the Lie-algebra spanned by the controls for an arbitrary point $\M$ having Fourier expansion %\stackrel{\sim}=
 $\bfm$. In the second part, we investigate how the Lie algebra looks like, if we start with a constant initial condition, and if we start with a non-constant initial condition.
	In the setting, \eqref{eq:llgeinm} can be written as
\begin{align}
	\label{eq:llgeinmcase1}\dot{ \bfm}(t)=  {\bf{G}}^{0,1}( \bfm(t))  v_0^1(t)+{\bf{G}}^{0,2}( \bfm(t))  v_0^2(t)+{\bf{G}}^{1,1}( \bfm(t))  v_1^1(t), t\in[0,T],
\end{align}
where 
\begin{align}\label{def_g_all}
	{\bf{G}}^{0,1}(\bfm)&= \begin{bmatrix}
		\frac{1}{\sqrt{2}}  \textbf{g}^\ast_{1} (\bm_0)\\
		\frac{1}{\sqrt{2}}  \textbf{g}^\ast_{1} (\bm_1)\\
		\frac{1}{\sqrt{2}}  \textbf{g}^\ast_{1} (\bm_2)\\
		\vdots
	\end{bmatrix},{\bf{G}}^{0,2}(\bfm)= \begin{bmatrix}
		\frac{1}{\sqrt{2}}  \textbf{g}^\ast_{2} (\bm_0)\\
		\frac{1}{\sqrt{2}}  \textbf{g}^\ast_{2} (\bm_1)\\
		\frac{1}{\sqrt{2}}  \textbf{g}^\ast_{2} (\bm_2)\\
		\vdots
	\end{bmatrix},{\bf{G}}^{1,1}(\bfm)=\begin{bmatrix}
		\frac{1}{\sqrt{2}} \textbf{g}^\ast_{1} (\bm_1)\\
		\frac{1}{\sqrt{2}} \textbf{g}^\ast_{1} (\bm_{0})+\frac{1}{\sqrt{2}} \textbf{g}^\ast_{1} (\bm_{2})\\
		\frac{1}{\sqrt{2}} \textbf{g}^\ast_{1} (\bm_{1})+\frac{1}{\sqrt{2}} \textbf{g}^\ast_{1} (\bm_{3})\\
		\vdots
	\end{bmatrix}.
\end{align}
and $	\textbf{g}^\ast_{1}$, $	\textbf{g}^\ast_{2}$, and $	\textbf{g}^\ast_{3}$ are defined in \eqref{def_map_g}.

{\bf Part I:} 
Let us remind that the controllability is given by the spanned vector space of the controls\footnote{See Appendix \ref{app_geo_cont} .}, i.e., 
 $\mathcal{B}=\{  {\bf{G}}^{0,1}, {\bf{G}}^{0,2} ,{\bf{G}}^{1,1} \}$, $\mathcal B_0=\mathcal B$, and 
\begin{align}
\label{def_lie_rec}
\mathcal{B}_{k+1} := \{a_{k+1}[f, g] : f \in \mathcal{B}_k,\ g \in \mathcal{B} \} \setminus \left( \bigcup_{i = 0}^k \mathcal{B}_i \right), \quad k = 0, 1, 2, \dots.
\end{align}
%\todo{$a_k$}
First, in  {\bf Step (i)}, we will define the setup, then, in {\bf Step (ii)} and {\bf Step (iii)}, we will investigate the exact form of $\mathcal{B}_{k+1} $ by induction.
	\begin{stepp}
		
	\item 
%	Let us now consider the set of controls given by
%	$\mathcal{K}=\{(0,1),(0,2),(1,1)\}$.
Before starting, let us calculate first the Jacobian matrices of the mappings $\textbf{g}^\ast_1$, $\textbf{g}^\ast_2$, and $\textbf{g}^\ast_3$.
Here, we get for $\bx=(x_1,x_2,x_3)^T$
\begin{align}\label{jacobigez}
	J_{\textbf{g}^\ast_1}(\textbf{x}) %\left(\begin{pmatrix} x_1\\x^2\\x^3\end{pmatrix}\right)
	&:=  \frac {\partial \textbf{g}^\ast_1(\bx)}{\partial \bx } = \begin{pmatrix}0&0&0\\0&0&1\\0&-1&0\end{pmatrix},\quad
	J_{\textbf{g}^\ast_2}\left(\textbf{x}\right):=  \frac {\partial \textbf{g}^\ast_2(\bx)}{\partial \bx } %\begin{pmatrix} x^1\\x^2\\x^3\end{pmatrix}\right)
	= \begin{pmatrix}0&0&-1\\0&0&0\\1&0&0\end{pmatrix},
\end{align}
\begin{align}\label{jacobigd}
	&\quad \mbox{and}\quad J_{\textbf{g}^\ast_3}\left( \textbf{x} %\begin{pmatrix} x^1\\x^2\\x^3\end{pmatrix}
	\right):=  \frac {\partial \textbf{g}^\ast_3(\bx)}{\partial \bx }= \begin{pmatrix}0&1&0\\-1&0&0\\0&0&0\end{pmatrix}.
\end{align}
Since the Jacobian matrix is independent from $\bx$, we will omit the $\bx$ in the notation and only write
in the sequel $ J_{\textbf{g}^\ast_1}$, $ J_{\textbf{g}^\ast_2}$, and $ J_{\textbf{g}^\ast_3}$. 
By straightforward calculations,  the following identities can be proven:
\begin{align}	\label{eq:identitiesg}
	[\textbf{g}^\ast_{1},\textbf{g}^\ast_{2}]=- [\textbf{g}^\ast_{2},\textbf{g}^\ast_{1}] =-\textbf{g}^\ast_{3},
	\quad 
	[\textbf{g}^\ast_{1},\textbf{g}^\ast_{3}]= -[\textbf{g}^\ast_{3},\textbf{g}^\ast_{1}] =\textbf{g}^\ast_{2},
	\quad 
	[\textbf{g}^\ast_{2},\textbf{g}^\ast_{3}]=- [\textbf{g}^\ast_{3},\textbf{g}^\ast_{2}] =-\textbf{g}^\ast_{1}.
\end{align}

	\newcommand{\bg}{\textbf{g}}

	\item  
Now, we investigate the recursively defined Lie algebra generated by $\mathcal B$. % generated in a point $\bm$.
 So, let
%	It follows by  Lemma \ref{lem:Lieaffine} that 
	$$\mathcal{B}=\mathcal{B}_0=\left\{ {\bf{G}}^{0,1},{\bf{G}}^{0,2},{\bf{G}}^{1,1}\right\},
	$$ 
	and $\mathcal B_k$ be defined 
recursively for $k\in\NN$ in \eqref{def_lie_rec}.
%		$$
%	\mathcal{B}_k :=
%	\mbox{Span}\lk\{ [f,g] :  f\in\mathcal{B}_{k-1}, g\in \mathcal{B}_0\rk\}.
%	$$
%	
Here, we expand the Lie algebra only through the first two steps. 
	Let us compute  
	$$
	\mathcal{B}_1 :=\lk\{ [f,g] :  f, g\in \mathcal{B}_0\rk\}.
	$$
	Taking the derivative ${\bf{G}}^{0,1}$, ${\bf{G}}^{0,2}$, and ${\bf{G}}^{1,1}$ defined in 	\eqref{def_g_all}, we obtain
	\begin{align*}
		\sqrt{2}\frac {\partial \textbf{G}^{0,1}(\hbfm)}{\partial \hbfm} &=\begin{pmatrix}
			J_{ \textbf{g}^\ast_1}&0 & 0& \cdots  \\
			0& J_{ \textbf{g}^\ast_1}
			&0  & \vdots 
			\\
			0 &0& J_{ \textbf{g}^\ast_1}
			&\vdots
			\\
			\vdots &\vdots &\vdots &\ddots
		\end{pmatrix}, \sqrt{2}\frac {\partial \textbf{G}^{0,2}(\hbfm)}{\partial \hbfm} =\begin{pmatrix}
		J_{ \textbf{g}^\ast_2}&0 & 0& \cdots  \\
		0& J_{ \textbf{g}^\ast_2}
		&0  & \cdots 
		\\
		0 &0& J_{ \textbf{g}^\ast_2}
		&\cdots
		\\
		\vdots &\vdots &\vdots &\ddots
		\end{pmatrix},
	\end{align*}
	\newcommand{\bG}{{\bf G}}
	Let us calculate $\left[\bG^{0,1},\bG^{0,2}\right]$. 
Applying the identities in~\eqref{eq:identitiesg}, we obtain
	\begin{eqnarray*}
		\left[\bG^{0,1},\bG^{0,2}\right]&=&-\left[\bG^{0,2},\bG^{0,1}\right]\\&=& \begin{pmatrix}
			J_{ \textbf{g}^\ast_1}&0 & 0& \cdots  \\
			0& J_{ \textbf{g}^\ast_1}
			&0  & \cdots 
			\\
			0 &0& J_{ \textbf{g}^\ast_1}
			&\cdots
			\\
			\vdots &\vdots &\vdots &\ddots
		\end{pmatrix}\begin{bmatrix}
		\textbf{g}^\ast_{2} (\bm_0)\\
		\textbf{g}^\ast_{2} (\bm_1)\\
		\textbf{g}^\ast_{2} (\bm_2)\\
		\vdots
		\end{bmatrix}-\begin{pmatrix}
		J_{ \textbf{g}^\ast_2}&0 & 0& \cdots  \\
		0& J_{ \textbf{g}^\ast_2}
		&0  & \cdots 
		\\
		0 &0& J_{ \textbf{g}^\ast_2}
		&\cdots
		\\
		\vdots &\vdots &\vdots &\ddots
		\end{pmatrix}\begin{bmatrix}
		\textbf{g}^\ast_{1} (\bm_0)\\
		\textbf{g}^\ast_{1} (\bm_1)\\
		\textbf{g}^\ast_{1} (\bm_2)\\
		\vdots
		\end{bmatrix}\\
		&=& \begin{pmatrix}
			J_{\textbf{g}^\ast_1} \textbf{g}^\ast_2(\bm_0) -   J_{\textbf{g}^\ast_2} \textbf{g}^\ast_1(\bm_0)
			\\
			J_{\textbf{g}^\ast_1} \textbf{g}^\ast_2(\bm_1) -   J_{\textbf{g}^\ast_2} \textbf{g}^\ast_1(\bm_1)
			\\
			J_{\textbf{g}^\ast_1} \textbf{g}^\ast_2(\bm_2) -   J_{\textbf{g}^\ast_2} \textbf{g}^\ast_1(\bm_2)
			\\
			\vdots   
		\end{pmatrix}=\begin{pmatrix}
		\left[\textbf{g}^\ast_1,\textbf{g}^\ast_2\right](\bm_0) \\
		\left[\textbf{g}^\ast_1,\textbf{g}^\ast_2\right](\bm_1)
		\\
		\left[\textbf{g}^\ast_1,\textbf{g}^\ast_2\right](\bm_2)
		\\
		\vdots   
		\end{pmatrix}=-\begin{pmatrix}
		\textbf{g}^\ast_3(\bm_0) \\
		\textbf{g}^\ast_3(\bm_1)
		\\
		\textbf{g}^\ast_3(\bm_2)
		\\
		\vdots   
		\end{pmatrix}.
	\end{eqnarray*}
Let us note first the following: the Lie algebra of $\{\textbf{G}^{0,1},\textbf{G}^{0,2}\}$
shuffles the values only within a cell. 
That means the action of the $ i$-th cell  affects only itself and does not shift along the different modes.
In particular,
\begin{align*}
\lk[	{\bf{G}}^{0,1},	{\bf{G}}^{0,2} \rk](\bfm)&= \begin{bmatrix}
	\frac{1}{\sqrt{2}}  \textbf{g}^\ast_{3} (\bm_0)\\
	\frac{1}{\sqrt{2}}  \textbf{g}^\ast_{3} (\bm_1)\\
	\frac{1}{\sqrt{2}}  \textbf{g}^\ast_{3} (\bm_2)\\
	\vdots
\end{bmatrix}=:	{\bf{G}}^{0,3} (\bfm).
\end{align*}
Another way of calculating the Lie brackets is by writing 
$	{\bf{G}}^{0,1}$ as follows
$$
	{\bf{G}}^{0,1}:\bfm\mapsto\left(  \textbf{g}^\ast_{1}\left(\bm_{i}\right)_{i=0}^\infty\right)
\qquad \mbox{and}\qquad  
{\bf{G}}^{0,2}:\bfm\mapsto\left(  \textbf{g}^\ast_{2}(\bm_{i}\right)_{i=0}^\infty).
$$
In view of Lemma \ref{lem:identity1} in the Appendix  and taking into account that the Lie bracket  of  $\textbf{g}^\ast_1$ and $\textbf{g}^\ast_2$ are given by \eqref{eq:identitiesg}, i.e., $\left[ \textbf{g}^\ast_1,\textbf{g}^\ast_2\right](\bm)=-\textbf{g}^\ast_3(\bm)$ , hence we get 
\begin{align*}
	\lk[	{\bf{G}}^{0,1},	{\bf{G}}^{0,2} \rk]
	(\bfm)&= 
	\bfm\mapsto \left(  
	\lk[ \textbf{g}^\ast_{1},\textbf{g}^\ast_{2}\rk] 
(\bm_{i})_{i=0}^\infty\right)= 
	- \bfm\mapsto\left(   \textbf{g}^\ast_{3}
	(\bm_{i})_{i=0}^\infty\right ).
\end{align*}
Hence,  one may write $	\lk[	{\bf{G}}^{0,1},	{\bf{G}}^{0,2} \rk]=	{\bf{G}}^{0,3}$. 
Next, we have to add the Lie brackets generated in combination with ${\bf{G}}^{1,1}$. First, observe, that we can write $	\textbf{G}^{1,1}$ as the following sum
%\footnote{We defined $	\mathcal{M}_A(\hbfm)$ by
%	$$
%	\mathcal{M}_A(\hbfm) = (\boldsymbol{n}_i)_{i=0}^\infty, \quad \text{where} \quad
%	\boldsymbol{n}_i :=
%	\begin{cases}
%		\mathbf{0}, & \text{if } i \in A, \\
%		\hbm_i, & \text{otherwise}.
%	\end{cases}
%%
%	$$}
\footnote{If we use the notation $\bfm\mapsto 
	\left(  \textbf{g}^\ast_{n}(\bm_{i-k})_{i=0}^\infty\right)$, $k>0$, we mean that the first $k$ entries, where $\bm_j$ is not defined, will be filled by zero, e.g.,  
	$$	\left(  \textbf{g}^\ast_{n}  
	(\bm_{i-1})_{i=0} ^\infty \right)= (0,\bm_0^\top,\bm^\top_1,\bm^\top_2,\ldots )^\top.
	$$
	}
\begin{align}\label{defG11}
	\textbf{G}^{1,1}(\bm)= 	\lk[ \bfm\mapsto 
	 \left(  \textbf{g}^\ast_{1}(\bm_{i-1})_{i=0}^\infty\right)\rk]
%	 +\lk[ \bfm\mapsto  \mathcal{M}_{\{0\}^c}
%	 \left(  \textbf{g}^\ast_{1}(\bm_{i+1})_{i=0}^\infty\right)\rk]% 
	+
\lk[ 	 \bfm\mapsto
		\left(
		\textbf{g}^\ast_{1}\left(\bm_{i+1}\right)_{i=0}^\infty
		\right) \rk] ,
\end{align}
%	\end{eqnarray*}
%
	and use Lemma \ref{eq:exentity} in the Appendix and the identities  in \eqref{eq:identitiesg}.
Doing so, we get
%\footnote{We defined $	\mathcal{M}_A(\hbfm)$ by
%		$$
%		\mathcal{M}_A(\hbfm) = (\boldsymbol{n}_i)_{i=0}^\infty, \quad \text{where} \quad
%		\boldsymbol{n}_i :=
%		\begin{cases}
%				\mathbf{0}, & \text{if } i \in A, \\
%				\hbm_i, & \text{otherwise}.
%			\end{cases}
%	%
%		$$}
\begin{align*}
&	\lk[	{\bf{G}}^{1,1},	{\bf{G}}^{0,2} \rk]
(\bfm)
\\
&=
\lk[ \bfm\mapsto   \left(  \textbf{g}^\ast_{1}(\bm_{i-1})_{i=0}^\infty\right),
\bfm\mapsto\left(  \textbf{g}^\ast_{2}(\bm_{i}\right)_{i=0}^\infty)
\rk]
%+ 
%\lk[ \bfm\mapsto  \mathcal{M}_{\{0\}^c} \left(  \textbf{g}^\ast_{1}(\bm_{i+1})_{i=0}^\infty\right),
%\bfm\mapsto\left(  \textbf{g}^\ast_{2}(\bm_{i}\right)_{i=0}^\infty)
%\rk]
%\\
%&\qquad {}
+\lk[ 	 \bfm\mapsto
\left(
\textbf{g}^\ast_{1}\left(\bm_{i+1}\right)_{i=0}^\infty
\right),
\bfm\mapsto\left(  \textbf{g}^\ast_{2}(\bm_{i}\right)_{i=0}^\infty) \rk] 
\\
&=
 \bfm\mapsto 
% \mathcal{M}_{\{0\}^c} 
\left( [{\textbf{g}^\ast_{1}},
\textbf{g}^\ast_{2}]\left(\bm_{i-1}\right)_{i=1}^\infty
\right) 
 + \bfm\mapsto 
\left( [{\textbf{g}^\ast_{1}},
\textbf{g}^\ast_{2}]\left(\bm_{i}\right)_{i=0}^\infty
\right) 
\\&= -  \left[\hbfm\mapsto \vectinf{\textbf{g}^\ast_{3}(\hbm_{i-1})}{i}-\vectinf{\textbf{g}^\ast_{3}(\hbm_{i+1})}{i} \right] .
\end{align*}
Furthermore, we achieve
	\begin{align*}
%%%%% REVIEW [M9]: h_{0,3} and h_{0,2} are UNDEFINED here. From context these should be G^{0,3} (=-[G^{0,1},G^{0,2}]) and G^{0,2}. Fix the symbols.
	&\left[{\bf{G}}^{0,3},{\bf{G}}^{0,2}\right]=-\left[{\bf{G}}^{0,2},{\bf{G}}^{0,3}\right]=\left[\hbfm\mapsto \vectinf{\textbf{g}^\ast_{1}(\hbm_{i-1})}{i}+\vectinf{\textbf{g}^\ast_{1}(\hbm_{i+1})}{i}, \hbfm\mapsto \vectinf{\textbf{g}^\ast_{2}(\hbm_{i})}{i}\right]\\
	&\qquad=- \left[\hbfm\mapsto \vectinf{\textbf{g}^\ast_{3}(\hbm_{i-1})}{i}-\vectinf{\textbf{g}^\ast_{3}(\hbm_{i+1})}{i} \right].
\end{align*}
Here, one can observe that the initial point is shifted. This means that one cell may influence the neighbouring cell.
%%%%% REVIEW [M8]: INDEX TYPO: 'B_1 = {h_{1,0}, h_{1,1}}' but the maps defined just below are h_{1,1} and h_{1,2}. Should read B_1 = {h_{1,1}, h_{1,2}}.
	In this way, we obtain $	\mathcal{B}_1= \{	h_{1,0},	h_{1,1}\}$, 
%	\begin{eqnarray*}
%		\mathcal{B}_1&=&\left\{ \pm \begin{pmatrix}
%			\textbf{g}^\ast_3(\bm_0) \\
%			\textbf{g}^\ast_3(\bm_1)
%			\\
%			\textbf{g}^\ast_3(\bm_2)
%			\\	\textbf{g}^\ast_3(\bm_3)
%			\\
%			\vdots   
%		\end{pmatrix},\pm \begin{pmatrix}
%		\mathbf{g}_3^\ast(\bm_{1})\\
%		\mathbf{g}_3^\ast(\bm_{0}) + \mathbf{g}_3^\ast(\bm_{2})\\
%		\mathbf{g}_3^\ast(\bm_{1}) + \mathbf{g}_3^\ast(\bm_{3})\\
%		\mathbf{g}_3^\ast(\bm_{2}) + \mathbf{g}_3^\ast(\bm_{4})\\
%		\vdots
%		\end{pmatrix} \right\}
%		
%		\\
%		&=:&\{\bG _{0,1},\bG _{1,2}\}
%.
%	\end{eqnarray*}
where we  introduce the notation
\begin{eqnarray}\label{def_mapping_h}
	\nonumber 	h_{1,1}:\times _{i=0}^\infty \RR^3  &\to & \times _{i=0}^\infty\RR^3
	: \hbfm \mapsto  \vectinf{\textbf{g}^\ast_3(\hbm_i)}{i},
	\\
	\nonumber 	h_{1,2}:\times _{i=0}^\infty \RR^3  &\to & \times _{i=0}^\infty\RR^3: \hbfm \mapsto  \vectinfeins{\textbf{g}^\ast_3(\hbm_{i-1})}{i},+\vectinf{\textbf{g}^\ast_3(\hbm_{i+1})}{i}.
\end{eqnarray}	
Next, we compute
\[
\mathcal{B}_2:=\{[f,g]:\, f\in \mathcal{B},\ g\in \mathcal{B}_1\}.
\]
In particular, we have to calculate the Lie Brackets
	\begin{eqnarray*}
	[h_{1,1},	{\bf{G}}^{0,1}]=- \left[\hbfm\mapsto \vectinf{\textbf{g}^\ast_2(\hbm_i)}{i}\right],&\qquad \mbox{and}\qquad &
	\left[h_{1,1},	{\bf{G}}^{0,2}\right]= \left[\hbfm\mapsto \vectinf{\textbf{g}^\ast_1(\hbm_i)}{i}\right].
\end{eqnarray*}
%%%%% REVIEW [M9]: 'To compute [h_{0,3}, G^{0,2}]' references undefined h_{0,3}; from the following lines this should be h_{1,2}.
To compute $\bigl[h_{1,2},{\bf G}^{0,2}\bigr]$, we proceed term by term as before. This yields
\begin{eqnarray*}
	\left[h_{1,2},	{\bf{G}}^{0,1}\right]&=&-  \left[\hbfm\mapsto \vectinfeins{\textbf{g}^\ast_{2}(\hbm_{i-1})}{i}+\vectinf{\textbf{g}^\ast_{2}(\hbm_{i+1})}{i} \right],
	\\
	\left[h_{1,2},	{\bf{G}}^{0,2}\right]&=&  \left[\hbfm\mapsto \vectinfeins{\textbf{g}^\ast_{1}(\hbm_{i-1})}{i}+\vectinf{\textbf{g}^\ast_{1}(\hbm_{i+1})}{i} \right].
\end{eqnarray*}
Finally, we compute $\bigl[h_{1,2},{\bf G}^{1,1}\bigr]$. 
Using the identity for the Lie bracket $\bigl[\bg^\ast_3,\bg^\ast_1\bigr](\bm)$ from~\eqref{eq:identitiesg},
the representation of ${\bf G}^{1,1}$ in~\eqref{defG11}, and applying Lemma~\ref{lem:identity1} in Appendix~\ref{calulating_lie}, %\ref{eq:exentity},
 we obtain
\begin{align*}
	\lqq{\left[h_{1,2},	{\bf{G}}^{1,1}\right]=\left[\hbfm\mapsto \vectinfeins{\textbf{g}^\ast_{3}(\hbm_{i-1})}{i}+\vectinf{\textbf{g}^\ast_{3}(\hbm_{i+1})}{i},\hbfm\mapsto \vectinfeins{\textbf{g}^\ast_{1}(\hbm_{i-1})}{i}+\vectinf{\textbf{g}^\ast_{1}(\hbm_{i+1})}{i}\right]}\\
	& = \left[\hbfm\mapsto \vectinfeins{\textbf{g}^\ast_{3}(\hbm_{i-1})}{i},\hbfm\mapsto \vectinfeins{\textbf{g}^\ast_{1}(\hbm_{i-1})}{i}\right]+\left[\vectinf{\textbf{g}^\ast_{3}(\hbm_{i+1})}{i},\vectinf{\textbf{g}^\ast_{1}(\hbm_{i+1})}{i}\right]
	\\
	&\qquad +\Big(\left[\hbfm\mapsto \vectinfeins{\textbf{g}^\ast_{3}(\hbm_{i-1})}{i},\vectinf{\textbf{g}^\ast_{1}(\hbm_{i+1})}{i}\right]+\left[\vectinf{\textbf{g}^\ast_{3}(\hbm_{i+1})}{i},\hbfm\mapsto \vectinfeins{\textbf{g}^\ast_{1}(\hbm_{i-1})}{i}\right]\Big)\\
	& = -\left[\hbfm\mapsto \vectinfzwei{\textbf{g}^\ast_{2}(\hbm_{i-2})}{i}\right]-\left[\hbfm\mapsto \vectinf{\textbf{g}^\ast_{2}(\hbm_{i+2})}{i}\right] -2\left[\hbfm\mapsto \vectinf{\textbf{g}^\ast_{2}(\hbm_{i})}{i}\right]
	\\
	&\qquad +\left[\hbfm\mapsto\mathcal{M}_{\{0\}^c}\left(\vectinf{\textbf{g}^\ast_{2}(\hbm_{i})}{i}\right)\right].
\end{align*}
Here, one can see that sequence $(\hbm_{i})_{i=0}^\infty $ is shifted by two, i.e., the elements $(\hbm_{i-2})_{i=0}^\infty$ and $(\hbm_{i+2})_{i=0}^\infty$
appear.
Collecting all together, we obtain $\mathcal{B}_2=
\{\pm h_{2,1},\pm h_{2,2}\}$,
%
%\begin{eqnarray*}
%	&&\mathcal{B}_2=
%	%		
%	\{\pm h_{2,1},\pm h_{2,2}\},
%\end{eqnarray*}
where 
\begin{eqnarray}\label{def_mapping_h3}
	\nonumber 	h_{2,1}:\times _{i=0}^\infty \RR^3  &\to & \times _{i=0}^\infty\RR^3
	:\hbfm\mapsto\vectinfeins{ \textbf{g}^\ast_{2} (\hbm_{i-1})+ \textbf{g}^\ast_{2} (\hbm_{i+1})}{i},
	\\
	\nonumber 	h_{2,2}:\times _{i=0}^\infty \RR^3  &\to & \times _{i=0}^\infty\RR^3: \hbfm\mapsto\vectinfzwei{\textbf{g}^\ast_{2}
		(\hbm_{i-2})}{i}+2\vectinf{\textbf{g}^\ast_{2}(\hbm_{i})}{i}
		\\ &&\qquad {} +\vectinf{\textbf{g}^\ast_{2}(\hbm_{i+2})}{i}-\mathcal{M}_{\{0\}^c}\left(\vectinf{\textbf{g}^\ast_{2}(\hbm_{i})}{i}\right).
\end{eqnarray}

	\item 
In this step, we carry out the induction from $n$ to $n+1$. The lemmas collected in
%%%%% REVIEW [W6]: Subject-verb agreement: 'The lemmas collected in Appendix D IS essential' -> 'ARE essential'.
Appendix~\ref{calulating_lie} are essential for this argument.
By {\bf Step~(ii)}, the claim holds for $n=1$.
Now fix $n\ge 1$ and assume that
\begin{eqnarray}
	\nonumber\mathcal{B}_{2n}&=& \left\{\pm\mathcal{H}_{\textbf{g}^\ast_{1},2n -2},\pm \mathcal{H}_{\textbf{g}^\ast_{1},2n -1},\pm\mathcal{H}_{\textbf{g}^\ast_{2},2n -1},\pm\mathcal{H}_{\textbf{g}^\ast_{2},2n } \right\}
	\\
	\text{ and }\quad \mathcal{B}_{2n +1} &=& \left\{\pm\mathcal{H}_{\textbf{g}^\ast_{3},2n },\pm\mathcal{H}_{\textbf{g}^\ast_{3},2n +1} \right\},
	\label{eq:Bkk}
\end{eqnarray}
where these sets $\mathcal{H}_{\textbf{g}^\ast_{l},2n +1}$ and $\mathcal{H}_{\textbf{g}^\ast_{l},2n }$, $l=1,2,3$, are defined in the beginning of Appendix~\ref{app:expandLie}.  %p. \pageref{app:expandLie}. %

In the induction step, we need to show that $\mathcal{B}_{2n+2}$ and $\mathcal{B}_{2n+3}$ are given by
\begin{eqnarray}\label{ind_step}
	\nonumber\mathcal{B}_{2n+2}&=& \left\{\pm\mathcal{H}_{\textbf{g}^\ast_{1},2n },\pm \mathcal{H}_{\textbf{g}^\ast_{1},{2n+1} },\pm\mathcal{H}_{\textbf{g}^\ast_{2},2n+1 },\pm\mathcal{H}_{\textbf{g}^\ast_{2},{2n+2} } \right\}
	\\
	\text{ and }\quad \mathcal{B}_{2n +3} &=& \left\{\pm\mathcal{H}_{\textbf{g}^\ast_{3},2n+2 },\pm\mathcal{H}_{\textbf{g}^\ast_{3},2n +3} \right\}.
\end{eqnarray}	
%	\label{eq:Bk
%	
Since the computations are routine and
lengthy, we only indicate the main steps.
The main computations are carried out in Lemma~\ref{lem:identityH}, where the following identities are  established: % the following identities:
%
%The main computations are done in Lemma \refl{lem:identityH} and are to verify the following identities:
%
%
%In fact, from Lemma \eqref{lem:identityH}, it is straightforward to verify that
\begin{eqnarray*}
%%%%% REVIEW [M-style]: Notation: superscripts here are G^{(0,1)}, G^{(0,2)}, G^{(1,1)} (parentheses) whereas elsewhere they are G^{0,1} etc. Unify the superscript style.
	\left[\mathcal{H}_{\textbf{g}^\ast_{\ell },2n},{\bf G}^{(0,1)}\right]=-\mathcal{H}_{[\textbf{g}^\ast_{\ell },\textbf{g}^\ast_{1}],2n},
	\quad  &\quad &
	\left[\mathcal{H}_{\textbf{g}^\ast_{\ell},2n+1},{\bf G}^{(0,2)}\right]= \mathcal{H}_{[\textbf{g}^\ast_{\ell },\textbf{g}^\ast_{2}],2n+1},
	\\
	\left[\mathcal{H}_{\textbf{g}^\ast_{\ell },2n+1},{\bf G}^{(0,1)}\right]=-\mathcal{H}_{[\textbf{g}^\ast_{\ell },\textbf{g}^\ast_{1}],2n+1},
	\quad  &\quad & 	\left[\mathcal{H}_{\textbf{g}^\ast_{\ell },2n},{\bf G}^{(1,1)}\right]=-\mathcal{H}_{[\textbf{g}^\ast_{\ell}, \textbf{g}^\ast_{1}],2n+1},
	\\
	\left[\mathcal{H}_{\textbf{g}^\ast_{\ell },2n},{\bf G}^{(0,2)}\right]=\mathcal{H}_{[\textbf{g}^\ast_{\ell},\textbf{g}^\ast_{2}],2n},
	\quad  &\quad & 	
	\left[\mathcal{H}_{\textbf{g}^\ast_{\ell},2n+1},{\bf G}^{(1,1)}\right]= -\mathcal{H}_{[ \textbf{g}^\ast_{\ell},\textbf{g}^\ast_{1}],2n+2}.
\end{eqnarray*} 
Using these identities, we are able to show \eqref{ind_step}.
Summing up, we have proved in {\bf Part~I} that
\begin{eqnarray*}
	\operatorname{Lie}\left(\left\{ \mathbf{G}^{0,1}, \mathbf{G}^{0,2},\mathbf{G}^{1,1} \right\}\right)&=&\text{Span}\left\{\mathcal{H}_{\textbf{g}^\ast_{1},k},\mathcal{H}_{\textbf{g}^\ast_{2},k},\mathcal{H}_{\textbf{g}^\ast_{3},k}:k\in\NN_0   \right\}.
\end{eqnarray*}

	{\bf Part II:}
In this part, we investigate the controllability of system~\eqref{M1.N} from a given initial condition $\M_0$.
We distinguish two cases: $\M_0$ is constant, or $\M_0$ is a non-constant function.
	\begin{itemize}
	\item {\bf Case $\M_0$ constant:}
However, note first that, by Lemma~\ref{lem:dimspan}, the Fourier series of $\M_0$ has the form
\[
\hbfm = (\hbm_0^\top,0,0,0,\dots)^\top
\]
for some nonzero vector $\hbm_0\in\mathbb{S}^2$. Since all higher Fourier coefficients vanish, we can
construct a direction that does not belong to
\[
\operatorname{Lie}_{\M_0}\!\left(\left\{\mathbf{G}^{0,1},\,\mathbf{G}^{0,2},\,\mathbf{G}^{1,1}\right\}\right).
\]
For simplicity, we assume $\hbm_0=(1,0,0)^\top$.
Since 	
\DEQSZ\label{dofe}
	\textbf{g}^\ast_{1}(e_1) %\begin{pmatrix} x^1\\x^2\\x^3\end{pmatrix}
	= \begin{pmatrix}0\\0\\0\end{pmatrix},\textbf{g}^\ast_{2}(e_1) %\begin{pmatrix} x^1\\x^2\\x^3\end{pmatrix}
	= \begin{pmatrix}0\\0\\1\end{pmatrix}, \textbf{g}^\ast_{3}(e_1) %\begin{pmatrix} x^1\\x^2\\x^3\end{pmatrix}
	= \begin{pmatrix}0\\-1\\0\end{pmatrix},
\EEQSZ
most of the elements vanish; in particular, we obtain
	\begin{align*}
		&\mathcal{H}_{\textbf{g}_l^\ast,0}(\hbfm)=(
		\textbf{g}_l^\ast(e_1),
		0  ,
		0  ,
		0  ,
		0  ,
		0  ,
		\ldots  ,
		0  ,
		0  ,
		0  ,
		0  ,
		\ldots)^\top
		\\
		&	\mathcal{H}_{\textbf{g}_l^\ast,2}(\hbfm)=(
		\textbf{g}_l^\ast(e_1)  ,
		0  ,
		\textbf{g}_l^\ast(e_1)  ,
		0  ,
		0  ,
		0  ,
		\ldots  ,
		0  ,
		0  ,
		0  ,
		0  ,
		\ldots
		)^\top ,
		\\
		&	\mathcal{H}_{\textbf{g}_l^\ast,4}(\hbfm)=(
		2\textbf{g}_l^\ast(e_1)  ,
		0  ,
		3\textbf{g}_l^\ast(e_1)  ,
		0  ,
		\textbf{g}_l^\ast(e_1)  ,
		0  ,
		\ldots  ,
		0  ,
		0  ,
		0  ,
		0  ,
		\ldots
		)^\top
		\\
		&\mathcal{H}_{\textbf{g}_l^\ast,1}(\hbfm)=( 
		0  ,
		\textbf{g}_l^\ast(e_1)  ,
		0  ,
		0  ,
		0  ,
		0  ,
		\ldots  ,
		0  ,
		0  ,
		0  ,
		0  ,
		\ldots
		)^\top,
		\\
		&
		\mathcal{H}_{\textbf{g}_l^\ast,3}(\hbfm)=(
		0  ,
		2\textbf{g}_l^\ast(e_1)  ,
		0  ,
		\textbf{g}_l^\ast(e_1)  ,
		0  ,
		0  ,
		\dots  ,
		0  ,
		0  ,
		0  ,
		0  ,
		\ldots
		),
		\\
		&\mathcal{H}_{\textbf{g}_l^\ast,5}(\hbfm)=(
		0  ,
		5\textbf{g}_l^\ast(e_1)  ,
		0  ,
		4\textbf{g}_l^\ast(e_1)  ,
		0  ,
		\textbf{g}_l^\ast(e_1)  ,
		\ldots  ,
		0  ,
		0  ,
		0  ,
		0  ,
		\dots
		),
	\end{align*}
	%	\end{align*}
	and, continuing
	\begin{align*}
		&\mathcal{H}_{\textbf{g}_l^\ast,2k}(\hbfm)=\begin{pmatrix}
			\left(\combin{2k}{k}-\combin{2k}{k+1}\right)\textbf{g}_l^\ast(e_1)\\
			0\\
			\left(\combin{2k}{k-1}-\combin{2k}{k+2}\right)\textbf{g}_l^\ast(e_1)\\
			0\\
			\left(\combin{2k}{k-2}-\combin{2k}{k+3}\right)\textbf{g}_l^\ast(e_1)\\
			0\\
			\vdots\\
			\left(\combin{2k}{0}-\combin{2k}{2k+1}\right)\textbf{g}_l^\ast(e_1)\\
			0\\
			0\\
			0\\
			\vdots
		\end{pmatrix},\cdots, \qquad 
		\mathcal{H}_{\textbf{g}_l^\ast,2k+1}(\hbfm)=\begin{pmatrix}
			0\\
			\left(\combin{2k+1}{k}-\combin{2k+1}{k+2}\right)\textbf{g}_l^\ast(e_1)\\
			0\\
			\left(\combin{2k+1}{k-1}-\combin{2k+1}{k+3}\right)\textbf{g}_l^\ast(e_1)\\
			0\\
			\left(\combin{2k+1}{k-2}-\combin{2k+1}{k+4}\right)\textbf{g}_l^\ast(e_1)\\
			\vdots\\
			\left(\combin{2k+1}{0}-\combin{2k+1}{2k+2}\right)\textbf{g}_l^\ast(e_1)\\
			0\\
			0\\
			0\\
			\vdots
		\end{pmatrix},\cdots
	\end{align*}
%%%%% REVIEW [W13]: 'Span{g*_l(e_k): l in {1,2,3}, k in {1,2}}': for the constant state m_0=e_1 only e_1 is relevant; k in {1,2} (i.e. including e_2) is extraneous/confusing here.
Due to the identities \eqref{dofe}, in particular, $g_1^\ast(\be_1)=\bn$,  it is straightforward that $\text{Span}\{\textbf{g}^\ast_{l}(e_k):l\in\{1,2,3\},k\in\{1,2\}\}=\text{Span}\{\be_2,\be_3\}$.  Thus, we can construct three directions that do not belong to
$
	\operatorname{Lie}_{\M_0}\!\left(\left\{\mathbf{G}^{0,1},\,\mathbf{G}^{0,2},\,\mathbf{G}^{1,1}\right\}\right)$.
	More precisely,
	\begin{align*}
		&\begin{pmatrix}
			\be_1\\
			0\\
			0\\
			0\\
			\vdots
		\end{pmatrix},\begin{pmatrix}
			0\\
			\be_1\\
			0\\
			0\\
			\vdots
		\end{pmatrix},\begin{pmatrix}
			0\\
			0\\
			\be_1\\
			0\\
			\vdots
		\end{pmatrix}\cdots\notin  \operatorname{Lie}_{\M_0}\left(\left\{ \mathbf{G}^{0,1}, \mathbf{G}^{0,2}, \mathbf{G}^{1,1} \right\}\right).
	\end{align*}
	Therefore, the system~\eqref{M1.N} is not globally $\mathcal G$-reachable for all finite sets $\mathcal G$.
		\item{\bf Case $\M_0$ non-constant:} \DeclarePairedDelimiter\ceil{\lceil}{\rceil}
		\DeclarePairedDelimiter\floor{\lfloor}{\rfloor}
		From Lemma~\ref{lem:dimspan} we know that 
		\begin{itemize}
			\item Not all higher modes can vanish: there exists $k\ge 1$ with $\bm_k\neq \vec{0}$. In fact, there are an infinite number of modes that do not vanish.
			\item The coefficient vectors cannot all be collinear (under mild regularity, e.g.\ $\M_0\in H^1$):
			\[
			\dim\left(\Span\{a_0,a_1,a_2,\dots\}\right) \ge 2.
			\]
%			Equivalently, if $\Span\{a_k\}$ is one-dimensional, then $f$ can only take values in $\{\pm w\}$ for
%			some $w\in\mathbb S^2$ (and if $f\in H^1$ or continuous, this forces $f$ to be constant).
		\end{itemize}
		%
%	More generally, assuming $i_1, \dots, i_{l-1}$ have been defined, we define $i_l$ recursively for $l \geq 3$ by
%	$$
%	i_l := \min \left\{ k > i_{l-1} : \hbm_k \notin \operatorname{span} \{ \hbm_{i_{l-1}} \} \right\}.
%	$$
%		
%	Our goal is to prove that 
% any given sequence 
%	\[
%	\hbfn \in \times_{l=0}^{\infty} \mathbb{R}^3, \quad 	\hbfn = (\hat{\boldsymbol{n}}_0, \hat{\boldsymbol{n}}_1, \dots) ,
%	\]
%belongs to  the set $
%	\operatorname{Lie}_{\M_0}\left(\left\{ \mathbf{G}^{0,1}, \mathbf{G}^{0,2},\mathbf{G}^{1,1} \right\}\right)
%	$.
%%\coma{The proof is by contradiction. Let us assume that $
%%	\hbfn \not\in \operatorname{Lie}_{\hbfm}(\mathcal{F})
%%	$.
%%}
%\todo{not finished}

%	 	$
%	 	\overline{\boldsymbol{\mathfrak{m}}} = (\overline{\boldsymbol{m}}_0, \overline{\boldsymbol{m}}_1, \dots) \in \mathrm{Lie}_{\hbfm}(\mathcal{F})
%	 	$
%	 	such that $\overline{\boldsymbol{m}}_i = \hat{\boldsymbol{n}}_i$.
%	 	
%%%%% REVIEW [W12]: Fourier expansion written starting at index 1: m = (m_1^T, m_2^T, ...). Elsewhere it starts at the zero mode (m_0^T, m_1^T, ...). Start at 0 for consistency.
	 	Let us denote the to $\M_0$ corresponding Fourier expansion by $\hbfm=(\hbm^\top _0, \hbm^\top _1,\hbm_2^\top ,\ldots,\hbm_k^\top ,\ldots)^\top$.
	 	Before proceeding with the construction, we construct a sequence of indices that span all directions and will play a key role in the argument. Define $i_1 \in \mathbb{N}_0$ to be the smallest index such that $\hbm_{i_1} \neq 0$, i.e.,
	 	$$
	 	i_1 := \min \{ k \in \mathbb{N}_0 : \hbm_k \neq 0 \}.
	 	$$
%%%%% REVIEW [W11]: The clause 'In particular, k>i_l such that m_k is not collinear with m_{i_l}, i.e.' is a broken fragment and introduces i_l with no general definition (the general def is commented out). Rewrite.
	 	Next, let $i_2$ be the smallest index greater than $i_1$ such that $\hbm_{i_2}$ is linearly independent of $\hbm_{i_1}$. In particular, 
	 	$k>i_\ell$ such that $\hbm_k$ is not collinear with $\hbm_{i_\ell}$, i.e.
	 	$$
	 	i_2 := \min \{ k > i_1 : \hbm_k \notin \operatorname{span}\{ \hbm_{i_1} \} \}.
	 	$$
	 	Since
	 	$
	 	\dim \big( \operatorname{span} \{ \hbm_k : k \in \mathbb{N}_0 \} \big) \geq 2,
	 	$
	 	such indices $i_1$ and $i_2$ exist.
	 	%
%	 	Let us define the element
%	 	$$
%	 	\tilde {\bfm}_0=(e_1,0,\cdots, 0, \bm_{i_1},0,\cdots,0,\bm_{i_2},0,\cdots)^\top.
%	 	$$
%	 Now, let us analyse $\Span_{\hbfm}(\mathcal{B}_0\cup \mathcal{B}_1)$. 
%	 \DEQS
%&&	\Span_{\hbfm} \left\{  \hbfm \mapsto \textbf{g}^\ast_{1}\left(\bm_{i}\right)_{i=0}^\infty,
% \hbfm \mapsto 	 \textbf{g}^\ast_{2}\left (\bm_{i}\right)_{i=0}^\infty,  \rk.
% \\
%&& \qquad \lk.
%	 \hbfm \mapsto  \vectinf{\textbf{g}^\ast_3(\hbm_i)}{i},
%	\hbfm \mapsto  \vectinf{\textbf{g}^\ast_3(\hbm_{i-1})}{i}+\vectinf{\textbf{g}^\ast_3(\hbm_{i+1})}{i}
%	\right\}.
%\EEQS 
Let $\mathcal{S}_0:=\left\{ (0,\ell ): \ell\in\{1,2,3\}\right\}$ and ${\bf S}_0:= \Span\left(\mathcal S_0\right)$.
Let us put
$$
c^{2k}_j:= C^{2k}_{j}-C_{2k}^{j+1}.
$$
Then, we have 
$$
{\bf S}_0\subset 
 \Span_{\hbfm} \{ 	\mathcal{H}_{\textbf{g}_l^\ast,2k}( {\hbfm}),
\mathcal{H}_{\textbf{g}_l^\ast,2k-1}({\hbfm}):0\le k\le i_2\}.
$$
This can be shown by generating all three directions as linear combinations of elements in
\[
\Span_{\hbfm}\Bigl\{
\mathcal{H}_{\bg_l^\ast,\,2k}(\hbfm),\ \mathcal{H}_{\bg_l^\ast,\,2k-1}(\hbfm):\ 0\le k\le i_2
\Bigr\}.
\]
We begin by showing that one can generate vectors whose first component is
$\bg_l^\ast(e_1)$ or $\bg_l^\ast(e_2)$, for $l\in\{1,2,3\}$. This can be shown by generating 
all three directions by linear combinations of elements of $ \Span_{\hbfm} \{ 	\mathcal{H}_{\textbf{g}_l^\ast,2k}( {\hbfm}),
\mathcal{H}_{\textbf{g}_l^\ast,2k-1}({\hbfm}):0\le k\le i_2\}$. Let us start to show that vectors, with the first element being $g_l^\ast(\e_1)$, $g_l^\ast(\e_2)$, $l\in\{1,2,3\}$, can be generated.
Then, the claim follows from Proposition \ref{lem:al2}.
Let us assume for simplicity that $\hbm_{i_1}=\e_2$, and let us start calculating the following differences
\DEQS%Z %\label{diffh}
\lqq{ c_{0}^0\mathcal{H}_{\textbf{g}_l^\ast,2}(\tilde {\bfm})-c_0^2
	\mathcal{H}_{\textbf{g}_l^\ast,0}(\tilde {\bfm})}
	\\
	&
	=&\begin{pmatrix*} c^{2}_0 {\textbf{g}_l^\ast}(\be_1)+ c^{2}_1	{\textbf{g}_l^\ast}(\bm_2)
		\\
		{\textbf{g}_l^\ast}(\bm_1)+	{\textbf{g}_l^\ast}(\bm_3)
		\\
		{\textbf{g}_l^\ast}(e_1)+	{\textbf{g}_l^\ast}(\bm_2)+	{\textbf{g}_l^\ast}(\bm_{4})
		\\
		\vdots 
\end{pmatrix*}
-\begin{pmatrix*} c_{0}^0 {\textbf{g}_l^\ast}(\be_1)
	\\
	{\textbf{g}_l^\ast}(\bm_1)
	\\
	{\textbf{g}_l^\ast}(\be_1)+	{\textbf{g}_l^\ast}(\bm_2)+	{\textbf{g}_l^\ast}(\bm_{4})
	\\
	\vdots 
\end{pmatrix*}
=
\begin{pmatrix*} c_0^2c_1^2	{\textbf{g}_l^\ast}(\bm_2)
	\\
\vdots 	\\
	\vdots 
	\\
	\vdots 
\end{pmatrix*},
\\
%%%%% REVIEW [M10]: ERROR: this difference reads 'H_{g,0}(m) - H_{g,0}(m) - alpha H_{g,2}(m)'. The first two terms are IDENTICAL, so they cancel to 0. Given the RHS uses C^4 coefficients (c^4_0, c^4_1, c^4_2), the first term should be H_{g,4}(m) (or H_{g,4}-H_{g,2}-...). Fix the indices.
\lqq{ 	
	\mathcal{H}_{\textbf{g}_l^\ast,4}(\tilde {\bfm})
	-	\mathcal{H}_{\textbf{g}_l^\ast,0}(\tilde {\bfm})
-\alpha \mathcal{H}_{\textbf{g}_l^\ast,2}(\tilde {\bfm})
}
\\
&
=&\begin{pmatrix*} c^{4}_0 {\textbf{g}_l^\ast}(\be_1)+ c^{4}_1	{\textbf{g}_l^\ast}(\bm_2)+ c^{4}_2{\textbf{g}_l^\ast}(\bm_2)
	\\
	{\textbf{g}_l^\ast}(\bm_1)+	{\textbf{g}_l^\ast}(\bm_3)
	\\
	{\textbf{g}_l^\ast}(\be_1)+	{\textbf{g}_l^\ast}(\bm_2)+	{\textbf{g}_l^\ast}(\bm_{4})
	\\
	\vdots 
\end{pmatrix*}
-\begin{pmatrix*} c_{0}^2 {\textbf{g}_l^\ast}(\be_1)
+ c_{1}^2 {\textbf{g}_l^\ast}(\bm_1)
	\\
	{\textbf{g}_l^\ast}(\bm_1)
	\\
	{\textbf{g}_l^\ast}(e_1)+	{\textbf{g}_l^\ast}(\bm_2)+	{\textbf{g}_l^\ast}(\bm_{4})
	\\
	\vdots 
\end{pmatrix*}
\\
&=&
-\begin{pmatrix*}\alpha_{1} 	
	c_0^4c_1^4	{\textbf{g}_l^\ast}(\bm_2)
	\\
	\vdots 	\\
	\vdots 
	\\
	\vdots 
\end{pmatrix*}.
\EEQS
%%%%% REVIEW [M13]: Range mismatch: text says 'alpha_r, r=1,...,i_2' but the sum below runs r=2,...,i_1/2 (which also presumes i_1 even). Reconcile the index range.
In this way we can find $\alpha_r$, $r=1,\ldots, i_2$, such that 
\DEQS %Z %\label{diffh}
\lqq{ \mathcal{H}_{\textbf{g}_l^\ast,i_1}(\tilde {\bfm})-
\mathcal{H}_{\textbf{g}_l^\ast,i_1-2}(\tilde {\bfm}
-\sum_{r=2}^{i_1/2}\alpha _r
\mathcal{H}_{\textbf{g}_l^\ast,i_1-2r}(\tilde {\bfm})
}
\\
%%%%% REVIEW [M13]: Coefficient-notation clash: c^{2k}_j was DEFINED (line ~1805) as C^{2k}_j - C^{j+1}_{2k}. Here it is written c_k^0, c_k^2, c_k^{i_1} with sub/superscripts swapped. Unify.
&=&\begin{pmatrix*} c_k^0 {\textbf{g}_l^\ast}(e_1)+c_k^2 	{\textbf{g}_l^\ast}(\bm_2)+\cdots+c_k^{i_1-2} {\textbf{g}_l^\ast}(\bm_{i_1-2})+	c_k^{i_1}{\textbf{g}_l^\ast}(\bm_{i_1})
	\\
	{\textbf{g}_l^\ast}(\bm_1)+	{\textbf{g}_l^\ast}(\bm_3)+\cdots +{\textbf{g}_l^\ast}(\bm_{i_1-1})+{\textbf{g}_l^\ast}(\bm_{i_1+1})
	\\
	{\textbf{g}_l^\ast}(e_1)+	{\textbf{g}_l^\ast}(\bm_2)+\cdots+	{\textbf{g}_l^\ast}(\bm_{i_1})+{\textbf{g}_l^\ast}(\bm_{i_1+2})
	\\
	\vdots 
\end{pmatrix*}
\\ &&{}
-\begin{pmatrix*} c_{k+1}^0 {\textbf{g}_l^\ast}(e_1)+	 c_{k+1}^0{\textbf{g}_l^\ast}(\bm_2)+\cdots+	 c_{k+1}^{i_1-2}{\textbf{g}_l^\ast}(\bm_{i_1-2})
	\\
	{\textbf{g}_l^\ast}(\bm_1)+	{\textbf{g}_l^\ast}(\bm_3)+\cdots +{\textbf{g}_l^\ast}(\bm_{i_1-1})
	\\
	{\textbf{g}_l^\ast}(e_1)+	{\textbf{g}_l^\ast}(\bm_2)+\cdots+	{\textbf{g}_l^\ast}(\bm_{i_1})
	\\
	\vdots 
\end{pmatrix*}-\sum_{r=2}^{i_1/2}\alpha _r
\mathcal{H}_{\textbf{g}_l^\ast,i_1-2r}(\tilde {\bfm})
%
%
%\\
%&=&\begin{pmatrix*} {\textbf{g}_l^\ast}(e_1)
%	+	{\textbf{g}_l^\ast}(\bm_2)+\cdots 
%	\\
%	{\textbf{g}_l^\ast}(e_1)+{\textbf{g}_l^\ast}(\bm_{i_2+1})
%	\\
%	\vdots 
%\end{pmatrix*},
%
\\
%&&\mathcal{H}_{\textbf{g}_l^\ast,i_2}(\tilde {\bfm})-
%\mathcal{H}_{\textbf{g}_l^\ast,i_2-1}(\tilde {\bfm})
&=&\begin{pmatrix*} {\textbf{g}_l^\ast}(\be_2)
	\\
	{\textbf{g}_l^\ast}(\be_2)+{\textbf{g}_l^\ast}(\bm_{i_2+1})
	\\
\vdots 
\end{pmatrix*}.\nonumber 
\EEQS
%%%%% REVIEW [M11]: Wrong/forward cross-reference: 'the differences in (2.21)' points to \label{diffh}, which is the equation block AFTER this sentence; the differences meant are the (unlabelled) block above. Also (2.21) partly duplicates the result just computed -- check for redundancy.
Taking successively $\ell=1,2,$ and $3$, the differences in~\eqref{diffh} yield vectors of the form
$(\be^\top_1,\ldots)^\top$, $(\be^\top _2,\ldots)^\top$, and $(\be^\top _3,\ldots)^\top$.
Next, we consider the following differences:
\DEQSZ\label{diffh}
\mathcal{H}_{\textbf{g}_l^\ast,i_1}(\tilde {\bfm})-
\mathcal{H}_{\textbf{g}_l^\ast,i_1-1}(\tilde {\bfm})
=\begin{pmatrix*} {\textbf{g}_l^\ast}(\be_1)
	\\
	{\textbf{g}_l^\ast}(\be_1)+{\textbf{g}_l^\ast}(\bm_{i_2+1})
	\\
	\vdots 
\end{pmatrix*},
\\
\mathcal{H}_{\textbf{g}_l^\ast,i_2}(\tilde {\bfm})-
\mathcal{H}_{\textbf{g}_l^\ast,i_2-1}(\tilde {\bfm})
=\begin{pmatrix*} {\textbf{g}_l^\ast}(\be_2)
	\\
	{\textbf{g}_l^\ast}(\be_2)+{\textbf{g}_l^\ast}(\bm_{i_2+1})
	\\
	\vdots 
\end{pmatrix*}.\nonumber 
\EEQSZ
	 	Let $N:=\max_{j\in\NN} \{ \exists \,\, k\in\NN \mbox{ such that } (j,k)\in \mathcal{G}\}$.
	 	Then,  for $K=\max(N,i_2)$,
	 	$$
	 \Span_{\hbfm} \{ 	\mathcal{H}_{\textbf{g}_l^\ast,2k}(\tilde {\bfm}),
	 	\mathcal{H}_{\textbf{g}_l^\ast,2k+1}(\tilde{\hbfm}):0\le k\le K\}\subset H_\mathcal{G}.
	 	$$ 
	 This completes the proof.

\end{itemize}
	\end{stepp}
	\end{proof}

%\end{document}

\begin{thm}\label{L2.app.cont2}
	Consider the system~\eqref{M1.N} with initial condition $\M_0\in\mathbb{H}^1(0,1;\mathbb{S}^2)$, and let the set of controlled forcing modes be
	$\mathcal{K} = \{(1,1), (1,2), (1,3),(2,1), (2,2), (2,3)\}.$
	Then, the system~\eqref{M1.N} is globally controllable.
\end{thm}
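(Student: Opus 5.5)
I will follow the same two-part scheme as in the proof of Theorem~\ref{app.cont1}. I first compute the Lie algebra generated by the six fields $\mathbf{G}^{k,l}$, $(k,l)\in\mathcal K$. I then evaluate it at an arbitrary point $\hbfm$ corresponding to $\M_0\in\mathbb{H}^1(0,1;\mathbb{S}^2)$, and check that its evaluation contains every finite-dimensional projection direction. The controllability statement is understood, as in the definition of $\mathcal G$-controllability, for every finite $\mathcal G$. The passage from Lie-algebra rank to reachability goes through the Chow--Rashevskii type argument of Appendix~\ref{app_geo_cont}, applied on finite-dimensional truncations. Reversibility of the driftless system (replace $\bv$ by $-\bv$) then upgrades reachability from $\M_0$ to controllability between any two states: steer $\M_0$ and $\M_1$ toward a common configuration and concatenate the first path with the reversal of the second.

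\textbf{Part I (Lie algebra).} Write each generator as a sum of shift operators, as in \eqref{defG11}:
\[
\mathbf{G}^{k,l}(\hbfm)=\tfrac1{\sqrt2}\Big(\vectinf{\textbf{g}^\ast_l(\hbm_{i-k})}{i}+\vectinf{\textbf{g}^\ast_l(\hbm_{i+k})}{i}\Big),\qquad k=1,2,
\]
with the reflection convention at $i<k$. By Lemma~\ref{lem:identity1} and Lemma~\ref{eq:exentity}, the bracket of two such shift fields is the shift field of the bracket $[\textbf{g}^\ast_l,\textbf{g}^\ast_{l'}]$, with composed shifts. By \eqref{eq:identitiesg}, $\textbf{g}^\ast_1,\textbf{g}^\ast_2,\textbf{g}^\ast_3$ span $\mathfrak{so}(3)$, and all three directions $l$ are available at both shifts $k=1,2$.

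Consequently, $[\mathbf G^{1,l},\mathbf G^{1,l'}]$ and $[\mathbf G^{2,l},\mathbf G^{2,l'}]$ yield fields of the form $\textbf{g}^\ast_{l''}$ applied with shift $0$ (the constant-in-$k$ fields $\mathbf G^{0,l''}$, up to multiples of $\mathbf G^{2,l''}$ and $\mathbf G^{4,l''}$), and $[\mathbf G^{1,l},\mathbf G^{2,l'}]$ yields shifts $1$ and $3$. Concretely, the symbol of a bracket of shift-$k$ and shift-$k'$ fields is the product of the shift symbols $(S^k+S^{-k})(S^{k'}+S^{-k'})$, so the Lie algebra is spanned by $\textbf{g}^\ast_l\otimes p(S+S^{-1})$ with $p$ ranging over a set of polynomials containing $x$ and $x^2-2$. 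Products of these generate every Chebyshev-type polynomial, in particular the constant $1$.

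I will first isolate the zero-shift fields $\mathbf G^{0,l}$ for $l=1,2,3$, using the identity $(S+S^{-1})^2=S^2+S^{-2}+2$ to subtract off $\mathbf G^{2,\cdot}$. Once these are in hand, the family $\{\mathbf G^{0,1},\mathbf G^{0,2},\mathbf G^{1,1}\}$ is contained in the algebra, so Part~I of Theorem~\ref{app.cont1} gives all $\mathcal H_{\textbf{g}^\ast_l,k}$, $k\in\NN_0$. The extra generators $\mathbf G^{1,2},\mathbf G^{1,3}$ add the $l$-variants directly.

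\textbf{Part II (rank at every point).} This is where the present theorem differs from Theorem~\ref{app.cont1}: it claims controllability even from constant $\M_0$, where the rank condition failed before. For constant $\hbfm=(\hbm_0,0,\dots)$, say with $\hbm_0=\be_1$, all the Lie fields evaluate into $\Span\{\be_2,\be_3\}$ per cell, as computed in Part~II of Theorem~\ref{app.cont1}. The missing $\be_1$-components are normal to the constraint in the linearized sense, so rank on $T_{\hbfm}\mathcal M$ rather than on the ambient space is what must be checked. The first-order directions $\mathbf G^{k,l}(\hbfm)$ move mode $k$ off zero, so the trajectory leaves the constant set immediately.

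The plan is therefore to apply a short control $v_1^2$ to reach a nearby non-constant state, and then run the non-constant argument. That argument chooses indices $i_1,i_2$ via Lemma~\ref{lem:dimspan}, performs the elimination with the binomial coefficients $c^{2k}_j$ as before, and invokes Proposition~\ref{lem:al2} to produce all three basis directions in each cell $0,\dots,\max(N,i_2)$.

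The main obstacle is the last reduction. Tangent directions at $\hbfm$ in $T_{\hbfm}\mathcal M$ are pointwise orthogonal to $f_{\hbfm}(x)$, and this is not the same as cellwise rank in Fourier coordinates. I will try to show that the evaluated Lie algebra equals $\{\mathbf{f}\in T_{\hbfm}\mathcal M\}$ projected onto the finitely many modes in $\mathcal G$, using that the fields act as $x\mapsto f(x)\times w(x)$ with $w$ ranging over all trigonometric polynomials. This last characterization is the cleanest route, since it follows from the Chebyshev argument in Part~I, and it avoids the explicit elimination entirely.
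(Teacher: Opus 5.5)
Your Part~I is sound, and cleaner than the paper's bracket tables. Write $X_{\mathbf w}(\M):=\M\times\mathbf w$. Then $[X_{\mathbf u},X_{\mathbf w}]=\pm X_{\mathbf u\times\mathbf w}$, with a pointwise cross product and the sign fixed by the bracket convention. So the generated algebra is exactly $\{X_{\mathbf w}:\mathbf w\in\Span\{\varphi_k\be_j\}\}$, which is what your Chebyshev argument yields, and you do not need the $\mathcal H$-families at all.

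Your tangency remark at constant states is also correct, and this is where you genuinely depart from the paper. The paper's constant-data step evaluates triple and quadruple brackets to $(0,0,\mathbf x,0,\dots)$, $(0,\mathbf x,0,0,\dots)$ and $(0,0,0,\mathbf y,\dots)$ for all $\mathbf x,\mathbf y\in\RR^3$. Since every field in the algebra is pointwise tangent, no $\be_1$-components can appear at $\hbm_0=\be_1$. For instance $[\mathbf G^{1,1},\mathbf G^{2,1}]$ must vanish, yet the paper records a nonzero expression.

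The gap is the final step, and it is not the one you flag. That the evaluated algebra projects onto $\Pi_\CG(T_{\hbfm}\mathcal M)$ is immediate: cosine polynomials are dense and $\Pi_\CG$ has finite rank. What is missing is the passage from rank to $\Pi_\CG\M(T,\bv,\M_0)=\Pi_\CG\M_1$. Chow--Rashevskii on finite-dimensional truncations does not give it, for three reasons:
in \eqref{my_system} the equation for $\bm_i$ involves $\bm_{i+1},\bm_{i+2}$, so no finite block of modes is invariant;
a Galerkin truncation is a different system, which does not preserve $|\M|=1$;
and projection does not commute with brackets.

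What the structure does give is that \eqref{M1.N} is a pointwise rotation: $\M(T,x)=R(T,x)\M_0(x)$, with $R(T,\cdot)$ ranging over a group of $\mathrm{SO}(3)$-valued maps whose Lie algebra is $\mathfrak{so}(3)\otimes\Span\{\varphi_k\}$. This yields approximate controllability. An open-mapping argument for $(t_1,\dots,t_p)\mapsto\Pi_\CG\bigl(e^{t_pX_p}\circ\cdots\circ e^{t_1X_1}(\M)\bigr)$ then gives exact matching of $\Pi_\CG\M_1$, provided some state $\M$ with that projection makes $\mathbf w\mapsto\Pi_\CG(\M\times\mathbf w)$ onto $\RR^{3|\CG|}$. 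This condition fails at constants. It also fails at some non-constant states, e.g.\ $\M=P/|P|$ with $P$ a non-vanishing vector cosine polynomial whose modes lie in $\CG$, since then $\langle P,\M\times\mathbf w\rangle_{L^2}=0$ for every $\mathbf w$.

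The restriction cannot be removed, so the plan cannot prove the statement in the sense you adopted. Take $\CG\supseteq\{(0,1),(0,2),(0,3)\}$ and $\M_1\equiv\be_1$.
\begin{enumerate}
\item The zero-mode coefficient is a fixed multiple of $\int_0^1\M\,dx$. Its norm is at most $1$, with equality only for a.e.\ constant fields, so matching $\Pi_\CG\M_1$ forces $\M(T)\equiv\be_1$.
\item $\bv(t,\cdot)$ is a trigonometric polynomial in $x$ with bounded coefficients in $t$, so $R(T,\cdot)$ is real-analytic in $x$.
\item Hence $\M(T)\equiv\be_1$ would make $\M_0=R(T,\cdot)^{-1}\be_1$ real-analytic.
\end{enumerate}
From, e.g., $\M_0(x)=(\cos|x-\tfrac12|,\sin|x-\tfrac12|,0)^\top$, this target is never reached, for any $T$ and any control.

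The same analyticity breaks your reversibility step. Concatenation needs $\M_0$ and $\M_1$ steered to a common full state; a common projection is not enough. A common full state is impossible when one of the two is analytic and the other is not.

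So the best your route can deliver is approximate controllability, together with exact $\Pi_\CG$-matching at non-degenerate targets. Exact $\CG$-controllability for every finite $\CG$ and every pair is false.
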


\begin{proof}
	The proof of Theorem \ref{L2.app.cont2} consists of several steps. In this proof, we adopt the notations introduced in Proof of Theorem \ref{app.cont1}
		 and Appendix~\ref{app:expandLie}.
	%	Let us now consider the set of controls given by
	%	$\mathcal{K} = \{(1,1), (1,2), (1,3),(2,1), (2,2), (2,3)\}$.
		In these notations, \eqref{eq:llgeinm} can be written as
		\begin{align}
			\nonumber&\frac d{dt}\, { \bfm}(t)=  {\bf{G}}^{1,1}( \bfm(t))  v_1^1(t)+{\bf{G}}^{1,2}( \bfm(t))  v_1^2(t)+{\bf{G}}^{1,3}( \bfm(t))  v_1^3(t)\\
			\label{eq:llgeinmcase2}&\quad\qquad+{\bf{G}}^{2,1}( \bfm(t))  v_2^1(t)+{\bf{G}}^{2,2}( \bfm(t))  v_2^2(t)+{\bf{G}}^{2,3}( \bfm(t))  v_2^3(t),\quad  t\in[0,T],
		\end{align}
Let us define the family of vector fields
\[
\mathcal{F}:=\left\{\,u_1 {\bf G}^{1,1}+u_2 {\bf G}^{1,2}+u_3 {\bf G}^{1,3}
+u_4 {\bf G}^{2,1}+u_5 {\bf G}^{2,2}+u_6 {\bf G}^{2,3}
:\; u_i\in\RR,\ i=1,2,\ldots,6 \right\},
\]
and analyze its Lie algebra, which is equivalent, by Lemma~\ref{lem:Lieaffine}, to the Lie algebra generated by the set of generators, namely
\[
\Lie\!\left(\left\{{\bf G}^{1,1},{\bf G}^{1,2},{\bf G}^{1,3},{\bf G}^{2,1},{\bf G}^{2,2},{\bf G}^{2,3}\right\}\right).
\]
We will show that the control action shuffles within each cell and propagates to its neighbouring cells. Again, an induction argument yields the proof.
\begin{stepp}
	\item 
	 		 First, observe, that we can write $	\textbf{G}^{1,\ell}$, $\ell\in\{1,2,3\}$, as the following sum
		\begin{align}\label{defG11_2}
			\textbf{G}^{1,\ell}(\bm)= 	\lk[ \bfm\mapsto  %\mathcal{M}_{\{0\}}
			 \left(  \textbf{g}^\ast_{\ell}(\bm_{i-1})_{i=1}^\infty\right)\rk]% 
			+
			\lk[ 	 \bfm\mapsto
			\left(
			\textbf{g}^\ast_{\ell}\left(\bm_{i+1}\right)_{i=0}^\infty
			\right) \rk] ,
		\end{align}
		and
			\begin{align}\label{defG21_2}
			\textbf{G}^{2,\ell }(\bm)= 	\lk[ \bfm\mapsto  %\mathcal{M}_{\{0,1\}} 
			\left(  \textbf{g}^\ast_{\ell}(\bm_{i-2})_{i=2}^\infty\right)\rk]% 
			+
			\lk[ 	 \bfm\mapsto
			\left(
			\textbf{g}^\ast_{\ell}\left(\bm_{i+2}\right)_{i=0}^\infty
			\right) \rk] .
				\end{align}
			%	\end{eqnarray*}
%%%%% REVIEW [M18]: TYPO: 'the Lie algebra spanned by of G^{1,l}, G^{1,l}, and G^{1,l}'. Remove 'of'; and the three fields should be G^{1,1}, G^{1,2}, G^{1,3} (similarly G^{2,1}, G^{2,2}, G^{2,3}).
Let us note first the following: the Lie algebra spanned by of $\textbf{G}^{1,\ell}$,
$\textbf{G}^{1,\ell}$, and $\textbf{G}^{1,\ell}$, $\ell\in\{1,2,3\}$, or spanned by $\textbf{G}^{2,\ell}$,
$\textbf{G}^{2,\ell}$, and $\textbf{G}^{2,\ell}$, $\ell\in\{1,2,3\}$,
can only shuffle around within a cell \( \bm_i=(\bm_i^1, \bm_i^2, \bm_i^3)^\top \) but cannot induce shifts across different cells (or modes).
To propagate the control to other cells, we need Lie brackets of the form
$
[{\bf G}^{1,\ell},{\bf G}^{2,k}]$,  $\ell\neq k $.
Using Lemma~\ref{lem:identity1}, Lemma \ref{cor:identity1} together with the identities in~\eqref{eq:identitiesg}, we obtain, for instance, the following expression for $\lk[	{\bf{G}}^{1,1},	{\bf{G}}^{2,2} \rk]$:
	\begin{align*}
		&	\lk[	{\bf{G}}^{1,1},	{\bf{G}}^{2,2} \rk]
		(\bfm)
		\\
		&=\lk[ \bfm\mapsto  %\mathcal{M}_{\{0\}} 
		\left(  \textbf{g}^\ast_{1}(\bm_{i-1})_{i=1}^\infty\right),
		\bfm\mapsto\left(  %\mathcal{M}_{\{0,1\}}
		 \left( \textbf{g}^\ast_{2}(\bm_{i-2}\right)_{i=2}^\infty\right)
		\rk]
	 + \lk[ 	 \bfm\mapsto
	% \mathcal{M}_{\{0\}} 
		\left(
		\textbf{g}^\ast_{1}\left(\bm_{i-1}\right)_{i=1}^\infty
		\right),
		\bfm\mapsto\left(  \textbf{g}^\ast_{2}(\bm_{i+2}\right)_{i=0}^\infty) \rk] 
		\\
	&\quad {} + \lk[ 	 \bfm\mapsto
	\left(
	\textbf{g}^\ast_{1}\left(\bm_{i+1}\right)_{i=0}^\infty
	\right),
	\bfm\mapsto % \mathcal{M}_{\{0,1\}}%
	 \left(  \textbf{g}^\ast_{2}(\bm_{i-2}\right)_{i=2}^\infty) \rk] 
	 + \lk[ 	 \bfm\mapsto
	\left(
	\textbf{g}^\ast_{1}\left(\bm_{i+1}\right)_{i=0}^\infty
	\right),
	\bfm\mapsto \left(  \textbf{g}^\ast_{2}(\bm_{i+2}\right)_{i=0}^\infty) \rk] 
		\\
		&=
		\bfm\mapsto 
		%\mathcal{M}_{\{0\}}
		 \left( [{\textbf{g}^\ast_{1}},
		\textbf{g}^\ast_{2}]\left(\bm_{i-3}\right)_{i=1}^\infty
		\right) 
		+ 2	\left[\bfm\mapsto 
	 [{\textbf{g}^\ast_{1}},
		\textbf{g}^\ast_{2}]\left(\bm_{i+1}\right)_{i=0}^\infty
		\right] 
\\&\quad {}	-\left[ \bfm\mapsto 	\mathcal{M}_{\{0\}^c}
	 [{\textbf{g}^\ast_{1}},
		\textbf{g}^\ast_{2}]\left(\bm_{i+1}\right)_{i=0}^\infty\right]
+ 
	\left[			\bfm\mapsto 
%\mathcal{M}_{\{0\}}
 [{\textbf{g}^\ast_{1}},
\textbf{g}^\ast_{2}]\left(\bm_{i+3}\right)_{i=1}^\infty\right]
%%		\\&= 		\bfm\mapsto 
%\mathcal{M}_{\{0\}}
\\
&= \bfm\mapsto  {\textbf{g}^\ast_{3}}\left(\bm_{i-3}\right)_{i=1}^\infty
+2	\bfm\mapsto 
{\textbf{g}^\ast_{3}}\left(\bm_{i+1}\right)_{i=0}^\infty
	- \bfm\mapsto 	\mathcal{M}_{\{0\}^c}
{\textbf{g}^\ast_{3}}\left(\bm_{i+1}\right)_{i=0}^\infty
+ 
		\bfm\mapsto 
%\mathcal{M}_{\{0\}}
{\textbf{g}^\ast_{3}} \left(\bm_{i+3}\right)_{i=1}^\infty
.
	\end{align*}
Here, one can see that if we start with a constant initial condition, i.e., $\bfm=(\be_1^\top,{\bf 0},{\bf 0},\ldots)^\top$, 	
then ${\textbf{g}^\ast_{3}}(\be_1)$ pops up in the third mode.

		\item Let us now start by analyzing the Lie algebra $$\mbox{Lie}(\mathcal{F})=\mbox{Lie}\left(\left\{ {\bf{G}}^{1,1},{\bf{G}}^{1,2},{\bf{G}}^{1,3},{\bf{G}}^{2,1},{\bf{G}}^{2,2},{\bf{G}}^{2,3}\right\}\right).$$
		It is straightforward to verify that Lemma \ref{lem:identityH} gives 
		\begin{eqnarray*}
			\left\{\mathcal{H}_{\textbf{g}^\ast_{l},k}:l\in\{1,2,3\},k\in\NN   \right\}\subseteq \mbox{Lie}\left(\left\{ {\bf{G}}^{1,1},{\bf{G}}^{1,2},{\bf{G}}^{1,3},{\bf{G}}^{2,1},{\bf{G}}^{2,2},{\bf{G}}^{2,3}\right\}\right),
		\end{eqnarray*}
		where 
		\begin{align}
			\nonumber&\mathcal{H}_{\textbf{g}_l^\ast,2k}(\hbfm)\\
			&:=\begin{pmatrix}
				\left(\combin{2k}{k}-\combin{2k}{k+1}\right)\textbf{g}_l^\ast(\hbm_{0})+\left(\combin{2k}{k+1}-\combin{2k}{k+2}\right)\textbf{g}_l^\ast(\hbm_{2})+\cdots+\left(\combin{2k}{2k}-\combin{2k}{2k+1}\right)\textbf{g}_l^\ast(\hbm_{2k})\\
				\left(\combin{2k}{k}-\combin{2k}{k+2}\right)\textbf{g}_l^\ast(\hbm_{1})+\left(\combin{2k}{k+1}-\combin{2k}{k+3}\right)\textbf{g}_l^\ast(\hbm_{3})+\cdots+\left(\combin{2k}{2k}-\combin{2k}{2k+2}\right)\textbf{g}_l^\ast(\hbm_{2k+1})\\
				\left(\combin{2k}{k-1}-\combin{2k}{k+2}\right)\textbf{g}_l^\ast(\hbm_{0})+\left(\combin{2k}{k}-\combin{2k}{k+3}\right)\textbf{g}_l^\ast(\hbm_{2})+\cdots+\left(\combin{2k}{2k}-\combin{2k}{2k+3}\right)\textbf{g}_l^\ast(\hbm_{2k+2})\\
				\left(\combin{2k}{k-1}-\combin{2k}{k+3}\right)\textbf{g}_l^\ast(\hbm_{1})+\left(\combin{2k}{k}-\combin{2k}{k+4}\right)\textbf{g}_l^\ast(\hbm_{3})+\cdots+\left(\combin{2k}{2k}-\combin{2k}{2k+4}\right)\textbf{g}_l^\ast(\hbm_{2k+3})\\
				\vdots\\
				\left(\combin{2k}{2}-\combin{2k}{2k-1}\right)\textbf{g}_l^\ast(\hbm_{0})+\left(\combin{2k}{3}-\combin{2k}{2k}\right)\textbf{g}_l^\ast(\hbm_{2})+\cdots+\left(\combin{2k}{2k}-\combin{2k}{4k-3}\right)\textbf{g}_l^\ast(\hbm_{4k-4})\\
				\left(\combin{2k}{2}-\combin{2k}{2k}\right)\textbf{g}_l^\ast(\hbm_{1})+\left(\combin{2k}{3}-\combin{2k}{2k+1}\right)\textbf{g}_l^\ast(\hbm_{3})+\cdots+\left(\combin{2k}{2k}-\combin{2k}{4k-2}\right)\textbf{g}_l^\ast(\hbm_{4k-3})\\
				\vdots
			\end{pmatrix}
			\label{eq:hgmsimple3}
		\end{align}
		and
		\begin{align}
			\nonumber&\mathcal{H}_{\textbf{g}_l^\ast,2k+1}(\hbfm)\\
			&:=\begin{pmatrix}
				\left(\combin{2k+1}{k+1}-\combin{2k+1}{k+2}\right)\textbf{g}_l^\ast(\hbm_{1})+\left(\combin{2k+1}{k+2}-\combin{2k+1}{k+3}\right)\textbf{g}_l^\ast(\hbm_{3})+\cdots+\left(\combin{2k+1}{2k+1}-\combin{2k+1}{2k+2}\right)\textbf{g}_l^\ast(\hbm_{2k+1})\\
				\left(\combin{2k+1}{k}-\combin{2k+1}{k+2}\right)\textbf{g}_l^\ast(\hbm_{0})+\left(\combin{2k+1}{k+1}-\combin{2k+1}{k+3}\right)\textbf{g}_l^\ast(\hbm_{2})+\cdots+\left(\combin{2k+1}{2k+1}-\combin{2k+1}{2k+3}\right)\textbf{g}_l^\ast(\hbm_{2k+2})\\
				\left(\combin{2k+1}{k}-\combin{2k+1}{k+3}\right)\textbf{g}_l^\ast(\hbm_{1})+\left(\combin{2k+1}{k+1}-\combin{2k+1}{k+4}\right)\textbf{g}_l^\ast(\hbm_{3})+\cdots+\left(\combin{2k+1}{2k+1}-\combin{2k+1}{2k+4}\right)\textbf{g}_l^\ast(\hbm_{2k+3})\\
				\left(\combin{2k+1}{k-1}-\combin{2k+1}{k+3}\right)\textbf{g}_l^\ast(\hbm_{0})+\left(\combin{2k+1}{k}-\combin{2k+1}{k+4}\right)\textbf{g}_l^\ast(\hbm_{2})+\cdots+\left(\combin{2k+1}{2k+1}-\combin{2k+1}{2k+5}\right)\textbf{g}_l^\ast(\hbm_{2k+4})\\
				\vdots\\
				\left(\combin{2k+1}{2}-\combin{2k+1}{2k+1}\right)\textbf{g}_l^\ast(\hbm_{1})+\left(\combin{2k+1}{3}-\combin{2k+1}{2k+2}\right)\textbf{g}_l^\ast(\hbm_{2})+\cdots+\left(\combin{2k+1}{2k+1}-\combin{2k+1}{4k}\right)\textbf{g}_l^\ast(\hbm_{4k-1})\\
				\left(\combin{2k+1}{1}-\combin{2k+1}{2k+1}\right)\textbf{g}_l^\ast(\hbm_{0})+\left(\combin{2k+1}{2}-\combin{2k+1}{2k+2}\right)\textbf{g}_l^\ast(\hbm_{2})+\cdots+\left(\combin{2k+1}{2k+1}-\combin{2k+1}{4k+1}\right)\textbf{g}_l^\ast(\hbm_{4k})\\
				\vdots
			\end{pmatrix}.
			\label{eq:hgmsimple4}
		\end{align}
		Therefore, by an argument similar to that used in the proof of Theorem~\ref{app.cont1}, we conclude that the system~\eqref{M1.N} is globally controllable whenever the initial condition $\M_0$ is a non-constant function. 
		On the other hand, in the case of a constant initial condition, we consider the specific form
		$$
		\hbfm = (\hbm_0^T, 0, 0, 0, \dots)^T,
		$$
		which corresponds to a spatially constant state, where $\hbm_0 \neq 0$. In the following, the detailed calculations are omitted and can be found in Appendix~\ref{sec:proofTheocont2}.

		Let us start with the Lie bracket of three vector fields. Appendix~\ref{sec:proofTheocont2} gives 
		\begin{eqnarray*}
			2^{3/2}\left[\left[ \bf{G}^{1,1},\bf{G}^{2,3} \right],\bf{G}^{1,2}\right](\hbfm)&=&  \begin{bmatrix}
				0&0&
				\textbf{p}^\ast_1(\hbm_{0})&0&0&\cdots
			\end{bmatrix},\\
			-2^{3/2}\left[\left[ \bf{G}^{1,3},\bf{G}^{2,1} \right],\bf{G}^{1,2}\right](\hbfm)&=&\begin{bmatrix}
				0&0&
				\textbf{p}^\ast_3(\hbm_{0})&0&0&\cdots
			\end{bmatrix},\\
			-2^{3/2}\left[\left[ \bf{G}^{1,2},\bf{G}^{2,3} \right],\bf{G}^{1,1}\right](\hbfm)&=&\begin{bmatrix}
				0&0&
				\textbf{p}^\ast_2(\hbm_{0})&0&0&\cdots
			\end{bmatrix},\\
			-2^{3/2}\left[\left[ \bf{G}^{1,1},\bf{G}^{2,1} \right],\bf{G}^{1,2}\right]&=& \begin{bmatrix}
				0&0&
				\textbf{h}^\ast_2(\hbm_{0})&0&0&\cdots
			\end{bmatrix},\\
			2^{3/2}\left[\left[ \bf{G}^{1,1},\bf{G}^{2,1} \right],\bf{G}^{1,3}\right]&=& \begin{bmatrix}
				0&0&
				\textbf{h}^\ast_1(\hbm_{0})&0&0&\cdots
			\end{bmatrix},\\
			-2^{3/2}\left[\left[ \bf{G}^{1,2},\bf{G}^{2,2} \right],\bf{G}^{1,1}\right]&=& \begin{bmatrix}
				0&0&
				\textbf{h}^\ast_4(\hbm_{0})&0&0&\cdots
			\end{bmatrix},\\
			-2^{3/2}\left[\left[ \bf{G}^{1,2},\bf{G}^{2,2} \right],\bf{G}^{1,3}\right]&=& \begin{bmatrix}
			0&0&
			\textbf{h}^\ast_3(\hbm_{0})&0&0&\cdots
			\end{bmatrix},\\
			-2^{3/2}\left[\left[ \bf{G}^{1,3},\bf{G}^{2,3} \right],\bf{G}^{1,1}\right]&=& \begin{bmatrix}
			0&0&
			\textbf{h}^\ast_6(\hbm_{0})&0&0&\cdots
			\end{bmatrix},\\
			2^{3/2}\left[\left[ \bf{G}^{1,3},\bf{G}^{2,3} \right],\bf{G}^{1,2}\right]&=& \begin{bmatrix}
			0&0&
			\textbf{h}^\ast_5(\hbm_{0})&0&0&\cdots
			\end{bmatrix},
		\end{eqnarray*}
		where   $\textbf{p}^\ast_i(\textbf{x})$, $i\in\{1,2,3\}$,  denotes the projection on the $i^{th}$ coordinate, and 
		\begin{eqnarray*}
			&&\textbf{h}^\ast_1(\textbf{x}):= \begin{bmatrix}
				0\\
				x_1\\
				0
			\end{bmatrix},
		 \textbf{h}^\ast_2(\textbf{x}):=\begin{bmatrix}
				0\\
				0\\
				x_1
			\end{bmatrix},\textbf{h}^\ast_3(\textbf{x}):= \begin{bmatrix}
			x_2\\
			0\\
			0
			\end{bmatrix},\\
			&&
			\textbf{h}^\ast_4(\textbf{x}):= \begin{bmatrix}
				0\\
				0\\
				x_2
			\end{bmatrix},\textbf{h}^\ast_5(\textbf{x}):= \begin{bmatrix}
				x_3\\
				0\\
				0
			\end{bmatrix},
			\textbf{h}^\ast_6(\textbf{x}):= \begin{bmatrix}
				0\\
				x_3\\
				0
			\end{bmatrix}.
		\end{eqnarray*}
		Moreover, we also have 
		\begin{eqnarray*}
			\mathcal{H}_{\textbf{g}^\ast_{1},4}&=& \begin{bmatrix}
				\textbf{g}^\ast_1(\hbm_{0})&0&
				\textbf{g}^\ast_1(\hbm_{0})&0&0&\cdots
			\end{bmatrix},\\
			\mathcal{H}_{\textbf{g}^\ast_{2},4}&=& \begin{bmatrix}
				\textbf{g}^\ast_2(\hbm_{0})&0&
				\textbf{g}^\ast_2(\hbm_{0})&0&0&\cdots
			\end{bmatrix},\\
			\mathcal{H}_{\textbf{g}^\ast_{3},4}&=& \begin{bmatrix}
				\textbf{g}^\ast_3(\hbm_{0})&0&
				\textbf{g}^\ast_3(\hbm_{0})&0&0&\cdots
			\end{bmatrix}.
		\end{eqnarray*}
		Observe, we obtain $\text{span}\{\textbf{g}^\ast_k,\textbf{p}^\ast_k:k\in\{1,2,3\} \}=\RR^3$ for $\textbf{x}\neq 0$. Therefore, we obtain
		$$\begin{bmatrix}
			0&0&
			\textbf{x}&0&0&\cdots
		\end{bmatrix}\in \mbox{Lie}\left(\left\{ {\bf{G}}^{1,1},{\bf{G}}^{1,2},{\bf{G}}^{1,3},{\bf{G}}^{2,1},{\bf{G}}^{2,2},{\bf{G}}^{2,3}\right\}\right)$$
		for all $\textbf{x}\in\RR^3$. Next, observe the Lie bracket of four vector fields. Appendix~\ref{sec:proofTheocont2} gives 
		\begin{eqnarray*}
			2^{2}\left[\left[\left[ \bf{G}^{1,1},\bf{G}^{2,3} \right],\bf{G}^{1,2}\right],\bf{G}^{1,1}\right]&=&\begin{bmatrix}
				0&0&
				0&0&0&0&\cdots
			\end{bmatrix}\\
			-2^{2}\left[\left[\left[ \bf{G}^{1,1},\bf{G}^{2,3} \right],\bf{G}^{1,2}\right],\bf{G}^{1,2}\right]&=&\begin{bmatrix}
				0&\textbf{h}^\ast_2(\hbm_{0})&
				0&\textbf{p}^\ast_4(\hbm_{0})&0&0&\cdots
			\end{bmatrix}\\
			-2^{2}\left[\left[\left[ \bf{G}^{1,1},\bf{G}^{2,3} \right],\bf{G}^{1,2}\right],\bf{G}^{1,3}\right]&=& \begin{bmatrix}
				0&\textbf{h}^\ast_1(\hbm_{0})&
				0&\textbf{p}^\ast_5(\hbm_{0})&0&0&\cdots
			\end{bmatrix}\\
			2^{2}\left[\left[\left[ \bf{G}^{1,2},\bf{G}^{2,3} \right],\bf{G}^{1,1}\right],\bf{G}^{1,1}\right]&=&  \begin{bmatrix}
				0&\textbf{h}^\ast_4(\hbm_{0})&
				0&\textbf{p}^\ast_6(\hbm_{0})&0&0&\cdots
			\end{bmatrix}\\
			-2^{2}\left[\left[\left[ \bf{G}^{1,2},\bf{G}^{2,3} \right],\bf{G}^{1,1}\right],\bf{G}^{1,2}\right]&=& \begin{bmatrix}
				0&0&
				0&0&0&0&\cdots
			\end{bmatrix}\\
			2^{2}\left[\left[\left[ \bf{G}^{1,2},\bf{G}^{2,3} \right],\bf{G}^{1,1}\right],\bf{G}^{1,3}\right]&=& \begin{bmatrix}
				0&\textbf{h}^\ast_3(\hbm_{0})&
				0&\textbf{p}^\ast_5(\hbm_{0})&0&0&\cdots
			\end{bmatrix}\\
			2^{2}\left[\left[\left[ \bf{G}^{1,3},\bf{G}^{2,1} \right],\bf{G}^{1,2}\right],\bf{G}^{1,1}\right]&=& \begin{bmatrix}
				0&\textbf{h}^\ast_6(\hbm_{0})&
				0&\textbf{p}^\ast_6(\hbm_{0})&0&0&\cdots
			\end{bmatrix}\\
			2^{2}\left[\left[\left[ \bf{G}^{1,3},\bf{G}^{2,1} \right],\bf{G}^{1,2}\right],\bf{G}^{1,2}\right]&=& \begin{bmatrix}
				0&\textbf{h}^\ast_5(\hbm_{0})&
				0&\textbf{p}^\ast_4(\hbm_{0})&0&0&\cdots
			\end{bmatrix}\\
			-2^{2}\left[\left[\left[ \bf{G}^{1,3},\bf{G}^{2,1} \right],\bf{G}^{1,2}\right],\bf{G}^{1,3}\right]&=& \begin{bmatrix}
				0&0&
				0&0&0&0&\cdots
			\end{bmatrix}\\
			-2^{2}\left[\left[\left[ \bf{G}^{1,1},\bf{G}^{2,1} \right],\bf{G}^{1,2}\right],\bf{G}^{1,2}\right]&=& \begin{bmatrix}
				0&2\textbf{p}^\ast_1(\hbm_{0})+\textbf{p}^\ast_3(\hbm_{0})&
				0&2\textbf{p}^\ast_1(\hbm_{0})&0&0&\cdots
			\end{bmatrix}\\
			-2^{2}\left[\left[\left[ \bf{G}^{1,3},\bf{G}^{2,3} \right],\bf{G}^{1,2}\right],\bf{G}^{1,2}\right]&=& \begin{bmatrix}
				0&2\textbf{p}^\ast_3(\hbm_{0})+\textbf{p}^\ast_1(\hbm_{0})&
				0&2\textbf{p}^\ast_3(\hbm_{0})&0&0&\cdots
			\end{bmatrix}\\
			-2^{2}\left[\left[\left[ \bf{G}^{1,1},\bf{G}^{2,1} \right],\bf{G}^{1,3}\right],\bf{G}^{1,3}\right]&=& \begin{bmatrix}
				0&2\textbf{p}^\ast_1(\hbm_{0})+\textbf{p}^\ast_2(\hbm_{0})&
				0&2\textbf{p}^\ast_1(\hbm_{0})&0&0&\cdots
			\end{bmatrix}\\
			-2^{2}\left[\left[\left[ \bf{G}^{1,2},\bf{G}^{2,2} \right],\bf{G}^{1,3}\right],\bf{G}^{1,3}\right]&=& \begin{bmatrix}
				0&2\textbf{p}^\ast_2(\hbm_{0})+\textbf{p}^\ast_1(\hbm_{0})&
				0&2\textbf{p}^\ast_2(\hbm_{0})&0&0&\cdots
			\end{bmatrix}\\
			-2^{2}\left[\left[\left[ \bf{G}^{1,2},\bf{G}^{2,2} \right],\bf{G}^{1,1}\right],\bf{G}^{1,1}\right]&=& \begin{bmatrix}
				0&2\textbf{p}^\ast_2(\hbm_{0})+\textbf{p}^\ast_3(\hbm_{0})&
				0&2\textbf{p}^\ast_2(\hbm_{0})&0&0&\cdots
			\end{bmatrix}\\
			-2^{2}\left[\left[\left[ \bf{G}^{1,3},\bf{G}^{2,3} \right],\bf{G}^{1,1}\right],\bf{G}^{1,1}\right]&=& \begin{bmatrix}
				0&2\textbf{p}^\ast_3(\hbm_{0})+\textbf{p}^\ast_2(\hbm_{0})&
				0&2\textbf{p}^\ast_3(\hbm_{0})&0&0&\cdots
			\end{bmatrix}
		\end{eqnarray*}
	As before, 	observe that $\text{span}\{\textbf{g}^\ast_k,\textbf{p}^\ast_k:k\in\{1,2,3\} \}=\RR^3$ for $\textbf{x}\neq 0$. Therefore, we have
		$$\begin{bmatrix}
			0&
			\textbf{x}&0&0&0&\cdots
		\end{bmatrix}^\top ,\begin{bmatrix}
		0&0&0&
		\textbf{y}&0&&\cdots
		\end{bmatrix}^\top \in \mbox{Lie}\left(\left\{ {\bf{G}}^{1,1},{\bf{G}}^{1,2},{\bf{G}}^{1,3},{\bf{G}}^{2,1},{\bf{G}}^{2,2},{\bf{G}}^{2,3}\right\}\right)$$
		for all $\textbf{x},\textbf{y}\in\RR^3$. Here, one can see, starting with a vector
		$(0,0,*,0,0\ldots)^T$, through the Lie algebra
		the vector
		$$
		(0,*,0,*,0,0,\ldots)^T
		$$
		is generated.  In the next round
		$$
		(*,0,*,0,*,0,\ldots)^T
		$$
		is generated. In this way, the control also propagates through all modes. This completes the proof. 
	\end{stepp}

\end{proof}

%\section{Application to the heat flow on the sphere}

\section{The heat flow on the sphere and controlled low-mode forcing}\label{sec:heatflow_intro}

A prototypical example of an evolution on a manifold is the \emph{harmonic map heat flow on the sphere}.
The harmonic map heat flow is a gradient flow of the Dirichlet energy
\[
    \mathcal E(u)=\frac12\int_M |\nabla u|^2\,dS
\]
for maps \(u:M\to N\), where $M$ and $N$ are two Riemannian manifolds.  In the case where the
target is the sphere, \(u(t,x)\in\mathbb S^2\), it describes the relaxation of an
orientation field, for instance a magnetisation or director field, under the
constraint \(|u|=1\).  The flow was introduced in the fundamental work of
Eells and Sampson \cite{EellsSampson1964} and has since become a central model in geometric analysis;
important later contributions include the work of \textsc{Struwe} \cite{Struwe1985} and of
\textsc{Chang--Ding--Ye} \cite{ChangDingYe1992} on regularity and finite-time singularities.

The harmonic map heat flow  is related to heat transport on an idealised planetary surface:
ordinary scalar heat diffusion on the Earth is governed by the Laplace--Beltrami
operator on \(\mathbb S^2\), while harmonic map heat flow adds the constraint
that the unknown takes values on the  \(\mathbb S^2\).  Such
spherical heat equations and Laplace--Beltrami eigenfunction methods also appear
in climate modelling and data analysis on the globe, see \cite{DelSoleTippett2015}.

In extrinsic coordinates (viewing $\mathbb S^2\subset\mathbb R^3$), the corresponding constrained
heat flow is obtained by projecting the Euclidean Laplacian $\partial_{xx}\M$ onto the tangent space
$T_{\M}\mathbb S^2=\{\xi\in\mathbb R^3:\xi\cdot \M=0\}$, i.e.
\begin{equation}\label{eq:harmmap_proj}
	\frac d {dt} \M(t) = P_{\M(t)}\,\M_{xx}(t),
	\qquad
	P_{\M(t)}:=I-\M(t)\otimes \M(t).
\end{equation}
Using the identity $\M\cdot \M\equiv 1 $, we know that $ \M\cdot \M_{xx}=-|\M_x|^2$, and one obtains the familiar
equivalent form
\begin{equation}\label{eq:harmmap_extrinsic}
\frac d{dt}	\M(t) = \M_{xx}(t) + |\M_x(t)|^2\,\M(t).
\end{equation}
The Neumann condition is the natural boundary condition associated with the energy $\mathcal E$.
For more details, see the book by Struwe \cite[Section 6]{Struwe08}.

The solution $\M$ of \eqref{eq:harmmap_proj}--\eqref{eq:harmmap_extrinsic} enjoy two key structural features:
\begin{itemize}
	\item \emph{Constraint preservation.}
	The right-hand side is pointwise tangent to the sphere, hence
	\[
	\partial_t |\M(t)|^2 = 2\M(t)\cdot \M_t(t) = 2\M(t)\cdot P_{\M(t)}\M_{xx}(t)=0,
	\]
	so $|\M(t,x)|=1$, $x\in[0,1]$, is preserved along the flow.
	
	\item \emph{Energy dissipation.}
	Formally (and rigorously under standard regularity assumptions),
	\[
	\frac{d}{dt}\mathcal E(\M(t))
	= -\int_0^1|\M_t(t,x)|^2\,dx \le 0,
	\]
	where boundary terms vanish due to $\M_x(t,0)=\M_x(t,1)=0$.
\end{itemize}
The harmonic map heat flow was introduced in the foundational work of Eells--Sampson
and has been studied extensively; see, for instance,
\cite{EellsSampson1964,Struwe1985,ChenStruwe1989}.  For further discussion, including the question of uniqueness when solutions are not very regular, we refer to \textsc{Coron} \cite{Coron1990}.

\medskip

		% --- Notation
	\newcommand{\Sph}{\mathbb{S}}
	\newcommand{\id}{\mathrm{Id}}
	\newcommand{\proj}[1]{\Pi_{#1}} % tangent projection

Let us turn our attention the existence and uniqueness results.
In the first step we introduce the 
required function spaces and operators.
%%%%% REVIEW [M3]: DOMAIN INCONSISTENCY (major): Section 3 builds the function spaces on (0,2*pi): L^2(0,2pi), H^1(0,2pi;S^2), D(A) with M_x(2pi)=0, etc. But the rest of the paper -- including the controlled equation (3.6) with M_x(t,1)=0 and the control ansatz (3.5) on x in [0,1] -- uses (0,1). The eigenvalues -4pi^2 k^2 also correspond to (0,1). Replace (0,2pi) by (0,1) throughout this subsection.
Let us denote the space of Lebesgue measurable real-valued square integrable
functions defined on $(0,1)$ by $\mathrm{L}^2(0,1)$ and
$$\mathbb{L}^2:= (\mathrm{L}^2(0,))^3=\mathrm{L}^2(0,1;\mathbb{R}^3).$$
Similarly, we denote
$$
\mathrm{H}^1(0,1):=\Big\{u\in L^2(0,1)| \,\, u'\in L^2(0,1) \Big\}\quad \mbox{and}\quad
\mathbb{H}^1:=(\mathrm{H}^1(0,1))^3=\mathrm{H}^1(0,1; \mathbb R^3).
$$
We denote the Sobolev space $\mathbb{H}^1(0,1;\mathbb{S}^2)$ by
\begin{equation}\label{eqn-H^1(D,S^2)}
\mathbb{H}^1(0,1;\mathbb{S}^2):=\Bigl\{\M \in \mathbb{H}^1 \mbox{ such that }|\M(x)|_{\R3}=1  \mbox{ for a.e. } x \in (0,1)  \Bigr\}.\end{equation}
In particular, $\mathbb{H}^1(\mathcal{O};\mathbb{S}^2)$ is the set of equivalence classes of all functions belonging to  the Sobolev space
$\mathbb{H}^1$ whose values are in the sphere.
We define the Laplacian with the Neumann boundary conditions by
\begin{equation}
\label{op.n} \left\{
\begin{array}{ll}
D(\mathcal{A}) &:= \{ \M \in \mathbb{H}^2(0,1;\mathbb{R}^3):\M_x(0)=\M_x(1)=0 \},\cr
\mathcal{A}(\M)&:=- \M_{xx}, \quad \M\in D(\mathcal{A}).
\end{array}
\right.
\end{equation}
Let us consider the Gelfand triple
$$
D(\mathcal{A})\subset \Hh \subset \L\cong (\L)' \subset
(\Hh)' \subset (D(\mathcal{A}))',
$$
where  $(\mathbb{H}^1)'$ is the dual space of $\mathbb{H}^1$ with duality pairing
$\langle \cdot ,\cdot \rangle :=_{(\Hh)'}\langle\cdot ,\cdot \rangle_{\Hh}.$
%or
%
 We note that $\A$ is self-adjoint and non-negative  operator in $\mathbb{L}^2.$ Define $\rA_1:=I+\mathcal{A}.$ We note that $\rV:=Dom(\rA_1^{1/2})$ when endowed with the graph norm coincides with $\mathbb{H}^1.$ %{\color{red}Also the operator $\rA_1^{-1}$ is compact.}

Let
$
    u_0:(0,1)\to \mathbb S^2
$
be a sufficiently smooth initial datum, for instance
\(u_0\in H^1((0,1);\mathbb S^2)\), or \(u_0\in C^{2+\alpha}((0,1);\mathbb S^2)\)
for the classical solution. 
In one space dimension the problem is subcritical.  In particular, the energy
controls the full \(H^1\)-norm of the map, and the parabolic smoothing prevents
the concentration phenomena which occur for harmonic map heat flow from
two-dimensional domains.  Consequently, for smooth initial data satisfying the
boundary conditions, there exists a unique global smooth solution
\[
%%%%% REVIEW [W4]: 'I' is undefined here (domain symbol). Use (0,1) (or O) consistently.
    u\in C^\infty\bigl((0,\infty)\times I;\mathbb S^2\bigr),
\]
and the constraint $
    |u(t,x)|=1$
 is preserved for all $t\ge0$, $x\in (0,1)$.
Moreover, the energy identity holds and one has the a priori estimate
\[
  \mathcal   E(u(t))
    +
%%%%% REVIEW [W4]: Upper limit 'L' in the integral is undefined (domain length). Use 1 (or 2pi), consistent with the chosen domain -- see the domain note above.
    \int_0^t\int_0^L |\partial_s u(s,x)|^2\,dx\,ds
    =
   \mathcal  E(u_0).
\]
We refer to Struwe's treatment of the harmonic map heat flow in
\cite{Struwe08}, where the general variational method is
developed and where the critical role of the two-dimensional case is explained.
The original global weak existence and partial regularity theory for surfaces is
due to \textsc{Struwe} \cite{Struwe1985}.  Since our spatial domain is
one-dimensional, the problem is below the critical dimension, and the standard
parabolic theory yields global existence and uniqueness of smooth solutions.

\medskip

\subsection*{The control on Fourier modes:}
%%%%% REVIEW [W3]: Word-order typo: 'harmonic heat map flow' -> 'harmonic map heat flow'.
To apply our results from Theorem \ref{app.cont1} and Theorem \ref{L2.app.cont2}  to the harmonic heat map flow, we approximate the solution of the harmonic heat flow %our solution of the \eqref{..} 
by Fourier modes expansion on the sphere.  %To be more precise,
%let us consider the affine control system 
%\begin{equation}\label{eq:harmmap_control}
%\frac d{dt}	\M(t) = \M_{xx}(t) + |\M_x(t)|^2\,\M(t).
%\end{equation}
%
A natural finite-dimensional control ansatz is\footnote{Compare identity \eqref{control.1}.}
\begin{align}   \label{eq:control_lowmodes}
	\v(t,x)=\sum_{(k,j)\in \mathcal{K}}v_k^j(t)\varphi_k(x) \be_j,\quad t\in [0,T],\,\,x\in [0,1],
\end{align}
%\begin{equation}\label{eq:finite-mode-control}
%	u(t,x)=\sum_{(\ell,m)\in\mathcal{S}} \alpha_{\ell m}(t)\,Y_{\ell}^m(x)\,q_{\ell m},
%\end{equation}
where $\mathcal{K}$ is a \emph{finite} index set, $\be_j\in\RR^3$ are fixed directions, and the scalar amplitudes
$v_k^j(t)$, $(k,j)\in \mathcal{K}\subset \mathbb N_0\times\{1,2,3\}$ are the control inputs.
To couple the geometric heat flow with a control acting only on finitely many Fourier modes,
we add a control term that is tangent to the sphere. A convenient choice consistent with the
%%%%% REVIEW [W5]: Sentence fragment: 'A convenient choice consistent with the cross-product structure.' has no main verb -- complete or merge with the next sentence.
cross-product structure.
To be more precise, the harmonic heat flow with control is given by 
\begin{equation}\label{eq:controlled_harmmap}
	\frac d{dt}	\M(t)
	= \M_{xx}(t)+|\M_x(t)|^2\M(t)+ \M(t)\times \bv(t,x),
	\qquad
	\M_x(t,0)=\M_x(t,1)=0.
\end{equation}
The evolution remains on the sphere because the forcing in \eqref{eq:controlled_harmmap} is projected tangentially: $\M\times\bv(t,\cdot)\in T_\M\mathbb{S}^2$, since $\M\cdot(\M\times \bv)=0$ implies that the control does not violate the constraint.
%We assume the control field is by a finite field,
%\begin{equation}\label{eq:control_lowmodes}
%	\bv(t,x)=\sum_{(k,j)\in\mathcal K} v_k^j(t)\,\varphi_k(x)\,e_j,
%	\qquad \varphi_k(x)=\cos(1 kx),
%\end{equation}
%with a finite index set $\mathcal K\subset \mathbb N_0\times\{1,2,3\}$.

\medskip

%	Unlike the linear heat equation, there is \emph{no single, universally applicable “null controllability for any $T>0$” theorem}
%currently known for \eqref{eq:controlled_harmmap} on spheres in full generality: the constraint $|d|=1$, topological obstructions,
%and possible singularity formation (depending on the dimension/energy regime) make the global controllability theory
%substantially more challenging.

	Current rigorous controllability results for the harmonic map heat flow (``harmonic heat flow'') given in \eqref{eq:controlled_harmmap} are much less universal than for the linear heat equation.
%%%%% REVIEW [W9/C3]: Awkward double 'is': '...variants ..., is given by Liu'. Also 'CoronXiang2024' (next line) is an undefined citation key -> prints '[?, 14]'.
A baseline small-time controllability theorem to constant states is available for externally controlled variants under geometric restrictions
(e.g.\ hemisphere condition), is given by \textsc{Liu} \cite{Liu2018}. The next controllability/stabilization results for the heat flow of the harmonic map are given by  \textsc{Coron and Xiang} in \cite{CoronXiang2025}.
% for the circle-to-sphere setting, including small-time local controllability and global controllability to geodesics \cite{CoronXiang2024,CoronXiang2025}.
%=========================================================
%
%	
	For the \emph{harmonic map heat flow} \eqref{eq:controlled_harmmap}, rigorous controllability results
seem not to appear in the current literature.
	Nevertheless, finite-dimensional actuation is highly natural from both a modeling and numerical viewpoint.

	%--------------------------
	% Bibliography
	%--------------------------

Equation~\eqref{eq:controlled_harmmap} defines a control-affine system on an infinite-dimensional
state space (the Fourier coefficients), with \emph{drift} given by the geometric heat operator and
controlled vector fields induced by the finitely many modes in~\eqref{eq:control_lowmodes}.
A particularly important observation is that every constant map $\M(t,x)\equiv \bar \M \in\mathbb S^2$
is an equilibrium of the drift, since
\[
\bar \M_{xx}+|\bar \M_x|^2\,\bar \M = 0.
\]
Hence, \emph{at such constant states} the local Lie-algebraic computations for the controlled vector
fields coincide with the driftless case treated above. In this sense, adding the heat-flow drift
does not change the \emph{instantaneous} bracket-generated directions at constant equilibria.

If \(\M_0\) is not constant, additional directions may be generated.
Thus the accessible distribution can only remain the same or become larger.
In particular, for a reachability result, this does not create an obstruction:
if the constant-\(\M_0\) case already generates all tangent directions,
then the non-constant case is at least as favourable.

This observation suggests a natural link between the reachability analysis carried out in Fourier space and the controlled harmonic map heat flow, motivating a transfer of the former to the latter.
Applying Theorem~\ref{app.cont1} and Theorem~\ref{L2.app.cont2} yields the following two corollaries.
\begin{cor}\label{cor:hf1}
%\label{app.cont1}
	Consider the system~\eqref{eq:controlled_harmmap} with initial condition $\M_0\in\mathbb{H}^1(0,1;\mathbb{S}^2)$, and let the set of controlled forcing modes be
	$$
	\mathcal{K} = \{(0,1),\, (0,2),\, (1,1)\}.
	$$
	Then,
	\begin{itemize}
			\item if $\M_0$ is constant, then not all states are reachable,  in particular, there exists a point $\M_1$, such that there exists no control steering $\M_0$ to $\M_1$ in any time $T$.
			%system is not globally controllable.
			\item if $\M_0$ is non-constant, all states are reachable. In particular, for any finite set $\mathcal{G}\subset \{1,2,3\}\times \mathbb{N}$, 
			any point belonging to $H_\mathcal{G}$ is reachable.
			%the system is globally controllable.
	\end{itemize}
\end{cor}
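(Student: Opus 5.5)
The plan is to treat \eqref{eq:controlled_harmmap} as a control-affine system with drift on the Fourier coefficients,
\[
\frac{d}{dt}\bfm(t)=\mathbf F_0(\bfm(t))+\sum_{(k,l)\in\mathcal K}{\bf G}^{k,l}(\bfm(t))\,v_k^l(t),
\]
where $\mathbf F_0$ is the Fourier representation of $\M\mapsto \M_{xx}+|\M_x|^2\M$ and the ${\bf G}^{k,l}$ are the fields from \eqref{def_g_all}. Both items of Theorem~\ref{app.cont1} are then transferred to this system. Two facts drive the argument: the drift vanishes at constant states, and it can be made negligible on short time intervals by scaling the controls up.

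\emph{Constant $\M_0$.} The obstruction should be of Lie-algebraic type, via the orbit theorem of Nagano--Sussmann recalled in Appendix~\ref{app_geo_cont}: the reachable set lies in the orbit of the Lie algebra $\Lie\{\mathbf F_0,{\bf G}^{0,1},{\bf G}^{0,2},{\bf G}^{1,1}\}$.

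1. Take $\hbfm=(\be_1^\top,0,0,\dots)^\top$. Since $\mathbf F_0(\hbfm)=0$, every bracket with the drift evaluated at $\hbfm$ reduces to $D\mathbf F_0(\hbfm)\,Y(\hbfm)$, up to higher brackets that are handled the same way.
2. At a constant state the linearisation $D\mathbf F_0(\hbfm)$ is the Neumann Laplacian composed with the tangential projection. In Fourier variables it acts cell-wise, multiplying cell $k$ by $\lambda_k=-4\pi^2k^2$ and projecting onto $\be_1^\perp$.
3. Hence it maps the span of the vectors $\mathcal H_{\textbf{g}^\ast_l,k}(\hbfm)$ computed in Part~II of the proof of Theorem~\ref{app.cont1} into itself: every cell stays in $\Span\{\be_2,\be_3\}$.
4. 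The directions $(0,\dots,0,\be_1,0,\dots)$ exhibited there therefore remain outside the bracket-generated distribution, even with the drift included. Choosing $\M_1$ with a nonzero $\be_1$-component in such a cell yields the non-reachable target.

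I will check that this invariant subspace is preserved along the whole orbit and not only at $\hbfm$. This should follow from the explicit linear structure of the fields in each cell, since all $\textbf{g}^\ast_l$ are linear.

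\emph{Non-constant $\M_0$.} Fix a finite $\mathcal G$ and a target in $H_{\mathcal G}\cap\mathbb H^1$.

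1. Let the drift act for a short time $\tau>0$. By backward uniqueness for the parabolic flow, $\M(\tau)$ is still non-constant, and it is now smooth. Lemma~\ref{lem:dimspan} then supplies indices $i_1,i_2$ as in the proof of Theorem~\ref{app.cont1}.
2. That theorem gives a finite family of brackets whose values at $\M(\tau)$ span $\Pi_{\mathcal G}$-directions. By continuity this spanning persists in a neighbourhood.
3. On a Galerkin truncation at level $K=\max(N,i_2)$, apply the finite-dimensional rescaling argument: with controls of size $1/\varepsilon$ on intervals of length $\varepsilon$ (Agrachev--Sarychev type), the drift contributes only $O(\varepsilon)$. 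Thus the driftless reachable set of Theorem~\ref{app.cont1} is approximated up to $O(\varepsilon)$, and an open neighbourhood of the target projection is covered for small $\varepsilon$.
4. Exact hitting of $\Pi_{\mathcal G}\M_1$ should then follow from a degree or implicit-function argument, since the end-point map is a submersion onto the $\mathcal G$-coordinates.

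\emph{Main obstacle.} The hardest step is item 3 of the non-constant case: controlling the tail modes $k>K$ under large controls in the presence of the nonlinear drift $|\M_x|^2\M$, so that the truncated computation really describes the PDE. I would first try an $\mathbb H^1$ energy estimate. Since $\M\times\bv$ has smooth spatial profile, the energy grows at most like $\exp(C\|v\|_{L^1})$, and $\|v\|_{L^1}$ stays bounded under the rescaling. Combined with continuity of the solution map in $\mathbb L^2$, this makes the projection $\Pi_{\mathcal G}$ of the full solution depend continuously on the controls, which is what the covering argument needs.
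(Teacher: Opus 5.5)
\textbf{Gap in the constant case.} Your argument for constant $\M_0$ has a genuine gap, and it lies exactly in the check you postpone. At $\hbfm=(\mathbf{e}_1^\top,0,0,\dots)^\top$, every tangent vector $\xi$ to $\mathbb H^1(0,1;\mathbb S^2)$ satisfies $\xi(x)\cdot\mathbf{e}_1=0$ a.e. Hence every Fourier cell of $\xi$ already lies in $\Span\{\mathbf{e}_2,\mathbf{e}_3\}$. So the absence of $\mathbf{e}_1$-directions at $\hbfm$ is forced by $|\M|=1$ and holds for any control system on this manifold: the directions $(0,\dots,0,\mathbf{e}_1,0,\dots)$ are normal to the constraint, not missing from the distribution. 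In fact the vectors $\mathcal H_{\textbf{g}^\ast_l,k}(\hbfm)$ from Part~II are triangular, with coefficient $1$ in cell $k$. For $l=2,3$ they therefore fill $\Span\{\mathbf{e}_2,\mathbf{e}_3\}$ in every cell. The rank at the constant state is maximal, and the orbit theorem gives no obstruction.

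The invariance you plan to verify along the orbit is false. Use $v_0^2$ to rotate about $\mathbf{e}_2$ by an angle $\alpha$. Then use $v_1^1$, which rotates about $\mathbf{e}_1$ by the $x$-dependent angle $b\cos(2\pi x)$. Then use $v_0^2$ again to rotate about $\mathbf{e}_2$ by $\gamma$. The first component becomes $\cos\alpha\cos\gamma\mp\sin\alpha\sin\gamma\cos\bigl(b\cos(2\pi x)\bigr)$, which is non-constant when $b\sin\alpha\sin\gamma\neq0$. So some cell $k\ge1$ acquires a nonzero $\mathbf{e}_1$-component; by Jacobi--Anger, generically cell $2$ does. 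With the drift present, the same controls compressed to a short interval reproduce this up to a small error, by the scaling you use later. Linearity of the $\textbf{g}^\ast_l$ does not help: ${\bf G}^{1,1}$ creates $x$-dependence, and the zero-mode rotations turn it into $\mathbf{e}_1$-components.

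The paper's proof of this bullet rests on the same pointwise information. It checks that $[\mathbf F_0,{\bf G}^{0,1}]$, $[\mathbf F_0,{\bf G}^{0,2}]$ and $[\mathbf F_0,{\bf G}^{1,1}]$ vanish at $(\mathbf{e}_1^\top,0,\dots)^\top$, and then falls back on Part~II of Theorem~\ref{app.cont1}. Following it will not close the gap. A non-reachability result needs a genuine invariant of the controlled flow. The only cheap one is that all Fourier coefficients are bounded, since $\|\M(t)\|_{L^2}=1$. That bound holds equally for non-constant data, so it cannot produce the stated dichotomy.

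\textbf{Non-constant case.} Here your route genuinely differs from the paper's. The paper gives no transfer argument beyond remarking that the controls must be chosen larger than the drift. Moreover, the drift it differentiates, $\mathbf F_0$ in \eqref{bfF0}, comes from $\M\times\M_{xx}$, not from $\M_{xx}+|\M_x|^2\M$; you use the correct drift. Fast controls are the right way to make the paper's remark precise.

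The Galerkin truncation, however, is not an invariant subsystem: ${\bf G}^{1,1}$ couples cell $K$ to $K+1$, and $|\M_x|^2\M$ couples all cells. You can avoid it by working on the PDE directly. Let $D$ be the difference between the controlled flow and the time-rescaled driftless flow. The identity $D\cdot(D\times\mathbf{v})=0$ removes the large control term. Your $\mathbb H^1$ bound then gives $\frac{d}{dt}\|D\|_{L^2}^2\le C$ on an interval of length $O(\varepsilon)$, hence an $O(\sqrt{\varepsilon})$ error on every fixed Fourier coefficient.

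What remains missing is the input to your step~4. You need the driftless $\Pi_{\mathcal G}$-endpoint map, restricted to a finite-dimensional family of controls, to cover a neighbourhood of the target with nonzero degree. That is strictly stronger than the bracket-spanning you extract from Theorem~\ref{app.cont1}. It can also only hold for targets compatible with the coefficient bound above, not for every point of $H_{\mathcal G}$ as the corollary asserts.
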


\begin{cor}\label{hf2}
	Consider the system~\eqref{eq:controlled_harmmap} with initial condition $\M_0\in\mathbb{H}^1(0,1;\mathbb{S}^2)$, and let the set of controlled forcing modes be
	$\mathcal{K} = \{(1,1), (1,2), (1,3),(2,1), (2,2), (2,3)\}.$
	Then, the system~\eqref{eq:controlled_harmmap} is globally controllable.
\end{cor}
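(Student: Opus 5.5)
We deduce Corollary~\ref{hf2} from Theorem~\ref{L2.app.cont2} by treating \eqref{eq:controlled_harmmap} as a control-affine system with drift. In Fourier variables it reads
\[
\frac{d}{dt}\bfm=\mathbf{F}_0(\bfm)+\sum_{(k,l)\in\mathcal K}{\bf G}^{k,l}(\bfm)\,v_k^l(t),
\]
where $\mathbf F_0$ is the Fourier representation of $\M\mapsto\M_{xx}+|\M_x|^2\M$. The controlled fields ${\bf G}^{k,l}$ are exactly those of \eqref{eq:llgeinmcase2}. The guiding principle is the standard one for control-affine systems: when the drift-free fields are bracket generating and the controls are unbounded, the drift can be compensated by large, fast controls. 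Hence it suffices to steer the driftless system and then absorb $\mathbf F_0$ by a time-rescaling (fast-switching) argument.

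\textbf{Step 1 (well-posedness and tangency).} For $\bv$ of the form \eqref{eq:control_lowmodes} with $v_k^l\in L^\infty(0,T)$, the one-dimensional parabolic theory recalled above gives a unique global solution in $C([0,T];\mathbb H^1(0,1;\mathbb S^2))$. This holds because the added term $\M\times\bv$ is tangent and smooth in $x$. The solution depends continuously on the controls in the weak-$*$ topology and on the initial datum. In particular, $\Pi_\CG\M(T,\bv,\M_0)$ depends continuously on $(\bv,\M_0)$.

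\textbf{Step 2 (reduction to the driftless system by time-scaling).} Fix $\M_0,\M_1$ and a finite $\CG$. By Theorem~\ref{L2.app.cont2} there is a control $\bv$ on $[0,1]$ steering the driftless system \eqref{M1.N} from $\M_0$ to some $\tilde\M_1$ with $\Pi_\CG\tilde\M_1=\Pi_\CG\M_1$. For $\varepsilon>0$, set $\bv_\varepsilon(t)=\varepsilon^{-1}\bv(t/\varepsilon)$ on $[0,\varepsilon]$. Rescaling time gives
\[
\frac{d}{ds}\bfm=\varepsilon\,\mathbf F_0(\bfm)+\sum{\bf G}^{k,l}(\bfm)v_k^l(s).
\]
The drift contribution is $\varepsilon\int_0^1\mathbf F_0\,ds$. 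In the mild formulation this is bounded by the heat semigroup acting for time $\varepsilon$ plus an $\varepsilon$-small nonlinear term. It tends to $0$ in $\mathbb L^2$ because the $\mathbb H^1$-energy stays bounded by Step~1. Consequently $\Pi_\CG\M(\varepsilon,\bv_\varepsilon,\M_0)\to\Pi_\CG\tilde\M_1$ as $\varepsilon\to0$.

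\textbf{Step 3 (exact hitting).} Approximate reachability is upgraded to exact equality of $\Pi_\CG$ by a degree/open-mapping argument. The Lie rank condition proved in Theorem~\ref{L2.app.cont2} holds for the finite-dimensional projection $\Pi_\CG$ at every point. Therefore the end-point map $\bv\mapsto\Pi_\CG\M(1,\bv,\M_0)$ of the driftless system admits controls at which it is a submersion onto $\RR^{3|\CG|}$, by the standard Krener/Sussmann argument for analytic systems, and so it is locally open there. A Brouwer-degree (or implicit function) argument then shows that a small perturbation, namely the $\varepsilon$-drift, still attains $\Pi_\CG\M_1$ exactly.

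\textbf{Step 4 (constant initial data).} Every constant state is an equilibrium of $\mathbf F_0$. Theorem~\ref{L2.app.cont2} covers the constant case as well, since there the brackets computed above already span $(\mathbf 0,\mathbf x,\mathbf 0,\mathbf y,\dots)$ and propagate to all modes. Steps~2--3 therefore apply verbatim, and no case distinction is needed.

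The main obstacle is Step~3. Its difficulty is to produce a \emph{regular} control at which the finite-dimensional end-point map is a submersion, which requires more than the Lie algebra identity. I would first try to obtain regularity by concatenating piecewise-constant controls along the bracket directions and using the Chow--Rashevskii construction on the finite Galerkin truncation containing $\CG$. I would then argue that the infinitely many higher modes only enter through continuous dependence, which is where the uniform $\mathbb H^1$ bound of Step~1 is essential.
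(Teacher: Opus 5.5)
Your Steps 1--2 are fine. On the rescaled interval the energy identity gives $\frac{d}{ds}\tfrac12\|\M_x\|^2\le\|\M_x\|\,\|\bv_x\|_{L^\infty}$ uniformly in $\varepsilon$. An $\mathbb L^2$ estimate for the difference with the driftless solution is then $O(\sqrt\varepsilon)$, so you do obtain approximate $\CG$-reachability. This neutralises the drift differently from the paper, which argues only from the vanishing of $[\mathbf F_0,{\bf G}^{k,l}]$ at constant states.

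The genuine gap is Step 3, and it is not merely technical, because the premise that the Lie rank condition for $\Pi_\CG$ holds at every point is false. Each ${\bf G}^{k,l}$ is the coordinate form of $\M\mapsto\M\times\varphi_k\be_l$. The bracket of $\M\mapsto\M\times\xi$ and $\M\mapsto\M\times\eta$ is $\M\mapsto\pm\M\times(\xi\times\eta)$. Hence every field in the Lie algebra is tangent to the constraint, and at a constant state $\bar\M$ all Fourier coefficients of its value are orthogonal to $\bar\M$. In particular, your Step~4 claim that one gets $(\mathbf 0,\mathbf x,\mathbf 0,\mathbf y,\dots)$ for all $\mathbf x,\mathbf y\in\RR^3$ at a constant state cannot hold.

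This defeats the degree argument for targets whose $\Pi_\CG$-fibre contains only such degenerate states. The simplest example is a constant target $\M_1\equiv\bar\M$ with $(0,1),(0,2),(0,3)\in\CG$. The zero-mode coefficient is a fixed multiple of $\int_0^1\M\,dx$, whose norm is at most $1$ with equality only for constant maps. So $\Pi_\CG\M=\Pi_\CG\M_1$ forces $\M\equiv\bar\M$, and the target lies on the boundary of $\Pi_\CG(\mathbb H^1(0,1;\mathbb S^2))$. No Brouwer-degree or implicit-function argument can then turn the $O(\sqrt\varepsilon)$ error into equality. You would have to steer the whole infinite-dimensional state exactly onto a constant, against the parabolic drift, with six bilinear controls, and your argument does not address this. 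As it stands, the proposal gives approximate reachability plus exact reachability of normally reachable targets, but not global controllability.

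Even for targets whose fibre contains a nondegenerate state, Step 3 needs a regular control \emph{whose endpoint lies in the target fibre}. Having some control at which the end-point map is a submersion is not enough. The Galerkin route does not supply such a control. Since $[{\bf G}^{k,l}(\bfm)]_i$ involves $\bm_{i+k}$ and $\bm_{|i-k|}$, no finite truncation is invariant, and Chow--Rashevskii on a truncation says nothing about the true end-point map.

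The structure to exploit is that the driftless flow acts by pointwise rotations, $\M(t,x)=R(t,x)\M_0(x)$. Piecewise-constant controls then give $\Pi_\CG\M(T)$ as a smooth function of finitely many switching times, with no truncation at all. Regularity at a state $\tilde\M_1$ of the fibre can be tested through the span of the vectors $\Pi_\CG(\tilde\M_1\times\zeta)$, with $\zeta$ ranging over the generated algebra. That test is exactly what fails at constant states.
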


The proofs of Corollary~\ref{cor:hf1} and  Corollary~\ref{hf2} rely on showing that the drift term does not alter the relevant dynamics. More precisely, we analyze the action of the drift and show that it affects the dynamics only within a cell, not outside it.
Hence, we prove both corollaries by the same argument: we analyze the action of the drift term and show that, for the purposes of our reachability analysis, the situation will not get worse  with the  drift. In particular, starting at the equilibrium the drift part would not change anything. 
However, the only difference arises when using perturbations or controls acting on higher modes. In that case, the coefficient in front of the control must be chosen larger, since the control has to work against the drift term.

\begin{proof}
	Given the heat flow in Fourier coefficients, i.e., system \eqref{hf_fourier},
we start with analysing the drift term ${\bf F}_0$ given in \eqref{bfF0}.
%Herefore, let us define the mapping
%\[
%\bff_0(\lambda,\bx_1,\bx_2)= \lambda\, ( \bx_1\times \bx_2 ) =\begin{pmatrix} x_1^2x_2^3-x_1^3x_2^2 \\ x_1^3x_2^1-x_1^1x_2^3\\x_1^1x_2^2-x_1^2x_2^1\end{pmatrix}
%\]
Now, we can write 
$$
{\bf F}_0({\bf \mathfrak{m}})= \begin{pmatrix} \sum_{k+n=0\atop |k-n|=0}
	\lambda_k\,\bm_n\times \bm_k
	\\
	\sum_{k+n=1\atop |k-n|=1} 	\lambda_k\,\bm_n\times \bm_k
	\\
	\vdots\\
	\sum_{k+n=K\atop |k-n|=K} 	\lambda_k\,\bm_n\times \bm_k
	\\
	\vdots 
\end{pmatrix}.
$$
Here, many terms in the sum are zero. E.g., if $i=0$, then either  $n=k=0$ or $n=k$. In both cases, the result is zero.
Calculating the Jacobian matrix, we get
\begin{align*}
	\frac {\partial {\bf F}_0({\bf \mathfrak{m}})}{\partial{\bf \mathfrak{m}}} &
	=\begin{pmatrix}
		J_{ {\bf F}_0}^{0,0}& J_{ {\bf F}_0}^{0,1}& \cdots &J_{ {\bf F}_0}^{0,K}
		\\
		J_{ {\bf F}_0}^{1,0}& J_{ {\bf F}_0}^{1,1}& \cdots &J_{ {\bf F}_0}^{1,K}
		\\
		\vdots &\vdots &\vdots & \vdots
		\\
		J_{ {\bf F}_0}^{K,0}& J_{ {\bf F}_0}^{K,1}& \cdots &J_{ {\bf F}_0}^{K,K}
	\\	\vdots &
	\vdots &	\vdots &	\vdots 	\end{pmatrix},
	&\, %quad 
	\begin{array}{rcl}
		\mbox{with} & &\bff_0(\lambda,\bx_1,\bx_2)= \lambda\, ( \bx_1\times \bx_2 )\mbox{ and }
		\\
		\\
%%%%% REVIEW [M14]: DUPLICATE-TERM TYPO: J^{i,j}_{F_0} = sum( d f_0/d m_j + d f_0/d m_j ): the two summands are identical. One should differentiate w.r.t. the first slot (m_n) and the other w.r.t. the second slot (m_k).
		J_{ {\bf F}_0}^{i,j}&=&
	\sum_{k+n=i\atop |k-n|=i} \left(
	\frac {\partial_n  \bff_0(\lambda_k,\bm_n,\bm_k)} %{\partial  \bff_0(\lambda_k,\bm_n,\bm_k)}
	{\partial \bm_j} +
	\frac {\partial_k  \bff_0(\lambda_k,\bm_n,\bm_k)}   %{\partial  \bff_0(\lambda_k,\bm_n,\bm_k)}
	{\partial \bm_j} \right).
	\end{array}
\end{align*}
In particular, also catching up the worst case, i.e. $\tilde \bfm=(\be_1^\top ,\bn^\top ,\bn^\top ,\bn^\top ,\ldots)^\top $, we first analyse  $\left[ {\bf F}_0,{\bf G}^{0,1}\right]$,  $\left[ {\bf F}_0,{\bf G}^{0,2}\right]$, and  $\left[ {\bf F}_0,{\bf G}^{1,1}\right]$.
%%%%% REVIEW [M15]: 'if n=k=1, m_n=0 and m_k=g*_1(e_1)=e_3' is wrong: g*_1(e_1)=0 (it is g*_2(e_1)=e_3). Also for the worst-case state m_1=0. And 'for m=k=1' should be 'n=k=1'.
Here, many terms in the sum are zero. E.g., if $i=0$,  we know there are only entries if $n=k=0$ and $n=k$. In the first case,  since the eigenvalue $\lambda_0$ is zero, we obtain for $n=k=0$
$$\frac {\partial  \bff_0(\lambda_k,\bm_n,\bm_k)}{\partial \bm_n}\Big|_{\bm_n=\bm_k=\be_1}=0.
$$
 In the second case, if $n=k=1$, $\bm_n=\bn$ and $\bm_k=g^\ast_1(\be_1)=\be_3$, we get for $m=k=1$
$$\frac {\partial  \bff_0(\lambda_1,\bm_n,\bm_k)}{\partial \bm_n}\Big|_{\bm_n=\bm_k=\tilde {\bm}}=\tilde{\bm}.
$$
%\begin{align}\label{def_g_all}
%	{\bf{G}}^{0,1}(\bfm)&= \begin{bmatrix}
%		\frac{1}{\sqrt{2}}  \textbf{g}^\ast_{1} (\bm_0)\\
%		\frac{1}{\sqrt{2}}  \textbf{g}^\ast_{1} (\bm_1)\\
%		\frac{1}{\sqrt{2}}  \textbf{g}^\ast_{1} (\bm_2)\\
%		\vdots
%	\end{bmatrix},{\bf{G}}^{0,2}(\bfm)= \begin{bmatrix}
%		\frac{1}{\sqrt{2}}  \textbf{g}^\ast_{2} (\bm_0)\\
%		\frac{1}{\sqrt{2}}  \textbf{g}^\ast_{2} (\bm_1)\\
%		\frac{1}{\sqrt{2}}  \textbf{g}^\ast_{2} (\bm_2)\\
%		\vdots
%	\end{bmatrix},{\bf{G}}^{1,1}(\bfm)=\begin{bmatrix}
%		\frac{1}{\sqrt{2}} \textbf{g}^\ast_{1} (\bm_1)\\
%		\frac{1}{\sqrt{2}} \textbf{g}^\ast_{1} (\bm_{0})+\frac{1}{\sqrt{2}} \textbf{g}^\ast_{1} (\bm_{2})\\
%		\frac{1}{\sqrt{2}} \textbf{g}^\ast_{1} (\bm_{1})+\frac{1}{\sqrt{2}} \textbf{g}^\ast_{1} (\bm_{3})\\
%		\vdots
%	\end{bmatrix}.
%\end{align}
Evaluating the Lie bracket for the first component, we obtain
\DEQSZ
\lqq{ \lk[{\bf F}_0,{\bf G}^{0,1}\rk]_0(\tilde \bfm)}
\\
& =2\lk( \frac {\partial\bff_0(\lambda,\bx_1,\bx_2)} {\partial \bx_1}\Big|_{\lambda=\lambda_0,\atop \bx_1=\tilde \bm_0,\bx_2=\tilde \bm_0}+ 
\frac {\partial\bff_0(\lambda,\bx_1,\bx_2)} {\partial \bx_2}\Big|_{\lambda=\lambda_0,\atop \bx_1=\tilde \bm_0,\bx_2=\tilde \bm_0}\rk)
\left[ \left( {\bf G}^{0,1}\right)_0\right]
\\
&{}+2\lk( \frac {\partial\bff_0(\lambda,\bx_1,\bx_2)} {\partial \bx_1}\Big|_{\lambda=\lambda_1,\atop \bx_1=\tilde \bm_1,\bx_2=\tilde \bm_1}+ 
\frac {\partial\bff_0(\lambda,\bx_1,\bx_2)} {\partial \bx_2}\Big|_{\lambda=\lambda_1,\atop \bx_1=\tilde \bm_1,\bx_2=\tilde \bm_1}\rk)
\left[ \left( {\bf G}^{0,1}\right)_1\right]
%
%	& =2\lk( \bff_0(\lambda,\bx_1,\bx_2)\Big|_{\lambda=\lambda_0,\atop \bx_1=\tilde \bm_0,\bx_2=\tilde \bm_0}
%\left[ \left( {\bf G}^{0,1}\right)_0\right]+	\bff_0(\lambda,\bx_1,\bx_2)\Big|_{\lambda=\lambda_1,\atop 
	%
%\left[ \left( {\bf G}^{0,1}\right)_1\right] \rk)%	\bfm\mapsto (g^\ast_1(\bm_{i-1}))_{i=1}^\infty \rk]}
\\
&=
2\lk(\lambda_0 \, \be_1\times  \textbf{g}^\ast_{1}(\be_1)  +	\lambda_1\, \bn \times  \textbf{g}^\ast_{1}(\tilde \bm_1)\rk) =0.
%\frac {\partial  \bff_0(\lambda_k,\bm_n,\bm_k)}{\partial \bm_n}
%
\EEQSZ
Evaluating the Lie bracket for the second component, we obtain
\DEQS
\lqq{ \lk[{\bf F}_0,{\bf G}^{0,1}\rk]_1(\tilde \bfm)}
\\
& =& \sum_{k=0}^\infty \lk(  \frac {\partial \bff_0(\lambda,\bx_1,\bx_2)}{\partial \bx_1 }
\Big|_{\lambda=\lambda_k,\atop \bx_1=\tilde \bm_k,\bx_2=\tilde \bm_{k+1}} 
\left[ \left( {\bf G}^{0,1}\right)_k\right]
+ \frac {\partial \bff_0(\lambda,\bx_1,\bx_2)}{\partial \bx_2 }
\Big|_{\lambda=\lambda_k,\atop \bx_1=\tilde \bm_k,\bx_2=\tilde \bm_{k+1}} 
\left[ \left( {\bf G}^{0,1}\right)_{k+1}\right]
\rk.
\\
&&{}\lk. + \frac {\partial
	\bff_0(\lambda,\bx_1,\bx_2)}{\partial \bx_1 }
	\Big|_{\lambda=\lambda_{k+1},\atop \bx_1=\tilde \bm_{k+1},\bx_2=\tilde \bm_{k}}
	\left[ \left( {\bf G}^{0,1}\right)_{k+1}\right]    %\right)
%+	\bff_0(\lambda,\bx_1,\bx_2)\Big|_{\lambda=\lambda_1,\atop \bx_1=\tilde \bm_1,\bx_2=\tilde \bm_1}\rk)
+\frac {\partial \bff_0(\lambda,\bx_1,\bx_2)}{\partial \bx_2 }
\Big|_{\lambda=\lambda_{k+1},\atop \bx_1=\tilde \bm_{k+1},\bx_2=\tilde \bm_{k}}
%{\lambda=\lambda_k,\atop \bx_1=\tilde \bm_k,\bx_2=\tilde \bm_{k+1}} 
\left[ \left( {\bf G}^{0,1}\right)_k\right]\rk)
\\
&=&{}\sum_{k=0}^\infty \lk(\lambda_k\, \tilde \bm_{k+1}\times  \textbf{g}^\ast_{1} (\tilde \bm_k)
+ \lambda_k \, \textbf{g}^\ast_{1} (\bm_{k+1})\times \tilde \bm_{k}+\lambda_{k+1} \, \tilde \bm_k\times 
%%%%% REVIEW [M16]: Missing cross product: 'lambda_{k+1} m_{k+1} g*_1(m_k)' should be 'lambda_{k+1} m_{k+1} x g*_1(m_k)'. (Same omission recurs ~line 2798.)
\textbf{g}^\ast_{1} (\tilde \bm_{k+1})+ \lambda_{k+1} \,\tilde \bm_{k+1} \textbf{g}^\ast_{1} (\tilde \bm_{k})
\rk).
%\\
%& =& \sum_{k=0}^\infty \lk(
%\lambda _k (\tilde  \bm_k-\tilde  \bm_{k+1})+ \lambda _{k+1}(\tilde  \bm_{k+1}-\tilde \bm_k)
%\right)
%\times \textbf{g}^\ast_{1}(\tilde \bm_1)\rk)
%+	\bff_0(\lambda,\bx_1,\bx_2)\Big|_{\lambda=\lambda_1,\atop \bx_1=\tilde \bm_1,\bx_2=\tilde \bm_1}\rk)
%
%\left[ \left( {\bf G}^{0,1}\right)_1\right] %	\bfm\mapsto (g^\ast_1(\bm_{i-1}))_{i=1}^\infty \rk]}
\EEQS
%%%%% REVIEW [M17]: Incomplete sentence: 'Hence [F_0,G^{0,1}]_1(m).' is missing the conclusion '= 0'.
Let us remind, $\tilde \bm_k=\bn$, for $k\ge1$. Hence $\lk[{\bf F}_0,{\bf G}^{0,1}\rk]_1(\tilde \bfm)$.
Similarly, $\lk[{\bf F}_0,{\bf G}^{0,1}\rk]_k(\tilde \bfm)=0$, and $\lk[{\bf F}_0,{\bf G}^{0,2}\rk]_k(\tilde \bfm)=0$ for all $k\ge 0$. Finally, 
\DEQSZ
\lqq{ \lk[{\bf F}_0,{\bf G}^{1,1}\rk]_0(\tilde \bfm)}
\nonumber\\
& =2\lk( \frac {\partial\bff_0(\lambda,\bx_1,\bx_2)} {\partial \bx_1}\Big|_{\lambda=\lambda_0,\atop \bx_1=\tilde \bm_0,\bx_2=\tilde \bm_0}+ 
\frac {\partial\bff_0(\lambda,\bx_1,\bx_2)} {\partial \bx_2}\Big|_{\lambda=\lambda_0,\atop \bx_1=\tilde \bm_0,\bx_2=\tilde \bm_0}\rk)
\left[ \left( {\bf G}^{1,1}\right)_0\right]
\nonumber\\
&{}+2\lk( \frac {\partial\bff_0(\lambda,\bx_1,\bx_2)} {\partial \bx_1}\Big|_{\lambda=\lambda_1,\atop \bx_1=\tilde \bm_1,\bx_2=\tilde \bm_1}+ 
\frac {\partial\bff_0(\lambda,\bx_1,\bx_2)} {\partial \bx_2}\Big|_{\lambda=\lambda_1,\atop \bx_1=\tilde \bm_1,\bx_2=\tilde \bm_1}\rk)
\left[ \left( {\bf G}^{1,1}\right)_1\right]
=0,
\EEQSZ
and
\DEQS
\lqq{ \lk[{\bf F}_0,{\bf G}^{1,1}\rk]_1(\tilde \bfm)}
\\
& =& \sum_{k=0}^\infty \lk(  \frac {\partial \bff_0(\lambda,\bx_1,\bx_2)}{\partial \bx_1 }
\Big|_{\lambda=\lambda_k,\atop \bx_1=\tilde \bm_k,\bx_2=\tilde \bm_{k+1}} 
\left[ \left( {\bf G}^{1,1}\right)_k\right]
+ \frac {\partial \bff_0(\lambda,\bx_1,\bx_2)}{\partial \bx_2 }
\Big|_{\lambda=\lambda_k,\atop \bx_1=\tilde \bm_k,\bx_2=\tilde \bm_{k+1}} 
\left[ \left( {\bf G}^{1,1}\right)_{k+1}\right]
\rk.
\\
&&{} + \frac {\partial
	\bff_0(\lambda,\bx_1,\bx_2)}{\partial \bx_1 }
\Big|_{\lambda=\lambda_{k+1},\atop \bx_1=\tilde \bm_{k+1},\bx_2=\tilde \bm_{k}}
\left[ \left( {\bf G}^{1,1}\right)_{k+1}\right]    %\right)
%+	\bff_0(\lambda,\bx_1,\bx_2)\Big|_{\lambda=\lambda_1,\atop \bx_1=\tilde \bm_1,\bx_2=\tilde \bm_1}\rk)
+\frac {\partial \bff_0(\lambda,\bx_1,\bx_2)}{\partial \bx_2 }
\Big|_{\lambda=\lambda_{k+1},\atop \bx_1=\tilde \bm_{k+1},\bx_2=\tilde \bm_{k}}
%{\lambda=\lambda_k,\atop \bx_1=\tilde \bm_k,\bx_2=\tilde \bm_{k+1}} 
\left[ \left( {\bf G}^{
	1,1}\right)_k\right]
\\
&=&{}\sum_{k=0}^\infty \lk(\lambda_k\, \tilde \bm_{k+1}\times  \textbf{g}^\ast_{1} (\tilde \bm_k)
+ \lambda_k \, \textbf{g}^\ast_{1} (\bm_{k+1})\times \tilde \bm_{k}+\lambda_{k+1} \, \tilde \bm_k\times 
\textbf{g}^\ast_{1} (\tilde \bm_{k+1})+ \lambda_{k+1} \,\tilde \bm_{k+1} \textbf{g}^\ast_{1} (\tilde \bm_{k})
\rk)
\\
&=& \lambda_1 \tilde \bm_0\times \textbf{g}^\ast_{1} (\tilde \bm_{0})=\lambda_1 \be_1\times g_1(\be_1)=0.
\EEQS
\newcommand{\mfm}{{\bf \mathfrak{m}}}
In this way, calculating the Lie brackets, one can show that the drift ${\bf F}_0$  does not improve the situation. This completes the proof.
	
	\end{proof} 

%\end{document}

\begin{appendix}

\section{Geometric control theory, reachability  and controllability of non-linear systems}\label{app_geo_cont}
In view of \cite{BoSi}, we present some basic facts about the controllability of non-linear finite-dimensional systems. We introduce the concepts of Lie bracket and state a sufficient condition that guarantees the controllability of the system.
For $N,r\in\mathbb{N}$, let us consider the control system in $\mathbb{R}^N,$
\begin{align}\label{controls.1}
\dot{u}(t)& =F(u(t),v(t)),\quad t>0;
\end{align}
\begin{itemize}
\item $u:[0,\infty)\to \mathbb{R}^N$ is the state of the system;
\item $V\subset \mathbb{R}^r$ is the set of control values and $v :[0, \infty)\to V$ is a control;
\item $F$ is a smooth function of its arguments and that $u$ is regular enough in such a way that equation \eqref{controls.1} with the initial condition $u(0) = u_0\in \mathbb{R}^N$ has an
unique solution in $[0,\infty)$.
\end{itemize}
%\fahim{Let us denote the collection of smooth and complete vector fields parametrized by elements in $V\subset \mathbb{R}^r$ by
%$$
%\mathcal{F}_V:=\big\{F(\cdot, v):\mathbb{R}^N\to \mathbb{R}^N ,\,\,  v\in V\big\}.
%$$}
Let $t\in [0,\infty)\mapsto u(t; u_0, v)$ be the solution of \eqref{controls.1} starting from $u_0 \in \mathbb{R}^N$ at $t = 0$ and corresponding to a control function $\fahim{v}.$
We recall the following definitions:
\begin{defi}
Let $u_0\in\mathbb{R}^N$ and $T\geq 0$. The time-$T$ reachable set of the system \eqref{controls.1} from a given initial point $u_0$  is
$$
\mathcal{A}(T, u_0 ) = \big\{u_1 \in \mathbb{R}^N\,| \,\,\exists\, v: [0, T ] \to V,\, u\left(T; u_0 , \fahim{v}\right) = u_1 \big\}.
$$
Moreover, the reachable set of the system \eqref{controls.1} from a given initial point $u_0$ is
%\item (Reachable set from $u_0$ )
$$
\mathcal{A}(u_0 )= \cup_{T\in [0,\infty)}\mathcal{A}(T, u_0 ).
$$
\end{defi}
\begin{defi}%[Controllable]
The system \eqref{controls.1} is said to be time-$T$
 controllable if for every $u_0 \in \mathbb{R}^N,\, \mathcal{A}(T,u_0 ) = \mathbb R^N$. Moreover, the system \eqref{controls.1} is said to be controllable if for every $u_0 \in \mathbb{R}^N,\, \mathcal{A}(u_0 ) = \mathbb R^N$.
\end{defi}
\begin{defi}
A vector space $X$ over field $\mathbb{R}$ endowed with an operation $[\cdot,\cdot]: X \times X \to X$
is said to be a Lie algebra if it satisfies the following
\begin{itemize}
\item (bilinear): for every $\lambda_1 , \lambda_2 \in \mathbb{R},$ $f,g,f_1,f_2,g_1,g_2 \in X,$
\begin{align*}
[f, \lambda_1 g_1 + \lambda_2 g_2 ] = \lambda_1 [f, g_1 ] + \lambda_2 [f, g_2 ], \\
[\lambda_1 f_1 + \lambda_2 f_2 , g] = \lambda_1 [f_1 , g] + \lambda_2 [f_2 , g];
\end{align*}
\item (antisymmetric): for every $f,g \in X,$
$[f,g]=-[g,f];
$
\item (Jacobi identity): for every $f,g,h \in X$
$$[f, [g, h]] + [h, [f, g]] + [g, [h, f ]] = 0.$$
\end{itemize}
\end{defi}

Let Vec($\mathbb{R}^N $) be the vector space of all smooth vector fields in $\mathbb{R}^N $. Let us first define the Lie bracket
between two vector fields $f,g\in$Vec($\mathbb{R}^N $), as the vector field defined by
$$
[f, g](x) = Dg(x)f (x) - Df (x)g(x),\quad x\in \mathbb{R}^N.
$$

Here, for a given vector field $f = (f_1 ,\dots, f_N )^t\in \mathbb{R}^N$, $Df$ is the matrix of partial derivatives
of the components of $f$ given by
$$
\begin{pmatrix}
\partial_1 f_1 & \cdots & \partial_N f_1 \\
\cdot & \cdots & \cdot \\
\cdot & \cdots & \cdot \\
\partial_1 f_N & \cdots & \partial_N f_N
\end{pmatrix}.
$$
We note that $(\mbox{Vec}(\mathbb{R}^N), [\cdot, \cdot])$ is a Lie algebra.

%
%\begin{example}
%Let  $f(x)=Ax,\,g(x)=Bx,\,x\in\mathbb{R}^N$  for $A,B\in \mathbb{R}^{N\times N}$.
%Then, $f,g \in \mbox{Vec}(\mathbb{R}^N)$ and $[f,g](x)=ABx-BAx$, $x\in\mathbb{R}^N$.
%\end{example}

\begin{defi}
Let $\mathcal{F}$ be a family of vector fields in $\mbox{Vec}(\mathbb{R}^N)$. We call $\mbox{Lie}(\mathcal{F})$ the smallest
sub-algebra of $\mbox{Vec}(\mathbb{R}^N)$ containing $\mathcal{F}$.
In particular, $\mbox{Lie}(\mathcal{F})$ is the span of all vector fields of $\mathcal{F}$ and of their iterated Lie brackets of any order:
$$
   \mbox{Lie}(\mathcal{F})=\mbox{span}\big\{f_1 , [f_1 , f_2 ], [f_1 , [f_2 , f_3 ]], [f_1 , [f_2 , [f_3 , f_4 ]]], \dots, |\,\, f_1 , f_2 ,\dots\in\mathcal{F}\big\}.
$$
\end{defi}

\begin{lem}[\cite{Sontag1990}, page 143]\label{lem:liecalc}
	Let $\mathcal{C}$ be a family of vector fields in $\mathrm{Vec}(\mathbb{R}^N)$. Define $\mathcal{C}_0 := \mathcal{C}$, and recursively,
	\[
	\mathcal{C}_{k+1} := \{[f, g] : f \in \mathcal{C}_k,\ g \in \mathcal{C}\}, \quad k = 0, 1, 2, \dots,
	\]
	and let $\mathcal{C}_\infty := \bigcup_{k \geq 0} \mathcal{C}_k$. Then $\mathrm{Lie}(\mathcal{C})$ is equal to the linear span of $\mathcal{C}_\infty$.
\end{lem}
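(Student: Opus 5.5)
The statement is Lemma~\ref{lem:liecalc}: for a family $\mathcal C$ of smooth vector fields on $\mathbb{R}^N$, the Lie algebra $\mathrm{Lie}(\mathcal C)$ coincides with $\operatorname{span}\mathcal C_\infty$, where $\mathcal C_\infty$ consists of the fields of $\mathcal C$ together with their right-nested brackets $[\cdots[[f_1,f_2],f_3],\dots,f_k]$, $f_i\in\mathcal C$.

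The plan is to prove the two inclusions separately.

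\emph{Step 1: $\operatorname{span}\mathcal C_\infty\subset\mathrm{Lie}(\mathcal C)$.} This is immediate. By induction on $k$, every element of $\mathcal C_k$ is a bracket of an element of $\mathcal C_{k-1}$ with an element of $\mathcal C$. Both lie in $\mathrm{Lie}(\mathcal C)$, and $\mathrm{Lie}(\mathcal C)$ is a subalgebra, so the bracket does too. Since $\mathrm{Lie}(\mathcal C)$ is also a linear subspace, it contains the span.

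\emph{Step 2: $\mathrm{Lie}(\mathcal C)\subset L:=\operatorname{span}\mathcal C_\infty$.} Since $\mathrm{Lie}(\mathcal C)$ is the smallest subalgebra containing $\mathcal C$, and $L$ is a linear subspace containing $\mathcal C=\mathcal C_0$, it suffices to show that $L$ is closed under the bracket. By bilinearity, this reduces to showing $[X,Y]\in L$ for all $X\in\mathcal C_j$ and $Y\in\mathcal C_k$.

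\emph{Step 3: induction on $k$.} We show that for every $k$ and every $Y\in\mathcal C_k$, the map $\operatorname{ad}_Y:=[\cdot,Y]$ sends $L$ into $L$.
\begin{itemize}
\item For $k=0$, i.e.\ $Y=g\in\mathcal C$: if $X\in\mathcal C_j$, then by definition $[X,g]\in\mathcal C_{j+1}\subset L$. Linearity extends this to all of $L$.
\item For $k\ge1$, write $Y=[Z,g]$ with $Z\in\mathcal C_{k-1}$ and $g\in\mathcal C$. The Jacobi identity, together with antisymmetry, gives
\[
[X,[Z,g]]=[[X,Z],g]-[[X,g],Z].
\]
By the induction hypothesis, $[X,Z]\in L$; applying the case $k=0$ then gives $[[X,Z],g]\in L$. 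Likewise $[X,g]\in L$ by the case $k=0$, and the induction hypothesis applied to $\operatorname{ad}_Z$ gives $[[X,g],Z]\in L$.
\end{itemize}
This completes the induction, so $L$ is a subalgebra, and the reverse inclusion follows.

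The only real obstacle is organising the induction correctly. The statement must be quantified over all $X\in L$, not just over fixed generators. This is what allows the induction hypothesis for $\operatorname{ad}_Z$ to be applied to the element $[X,g]$, which lies in $L$ but not necessarily in a single $\mathcal C_j$. Everything else is bilinearity and the Jacobi identity, which hold in $\mathrm{Vec}(\mathbb{R}^N)$ as recorded above.
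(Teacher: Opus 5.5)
Your proof is correct. Showing by induction on $k$ that $X\mapsto[X,Y]$ maps $L=\operatorname{span}\mathcal C_\infty$ into itself for every $Y\in\mathcal C_k$ is the standard argument; it uses $[X,[Z,g]]=[[X,Z],g]-[[X,g],Z]$, with the hypothesis quantified over all $X\in L$. The paper gives no proof of its own and only cites Sontag, so the sole point of comparison is that this is the usual way the fact is proved; your description of $[\cdots[[f_1,f_2],f_3],\dots,f_k]$ as ``right-nested'' is a harmless slip, since these brackets are left-nested.
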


{Next, we provide a corollary of Lemma~\ref{lem:liecalc}.

\begin{cor}\label{cor:liecalc}
	Let $\mathcal{B}$ be a family of vector fields in $\mathrm{Vec}(\mathbb{R}^N)$. Define $\mathcal{B}_0 := \mathcal{B}$, and recursively,
	\[
	\mathcal{B}_{k+1} := \{[f, g] : f \in \mathcal{B}_k,\ g \in \mathcal{B} \} \setminus \left( \bigcup_{i = 0}^k \mathcal{B}_i \right), \quad k = 0, 1, 2, \dots,
	\]
	and let $\mathcal{B}_\infty := \bigcup_{k \geq 0} \mathcal{B}_k$. Then $\mathrm{Lie}(\mathcal{B})$ is equal to the linear span of $\mathcal{B}_\infty$.
\end{cor}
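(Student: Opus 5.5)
The plan is to reduce the statement directly to Lemma~\ref{lem:liecalc}, with $\mathcal{C}:=\mathcal{B}$. I will show that the two recursively defined families have the same union, $\mathcal{B}_\infty=\mathcal{C}_\infty$ as sets of vector fields. Their linear spans then coincide, and Lemma~\ref{lem:liecalc} gives $\mathrm{Lie}(\mathcal{B})=\operatorname{span}\mathcal{C}_\infty=\operatorname{span}\mathcal{B}_\infty$. The removal of previously obtained elements in the definition of $\mathcal{B}_{k+1}$ only discards redundant generators; it never discards a new one.

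First inclusion: I prove $\mathcal{B}_k\subset\mathcal{C}_k$ for all $k$ by induction. For $k=0$ both sets equal $\mathcal{B}$. If $\mathcal{B}_k\subset\mathcal{C}_k$, then
\[
\mathcal{B}_{k+1}\subset\{[f,g]: f\in\mathcal{B}_k,\ g\in\mathcal{B}\}\subset\{[f,g]: f\in\mathcal{C}_k,\ g\in\mathcal{C}\}=\mathcal{C}_{k+1}.
\]
Hence $\mathcal{B}_\infty\subset\mathcal{C}_\infty$.

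Second inclusion: I prove $\mathcal{C}_k\subset\bigcup_{i=0}^k\mathcal{B}_i$ by induction. The case $k=0$ is trivial. Let $h\in\mathcal{C}_{k+1}$, so $h=[f,g]$ with $f\in\mathcal{C}_k$ and $g\in\mathcal{B}$. By the induction hypothesis, $f\in\mathcal{B}_i$ for some $i\le k$, so $h\in\{[f',g]: f'\in\mathcal{B}_i,\ g\in\mathcal{B}\}$. By the definition of $\mathcal{B}_{i+1}$ as this set minus $\bigcup_{j\le i}\mathcal{B}_j$, either $h\in\mathcal{B}_{i+1}$ or $h\in\bigcup_{j\le i}\mathcal{B}_j$. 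In both cases $h\in\bigcup_{j\le i+1}\mathcal{B}_j\subset\bigcup_{j\le k+1}\mathcal{B}_j$, since $i+1\le k+1$. Hence $\mathcal{C}_\infty\subset\mathcal{B}_\infty$.

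The only delicate point is this second induction. An element of $\mathcal{C}_{k+1}$ may have been removed because it already appeared at an earlier level, and the argument must allow the index $i$ to be strictly smaller than $k$. Beyond this there is no real obstacle. I would also remark that the nonzero scalars $a_{k+1}$ used in \eqref{def_lie_rec} in the body of the paper do not affect linear spans, so the corollary applies verbatim to that recursion.
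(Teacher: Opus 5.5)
Your proof is correct and is essentially the paper's argument. The paper also takes $\mathcal{C}=\mathcal{B}$ in Lemma~\ref{lem:liecalc}. It then shows $\mathcal{B}_k\subseteq\mathcal{C}_k$ by induction, and proves $\bigcup_{k\le N}\mathcal{C}_k\subseteq\bigcup_{k\le N}\mathcal{B}_k$ by the same case distinction you use: the bracket lies in $\mathcal{B}_{m+1}$ for some $m\le l$, or it was already present at an earlier level. So the unions coincide, and hence so do their linear spans.

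Your closing remark on the scalars $a_{k+1}$ is right in substance but is not a literal ``verbatim'' application. It needs one more easy induction: every element of the scaled recursion is a nonzero multiple of an element of $\mathcal{C}_\infty$, and every element of $\mathcal{C}_k$ is a nonzero multiple of an element of an earlier scaled level. This is what Corollary~\ref{cor:liecalc2} asserts without proof.
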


\begin{proof}
	Let $\mathcal{C}_k$, for $k = 0, 1, 2, \dots$, be defined as in Lemma~\ref{lem:liecalc}, with $\mathcal{C} = \mathcal{B}$. We aim to show that 
	\[
	\bigcup_{k \geq 0} \mathcal{B}_k = \bigcup_{k \geq 0} \mathcal{C}_k.
	\]
	We first prove that $\mathcal{B}_k \subseteq \mathcal{C}_k$ for all $k \geq 0$ by induction. Clearly, $\mathcal{B}_0 = \mathcal{B} = \mathcal{C}_0$. Next, assume that $\mathcal{B}_i \subseteq \mathcal{C}_i$ for all $0 \leq i \leq k$. Then
	\[
	\mathcal{B}_{k+1}\subseteq \{ [f, g] : f \in \mathcal{B}_k,\ g \in \mathcal{C} \} \subseteq \{ [f, g] : f \in \mathcal{C}_k,\ g \in \mathcal{C} \} = \mathcal{C}_{k+1}.
	\]
	Thus, $\bigcup_{k \geq 0} \mathcal{B}_k \subseteq \bigcup_{k \geq 0} \mathcal{C}_k$.
	Next, we prove the reverse inclusion 
	\[
	\bigcup_{k \geq 0}^N \mathcal{C}_k \subseteq \bigcup_{k \geq 0}^N \mathcal{B}_k
	\]
	for all $N\in\NN_0$ by induction. It is clear that $\mathcal{C}_0 = \mathcal{B}_0$. Now, assume that \[
	\bigcup_{k=0}^l \mathcal{C}_k \subseteq \bigcup_{k=0}^l \mathcal{B}_k
	\]
	for some $l \geq 0$. Let $f \in \mathcal{C}_{l+1}$. Then there exist $g \in \mathcal{C}_l$ and $h \in \mathcal{C}$ such that $f = [g, h]$.
	By the induction hypothesis, $g \in \mathcal{C}_l \subseteq \bigcup_{k=0}^l \mathcal{B}_k$, and since $h \in \mathcal{C} = \mathcal{B}$, we have $f = [g,h] \in \mathcal{B}_{m+1}$ for some $m \leq l$, or possibly $f \in \bigcup_{k=0}^l \mathcal{B}_k$.
	Thus, $f \in \bigcup_{k=0}^{l+1} \mathcal{B}_k$, and we conclude that
	\[
	\bigcup_{k=0}^{l+1} \mathcal{C}_k \subseteq \bigcup_{k=0}^{l+1} \mathcal{B}_k.
	\]
	By induction, this inclusion holds for all $N \in \mathbb{N}_0$. Therefore,
	\[
	\bigcup_{k \geq 0} \mathcal{C}_k = \bigcup_{k \geq 0} \mathcal{B}_k,
	\]
	and hence their linear spans are equal. This completes the proof.
\end{proof}

\begin{cor}\label{cor:liecalc2}
	Let $\{a_i\}_{i=0}^{\infty}$ be a sequence consisting of real numbers not equal to zero. Let $\mathcal{B}$ be a family of vector fields in $\mathrm{Vec}(\mathbb{R}^N)$. Define $\mathcal{B}_0 :=a_0 \mathcal{B}$, and recursively,
	\[
	\mathcal{B}_{k+1} := \{a_{k+1}[f, g] : f \in \mathcal{B}_k,\ g \in \mathcal{B} \} \setminus \left( \bigcup_{i = 0}^k \mathcal{B}_i \right), \quad k = 0, 1, 2, \dots,
	\]
	and let $\mathcal{B}_\infty := \bigcup_{k \geq 0} \mathcal{B}_k$. Then $\mathrm{Lie}(\mathcal{B})$ is equal to the linear span of $\mathcal{C}_\infty$.
\end{cor}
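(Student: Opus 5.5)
The plan is to reduce Corollary~\ref{cor:liecalc2} to Corollary~\ref{cor:liecalc} by a rescaling argument. Nonzero scalar multiples of a vector field generate the same linear span. Lie brackets are bilinear, so rescaling never changes which directions are reached.

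First, define the unscaled family $\mathcal{C}_k$ as in Lemma~\ref{lem:liecalc}, with $\mathcal{C}=\mathcal{B}$. Note that the statement's conclusion should read ``the linear span of $\mathcal{B}_\infty$''; I will prove that version.

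Second, show by induction on $k$ that every element of $\mathcal{B}_k$ is a nonzero multiple of an element of $\mathcal{C}_k$. For $k=0$ this is immediate, since $\mathcal{B}_0=a_0\mathcal{B}=a_0\mathcal{C}_0$. For the step, take $f\in\mathcal{B}_k$ and write $f=\beta f'$ with $f'\in\mathcal{C}_k$ and $\beta\neq 0$. Then bilinearity gives
\[
a_{k+1}[f,g]=a_{k+1}\beta\,[f',g],
\]
and $[f',g]\in\mathcal{C}_{k+1}$. Hence $\operatorname{span}\mathcal{B}_\infty\subseteq\operatorname{span}\mathcal{C}_\infty=\mathrm{Lie}(\mathcal{B})$, the last equality by Lemma~\ref{lem:liecalc}.

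Third, prove the reverse inclusion. I will show by induction on $N$ that each element of $\bigcup_{k\le N}\mathcal{C}_k$ is a nonzero multiple of an element of $\bigcup_{k\le N}\mathcal{B}_k$. This copies the proof of Corollary~\ref{cor:liecalc}. Take $f=[g,h]$ with $g\in\mathcal{C}_N$ and $h\in\mathcal{B}$. By hypothesis $g=\gamma g'$ with $\gamma\neq 0$ and $g'\in\mathcal{B}_m$ for some $m\le N$. Then
\[
a_{m+1}[g',h]=\frac{a_{m+1}}{\gamma}\,f .
\]
Either $a_{m+1}[g',h]$ lies in $\mathcal{B}_{m+1}$, or it was removed as a duplicate and so already lies in some $\mathcal{B}_i$ with $i\le m$. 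In both cases $f$ is a nonzero multiple of an element of $\bigcup_{k\le N+1}\mathcal{B}_k$. Taking spans gives $\mathrm{Lie}(\mathcal{B})=\operatorname{span}\mathcal{C}_\infty\subseteq\operatorname{span}\mathcal{B}_\infty$.

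The main obstacle is the set-difference bookkeeping in the definition of $\mathcal{B}_{k+1}$. Because the scalars $a_k$ differ from step to step, an element excluded from $\mathcal{B}_{m+1}$ as a duplicate equals an earlier element exactly, not merely up to a scalar. The duplicate case in the third step therefore still puts the vector field into $\bigcup_{i\le m}\mathcal{B}_i$. The induction hypothesis must accordingly be phrased as ``nonzero multiple of an element of the union'' rather than as set membership. With that formulation both inclusions go through, and the equality of spans follows.
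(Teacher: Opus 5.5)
Your proof is correct and is essentially the paper's route. The paper's proof is only the remark that the result follows from Corollary~\ref{cor:liecalc}, and your argument is the careful version of that: you rerun its two inclusions up to nonzero scalar multiples, which is needed because the set differences in the scaled recursion need not match the unscaled ones, so only equality of spans survives. You are also right that the conclusion should read $\operatorname{span}\mathcal{B}_\infty$, since with $\mathcal{C}_\infty$ as in Lemma~\ref{lem:liecalc} the printed statement merely restates that lemma.
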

%\todo{Should be known}

\begin{proof}
	It is straightforward to verify by applying Corollary \ref{cor:liecalc}.
\end{proof}}
\begin{defi}
The family $\mathcal{F}$ is said to be Lie bracket generating at a point $x\in\mathbb{R}^N$ if the
dimension of $\mbox{Lie}_x(\mathcal{F}):= \{f (x) |\, f \in \mbox{Lie}(\mathcal{F})\}$ is equal to $N$.
The family $\mathcal{F}$ is said to be Lie bracket generating if this condition is verified for every $x\in\mathbb{R}^N$.
\end{defi}
We note that in general $\mbox{Lie}(\mathcal{F})$ is an infinite-dimensional subspace of $\mbox{Vec}(\mathbb{R}^n)$, while
 $\mbox{Lie}(\mathcal{F})$ is a subspace 
 of $\mathbb{R}^N$.

{\begin{defi}
A family of vector fields $\mathcal{F}$ is said to be symmetric if $f \in\mathcal{F}$ implies  $-f\in \mathcal{F}$.
\end{defi}}
The following definition can be found in \cite{Agra_Sary.1.1,Agra_Sary.2}.

{\begin{defi}
		Let $u_0\in\mathbb{R}^N$ and $T\geq 0$.  Let $\mathcal{F}$ be a family of vector fields in $\mbox{Vec}(\mathbb{R}^N)$. The time-$T$ reachable set of $\mathcal{F}$ from a given initial point $u_0$  is\footnote{ Let $f$ be a vector field in \( \mathrm{Vec}(\mathbb{R}^n)\). Its flow \(e^{t f}\) is the map that assigns to each initial point \(x_0\) the solution at time \(t\) of the Cauchy problem $\dot{x} = f(x)$ with the initial condition $x(0) = x_0$.}
		$$
		\mathcal{A}_{\mathcal{F}}^T( u_0 ) = \big\{u_1 \in \mathbb{R}^N\,| \,\,\exists\, 
		t_i\in \RR^+, X_i\in \mathcal{F}\text{ for }i=1,\cdots,p : \sum_{i=1}^pt_i=T, e^{t_pX_p}\circ \cdots\circ e^{t_1X_1}(u_0)=u_1 \big\}.
		$$
		Moreover, the reachable set of $\mathcal{F}$ from a given initial point $u_0$ is
		%\item (Reachable set from $u_0$ )
		$$
		\mathcal{A}_{\mathcal{F}}(u_0 )= \cup_{T\in [0,\infty)}\mathcal{A}_{\mathcal{F}}^T(u_0 ).
		$$
\end{defi}}

{
		\begin{rem}\label{rem:acces}
			The time-$T$ reachable set \(\mathcal{A}_{\mathcal{F}_V}^T(u_0)\) coincides with the set of points that can be reached at time \(T\) in the control system~\eqref{controls.1} using piecewise constant controls. Moreover, since piecewise constant controls are dense in the set of all bounded measurable controls, it follows that
			\[
%%%%% REVIEW [W2]: Typo: 'the closer of the set A' -> 'the closure of the set A'.
			\overline{\mathcal{A}_{\mathcal{F}_V}^T(u_0)} = \overline{\mathcal{A}(T, u_0)}, \footnote{Let $A\subset \RR^N$. Then, $\overline{A}$ denotes the closure of the set $A$ in $\RN$.}
			\]
			see~\cite{Agra_Sary.1.1,Agra_Sary.2}. Therefore, controllability of a family of vector fields \(\mathcal{F}\) can be defined in complete analogy with the controllability of the control system~\eqref{controls.1} and implies that \(\mathcal{F}\) is controllable if and only if the system~\eqref{controls.1} is controllable.
		\end{rem}}	

\begin{thm}[Chow-Rashevskii] \label{ChowR}
If $\mathcal{F}_V$ is Lie bracket generating and symmetric\footnote{A family of vector fields $\mathcal{F}$ is said to be symmetric if $f \in\mathcal{F}$ implies  $-f\in \mathcal{F}$.},
then for every $u_0\in\mathbb{R}^N$ we have $\mathcal{A}^T_{\mathcal{F}_V}(u_0) =\mathbb{R}^N$
\end{thm}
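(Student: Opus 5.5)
The statement is the Chow--Rashevskii theorem in $\mathbb{R}^N$. I would prove it by the standard orbit argument, the way it is presented in \cite{BoSi,Jurdjevic}. The strategy has three steps. First, show that the reachable set $\mathcal{A}_{\mathcal{F}_V}(u_0)$ coincides with the orbit of $\mathcal{F}_V$ through $u_0$; this uses symmetry. Second, show that the orbit is open; this uses the Lie bracket generating condition. Third, conclude by connectedness of $\mathbb{R}^N$ that the orbit is everything. The time-$T$ statement then needs a separate remark, discussed at the end.

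\textbf{Symmetry.} Because $\mathcal{F}_V$ is symmetric, any flow $e^{tX}$ with $t<0$ equals $e^{|t|(-X)}$ with $-X\in\mathcal{F}_V$. Hence compositions $e^{t_pX_p}\circ\cdots\circ e^{t_1X_1}$ with times of arbitrary sign reach exactly the same set as those with nonnegative times. So $\mathcal{A}_{\mathcal{F}_V}(u_0)$ equals the orbit $\mathcal{O}_{u_0}$, and reachability is an equivalence relation whose classes are the orbits. Since distinct orbits are disjoint, it suffices to show that each orbit is open. Then each orbit is also closed, as its complement is a union of open orbits, and connectedness gives $\mathcal{O}_{u_0}=\mathbb{R}^N$.

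\textbf{Openness.} I would follow the Krener/Sussmann construction. Fix $x\in\mathcal{O}_{u_0}$ and consider maps of the form
\[
\Phi(t_1,\dots,t_k)=e^{t_kX_k}\circ\cdots\circ e^{t_1X_1}(x),\qquad X_i\in\mathcal{F}_V.
\]
Choose such a map with $k$ maximal so that $\Phi$ has rank $k$ at some point $\bar t$ near $0$; the image is then locally a $k$-dimensional submanifold $S$ contained in the orbit. I claim every $Y\in\mathcal{F}_V$ is tangent to $S$ near $\Phi(\bar t)$. Otherwise, appending $e^{sY}$ would raise the rank to $k+1$, contradicting maximality. The set of vector fields tangent to $S$ is closed under Lie brackets, so all of $\mathrm{Lie}(\mathcal{F}_V)$ is tangent to $S$. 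By the bracket generating hypothesis, $\dim\mathrm{Lie}_y(\mathcal{F}_V)=N$ at points $y$ of $S$, which forces $k=N$. The inverse function theorem then shows that the orbit contains an open neighbourhood of $\Phi(\bar t)$. Flowing back with the symmetric family transports this neighbourhood onto an open neighbourhood of $x$, since the flows are diffeomorphisms. Hence the orbit is open.

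\textbf{Main obstacle.} The delicate step is the maximal-rank argument. One has to ensure the rank is attained at parameters that can be taken arbitrarily close to $0$, and make the ``tangent to $S$ implies brackets tangent'' step rigorous on a possibly only immersed piece of $S$. I would handle this with the standard lower semicontinuity of rank, working on a small open parameter set where the rank is constant.

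\textbf{The time-$T$ claim.} This needs an extra argument, since symmetric families can be reparametrised in time. Replacing $X$ by $cX$ with $c>0$ rescales the time needed, provided the family is closed under positive scalings. In the present application $\mathcal{F}$ consists of all linear combinations $\sum u_i\mathbf{G}^{k,l}$, so it is a cone and this holds. Under that reading, any point reachable in time $T'$ is reachable in any prescribed time $T$. Moreover, one can wait using $X$ followed by $-X$ to pad the time, so the time-$T$ reachable set equals the full reachable set $\mathbb{R}^N$.
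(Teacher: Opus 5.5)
The paper does not prove this theorem; it quotes it from the geometric-control literature (\cite{BoSi}, \cite{Jurdjevic}). Your argument is precisely the standard proof found there: symmetry identifies the reachable set with the orbit, the maximal-rank/tangency construction combined with bracket generation shows that orbits are open, and connectedness of $\mathbb{R}^N$ finishes. It correctly establishes $\mathcal{A}_{\mathcal{F}_V}(u_0)=\mathbb{R}^N$.

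The step you flag as the main obstacle is harmless in the symmetric setting. No smallness or positivity of the times $\bar t_i$ is needed, for two reasons. First, full rank of $D\Phi$ is an open condition, so $\Phi$ is an embedding on a small neighbourhood of $\bar t$, and that is all the tangency argument uses. Second, at the end you return to $x$ along $e^{-\bar t_1X_1}\circ\cdots\circ e^{-\bar t_NX_N}$, which symmetry permits.

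The one point to sharpen is the time-$T$ clause. Your extra hypothesis is not a convenience but a necessity: as printed, the statement is false. For example, $\mathcal{F}=\{\pm\partial_{x_1},\dots,\pm\partial_{x_N}\}$ is symmetric and Lie bracket generating, but $\mathcal{A}^T_{\mathcal{F}}(u_0)$ is the closed $\ell^1$-ball of radius $T$ around $u_0$.

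You should therefore state explicitly what you prove. In general you obtain $\mathcal{A}_{\mathcal{F}_V}(u_0)=\mathbb{R}^N$. You obtain $\mathcal{A}^T_{\mathcal{F}_V}(u_0)=\mathbb{R}^N$ for every $T>0$ only when $\mathcal{F}_V$ is invariant under multiplication by positive constants, via $e^{tX}=e^{(t/c)(cX)}$; this holds for the driftless control-affine families with $V=\mathbb{R}^r$ used in the paper.

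It is also worth recording that completeness of the vector fields is tacitly assumed. The paper's standing global-existence hypothesis gives forward completeness of each field, and symmetry then gives completeness.
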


\begin{defi}[Extension of control system]
A family $\mathcal{F}'$ of real analytic vector fields in $\RN$ is said to be an
extension of $\mathcal{F}$ if $\mathcal{F}\subset \mathcal{F}'$ and for all
$u_0\in \RN$, $\overline{\mathcal{A}^T_{\mathcal{F}}(u_0)}=\overline{\mathcal{A}^T_{\mathcal{F}'}(u_0)}$.
\end{defi}
\begin{defi}[Fixed time extension of control system]
A family $\mathcal{F}'$ of a real analytic vector field in $\RN$ is said to be a fixed time extension of $\mathcal{F}$, if $\mathcal{F}\subset \mathcal{F}'$ and for all $T>0$ and  $u_0\in \RN$, we have  $\overline{\mathcal{A}_{\mathcal{F}}^T(u_0)}=\overline{\mathcal{A}_{\mathcal{F}'}^T(u_0)}$,
\end{defi}
%%One can see that $\mathcal{A}_{\mathcal{F}}(u_0)\subset \mathcal{A}_{\mathcal{F}'}(u_0),$ and for all $T>0,$ $\mathcal{A}_{\mathcal{F}}^T(u_0)\subset \mathcal{A}_{\mathcal{F}'}^T(u_0).$
%\fahim{\begin{lem}[Agrachev-Sarychev, compare \cite{Agra_Sary.3}]
%Let $\mathcal{F}'$ be an extension of $\mathcal{F}.$ If  $\mathcal{F}'$ is controllable (resp. time-$T$ controllable) then $\overline{\mathcal{A}_{\mathcal{F}}(u_0)}=\RN$ (resp. $\overline{\mathcal{A}^T_{\mathcal{F}}(u_0)}=\RN$).
%\end{lem}}

%\subsection{Bracket generating, approximate controllability and global controllability} Let us consider a control affine system \eqref{control.affine.d}.
{
\subsection{Control Affine System}

Let us introduce the definition of 
 control affine systems.
\begin{defi}(compare \cite[Section 13.2.3]{LaValle}).
	Let $N,r\in\Na,\, r\leq N.$ Let $u:[0,T]\to \RN$ be the state of the system, $v_j:[0,T]\to \Rr$ for $j=1,2,\dots,r$ be the controls and $f,g_j$ for $j=1,2,\dots,r$ be
	some smooth vector fields in $\RN$. A non-linear control system of the form
	\begin{align}\label{control.affine.d}
		\dot{u}(t)&=f(u(t))+\sum_{j=1}^r g_j(u(t))v_j(t),\,\,t\in (0,T]\quad \hbox{with} \quad
		u(0)= u_0,
	\end{align}
	It is called a control-affine system or an affine-in-control system.
\end{defi}
Affine means `linear', and non-affine means `nonlinear'. Therefore, a nonlinear system in which the control appears linearly is called a control-affine nonlinear system (or simply control-affine system, where the nonlinearity with respect to the state is already there in the system), whereas a system that has nonlinearities both in the state and in the controls is called a control-non-affine nonlinear system (or simply control-non-affine system).
%The general form of an control-affine system is represented  as :
%$$
%\dot{u}(t) = f(u(t)) + g(u(t))v(t),\,\,t\in (0,T]
%$$
%whereas the general form of a system which is non affine in the controls is given as:
%$$
%\dot{u}(t) = f(u(t)) + g(u(t),v(t)),\,\,t\in (0,T].
%$$

Let us introduce the controlled distribution of a control-affine system.
In our case, we know that the solution will remain on the sphere $\mathbb{S}^2$.
So,	let $M=\mathbb{S}^2$ and let $X_1,\dots,X_m$ be smooth vector fields on $M$, and $T_\M M$ the tangent space at $\M$.
Let us consider the control-affine system given by 
	\[
	\dot{x}(t)=\sum_{i=1}^m u_i(t)\,X_i\big(x(t)\big),
	\]
	where $u(t)=(u_1(t),\dots,u_m(t))$ is the control.
	The associated {controlled distribution} at $\M \in M$, is the  subbundle
	\[
\Delta(\M):=\operatorname{span}\{X_1(\M),\dots,X_m(\M)\}\subset T_\M M,
	\]
	If a drift vector field $X_0$ is present,
	\[
	\dot{x}(t)=X_0(x(t))+\sum_{i=1}^m u_i(t)\,X_i\big(x(t)\big),
	\]
	the controlled distribution is still $\mathcal{D}=\operatorname{span}\{X_1,\dots,X_m\}$,
	while reachability is governed by the Lie algebra generated by $\{X_0,X_1,\dots,X_m\}$.
	
	Denote by $\Lie(X_1,\dots,X_m)$ the Lie algebra generated by $X_1,\dots,X_m$
	(i.e.\ the smallest set of vector fields containing $X_i$ and closed under
	linear combinations with smooth coefficients and Lie brackets).
	The \emph{accessibility distribution} (also called the Lie-saturate) at a point $\M$ is
	\[
	\mathcal{F}(\M):=\operatorname{span}\{Y(\M):\,Y\in \Lie(X_1,\dots,X_m)\}\subset T_\M M,
	\]
	(and analogously with $X_0$ included when there is a drift).
%	\coma{The lemma is not clearly written}
\begin{lem}[\cite{Sontag1990}, page 154]\label{lem:Lieaffine}
Assume we have a control-affine system	
as we have in \eqref{control.affine.d}
defined by  the vector field
% $\mathcal{F}$ be a vector field space given by \eqref{control.affine.d}
 $$
	\mathcal{F}:=\big\{f+u_1g_1+u_2g_2+\cdots+u_rg_r:u_1,\cdots,u_r\in\mathbb{R}\big\}.
	$$
	Then, $$\mbox{Lie}(\mathcal{F})=\mbox{Lie}\left(\left\{ f,g_1,g_2,\cdots,g_r\right\}\right).$$
\end{lem}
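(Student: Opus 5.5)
The plan is to prove the two inclusions separately. Both rest only on the definition of $\mbox{Lie}(\cdot)$ as the smallest subalgebra of $\mbox{Vec}(\mathbb{R}^N)$ containing a given family. In particular, it is a real vector space closed under $[\cdot,\cdot]$, and it is monotone: if $\mathcal{F}_1\subset \mbox{Lie}(\mathcal{F}_2)$, then $\mbox{Lie}(\mathcal{F}_1)\subset\mbox{Lie}(\mathcal{F}_2)$.

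For the inclusion $\mbox{Lie}(\mathcal{F})\subset \mbox{Lie}(\{f,g_1,\dots,g_r\})$, observe that every element $f+u_1g_1+\cdots+u_rg_r$ of $\mathcal{F}$ is a real linear combination of $f,g_1,\dots,g_r$. It therefore lies in $\Span\{f,g_1,\dots,g_r\}\subset \mbox{Lie}(\{f,g_1,\dots,g_r\})$. Hence $\mathcal{F}\subset \mbox{Lie}(\{f,g_1,\dots,g_r\})$. Since the right-hand side is a subalgebra containing $\mathcal{F}$, and $\mbox{Lie}(\mathcal{F})$ is the smallest such subalgebra, the inclusion follows. One could equally argue via Lemma~\ref{lem:liecalc}: by bilinearity of the bracket, every iterated bracket of elements of $\mathcal{F}$ expands into a linear combination of iterated brackets of the generators $f,g_j$.

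For the reverse inclusion, it suffices by minimality to show that each generator lies in $\mbox{Lie}(\mathcal{F})$. Choosing $u_1=\cdots=u_r=0$ gives $f\in\mathcal{F}\subset\mbox{Lie}(\mathcal{F})$. For fixed $j$, choosing $u_j=1$ and all other $u_i=0$ gives $f+g_j\in\mathcal{F}$. Since $\mbox{Lie}(\mathcal{F})$ is a vector space, $g_j=(f+g_j)-f\in\mbox{Lie}(\mathcal{F})$. Thus $\{f,g_1,\dots,g_r\}\subset\mbox{Lie}(\mathcal{F})$, and consequently $\mbox{Lie}(\{f,g_1,\dots,g_r\})\subset\mbox{Lie}(\mathcal{F})$.

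I do not expect a genuine obstacle. The only point needing care is which notion of span is used. The appendix sometimes speaks of linear combinations with smooth coefficients, while the definition of $\mbox{Lie}(\mathcal{F})$ uses real-linear spans. The argument above only uses real-linear combinations and the subalgebra property, so it goes through verbatim under either convention, as long as both sides of the identity are understood with the same one. I would state this explicitly. I would also note that the proof uses that the control set is all of $\mathbb{R}^r$, or at least a set whose affine hull is $\mathbb{R}^r$, so that $f$ and the differences $g_j$ can be recovered.
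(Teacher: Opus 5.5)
Your proof is correct and complete. The two inclusions follow from the minimality of $\mbox{Lie}(\cdot)$: in one direction you use $\mathcal{F}\subset\Span\{f,g_1,\dots,g_r\}$, and in the other $f\in\mathcal{F}$ together with $g_j=(f+g_j)-f$. The paper gives no proof of its own and only cites Sontag, where the identity rests on this same standard argument. Since you use only the vector-space and bracket-closure properties, your argument also covers the paper's use of the lemma for vector fields on the infinite product of copies of $\mathbb{R}^3$.
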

}

\begin{defi} [Full Lie rank property] \label{liegen}
	The control affine system given in  \eqref{control.affine.d} is said to have the full Lie rank property (or is Lie bracket generating),
	if for every $u_0\in\RN$ the iterated Lie brackets of the vector fields $f,g_j$ for $j=1,2,\dots, r$ evaluated at $u_0\in\RN$ span the whole space $\RN$.
\end{defi}

%%%%% REVIEW [C4]: Undefined citation key 'Jurd' -> '[?]'. Use 'Jurdjevic' (the defined key).
\begin{defi}[compare \textsc{Jurdjevic} \cite{Jurdjevic}, Chapter 3]
A point $u_0\in\RN$ is normally accessible from another point $u_1\in\RN$ by a family of vector fields $\mathcal{F}$ of $\RN$,
if there exist elements $f_i\in\mathcal{F}$ for $i=1,\dots,p$ ,such that the function $F:\Rr^p\to \RN$ defined by
$$
F(t):=\exp (t_pf_p)\circ \exp (t_{p-1}f_{p-1})\circ \cdots\circ  \exp (t_1f_1)(u_0)(u_1),\quad t=(t_1,t_2,\dots,t_p)\in \Rr^p,
$$
satisfies the following conditions:
\begin{enumerate}[i.]
\item there exists $\hat{t}\in \Rr^p$ such that $F(\hat{t})=u_0$,
\item rank of $DF(\hat{t})=N$.
\end{enumerate}
We shall say that $u_0$ is normally accessible from  $u_1$ in time
$\sum_{i=1}^p\hat{t}_i$.
\end{defi}
If  $u_0\in\RN$ is normally accessible from $u_1\in\RN,$ then by the Constant Rank Theorem (see \cite[Theorem 4.12]{Lee_smooth_mainfolds}), there exists a neighbourhood $U$ of $\hat{t}$ in $\Rr^p$ such that $F(U)$ is a neighbourhood $u_0$ in $\RN.$ Therefore, any point which is normally accessible from $u_0$ at time $T$ belongs to the  interior of an attainable set {$\cup_{s\in [0,T]}\mathcal{A}_{\mathcal{F}}^s(u_0)$.}

%However, in our case no drift exists. That means, the system will only be generated by the controls. In addition, we have an inifinite dimensional system, 
%% of Agrachev-Sarychev \cite[Theorem 5.5]{Agra_Sary.3}, Jurdjevic \cite[Chapter 3, Theorem 1]{Jurd}.
%%
%We say the control system \eqref{control.affine.d} is   $C^\infty$-smooth if all  controlled vector fields $f_1,\dots,f_r$
%are $C^\infty$ as maps $M\to TM$ (i.e.\ all derivatives of all orders exist and are continuous),
%so that all iterated Lie brackets are well-defined and smooth.
%
\begin{thm} (compare \cite[Theorem~5.5]{Agra_Sary.3} and \cite[Chapter~3, Theorem~1]{Jurdjevic}).
%[compare \cite[Theorem 5.5,Agra_Sary.3], or \cite[Chapter 3, Theorem 1,Jurdjevic]
\label{teo:affinesystem}
Let the control system \eqref{control.affine.d} be real-analytic. Then it is accessible if and only if it is bracket-generating.
Moreover, if the control system \eqref{control.affine.d} is $C^\infty$-smooth and is bracket generating, then, it is accessible.
\end{thm}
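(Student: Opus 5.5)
The plan is to prove the two directions separately. Throughout, accessibility from $u_0$ means that the attainable set $\bigcup_{s\in[0,T]}\mathcal{A}_{\mathcal F}^s(u_0)$ has nonempty interior for every $T>0$. Here $\mathcal F=\{f+\sum_j u_jg_j : u\in\Rr^r\}$ is the family attached to \eqref{control.affine.d}. By Remark~\ref{rem:acces} we may work with piecewise constant controls, i.e.\ with compositions of flows $e^{t_pX_p}\circ\cdots\circ e^{t_1X_1}$ with $X_i\in\mathcal F$. By Lemma~\ref{lem:Lieaffine}, $\mbox{Lie}(\mathcal F)=\mbox{Lie}(\{f,g_1,\dots,g_r\})$. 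Hence being bracket generating in the sense of Definition~\ref{liegen} is the same as $\dim \mbox{Lie}_x(\mathcal F)=N$ for all $x$.

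\emph{Sufficiency} (valid in the $C^\infty$ case, hence also in the analytic one) will follow Krener's rank-raising construction. Fix $u_0$ and $\varepsilon>0$. Pick $X_1\in\mathcal F$ with $X_1(u_0)\neq0$; this is possible since otherwise $\mbox{Lie}_{u_0}(\mathcal F)=\{0\}$. Then $t_1\mapsto e^{t_1X_1}(u_0)$, $t_1\in(0,\varepsilon)$, is an immersed one-dimensional submanifold. Inductively, suppose we have $F_k(t_1,\dots,t_k)=e^{t_kX_k}\circ\cdots\circ e^{t_1X_1}(u_0)$ of rank $k<N$ on an open set $U_k\subset(0,\varepsilon)^k$. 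If every $X\in\mathcal F$ were tangent to the $k$-dimensional submanifold $F_k(U_k)$, then all brackets would be tangent as well. This would give $\dim\mbox{Lie}_x(\mathcal F)\le k<N$ there, contradicting bracket generation. So there exist a point $F_k(\hat t)$ and an $X_{k+1}\in\mathcal F$ transversal to the image at that point. Then $F_{k+1}$ has rank $k+1$ near $(\hat t,0^+)$. After $N$ steps we obtain a map of rank $N$ on an open set of times with total time at most $N\varepsilon$. The constant rank theorem, as in the discussion after the definition of normal accessibility, then shows that the image contains an open set. This proves accessibility.

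\emph{Necessity} in the real-analytic case will use the Nagano (Sussmann) orbit theorem. For analytic fields, the orbit $\mathcal O_{u_0}$ of $\mathcal F$ (reachable with forward and backward times) is an immersed submanifold whose tangent space at each $x\in\mathcal O_{u_0}$ equals $\mbox{Lie}_x(\mathcal F)$. Moreover, $\dim\mbox{Lie}_x(\mathcal F)$ is constant along the orbit. This constancy follows from analyticity: the pushforwards $(e^{tX})_*Y$ are expanded via the series $\sum_n \frac{t^n}{n!}\mathrm{ad}_X^nY$, which lies in $\mbox{Lie}(\mathcal F)$. If the system is not bracket generating at $u_0$, then $\dim\mathcal O_{u_0}<N$. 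An immersed submanifold of dimension less than $N$ has measure zero by Sard's theorem, and in particular empty interior. Since the attainable set is contained in the orbit, accessibility fails, and the equivalence follows.

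The main obstacle is the necessity direction. The invariance of $\mbox{Lie}_x(\mathcal F)$ under the flows genuinely needs analyticity, and it fails for merely smooth fields, which is why the statement only claims one direction there. I would not reprove Nagano's theorem. Instead I would cite it, e.g.\ via \cite[Chapter~3]{Jurdjevic}, and only verify that its hypotheses hold for \eqref{control.affine.d}. In the sufficiency part, the delicate point is to keep the times positive, since $f$ cannot be reversed. This is handled by choosing $\hat t$ in the open set $(0,\varepsilon)^k$ rather than at the boundary.
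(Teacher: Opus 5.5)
The paper gives no proof of this theorem. It is quoted from \cite[Theorem~5.5]{Agra_Sary.3} and \cite[Chapter~3]{Jurdjevic}, so your sketch reconstructs the argument behind those citations rather than competing with one in the paper. Sufficiency is Krener's rank-raising construction, which indeed needs only $C^\infty$ fields. Necessity in the analytic case is the Nagano--Sussmann orbit theorem combined with Sard. Your proposal is correct, and it supplies an actual verification at the price of using the orbit theorem as a black box.

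A few points deserve to be made explicit.

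\begin{itemize}
\item Your choice of $\bigcup_{s\in[0,T]}\mathcal{A}^s_{\mathcal F}(u_0)$ rather than $\mathcal{A}^T_{\mathcal F}(u_0)$ is essential. With a drift, bracket generation gives no interior at a fixed time: for $f=\partial_{x_1}$, $g=\partial_{x_2}$ on $\RR^2$, the time-$T$ set lies in a line.
\item The series $\sum_n\frac{t^n}{n!}\operatorname{ad}_X^nY$ converges only for small $|t|$ and need not belong to $\operatorname{Lie}(\mathcal F)$. What is true is that its value at $x$ lies in the finite-dimensional, hence closed, subspace $\operatorname{Lie}_x(\mathcal F)$. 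Analyticity in $t$ together with the group property of flows then gives invariance for all $t$, and running the argument with $-X$ gives equality of dimensions along the orbit.
\item In the necessity step, the inclusion of the attainable set in the orbit is immediate for the piecewise-constant sets $\mathcal{A}^s_{\mathcal F}$ you use. However, Remark~\ref{rem:acces} only identifies closures, and the closure of a null set can have interior. So if accessibility is meant for measurable controls, you also need the standard fact that Carath\'eodory trajectories stay in the orbit. One way is to solve the control equation on the orbit manifold and invoke uniqueness.
\end{itemize}
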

 For a bracket generating system, approximate controllability implies exact controllability.
 %We have
 \begin{prop}[\textsc{Jurdevic} \cite{Jurdjevic}] \label{cont.1}
 Let the system \eqref{control.affine.d} be bracket generating and
$\overline{\mathcal{A}_{\mathcal{F}}^T(u_0)}=\RN$ {(resp. $\overline{\mathcal{A}_{\mathcal{F}}(u_0)}=\RN$)}. Then $\mathcal{A}_{\mathcal{F}}^T(u_0)=\RN$ {(resp. $\mathcal{A}_{\mathcal{F}}(u_0)=\RN$)}.
\end{prop}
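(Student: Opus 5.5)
The plan is the standard ``meet in the middle'' argument. Use the bracket-generating property to get a backward-reachable set with nonempty interior around the target. Then use density of the forward-reachable set to hit that interior, and concatenate the two controls. I first treat the free-time statement $\mathcal{A}_{\mathcal{F}}(u_0)=\RN$. Throughout, $\mathcal{F}$ denotes the family of vector fields $f+\sum_j v_j g_j$ of \eqref{control.affine.d}, and $-\mathcal{F}:=\{-X: X\in\mathcal{F}\}$ is the time-reversed family.

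\emph{Step 1 (backward accessibility).} Fix an arbitrary target $u_1\in\RN$. Since $\mathrm{Lie}(-\mathcal{F})=\mathrm{Lie}(\mathcal{F})$, the reversed family is also bracket generating at every point. By Theorem~\ref{teo:affinesystem} (Krener's accessibility theorem) and the normal-accessibility construction recalled before it, there are $X_1,\dots,X_p\in\mathcal{F}$ and times $\hat t\in(0,\infty)^p$ such that the map
\[
F(t)=e^{-t_pX_p}\circ\cdots\circ e^{-t_1X_1}(u_1)
\]
has $\operatorname{rank}DF(\hat t)=N$. I would obtain this by the usual inductive choice of fields that increases the rank of the iterated-flow map one at a time; the rank can always be increased while it is below $N$, because otherwise all fields of $-\mathcal{F}$ would be tangent to a lower-dimensional immersed image, contradicting the rank condition. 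By the Constant Rank Theorem, $F$ maps a neighbourhood of $\hat t$ onto an open set $W\subset\RN$. Every $w\in W$ satisfies $e^{t_1X_1}\circ\cdots\circ e^{t_pX_p}(w)=u_1$ for suitable $t$ near $\hat t$, so $u_1$ is reachable from every point of $W$.

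\emph{Step 2 (hitting $W$).} Since $\overline{\mathcal{A}_{\mathcal{F}}(u_0)}=\RN$ and $W$ is open and nonempty, there is $w\in W\cap\mathcal{A}_{\mathcal{F}}(u_0)$. Steer $u_0$ to $w$ with a piecewise constant control, which is enough by Remark~\ref{rem:acces}. Then apply the forward flows $e^{t_pX_p},\dots,e^{t_1X_1}$, in the order that inverts the backward chain from Step 1, to go from $w$ to $u_1$. The concatenation is again an admissible piecewise constant control, so $u_1\in\mathcal{A}_{\mathcal{F}}(u_0)$. Since $u_1$ was arbitrary, $\mathcal{A}_{\mathcal{F}}(u_0)=\RN$.

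\emph{Step 3 (fixed time; main obstacle).} For the time-$T$ statement the concatenation must take exactly time $T$. I would split $T=(T-s)+s$ with small $s>0$ and need two things.
\begin{itemize}
\item First, a backward set of \emph{exact} duration $s$ with nonempty interior. I would redo Step 1 for the restricted map $t\mapsto F(t)$ on the simplex $\{\sum t_i=s\}$. Full rank there is the ``strong accessibility'' condition: the ideal generated by the $g_j$ inside $\mathrm{Lie}(\mathcal{F})$ must span. This is where I expect the real difficulty. If it is not available, I would pass to the time-augmented system $(u,\tau)$ with $\dot\tau=1$ and check the rank condition there.
\item Second, density of $\mathcal{A}^{T-s}_{\mathcal{F}}(u_0)$ rather than of $\mathcal{A}^{T}_{\mathcal{F}}(u_0)$. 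I would get this by reading the hypothesis as holding for all sufficiently large $T$, or by using the fixed-time extension property, so that it holds at time $T-s$.
\end{itemize}
With both ingredients, Step 2 goes through verbatim and gives $\mathcal{A}^T_{\mathcal{F}}(u_0)=\RN$.
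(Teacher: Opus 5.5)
The paper gives no proof of this proposition; it only cites Jurdjevic, so your argument has to stand on its own. Steps~1--2 are correct. They are the standard Krener ``meet-in-the-middle'' proof of the free-time assertion, and they rightly need only two things: density of $\mathcal{A}_{\mathcal{F}}(u_0)$ from the single point $u_0$, and a nonempty interior for the backward attainable set of the bracket-generating family $-\mathcal{F}$.

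The fixed-time assertion is not proved. Step~3 names two ingredients, and neither follows from what you have.

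First, bracket generating alone does not give a backward set of \emph{exact} duration $s$ with nonempty interior. For $\dot x=1$, $\dot y=v$ on $\mathbb{R}^2$ the fields $\partial_x,\partial_y$ span everywhere, yet every exact-time backward set $\{(x_1-s,y):y\in\mathbb{R}\}$ is a line. Passing to the time-augmented system does not help. With $\tilde f=(f,1)$ and $\tilde g_j=(g_j,0)$, every bracket has zero $\tau$-component, so
\[
\mathrm{Lie}(\tilde{\mathcal{F}})(u,\tau)=\mathbb{R}\,(f(u),1)\oplus\bigl(\mathcal{L}_0(u)\times\{0\}\bigr),
\]
where $\mathcal{L}_0$ is the ideal generated by $g_1,\dots,g_r$ in $\mathrm{Lie}(\mathcal{F})$. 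This spans $\mathbb{R}^{N+1}$ if and only if $\mathcal{L}_0(u)=\RN$. So checking the rank condition for the augmented system \emph{is} the strong accessibility condition you were trying to avoid. In the example it fails: only $(1,0,1)$ and $(0,1,0)$ are generated.

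Second, density of $\mathcal{A}^{T-s}_{\mathcal{F}}(u_0)$ is a statement about a different time, and nothing you invoke transfers density from $T$ to $T-s$. The fixed-time extension property only compares closures of two families at the \emph{same} $T$. Reading the hypothesis as holding for all sufficiently large $T$ changes the proposition.

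As written, you have proved the free-time claim. You have proved the fixed-time claim only under two extra hypotheses:
\begin{itemize}
\item $\mathcal{L}_0$ has full rank, at least at the target $u_1$; note that $-\mathcal{F}$ has the same zero-time ideal, so this gives exact-duration backward sets with interior.
\item $\overline{\mathcal{A}^{t}_{\mathcal{F}}(u_0)}=\RN$ at the intermediate time $t=T-s$, for instance for all $t>0$.
\end{itemize}
Under these assumptions your Step~2 goes through verbatim, but they must be added to the statement or derived, not taken for granted.
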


\section{The equations in Fourier modes}\label{sec:fouriermode}

In this appendix we derive the Fourier-mode formulation of the drift-free
controlled system \eqref{M1.N}.  Throughout this section, we consider only the
model
\begin{equation}\label{app:driftfree_model}
\frac{d}{dt}\M(t,x)=\M(t,x)\times \v(t,x),
\end{equation}
where the control is supported on finitely many Fourier modes,
\begin{equation}\label{control.1_app}
\v(t,x)=\sum_{(k,l)\in\mathcal K}v_k^l(t)\varphi_k(x)\be_l,
\qquad t\in[0,T],\quad x\in[0,1].
\end{equation}

Let
$$ \be_1=(1,0,0)^T,\qquad
\be_2=(0,1,0)^T,\qquad
\be_3=(0,0,1)^T. $$
We use the cross-product identities
$$
\be_1\times\be_1=\be_2\times\be_2=\be_3\times\be_3=0, $$
and
$$
\be_1\times\be_2=\be_3,\qquad
\be_2\times\be_3=\be_1,\qquad
\be_3\times\be_1=\be_2,
$$
with the corresponding antisymmetric relations.

We write
\begin{equation}\label{eq:seriesM_app}
\M(t,x)=\sum_{n\in\NN_0}\bm_n(t)\varphi_n(x),
\qquad
\bm_n(t)=(m_n^1(t),m_n^2(t),m_n^3(t))^T.
\end{equation}
With the normalization
\[
\varphi_k(x)=\sqrt{2}\cos(2\pi kx),\qquad k\in\NN_0,
\]
we have
\begin{equation}\label{product_phi_app}
\varphi_n(x)\varphi_k(x)
=
\frac{1}{\sqrt{2}}\varphi_{n+k}(x)
+
\frac{1}{\sqrt{2}}\varphi_{|n-k|}(x).
\end{equation}

Substituting \eqref{eq:seriesM_app} and \eqref{control.1_app} into
\eqref{app:driftfree_model}, we obtain
\begin{align}
\M(t,x)\times \v(t,x)
&=
\left(\sum_{n\in\NN_0}\bm_n(t)\varphi_n(x)\right)
\times
\left(\sum_{(k,l)\in\mathcal K}v_k^l(t)\varphi_k(x)\be_l\right) \nonumber\\
&=
\sum_{(k,l)\in\mathcal K}
\sum_{n\in\NN_0}
v_k^l(t)
\bigl(\bm_n(t)\times\be_l\bigr)
\varphi_n(x)\varphi_k(x) \nonumber\\
&=
\sum_{(k,l)\in\mathcal K}
\sum_{n\in\NN_0}
\frac{v_k^l(t)}{\sqrt{2}}
\bigl(\bm_n(t)\times\be_l\bigr)
\left(\varphi_{n+k}(x)+\varphi_{|n-k|}(x)\right).
\end{align}
Therefore, comparing the coefficient of $\varphi_i$, we get
\begin{equation}\label{my_system_app_corrected}
\frac{d}{dt}\bm_i(t)
=
\sum_{(k,l)\in\mathcal K}
v_k^l(t)
\left[{\bf G}^{k,l}(\bfm(t))\right]_i,
\qquad i\in\NN_0,
\end{equation}
where $\bfm=(\bm_0,\bm_1,\bm_2,\ldots)^T$ and
\begin{equation}\label{Gisgivenby_corrected}
\left[{\bf G}^{k,l}(\bfm)\right]_i
=
\frac{1}{\sqrt{2}}
\sum_{n\in\NN_0}
\bigl(\bm_n\times\be_l\bigr)
\left(
\mathbf 1_{\{n+k=i\}}
+
\mathbf 1_{\{|n-k|=i\}}
\right).
\end{equation}

Equivalently, using the notation
\[
\textbf{g}_l^\ast(\bm)=\bm\times\be_l,
\]
the components of the controlled vector field are
\begin{equation}\label{Gisgivenby_piecewise_corrected}
\left[{\bf G}^{k,l}(\bfm)\right]_i
=
\begin{cases}
\sqrt{2}\,\textbf{g}_l^\ast(\bm_i),
& k=0, \\[0.15cm]
\frac{1}{\sqrt{2}}\textbf{g}_l^\ast(\bm_{i+k})
+
\frac{1}{\sqrt{2}}\textbf{g}_l^\ast(\bm_{i-k}),
& 0<k<i, \\[0.15cm]
\sqrt{2}\,\textbf{g}_l^\ast(\bm_0)
+
\frac{1}{\sqrt{2}}\textbf{g}_l^\ast(\bm_{2k}),
& 0<k=i, \\[0.15cm]
\frac{1}{\sqrt{2}}\textbf{g}_l^\ast(\bm_{i+k})
+
\frac{1}{\sqrt{2}}\textbf{g}_l^\ast(\bm_{k-i}),
& 0<i<k, \\[0.15cm]
\frac{1}{\sqrt{2}}\textbf{g}_l^\ast(\bm_k),
& 0=i<k .
\end{cases}
\end{equation}
Hence the Fourier-mode formulation of the drift-free control system is
\begin{equation}\label{eq:llgeinm_app_corrected}
\dot{\bfm}(t)
=
\sum_{(k,l)\in\mathcal K}
{\bf G}^{k,l}(\bfm(t))\,v_k^l(t),
\qquad t\in[0,T].
\end{equation}

Finally, the elementary vector fields are explicitly given by
\[
\textbf{g}_1^\ast(x)=x\times\be_1
=
\begin{pmatrix}
0\\ x_3\\ -x_2
\end{pmatrix},
\qquad
\textbf{g}_2^\ast(x)=x\times\be_2
=
\begin{pmatrix}
-x_3\\ 0\\ x_1
\end{pmatrix},
\qquad
\textbf{g}_3^\ast(x)=x\times\be_3
=
\begin{pmatrix}
x_2\\ -x_1\\ 0
\end{pmatrix}.
\]

\subsection{The heat flow on $\mathbb{S}^2$ in Fourier modes}
%
%We know that $\left\{\lambda_n:=-4\pi^2 n^2, \phi_n:=\cos(n\cdot)\right\}_{n\in \mathbb{N}_0}$ are the eigen pairs of the Neumann Laplace operator $-\mathcal{A}:=\Delta$ ($\mathcal{A}$ is defined above in \ref{op.n}) in the Hilbert space $\mathbb{L}^2(0,1).$ 
%Therefore, Fourier expansion of solution $\M(t)$ can be written as
%
Let us write the system \eqref{M1.N} in componentwise. First, we have 
\begin{eqnarray*}
	\M_{xx}(t)= \sum_{k\in\NN_0}-4\pi^2k^2\textbf{m}_k(t) \varphi_k.
\end{eqnarray*}
Observe:
%%%%% REVIEW [M4]: F_0 (the 'drift') is derived here from M x M_xx -- a PRECESSION (Schroedinger/LLG) term -- whereas the controlled harmonic map heat flow (3.6) has the PARABOLIC drift M_xx + |M_x|^2 M. The corollary proof then brackets [F_0, G] with this cross-product drift. Verify that the drift analysed matches (3.6).
\begin{eqnarray}
	\nonumber\lefteqn{\M(t)\times \M_{xx}(t)= \left( \sum_{n\in\NN_0}\textbf{m}_n(t) \varphi_n \right)\times \left( \sum_{k\in\NN_0}-4\pi^2k^2\textbf{m}_k(t) \varphi_k\right)}\\
	\label{eq:McM}&=&  \sum_{n\in\NN_0}\sum_{k\in\NN_0}\left(-4\pi^2k^2\right) \varphi_k \varphi_n\textbf{m}_n(t)\times \textbf{m}_k(t).
\end{eqnarray}
From trigonometry identities, we have for $k,n\in\NN_0$
\begin{eqnarray*}
	\varphi_k \varphi_n&=& 2\cos\big(2\pi k x \big) \cos\big(2\pi n x \big)=\cos\big(2\pi (k+n) x \big) +\cos\big(2\pi |k-n| x \big)=\frac{1}{\sqrt{2}}\left(\varphi_{k+n} +\varphi_{|k-n|} \right).
\end{eqnarray*} 
Substituting into \eqref{eq:McM}, we have
\begin{eqnarray}
	\lqq{\M(t)\times \M_{xx}(t)}\nonumber 
	\\
	&=&\nonumber 
\frac{1}{\sqrt{2}}\left(  \sum_{n\in\NN}\sum_{k\in\NN}\left(-4\pi^2k^2\right) \varphi_{n+k} \textbf{m}_n(t)\times \textbf{m}_k(t) +   \sum_{n\in\NN}\sum_{k\in\NN}\left(-4\pi^2k^2\right) \varphi_{|n-k|}\textbf{m}_n(t)\times \textbf{m}_k(t) \right)\\
	\nonumber&&= \sum_{i\in\NN_0}\left[\sum_{\substack{n,k\in\NN_0\\ n+k=i}}\frac{-4\pi^2k^2}{\sqrt{2}}  \textbf{m}_n(t)\times \textbf{m}_k(t) +   \sum_{\substack{n,k\in\NN_0\\ |n-k|=i}}\frac{-4\pi^2k^2}{\sqrt{2}}  \textbf{m}_n(t)\times \textbf{m}_k(t)\right] \varphi_{i}\\
	\label{eq:seriesMM}&&=\sum_{i\in\NN_0}\left[\sum_{\substack{n,k\in\NN_0\\ n+k=i\text{ or }|n-k|=i}}\frac{\lambda_k}{\sqrt{2}}  \textbf{m}_n(t)\times \textbf{m}_k(t) \right] \varphi_{i},
\end{eqnarray}
where $\lambda_k:=-4\pi^2k^2$.
Reformulating the system  \eqref{M1.N} in terms of its Fourier coefficients, we get 
\DEQSZ\label{hf_fourier}
\frac d {dt}\,{ \bfm}(t) &=& {\bf F}_0( \bfm(t)) +\sum_{(k,l)\in \mathcal{K}} {\bf{G}}^{k,l}( \bfm(t))\,  v_k^l(t),\qquad t\in[0,T],\,
\EEQSZ 
where 
%$${\bf F}_0=([F_0]_1,[F_0]_2,\cdots),~{\bf F}_1=([F_1]_1,[F_1]_2,\cdots),{\bf{G}}^{k,l}=\left( [{\bf{G}}^{k,l}]_1,[{\bf{G}}^{k,l}]_2,\cdots\right):\times_{l=0}^\infty \RR^{3 }\to\times_{l=0}^\infty \RR^{3 }.$$
%Here,
\begin{align}\label{bfF0}
%%%%% REVIEW [M5]: (B.12): the drift coefficient mu_1 is undefined/leftover here; in (B.10) the harmonic drift has coefficient 1 (lambda_k/sqrt(2)). Set mu_1=1 or remove.
	[{\bf F}_0( {\bfm}(t))]_i&=\sum_{\substack{n,k\in\NN_0\\ n+k=i\text{ or }|n-k|=i}}  \frac{\mu_1\lambda_k}{\sqrt{2}} \, \bm_n(t) \times \bm_k(t),  % \notag
	\end{align}
and $	\left[{\bf{G}}^{k,l}(\bfm)\right]_i$ is again given by  \eqref{Gisgivenby_piecewise_corrected}. 
%\begin{align*}
%	\left[{\bf{G}}^{k,l}(\bfm)\right]_i=\left\{
%	\begin{array}{ll}
%		\frac{1}{\sqrt{2}} \textbf{g}^\ast_{l} (\bm_i),&k=0,\\
%		\frac{1}{\sqrt{2}} \textbf{g}^\ast_{l} (\bm_{i+k})+\frac{1}{\sqrt{2}} \textbf{g}^\ast_{l} (\bm_{i-k}),&0<k<i,\\
%		\frac{1}{\sqrt{2}} \textbf{g}^\ast_{l} (\bm_{0})+\frac{1}{\sqrt{2}} \textbf{g}^\ast_{l} (\bm_{2k}),&0<k=i,\\
%		\frac{1}{\sqrt{2}} \textbf{g}^\ast_{l} (\bm_{i+k})+\frac{1}{\sqrt{2}} \textbf{g}^\ast_{l} (\bm_{k-i}),&0<i<k,\\
%		\frac{1}{\sqrt{2}} \textbf{g}^\ast_{l} (\bm_k),&0=i<k.
%	\end{array}
%	\right.
%\end{align*}

\section{Linear algebra}

In this section we prove some simple linear-algebraic identities that are needed for the main results.
\begin{lem}\label{lem:al1}
	Let $\mathbf{x} = (x_1, x_2, x_3)^T$ and $\mathbf{y} = (y_1, y_2, y_3)^T$ be linearly independent vectors in $\mathbb{R}^3$. Let $k, l, m \in \{1, 2, 3\}$ be distinct indices and $x_k$ is a non-zero number. Then, at least one of the following two pairs of vectors is linearly independent:
	\[
	(x_k, x_l)^T \text{ and } (y_k, y_l)^T \quad \text{or} \quad (x_k, x_m)^T \text{ and } (y_k, y_m)^T.
	\]
\end{lem}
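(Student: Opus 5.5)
We argue by contradiction, using $2\times 2$ determinants. Write $\{k,l,m\}=\{1,2,3\}$. Suppose both pairs $(x_k,x_l)^T,(y_k,y_l)^T$ and $(x_k,x_m)^T,(y_k,y_m)^T$ are linearly dependent. This is equivalent to the vanishing of both minors:
\[
x_k y_l - x_l y_k = 0, \qquad x_k y_m - x_m y_k = 0 .
\]

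The key step uses the hypothesis $x_k\neq 0$ to solve these two relations. Set $\lambda := y_k/x_k$. The relations then read $y_l=\lambda x_l$ and $y_m=\lambda x_m$. Trivially $y_k=\lambda x_k$ as well. Since $k,l,m$ exhaust $\{1,2,3\}$, these three coordinate identities combine into the vector identity $\mathbf{y}=\lambda\mathbf{x}$.

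This contradicts the linear independence of $\mathbf{x}$ and $\mathbf{y}$. Hence at least one of the two minors is nonzero, and the corresponding pair of vectors in $\mathbb{R}^2$ is linearly independent.

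There is no real obstacle. The only point needing care is to state explicitly that a pair of vectors in $\mathbb{R}^2$ is linearly independent exactly when its $2\times 2$ determinant is nonzero. One should also note that $x_k\neq0$ is what allows division; without it, the counterexample $\mathbf{x}=\be_l$, $\mathbf{y}=\be_m$ shows the conclusion can fail.
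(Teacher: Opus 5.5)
Your proof is correct and is essentially the paper's argument: assume both pairs are dependent, use $x_k\neq 0$ to get a single common scalar, and deduce $\mathbf{y}=\lambda\mathbf{x}$, which contradicts independence. The only difference is cosmetic: you express dependence through vanishing $2\times 2$ minors and set $\lambda=y_k/x_k$ directly, whereas the paper introduces two scalars $\alpha,\beta$ and uses $x_k\neq 0$ to conclude $\alpha=\beta$.
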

\begin{proof}
	We proceed by contradiction. Suppose that both pairs of vectors are linearly dependent:
	\[
	(x_k, x_l)^T \text{ and } (y_k, y_l)^T, \quad \text{and} \quad (x_k, x_m)^T \text{ and } (y_k, y_m)^T.
	\]
	Then there exist scalars $\alpha, \beta \in \mathbb{R}$ such that
$
		y_k = \alpha x_k$, 
		$y_l = \alpha x_l,$
	and
$y_k = \beta x_k$, 
$y_m = \beta x_m$.
	Since $x_k \neq 0$, the first equations in each pair imply that $\alpha = \beta$. Substituting, we have
	\[
	y_i = \alpha x_i \quad \text{for } i = k, l, m.
	\]
	Thus, $y_i = \alpha x_i$ for three distinct indices, which implies $\mathbf{y} = \alpha \mathbf{x}$. This contradicts the assumption that $\mathbf{x}$ and $\mathbf{y}$ are linearly independent. Hence, at least one of the two pairs must be linearly independent.
\end{proof}
\begin{prop}\label{lem:al2}
	Let $\mathbf{x}, \mathbf{y} \in \mathbb{R}^3$ be nonzero vectors. If $\mathbf{x}$ and $\mathbf{y}$ are linearly independent, then there exist indices $m, n, l \in \{1, 2, 3\}$ such that the vectors
	\[
	\mathbf{g}^*_{m}(\mathbf{x}), \quad \mathbf{g}^*_{n}(\mathbf{x}), \quad \text{and} \quad \mathbf{g}^*_{l}(\mathbf{y})
	\]
	are linearly independent.
\end{prop}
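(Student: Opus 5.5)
The key structural fact is that $\mathbf{g}^*_j(\mathbf{z}) = \mathbf{z}\times \be_j$ for $j=1,2,3$. This follows from the explicit formulas in \eqref{def_map_g}. Consequently, for fixed $\mathbf{z}$ the map $j\mapsto \mathbf{g}^*_j(\mathbf{z})$ runs through the images of the standard basis under the linear map $\mathbf{w}\mapsto \mathbf{z}\times\mathbf{w}$.

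For $\mathbf{z}\neq 0$, this map has kernel $\Span\{\mathbf{z}\}$ and image $\mathbf{z}^\perp$, which is two-dimensional. Since $\be_1,\be_2,\be_3$ span $\RR^3$, their images span $\mathbf{x}^\perp$. We can therefore pick indices $m\neq n$ such that $\mathbf{g}^*_m(\mathbf{x})$ and $\mathbf{g}^*_n(\mathbf{x})$ form a basis of $\mathbf{x}^\perp$. Concretely, let $k$ be an index with $x_k\neq 0$. Then $\mathbf{g}^*_k(\mathbf{x})$ has zero $k$-th entry and is generally independent of the other two, and the choice can be read off directly from \eqref{def_map_g}. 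Lemma~\ref{lem:al1} is a more hands-on version of the same selection, and I would use it if a coordinate argument is preferred.

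It remains to choose $l$ with $\mathbf{g}^*_l(\mathbf{y})\notin \mathbf{x}^\perp$, i.e.
\[
\mathbf{x}\cdot(\mathbf{y}\times\be_l)\neq 0 .
\]
By the scalar triple product identity, $\mathbf{x}\cdot(\mathbf{y}\times\be_l) = (\mathbf{x}\times\mathbf{y})\cdot\be_l$. Because $\mathbf{x}$ and $\mathbf{y}$ are linearly independent, $\mathbf{x}\times\mathbf{y}\neq 0$, so some coordinate $(\mathbf{x}\times\mathbf{y})_l$ is nonzero. For that $l$, the vector $\mathbf{g}^*_l(\mathbf{y})$ has a nonzero component along $\mathbf{x}$. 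The three vectors are then linearly independent, since the first two span $\mathbf{x}^\perp$ and the third lies outside that plane.

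I do not expect a serious obstacle here. The only point requiring care is selecting $m,n$ correctly, which means excluding the degenerate pair in which both vectors are multiples of one another. The rank argument settles this: three vectors spanning a two-dimensional space always contain an independent pair.
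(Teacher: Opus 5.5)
Your argument is correct, and it takes a different route from the paper's.

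\textbf{Comparison with the paper.} The paper works in coordinates. It splits into three cases according to which entry $x_k$ of $\mathbf{x}$ is nonzero and fixes $\{m,n\}=\{1,2,3\}\setminus\{k\}$. It then writes out the homogeneous linear systems for the two candidate triples with $l=m$ and $l=n$. Finally it invokes Lemma~\ref{lem:al1}, a statement about the $2\times 2$ minors $x_ky_j-x_jy_k$ with $j\neq k$, to conclude that one of the two systems has only the trivial solution.

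You instead use $\mathbf{g}^*_j(\mathbf{z})=\mathbf{z}\times\mathbf{e}_j$, which gives the span of the $\mathbf{g}^*_j(\mathbf{x})$ directly from the rank of $\mathbf{w}\mapsto\mathbf{x}\times\mathbf{w}$. The triple-product identity then reduces the choice of $l$ to the single condition $(\mathbf{x}\times\mathbf{y})_l\neq 0$. This removes the case analysis and makes Lemma~\ref{lem:al1} unnecessary.

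It also explains the paper's computation. For $\{m,n,k\}=\{1,2,3\}$ one has $(\mathbf{x}\times\mathbf{e}_m)\times(\mathbf{x}\times\mathbf{e}_n)=\pm x_k\,\mathbf{x}$, hence $\det\bigl[\mathbf{g}^*_m(\mathbf{x}),\mathbf{g}^*_n(\mathbf{x}),\mathbf{g}^*_l(\mathbf{y})\bigr]=\pm x_k(\mathbf{x}\times\mathbf{y})_l$. Up to sign, the minors in Lemma~\ref{lem:al1} are exactly the entries $(\mathbf{x}\times\mathbf{y})_l$ with $l\neq k$.

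The paper's route gives one extra piece of information: $l$ can be chosen in $\{m,n\}$. In your language this holds because $\mathbf{x}\cdot(\mathbf{x}\times\mathbf{y})=0$ and $x_k\neq 0$ prevent $\mathbf{x}\times\mathbf{y}$ from being a nonzero multiple of $\mathbf{e}_k$.

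\textbf{Two slips in your write-up.} Neither is load-bearing, since your rank argument already settles the choice of $m,n$.
\begin{enumerate}
\item The ``concrete'' remark that $\mathbf{g}^*_k(\mathbf{x})$ is generally independent of the other two is wrong. All three vectors lie in $\mathbf{x}^\perp$ and satisfy $\sum_j x_j\mathbf{g}^*_j(\mathbf{x})=\mathbf{x}\times\mathbf{x}=0$. The correct explicit choice is the two indices different from $k$.
\item Lemma~\ref{lem:al1} is not a hands-on version of selecting $m,n$. It involves $\mathbf{y}$, and it is the paper's device for selecting $l$.
\end{enumerate}
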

\begin{proof}
	Let $\mathbf{x} = (x_1, x_2, x_3)^T$ and $\mathbf{y} = (y_1, y_2, y_3)^T$ be linearly independent vectors in $\mathbb{R}^3$. We will consider three cases depending on which coordinate of $\mathbf{x}$ is nonzero.
	
	\begin{description}
		\item[\textbf{Case 1:} $x_1 \neq 0$] 
		Consider the following two linear systems:
		\begin{equation}\label{eq:sys1}
			\alpha_1 \mathbf{g}^*_{2}(\mathbf{x}) + \alpha_2 \mathbf{g}^*_{3}(\mathbf{x}) + \alpha_3 \mathbf{g}^*_{2}(\mathbf{y}) = 0 \tag{$*_1$}
		\end{equation}
		\begin{equation}\label{eq:sys2}
			\beta_1 \mathbf{g}^*_{2}(\mathbf{x}) + \beta_2 \mathbf{g}^*_{3}(\mathbf{x}) + \beta_3 \mathbf{g}^*_{3}(\mathbf{y}) = 0 \tag{$**_1$}
		\end{equation}
		
		From \eqref{eq:sys1}, we obtain the system of equations
		\begin{align*}
			\alpha_1 x_3 + \alpha_3 y_3 = 0, \quad 
			\alpha_2 = 0, \quad 
			\alpha_1 x_1 + \alpha_3 y_1 = 0.
		\end{align*}
		From \eqref{eq:sys2}, we obtain the system of equations
		\begin{align*}
			\beta_2 x_2 + \beta_3 y_2 = 0, \quad
			\beta_2 x_1 + \beta_3 y_1 = 0, \quad 
			\beta_1 = 0.
		\end{align*}
		Since $\mathbf{x}$ and $\mathbf{y}$ are linearly independent and $x_1 \neq 0$, it follows from Lemma~\ref{lem:al1} that either all coefficients $\alpha_i$ are zero or all $\beta_i$ are zero. Therefore, at least one of the following triples of vectors is linearly independent:
		\[
		\mathbf{g}^*_{2}(\mathbf{x}), \quad \mathbf{g}^*_{3}(\mathbf{x}), \quad \mathbf{g}^*_{2}(\mathbf{y}) \quad \text{or} \quad \mathbf{g}^*_{2}(\mathbf{x}), \quad \mathbf{g}^*_{3}(\mathbf{x}), \quad \mathbf{g}^*_{3}(\mathbf{y}).
		\]
		
		\item[\textbf{Case 2:} $x_2 \neq 0$] 
		Consider:
		\begin{equation}\label{eq:sys3}
			\alpha_1 \mathbf{g}^*_{1}(\mathbf{x}) + \alpha_2 \mathbf{g}^*_{3}(\mathbf{x}) + \alpha_3 \mathbf{g}^*_{1}(\mathbf{y}) = 0 \tag{$*_2$}
		\end{equation}
		\begin{equation}\label{eq:sys4}
			\beta_1 \mathbf{g}^*_{1}(\mathbf{x}) + \beta_2 \mathbf{g}^*_{3}(\mathbf{x}) + \beta_3 \mathbf{g}^*_{3}(\mathbf{y}) = 0 \tag{$**_2$}
		\end{equation}
		From \eqref{eq:sys3},  we obtain the system of equations
		\begin{align*}
			\alpha_2 = 0, \quad 
			\alpha_1 x_3 + \alpha_3 y_3 = 0, \quad 
			\alpha_1 x_2 + \alpha_3 y_2 = 0.
		\end{align*}
		From \eqref{eq:sys4}, we obtain the system of equations
		\begin{align*}
			\beta_2 x_2 + \beta_3 y_2 = 0, \quad 
			\beta_2 x_1 + \beta_3 y_1 = 0, \quad 
			\beta_1 = 0.
		\end{align*}
		Again, since $\mathbf{x}$ and $\mathbf{y}$ are linearly independent and $x_2 \neq 0$, Lemma~\ref{lem:al1} implies that either all $\alpha_i = 0$ or all $\beta_i = 0$. Hence, at least one of the following triples is linearly independent:
		\[
		\mathbf{g}^*_{1}(\mathbf{x}), \quad \mathbf{g}^*_{3}(\mathbf{x}), \quad \mathbf{g}^*_{1}(\mathbf{y}) \quad \text{or} \quad \mathbf{g}^*_{1}(\mathbf{x}), \quad \mathbf{g}^*_{3}(\mathbf{x}), \quad \mathbf{g}^*_{3}(\mathbf{y}).
		\]
		
		\item[\textbf{Case 3:} $x_3 \neq 0$] 
		Consider:
		\begin{equation}\label{eq:sys5}
			\alpha_1 \mathbf{g}^*_{1}(\mathbf{x}) + \alpha_2 \mathbf{g}^*_{2}(\mathbf{x}) + \alpha_3 \mathbf{g}^*_{1}(\mathbf{y}) = 0 \tag{$*_3$}
		\end{equation}
		\begin{equation}\label{eq:sys6}
			\beta_1 \mathbf{g}^*_{1}(\mathbf{x}) + \beta_2 \mathbf{g}^*_{2}(\mathbf{x}) + \beta_3 \mathbf{g}^*_{2}(\mathbf{y}) = 0 \tag{$**_3$}
		\end{equation}
		From \eqref{eq:sys5}, we obtain the system of equations
		\begin{align*}
			\alpha_2 = 0, \quad 
			\alpha_1 x_3 + \alpha_3 y_3 = 0, \quad 
			\alpha_1 x_2 + \alpha_3 y_2 = 0.
		\end{align*}
		From \eqref{eq:sys6}, we obtain the system of equations
		\begin{align*}
			\beta_2 x_3 + \beta_3 y_3 = 0, \quad 
			\beta_1 = 0, \quad 
			\beta_2 x_1 + \beta_3 y_1 = 0.
		\end{align*}
		As before, using Lemma~\ref{lem:al1} and the fact that $x_3 \neq 0$, we conclude that one of the following sets is linearly independent:
		\[
		\mathbf{g}^*_{1}(\mathbf{x}), \quad \mathbf{g}^*_{2}(\mathbf{x}), \quad \mathbf{g}^*_{1}(\mathbf{y}) \quad \text{or} \quad \mathbf{g}^*_{1}(\mathbf{x}), \quad \mathbf{g}^*_{2}(\mathbf{x}), \quad \mathbf{g}^*_{2}(\mathbf{y}).
		\]
	\end{description}
	This concludes the proof.
\end{proof}

\section{The Lie algebra generated by different sets}\label{calulating_lie}

In this section we prove that the action of the Lie brackets is essential for the main results.
Let us consider the mapping
$\bfm\mapsto\left(  \textbf{g}^\ast_{j}(\bm_{i}\right)_{i=1}^\infty $, $j=1,2$, or $3$. Then, it is straightforward to show taking into account the identities \eqref{eq:identitiesg}, that it holds for the Lie brackets
	\begin{eqnarray*}
	\lk[ \bfm\mapsto\left(  \textbf{g}^\ast_{1}(\bm_{i}\right)_{i=0}^\infty) ,\bfm\mapsto\left(  \textbf{g}^\ast_{2}(\bm_{i}\right)_{i=0}^\infty) \rk]&=&\,\bfm\mapsto\left(  \lk[ \textbf{g}^\ast_{1},\textbf{g}^\ast_{2}\rk] (\bm_{i}\right)_{i=0}^\infty =
	- \,\bfm\mapsto\left(  \textbf{g}^\ast_{3}(\bm_{i})_{i=0}^\infty \right)
\\
	\lk[ \bfm\mapsto\left(  \textbf{g}^\ast_{1}(\bm_{i})_{i=0}^\infty\right) ,\bfm\mapsto\left(  \textbf{g}^\ast_{3}(\bm_{i})_{i=0}^\infty\right) \rk]&=&\,\bfm\mapsto\left(  \lk[ \textbf{g}^\ast_{1},\textbf{g}^\ast_{3}\rk] (\bm_{i}\right)_{i=0}^\infty =
	- \,\bfm\mapsto\left(  \textbf{g}^\ast_{2}(\bm_{i})_{i=0}^\infty \right)
\\
	\lk[ \bfm\mapsto\left(  \textbf{g}^\ast_{2}(\bm_{i})_{i=0}^\infty\right) ,\bfm\mapsto\left(  \textbf{g}^\ast_{3}(\bm_{i})_{i=0}^\infty\right) \rk]&=&\,\bfm\mapsto\left(  \lk[ \textbf{g}^\ast_{2},\textbf{g}^\ast_{3}\rk] (\bm_{i}\right)_{i=0}^\infty =
	- \,\bfm\mapsto\left(  \textbf{g}^\ast_{1}(\bm_{i})_{i=0}^\infty \right).
			\end{eqnarray*}
If we shift the mapping by some $l\in\mathbb{N}$, then we obtain, for example,
	\begin{eqnarray*}
	\lk[ \bfm\mapsto\left(  \textbf{g}^\ast_{1}(\bm_{i+l})_{i=0}^\infty\right) ,\bfm\mapsto\left(  \textbf{g}^\ast_{2}(\bm_{i})_{i=0}^\infty\right) \rk]&=&- \,\bfm\mapsto\left(  \textbf{g}^\ast_{3}(\bm_{i+l})_{i=0}^\infty \right).
			\end{eqnarray*}
On the other hand, if an element vanishes, then we obtain\footnote{We used here the abbreviation
	$$
	\mathcal{M}_A(\hbfm) = (\boldsymbol{n}_i)_{i=0}^\infty, \quad \text{where} \quad
	\boldsymbol{n}_i :=
	\begin{cases}
			\mathbf{0}, & \text{if } i \in A, \\
			\hbm_i, & \text{otherwise}.
		\end{cases}
	$$}

	\begin{eqnarray*}
	\lk[ \bfm\mapsto \mathcal{M}_{\{k\}^c} 
	\left(
	 \textbf{g}^\ast_{1}\left(\bm_{i+l}\right)_{i=0}^\infty
	\right)  ,\bfm\mapsto\left(  \textbf{g}^\ast_{2}(\bm_{i})_{i=0}^\infty\right) \rk]&=&-   \bfm\mapsto\left(\mathcal{M}_{\{k\}^c} 
	\left(  \textbf{g}^\ast_{3}(\bm_{i+l}\right)_{0=1}^\infty \right) 
\end{eqnarray*}
Identities of this type are collected in the following lemmas. 
In Lemma~\ref{lem:identity1}, we analyze what happens when both sequences are shifted in the same direction, 
whereas Lemma~\ref{lem:identity2} treats the case of shifts in opposite directions. 
Finally, Lemma~\ref{eq:exentity} covers the additional case in which one of the sequences vanishes on a prescribed set.
These lemmas are essential for completing the induction step in the construction of the Lie algebra
generated by ${\bf G}^{(0,1)}$, ${\bf G}^{(0,2)}$, and ${\bf G}^{(1,1)}$.
%	Applying Lemma \eqref{lem:identity1}, Lemma \eqref{lem:identity2} and Lemma \eqref{eq:exentity}, 

%
%
%In this step, we derive several auxiliary expressions that are essential for computing the Lie algebra
%$$
%\operatorname{Lie}\left(\left\{ \mathbf{G}^{0,1}, \mathbf{G}^{0,2}, \mathbf{G}^{1,1} \right\}\right).
%$$
%We adopt the notations introduced in Appendix~\ref{app:expandLie}.  
%We begin by verifying the following lemma.
\begin{lem}\label{lem:identity1}
	We have the following identity
	\begin{equation}\label{eq:identitybraktegnm1}
		[\hbfm\mapsto\vectinf{\textbf{g}^\ast_{m}(\hbm_{i+k})}{i},\hbfm\mapsto\vectinf{\textbf{g}^\ast_{n}(\hbm_{i+l})}{i}]=\left[\hbfm\mapsto \vectinf{[\textbf{g}^\ast_{m},\textbf{g}^\ast_{n}](\hbm_{i+k+l})}{i} \right],~
	\end{equation}
	$\text{for }m,n\in\{1,2,3\}$ and $k,l\in\ZZ$ with $k, l\geq 0$.
\end{lem}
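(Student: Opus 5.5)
The plan is to compute the bracket directly from the definition $[f,g](x)=Dg(x)f(x)-Df(x)g(x)$ recalled in Appendix~\ref{app_geo_cont}, applied componentwise (cell by cell) on $\times_{i=0}^\infty\RR^3$. Write
\[
X(\hbfm):=\vectinf{\textbf{g}^\ast_{m}(\hbm_{i+k})}{i},\qquad Y(\hbfm):=\vectinf{\textbf{g}^\ast_{n}(\hbm_{i+l})}{i}.
\]
Both fields are linear in $\hbfm$. By \eqref{jacobigez}--\eqref{jacobigd}, the Jacobians $J_{\textbf{g}^\ast_{m}}$ are constant matrices. Hence $DX$ is the constant block operator whose $i$-th block row contains $J_{\textbf{g}^\ast_{m}}$ in block column $i+k$ and zeros elsewhere, that is, the $k$-fold left shift composed with the blockwise action of $J_{\textbf{g}^\ast_{m}}$. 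The analogous description holds for $DY$ with $n$ and $l$. This also gives $DX\,Z=X(Z)$ and $DY\,Z=Y(Z)$ for every sequence $Z$, since the maps are linear.

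First I will evaluate the $i$-th cell of $DY(\hbfm)X(\hbfm)$. It equals $J_{\textbf{g}^\ast_{n}}\,[X(\hbfm)]_{i+l}=J_{\textbf{g}^\ast_{n}}\textbf{g}^\ast_{m}(\hbm_{i+l+k})$. Symmetrically, the $i$-th cell of $DX(\hbfm)Y(\hbfm)$ equals $J_{\textbf{g}^\ast_{m}}\textbf{g}^\ast_{n}(\hbm_{i+k+l})$. The key observation is that the two shifts compose additively and commute, so both terms are evaluated at the same cell $\hbm_{i+k+l}$. Subtracting, the $i$-th cell of $[X,Y](\hbfm)$ is
\[
J_{\textbf{g}^\ast_{n}}\textbf{g}^\ast_{m}(\textbf{x})-J_{\textbf{g}^\ast_{m}}\textbf{g}^\ast_{n}(\textbf{x})=[\textbf{g}^\ast_{m},\textbf{g}^\ast_{n}](\textbf{x}),\qquad \textbf{x}=\hbm_{i+k+l}.
\]
This is exactly the finite-dimensional bracket on $\RR^3$, whose values are listed in \eqref{eq:identitiesg}, and it yields \eqref{eq:identitybraktegnm1}.

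The only point needing care is justifying the cellwise use of the Jacobian in infinite dimensions. Since every map here is a bounded linear operator on $\ell^2(\NN_0;\RR^3)$ (shifts to the left by $k,l\ge0$ lose no entries and need no zero-padding, unlike the negative shifts handled in Lemma~\ref{eq:exentity}), the Fréchet derivative is the operator itself, so no convergence issue arises. I expect the main obstacle to be purely a bookkeeping one: keeping the bracket convention consistent with the one used in \eqref{eq:identitiesg}, so that the sign of $[\textbf{g}^\ast_m,\textbf{g}^\ast_n]$ matches. I will check this on the case $k=l=0$, $m=1,n=2$, which must reproduce $-\textbf{g}^\ast_3$.
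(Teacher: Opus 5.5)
Your proof is correct and is essentially the paper's argument: the paper likewise writes the derivative of each shifted field as a block matrix carrying $J_{\textbf{g}^\ast_m}$ (resp.\ $J_{\textbf{g}^\ast_n}$) offset by $k$ (resp.\ $l$) cells, obtains $J_{\textbf{g}^\ast_m}\textbf{g}^\ast_n(\hbm_{i+k+l})$ and $J_{\textbf{g}^\ast_n}\textbf{g}^\ast_m(\hbm_{i+k+l})$ cellwise, and subtracts. Adjust the expectation in your planned sanity check. With the Appendix~A convention $[f,g]=Dg\,f-Df\,g$ that you adopt, the case $k=l=0$, $m=1$, $n=2$ gives $J_{\textbf{g}^\ast_2}\textbf{g}^\ast_1-J_{\textbf{g}^\ast_1}\textbf{g}^\ast_2=+\textbf{g}^\ast_3$, not $-\textbf{g}^\ast_3$. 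The reason is that \eqref{eq:identitiesg} and the paper's own bracket computations silently use the opposite convention $[f,g]=Df\,g-Dg\,f$. The lemma itself is unaffected, since both of its sides use the same bracket.
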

\begin{proof}
	Here, we distinguish between  $k,l\leq 0$ and $k,l\geq 0$:
	\begin{description}
		\item[Case $k,l\leq 0$] Observe that 
		
		\hspace{-1cm}	\scalebox{0.85}{\begin{minipage}{15cm}
				\begin{eqnarray*}
					\lqq{D[\hbfm\mapsto\vectinf{\textbf{g}^\ast_{m}(\hbm_{i+k})}{i}]\vectinf{\textbf{g}^\ast_{n}(\hbm_{i+l})}{i}}
					\phantom{\Bigg|}
					\\
					&=& \begin{bmatrix}
						0&  0&0 & 0& \cdots &0&0 &\cdots   &\rdelim\}{3}{-2pt}[$-k\,$ cells]              \\
						\vdots&  \vdots&\vdots& \vdots& \cdots &\vdots&\vdots &\vdots     &            \\
						0&  0&0 & 0& \cdots &0&0 &\cdots            &     \\
						J_{\textbf{g}^\ast_m} & 0& 0
						&0  & \cdots &0&0 & \cdots&
						\\
						0&  J_{ \textbf{g}^\ast_m} &0&  0
						&\cdots & \cdots& 0&\cdots &
						\\
						\vdots&\vdots &\ddots &\vdots &\vdots &\vdots&\vdots&\vdots &
						\\
						0            &0 &0
						&J_{ \textbf{g}^\ast_m}  
						&
						\cdots %J_{\textbf{g}^\ast_1 } 
						&0 &  0 &\cdots &
						\\
						\vdots            &\vdots &\vdots
						&\cdots 
						&\ddots &
						\cdots &\vdots &  \cdots&
						\\
						0&0&0&           \cdots                           &\cdots&
						J_{ \textbf{g}^\ast_m}&0 &\cdots&
						\\
						\vdots &\vdots&\vdots&           \vdots                           &\vdots&
						\vdots &\ddots  &\cdots &
					\end{bmatrix}
					\begin{bmatrix}
						0&\rdelim\}{3}{-2pt}[$-l$ cells]\\
						\vdots&\\
						0&\\
						\textbf{g}^\ast_{n}(\hbm_{0})&\\
						\textbf{g}^\ast_{n}(\hbm_{1})&\\
						\vdots&	
					\end{bmatrix}
					=  \begin{bmatrix}
						0&\rdelim\}{3}{-2pt}[$-(k+l)$ cells]\\
						\vdots&\\
						0&\\
						J_{ \textbf{g}^\ast_m}\textbf{g}^\ast_{n}(\hbm_{0})&\\
						J_{ \textbf{g}^\ast_m}\textbf{g}^\ast_{n}(\hbm_{1})&\\
						\vdots&	
					\end{bmatrix}
				\end{eqnarray*}
		\end{minipage}}
		\\
		and
		
		\hspace{-1cm}	\scalebox{0.85}{\begin{minipage}{15cm}
				\begin{eqnarray*}
					\lqq{D[\hbfm\mapsto\vectinf{\textbf{g}^\ast_{n}(\hbm_{i+l})}{i}]\vectinf{\textbf{g}^\ast_{m}(\hbm_{i+k})}{i}}\\
					&=& \begin{bmatrix}
						0&  0&0 & 0& \cdots &0&0 &\cdots   &\rdelim\}{3}{-2pt}[$-l$ cells]              \\
						\vdots&  \vdots&\vdots& \vdots& \cdots &\vdots&\vdots &\vdots     &            \\
						0&  0&0 & 0& \cdots &0&0 &\cdots            &     \\
						J_{\textbf{g}^\ast_n} & 0& 0
						&0  & \cdots &0&0 & \cdots&
						\\
						0&  J_{ \textbf{g}^\ast_n} &0&  0
						&\cdots & \cdots& 0&\cdots &
						\\
						\vdots&\vdots &\ddots &\vdots &\vdots &\vdots&\vdots&\vdots &
						\\
						0            &0 &0
						&J_{ \textbf{g}^\ast_n}  
						&
						\cdots %J_{\textbf{g}^\ast_1 } 
						&0 &  0 &\cdots &
						\\
						\vdots            &\vdots &\vdots
						&\cdots 
						&\ddots &
						\cdots &\vdots &  \cdots&
						\\
						0&0&0&           \cdots                           &\cdots&
						J_{ \textbf{g}^\ast_n}&0 &\cdots&
						\\
						\vdots &\vdots&\vdots&           \vdots                           &\vdots&
						\vdots &\ddots  &\cdots &
					\end{bmatrix}
					\begin{bmatrix}
						0&\rdelim\}{3}{-2pt}[$-k$ cells]\\
						\vdots&\\
						0&\\
						\textbf{g}^\ast_{m}(\hbm_{0})&\\
						\textbf{g}^\ast_{m}(\hbm_{1})&\\
						\vdots&	
					\end{bmatrix}
					=  \begin{bmatrix}
						0&\rdelim\}{3}{-2pt}[$-(k+l)$ cells]\\
						\vdots&\\
						0&\\
						J_{ \textbf{g}^\ast_n}\textbf{g}^\ast_{m}(\hbm_{0})&\\
						J_{ \textbf{g}^\ast_n}\textbf{g}^\ast_{m}(\hbm_{1})&\\
						\vdots&	
					\end{bmatrix}
				\end{eqnarray*}
		\end{minipage}}
		
		\medskip
		where $J_{ \textbf{g}^\ast_m}$ and $J_{ \textbf{g}^\ast_n}$ are Jacobian matrices of $\textbf{g}^\ast_m$ and $\textbf{g}^\ast_n$, respectively. By the definition of the Lie bracket, we can write 
		\begin{align*}
		%\begin{eqnarray*}
	[\hbfm\mapsto\vectinf{\textbf{g}^\ast_{m}(\hbm_{i+k})}{i},\hbfm\mapsto\vectinf{\textbf{g}^\ast_{n}(\hbm_{i+l})}{i}]=
%\begin{bmatrix}
%				0&\rdelim\}{3}{-2pt}[$-(k+l)$ cells]\\
%				\vdots&\\
%				0&\\
%				[\textbf{g}^\ast_{m},\textbf{g}^\ast_{n}](\hbm_{0})&\\
%				[\textbf{g}^\ast_{m},\textbf{g}^\ast_{n}](\hbm_{1})&\\
%				\vdots&	
%			\end{bmatrix}=
%
%
\left[\hbfm\mapsto \vectinf{[\textbf{g}^\ast_{m},\textbf{g}^\ast_{n}](\hbm_{i+k+l})}{i} \right].
\end{align*}
	%	\end{eqnarray*}

		\item[Case $k,l\geq0$] Observe that 
		
\hspace{-1.3cm}\scmbox{0.85}{19cm}{		\begin{eqnarray*}
			\lqq{D[\hbfm\mapsto\vectinf{\textbf{g}^\ast_{m}(\hbm_{i+k})}{i}]\vectinf{\textbf{g}^\ast_{n}(\hbm_{i+l})}{i}}\\
			&=& \begin{pmatrix}
				0&\cdots&0&J_{\textbf{g}^\ast_m} & 0& 0
				&0  & \cdots &0&\cdots 
				\\
				0&\cdots&0&0&  J_{ \textbf{g}^\ast_m} &0&  0
				&\cdots & 0& \cdots
				\\
				\vdots&\cdots&\vdots&\vdots&\vdots &\ddots &\vdots &\vdots &\vdots&\vdots
				\\
				0&\cdots&0&0            &0 &0
				&J_{ \textbf{g}^\ast_m}  
				&
				\cdots %J_{\textbf{g}^\ast_1 } 
				&0 &  0 
				\\
				\vdots&\cdots&\vdots&\vdots &\vdots &\vdots
				&\cdots 
				&\ddots &
				\cdots &\vdots 
				\\
				0&\cdots&0&0&0&0&           \cdots                           &\cdots&
				J_{ \textbf{g}^\ast_m}&0 
				\\
				\vdots & \sunderb{3.5em}{k \text{ cells }}&\vdots&\vdots &\vdots&\vdots&           \vdots                           &\vdots&
				\vdots &\ddots 
			\end{pmatrix}
			\begin{bmatrix}
				\textbf{g}^\ast_{n}(\hbm_{l})\\
				\textbf{g}^\ast_{n}(\hbm_{l+1})\\
				\textbf{g}^\ast_{n}(\hbm_{l+2})\\
				\textbf{g}^\ast_{n}(\hbm_{l+3})\\
				\vdots
			\end{bmatrix}
			=  \begin{bmatrix}
				J_{ \textbf{g}^\ast_m}\textbf{g}^\ast_{n}(\hbm_{k+l})&\\
				J_{ \textbf{g}^\ast_m}\textbf{g}^\ast_{n}(\hbm_{k+l+1})&\\
				J_{ \textbf{g}^\ast_m}\textbf{g}^\ast_{n}(\hbm_{k+l+2})&\\
				J_{ \textbf{g}^\ast_m}\textbf{g}^\ast_{n}(\hbm_{k+l+3})&\\
				\vdots&	
			\end{bmatrix}
		\end{eqnarray*}
	}
		and
		
\hspace{-1.3cm}\scmbox{0.85}{19cm}{		\begin{eqnarray*}
			\lqq{D[\hbfm\mapsto\vectinf{\textbf{g}^\ast_{n}(\hbm_{i+l})}{i}]\vectinf{\textbf{g}^\ast_{m}(\hbm_{i+k})}{i}}\\
			&=& \begin{pmatrix}
				0&\cdots&0&J_{\textbf{g}^\ast_n} & 0& 0
				&0  & \cdots &0&\cdots 
				\\
				0&\cdots&0&0&  J_{ \textbf{g}^\ast_n} &0&  0
				&\cdots & 0& \cdots
				\\
				\vdots&\cdots&\vdots&\vdots&\vdots &\ddots &\vdots &\vdots &\vdots&\vdots
				\\
				0&\cdots&0&0            &0 &0
				&J_{ \textbf{g}^\ast_n}  
				&
				\cdots %J_{\textbf{g}^\ast_1 } 
				&0 &  0 
				\\
				\vdots&\cdots&\vdots&\vdots &\vdots &\vdots
				&\cdots 
				&\ddots &
				\cdots &\vdots 
				\\
				0&\cdots&0&0&0&0&           \cdots                           &\cdots&
				J_{ \textbf{g}^\ast_n}&0 
				\\
				\vdots & \sunderb{3.5em}{l \text{ cells }}&\vdots&\vdots &\vdots&\vdots&           \vdots                           &\vdots&
				\vdots &\ddots 
			\end{pmatrix}
			\begin{bmatrix}
				\textbf{g}^\ast_{n}(\hbm_{k})\\
				\textbf{g}^\ast_{n}(\hbm_{k+1})\\
				\textbf{g}^\ast_{n}(\hbm_{k+2})\\
				\textbf{g}^\ast_{n}(\hbm_{k+3})\\
				\vdots
			\end{bmatrix}
			= \begin{bmatrix}
				J_{ \textbf{g}^\ast_n}\textbf{g}^\ast_{m}(\hbm_{k+l})&\\
				J_{ \textbf{g}^\ast_n}\textbf{g}^\ast_{m}(\hbm_{k+l+1})&\\
				J_{ \textbf{g}^\ast_n}\textbf{g}^\ast_{m}(\hbm_{k+l+2})&\\
				J_{ \textbf{g}^\ast_n}\textbf{g}^\ast_{m}(\hbm_{k+l+3})&\\
				\vdots&	
			\end{bmatrix}
		\end{eqnarray*}
	}
	
\noindent Again,
taking into account  the definition of the Lie bracket, we get $\phantom{\Big|}$

\hspace{-1.3cm}\scmbox{0.85}{19cm}{	
			\begin{eqnarray*}
			[\hbfm\mapsto\vectinf{\textbf{g}^\ast_{m}(\hbm_{i+k})}{i},\hbfm\mapsto\vectinf{\textbf{g}^\ast_{n}(\hbm_{i+l})}{i}]&=&\hbfm\mapsto\begin{bmatrix}
				[\textbf{g}^\ast_{m},\textbf{g}^\ast_{n}](\hbm_{k+l})&\\
				[\textbf{g}^\ast_{m},\textbf{g}^\ast_{n}](\hbm_{k+l+1})&\\
				\vdots&	
			\end{bmatrix}\\
		\phantom{\Bigg|}	&=&\left[\hbfm\mapsto \vectinf{[\textbf{g}^\ast_{m},\textbf{g}^\ast_{n}](\hbm_{i+k+l})}{i} \right].
		\end{eqnarray*}
	}
		
	\end{description}
\end{proof}
In the previous lemma, we assumed that $lk>0$. In the next lemma, we assume that $l$ and $k$ have opposite signs (i.e. $lk<0$).
\begin{lem}\label{lem:identity2}
Let $m,n\in\{1,2,3\}$ and $k,l\in\NN$. Then the following identities hold:
	\begin{align}
		\nonumber&[\hbfm\mapsto\vectinf{\textbf{g}^\ast_{m}(\hbm_{i-k})}{i},\hbfm\mapsto\vectinf{\textbf{g}^\ast_{n}(\hbm_{i+l})}{i}]\\
		\nonumber&=\left[\hbfm\mapsto \vectinf{[\textbf{g}^\ast_{m},\textbf{g}^\ast_{n}](\hbm_{i-(k-l)})}{i} \right]-\left[\hbfm\mapsto\mathcal{M}_{\{k-l,\cdots,k-1\}^c}\left(\vectinf{J_{ \textbf{g}^\ast_m}\textbf{g}^\ast_{n}(\hbm_{i-(k-l)})}{i}\right)\right]\\
		&= \left\{\begin{array}{ll}
			\left[\hbfm\mapsto \vectinf{[\textbf{g}^\ast_{m},\textbf{g}^\ast_{n}](\hbm_{i})}{i} \right]-\left[\hbfm\mapsto\mathcal{M}_{\{0,\cdots,k-1\}^c}\left(\vectinf{J_{ \textbf{g}^\ast_m}\textbf{g}^\ast_{n}(\hbm_{i})}{i}\right)\right]&\text{if }l=k\\
			\left[\hbfm\mapsto \vectinf{[\textbf{g}^\ast_{m},\textbf{g}^\ast_{n}](\hbm_{i-(k-l)})}{i} \right]-\left[\hbfm\mapsto\mathcal{M}_{\{k-l,\cdots,k-1\}^c}\left(\vectinf{J_{ \textbf{g}^\ast_m}\textbf{g}^\ast_{n}(\hbm_{i-(k-l)})}{i}\right)\right]&\text{if }l<k\\
			\left[\hbfm\mapsto \vectinf{[\textbf{g}^\ast_{m},\textbf{g}^\ast_{n}](\hbm_{i+(l-k)})}{i} \right]-\left[\hbfm\mapsto\mathcal{M}_{\{0,\cdots,k-1\}^c}\left(\vectinf{J_{ \textbf{g}^\ast_m}\textbf{g}^\ast_{n}(\hbm_{i+(l-k)})}{i}\right)\right]&\text{if }l>k
		\end{array}\right.\label{eq:identitybraktegnm3}.
	\end{align}
Moreover,
	\begin{align}
		\nonumber&[\hbfm\mapsto\vectinf{\textbf{g}^\ast_{n}(\hbm_{i+l})}{i},\hbfm\mapsto\vectinf{\textbf{g}^\ast_{m}(\hbm_{i-k})}{i}]\\
		\nonumber&=\left[\hbfm\mapsto \vectinf{[\textbf{g}^\ast_{n},\textbf{g}^\ast_{m}](\hbm_{i-(k-l)})}{i} \right]+\left[\hbfm\mapsto\mathcal{M}_{\{k-l,\cdots,k-1\}^c}\left(\vectinf{J_{ \textbf{g}^\ast_m}\textbf{g}^\ast_{n}(\hbm_{i-(k-l)})}{i}\right)\right]\\
		&= \left\{\begin{array}{ll}
			\left[\hbfm\mapsto \vectinf{[\textbf{g}^\ast_{n},\textbf{g}^\ast_{m}](\hbm_{i})}{i} \right]+\left[\hbfm\mapsto\mathcal{M}_{\{0,\cdots,k-1\}^c}\left(\vectinf{J_{ \textbf{g}^\ast_m}\textbf{g}^\ast_{n}(\hbm_{i})}{i}\right)\right]&\text{if }l=k\\
			\left[\hbfm\mapsto \vectinf{[\textbf{g}^\ast_{n},\textbf{g}^\ast_{m}](\hbm_{i-(k-l)})}{i} \right]+\left[\hbfm\mapsto\mathcal{M}_{\{k-l,\cdots,k-1\}^c}\left(\vectinf{J_{ \textbf{g}^\ast_m}\textbf{g}^\ast_{n}(\hbm_{i-(k-l)})}{i}\right)\right]&\text{if }l<k\\
			\left[\hbfm\mapsto \vectinf{[\textbf{g}^\ast_{n},\textbf{g}^\ast_{m}](\hbm_{i+(l-k)})}{i} \right]+\left[\hbfm\mapsto\mathcal{M}_{\{0,\cdots,k-1\}^c}\left(\vectinf{J_{ \textbf{g}^\ast_m}\textbf{g}^\ast_{n}(\hbm_{i+(l-k)})}{i}\right)\right]&\text{if }l>k
		\end{array}\right.\label{eq:identitybraktegnm4}.
	\end{align}
%	$\text{for }m,n\in\{1,2,3\}$ and $k,l\in\NN$.
\end{lem}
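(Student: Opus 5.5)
The plan is to prove the identity by the same direct Jacobian computation used in the proof of Lemma~\ref{lem:identity1}. The only new feature is that the two shifts point in opposite directions, so the cut-off at the bottom of the sequence no longer matches for the two terms. This mismatch produces the correction term $\mathcal{M}_{\{\cdot\}^c}(\cdots)$.

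Write $A(\hbfm):=\vectinf{\textbf{g}^\ast_{m}(\hbm_{i-k})}{i}$, with the convention that the first $k$ cells are zero, and $B(\hbfm):=\vectinf{\textbf{g}^\ast_{n}(\hbm_{i+l})}{i}$. Both maps are linear, so $DA$ and $DB$ are constant block-shift matrices.
\begin{itemize}
\item $DA$ carries $J_{\textbf{g}^\ast_m}$ on the block diagonal shifted down by $k$ cells.
\item $DB$ carries $J_{\textbf{g}^\ast_n}$ on the block diagonal shifted to the right by $l$ cells.
\end{itemize}
With the convention $[f,g]=Dg\,f-Df\,g$ of Appendix~\ref{app_geo_cont}, I compute the two products cell by cell.
\begin{itemize}
\item $(DB\,A)_i=J_{\textbf{g}^\ast_n}A_{i+l}$. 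This equals $J_{\textbf{g}^\ast_n}\textbf{g}^\ast_m(\hbm_{i+l-k})$ whenever $i+l\ge k$, and vanishes otherwise.
\item $(DA\,B)_i=J_{\textbf{g}^\ast_m}B_{i-k}$. This equals $J_{\textbf{g}^\ast_m}\textbf{g}^\ast_n(\hbm_{i-k+l})$ for $i\ge k$, and vanishes for $i<k$.
\end{itemize}

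Next I split the index set into three ranges.
\begin{itemize}
\item For $i\ge k$ both products are present. Their difference is the three-dimensional bracket $[\textbf{g}^\ast_m,\textbf{g}^\ast_n](\hbm_{i-(k-l)})$, exactly as in Lemma~\ref{lem:identity1}, in whatever sign convention \eqref{eq:identitiesg} fixes.
\item For $\max(0,k-l)\le i\le k-1$ only $DB\,A$ survives. I rewrite it as the bracket term plus (or minus) $J_{\textbf{g}^\ast_m}\textbf{g}^\ast_n(\hbm_{i-(k-l)})$. The bracket term then extends uniformly over all $i$, and the leftover appears only on the cells $\{k-l,\dots,k-1\}$; this is precisely the $\mathcal{M}$-term.
\item For $i<k-l$, which occurs only when $l<k$, both products vanish, consistent with the zero-padding convention for the shift $i-(k-l)$.
\end{itemize}
The three displayed cases follow from this. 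For $l=k$ the window is $\{0,\dots,k-1\}$ and there is no shift. For $l>k$ the shift becomes $i+(l-k)$, so no padding occurs and the window is again $\{0,\dots,k-1\}$. The second identity \eqref{eq:identitybraktegnm4} then follows immediately from antisymmetry of the bracket.

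The main obstacle is bookkeeping rather than analysis. First, I must pin down the sign convention used for $[\textbf{g}^\ast_m,\textbf{g}^\ast_n]$, since $J_{\textbf{g}^\ast_n}\textbf{g}^\ast_m-J_{\textbf{g}^\ast_m}\textbf{g}^\ast_n$ versus its negative determines whether the correction carries $+$ or $-$ in \eqref{eq:identitybraktegnm3}. Second, I must check that the mask is applied to the correctly shifted sequence and that the window endpoints agree with the paper's $\mathcal{M}_A$ notation. I would verify both points on the explicit example $k=1$, $l=1$ with $m=1$, $n=2$, comparing against the computation of $[\mathbf G^{1,1},\mathbf G^{0,2}]$ in Step~(ii) of the proof of Theorem~\ref{app.cont1}, and adjust the signs before writing the general case.
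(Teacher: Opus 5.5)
Your argument is the paper's own proof. The paper also writes out the block-shift products $DA\,B$ and $DB\,A$, though it treats $l=k$, $l<k$, $l>k$ separately instead of in your unified index form. It finds that they combine to the three-dimensional bracket for $i\ge k$, that only $DB\,A$ survives on the cells $\max(0,k-l)\le i\le k-1$, and it obtains \eqref{eq:identitybraktegnm4} by antisymmetry.

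The one point you leave open, the sign of the correction term, is not actually ambiguous and should not be left to an example.
\begin{itemize}
\item The relation $[\textbf{g}^\ast_1,\textbf{g}^\ast_2]=-\textbf{g}^\ast_3$ in \eqref{eq:identitiesg} holds only for the convention $[f,g]=Df\,g-Dg\,f$. Indeed $J_{\textbf{g}^\ast_1}\textbf{g}^\ast_2(\textbf{x})=(0,x_1,0)^\top$ and $J_{\textbf{g}^\ast_2}\textbf{g}^\ast_1(\textbf{x})=(x_2,0,0)^\top$.
\item This is also the convention the paper's proof uses when it writes $[A,B]$ as $DA\,B-DB\,A$.
\item With it, on the window you get $[A,B]_i=-J_{\textbf{g}^\ast_n}\textbf{g}^\ast_m(\hbm_{i-(k-l)})=[\textbf{g}^\ast_m,\textbf{g}^\ast_n](\hbm_{i-(k-l)})-J_{\textbf{g}^\ast_m}\textbf{g}^\ast_n(\hbm_{i-(k-l)})$. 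This is exactly the stated minus sign.
\item You instead announce the convention $[f,g]=Dg\,f-Df\,g$ of Appendix~\ref{app_geo_cont}. Used consistently for both the outer and the inner bracket, the same computation gives the correction with a plus sign. You would then prove the identity with the opposite sign, not the stated one, so drop that announcement.
\end{itemize}

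Your proposed check against $[\mathbf G^{1,1},\mathbf G^{0,2}]$ could not detect this sign anyway. Since $\mathbf G^{0,2}$ is unshifted, no pair of nonzero opposite shifts occurs there, so no correction term arises. The cross terms in $[h_{1,2},\mathbf G^{1,1}]$ would be the relevant test.
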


Before proving Lemma~\ref{lem:identity2}, we state the following corollary. Its proof follows by direct calculations using Lemma~\ref{lem:identity2}.

\begin{cor}\label{cor:identity1}
	Let $m,n\in\{1,2,3\}$ and $k,l\in\NN$. Then
	\begin{align}
		\nonumber&
		[\hbfm\mapsto\vectinf{\textbf{g}^\ast_{m}(\hbm_{i+l})}{i},\;
		\hbfm\mapsto\vectinf{\textbf{g}^\ast_{n}(\hbm_{i-k})}{i}]
		+
		[\hbfm\mapsto\vectinf{\textbf{g}^\ast_{m}(\hbm_{i-k})}{i},\;
		\hbfm\mapsto\vectinf{\textbf{g}^\ast_{n}(\hbm_{i+l})}{i}]
		\\
		&=
		2\left[\hbfm\mapsto \vectinf{[\textbf{g}^\ast_{m},\textbf{g}^\ast_{n}](\hbm_{i+(l-k)})}{i} \right]
		-\left[\hbfm\mapsto\mathcal{M}_{\{k-l,\cdots,k-1\}^c}\!\left(\vectinf{[\textbf{g}^\ast_{m},\textbf{g}^\ast_{n}](\hbm_{i+(l-k)})}{i}\right)\right].
		\label{eq:identitybraktegnm2}
	\end{align}
\end{cor}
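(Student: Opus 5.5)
The corollary should follow by adding the two identities of Lemma~\ref{lem:identity2}, after relabelling them so that both brackets have the same ordered pair of generators $(\textbf{g}^\ast_m,\textbf{g}^\ast_n)$. For the second bracket on the left-hand side, $[\hbfm\mapsto\vectinf{\textbf{g}^\ast_{m}(\hbm_{i-k})}{i},\hbfm\mapsto\vectinf{\textbf{g}^\ast_{n}(\hbm_{i+l})}{i}]$, I use \eqref{eq:identitybraktegnm3} verbatim. This gives the principal part $\vectinf{[\textbf{g}^\ast_{m},\textbf{g}^\ast_{n}](\hbm_{i+(l-k)})}{i}$ minus the masked correction $\mathcal{M}_{\{k-l,\cdots,k-1\}^c}\bigl(\vectinf{J_{\textbf{g}^\ast_m}\textbf{g}^\ast_{n}(\hbm_{i+(l-k)})}{i}\bigr)$. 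Here I use $i-(k-l)=i+(l-k)$ throughout.

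For the first bracket, $[\hbfm\mapsto\vectinf{\textbf{g}^\ast_{m}(\hbm_{i+l})}{i},\hbfm\mapsto\vectinf{\textbf{g}^\ast_{n}(\hbm_{i-k})}{i}]$, I apply \eqref{eq:identitybraktegnm4} with the roles of $m$ and $n$ interchanged. Its principal term is then $\vectinf{[\textbf{g}^\ast_{m},\textbf{g}^\ast_{n}](\hbm_{i+(l-k)})}{i}$, with the same shift and the same ordering of the bracket. Its correction term is the same masked sequence, but with $J_{\textbf{g}^\ast_n}\textbf{g}^\ast_m$ in place of $J_{\textbf{g}^\ast_m}\textbf{g}^\ast_n$ and with the opposite sign. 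I would do the three cases $l=k$, $l<k$, $l>k$ of Lemma~\ref{lem:identity2} in parallel. In each case the two masks coincide: the set is $\{k-l,\dots,k-1\}$ when $l<k$, and $\{0,\dots,k-1\}$ when $l\ge k$, which I read as the convention behind the notation $\{k-l,\cdots,k-1\}$ in the statement. This matching is the point that must be checked case by case.

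Adding the two expansions, the principal parts add up to twice $\vectinf{[\textbf{g}^\ast_{m},\textbf{g}^\ast_{n}](\hbm_{i+(l-k)})}{i}$. The corrections combine into one masked sequence with entries $\pm\bigl(J_{\textbf{g}^\ast_n}\textbf{g}^\ast_m-J_{\textbf{g}^\ast_m}\textbf{g}^\ast_n\bigr)(\hbm_{i+(l-k)})$. This combination works because $\mathcal{M}_A$ is linear and the masks agree. Since $\textbf{g}^\ast_m,\textbf{g}^\ast_n$ are linear with constant Jacobians \eqref{jacobigez}--\eqref{jacobigd}, this difference is exactly the Lie bracket $[\textbf{g}^\ast_m,\textbf{g}^\ast_n]$, which the identities \eqref{eq:identitiesg} express in closed form. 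That yields \eqref{eq:identitybraktegnm2}.

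The main obstacle is bookkeeping the signs and conventions rather than any analysis. The sign in front of the masked term depends on the bracket convention $[f,g]=Dg\,f-Df\,g$ as actually used in Lemma~\ref{lem:identity2} and in \eqref{eq:identitiesg}. I would verify it on the simplest instance, $k=l=1$ with $(m,n)=(1,2)$, writing out the first few cells by hand. I would then carry whichever sign results through the general case, so that the masked term in \eqref{eq:identitybraktegnm2} enters with the sign dictated by those conventions.
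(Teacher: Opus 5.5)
Your plan is the paper's own argument: the paper derives the corollary by direct calculation from Lemma~\ref{lem:identity2}, namely by adding \eqref{eq:identitybraktegnm3} to \eqref{eq:identitybraktegnm4} with $m$ and $n$ interchanged. The masks depend only on $(k,l)$, reading $\{k-l,\dots,k-1\}$ as $\{\max(k-l,0),\dots,k-1\}$, and the two Jacobian corrections then collapse into one bracket.

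The sign you leave open is in fact forced. Lemma~\ref{lem:identity2} as stated holds for the convention $[f,g]=Df\,g-Dg\,f$, which is also the one giving $[\textbf{g}^\ast_1,\textbf{g}^\ast_2]=-\textbf{g}^\ast_3$ in \eqref{eq:identitiesg}; it is not the $Dg\,f-Df\,g$ you cite. In that convention the combined correction $J_{\textbf{g}^\ast_n}\textbf{g}^\ast_m-J_{\textbf{g}^\ast_m}\textbf{g}^\ast_n$ equals $-[\textbf{g}^\ast_m,\textbf{g}^\ast_n]$, not $+[\textbf{g}^\ast_m,\textbf{g}^\ast_n]$, which yields exactly the minus sign in \eqref{eq:identitybraktegnm2}. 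Moreover, both sides change sign together when the convention is switched, so no convention can produce the opposite sign.
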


\begin{proof}[Proof of Lemma \ref{lem:identity2}] 
Here we prove only the identity \eqref{eq:identitybraktegnm3}, since \eqref{eq:identitybraktegnm4} follows directly from \eqref{eq:identitybraktegnm3}.
 We distinguish three cases:  $l=k$, $l<k$, and $l>k$. We begin with the first case.
	\begin{description}
		\item[Case $l= k$]
	Here, we obtain
	
		\hspace{-1.3cm}\scmbox{0.85}{19cm}{	
			\begin{eqnarray*}
			\lqq{D[\hbfm\mapsto\vectinf{\textbf{g}^\ast_{m}(\hbm_{i-k})}{i}]\vectinf{\textbf{g}^\ast_{n}(\hbm_{i+l})}{i}}\\
		\hspace{-1cm}	&=& \begin{bmatrix}
				0&  0&0 & 0& \cdots &0&0 &   \rdelim\}{3}{-7pt}[$k$ cells]              \\
				\vdots&  \vdots&\vdots& \vdots& \cdots &\vdots&\vdots &             \\
				0&  0&0 & 0& \cdots &0&0 &           \\
				J_{\textbf{g}^\ast_m} & 0& 0
				&0  & \cdots &0&0 &
				\\
				0&  J_{ \textbf{g}^\ast_m} &0&  0
				&\cdots & \cdots& 0& 
				\\
				\vdots&\vdots &\ddots &\vdots &\vdots &\vdots&\vdots&
				\\
				0            &0 &0
				&J_{ \textbf{g}^\ast_m}  
				&
				\cdots %J_{\textbf{g}^\ast_1 } 
				&0 &  0 &
				\\
				\vdots            &\vdots &\vdots
				&\cdots 
				&\ddots &
				\cdots &\vdots & 
				\\
				0&0&0&           \cdots                           &\cdots&
				J_{ \textbf{g}^\ast_m}&0 &
				\\
				\vdots &\vdots&\vdots&           \vdots                           &\vdots&
				\vdots &\ddots   &
			\end{bmatrix}
			\begin{bmatrix}
				\textbf{g}^\ast_{n}(\hbm_{k})\\
				\textbf{g}^\ast_{n}(\hbm_{k+1})\\
				\textbf{g}^\ast_{n}(\hbm_{k+2})\\
				\textbf{g}^\ast_{n}(\hbm_{k+3})\\
				\vdots
			\end{bmatrix}
		%	\\
		= 
			\begin{bmatrix}
				0&\rdelim\}{3}{-2pt}[\parbox{2cm}{$k$\\  cells}]\\
				\vdots&\\
				0&\\
				J_{ \textbf{g}^\ast_m}\textbf{g}^\ast_{n}(\hbm_{k})&\\
				J_{ \textbf{g}^\ast_m}\textbf{g}^\ast_{n}(\hbm_{k+1})&\\
				\vdots&	
			\end{bmatrix}
		\end{eqnarray*}
	}
		and

\hspace{-1.3cm}\scmbox{0.85}{19cm}{	
			\begin{eqnarray*}
			\lqq{D[\hbfm\mapsto\vectinf{\textbf{g}^\ast_{n}(\hbm_{i+l})}{i}]\vectinf{\textbf{g}^\ast_{m}(\hbm_{i-k})}{i}}\\
			&=& \begin{pmatrix}
				0&\cdots&0&J_{\textbf{g}^\ast_n} & 0& 0
				&0  & \cdots &0& 
				\\
				0&\cdots&0&0&  J_{ \textbf{g}^\ast_n} &0&  0
				&\cdots &  
				\\
				\vdots&\cdots&\vdots&\vdots&\vdots &J_{ \textbf{g}^\ast_n}   &\vdots &\vdots &
				\\
				0&\cdots&0&0            &0 &0
				&\ddots 
				&
				\cdots %J_{\textbf{g}^\ast_1 } 
				&0 & 
				\\
				\vdots&\cdots&\vdots&\vdots &\vdots &\vdots
				&\cdots 
				&	J_{ \textbf{g}^\ast_n} &
				\cdots &
				\\
				0&\cdots&0&0&0&0&           \cdots                           &\cdots&
			\ddots&
				\\
				\vdots & \sunderb{3.5em}{k \text{ cells }}&\vdots&\vdots &\vdots&\vdots&           \vdots                           &\vdots&
				\vdots 
			\end{pmatrix}
			\begin{bmatrix}
				0&\rdelim\}{3}{-2pt}[$k$ cells]\\
				\vdots&\\
				0&\\
				\textbf{g}^\ast_{m}(\hbm_{0})&\\
				\textbf{g}^\ast_{m}(\hbm_{1})&\\
				\vdots&	
			\end{bmatrix}
			=  \begin{bmatrix}
				J_{ \textbf{g}^\ast_n}\textbf{g}^\ast_{m}(\hbm_{0})\\
				J_{ \textbf{g}^\ast_n}\textbf{g}^\ast_{m}(\hbm_{1})\\
				\vdots
			\end{bmatrix}
		\end{eqnarray*}
	}

Combining both calculations, we get 
		\begin{eqnarray*}
			&&[\hbfm\mapsto\vectinf{\textbf{g}^\ast_{m}(\hbm_{i-k})}{i},\hbfm\mapsto\vectinf{\textbf{g}^\ast_{n}(\hbm_{i+l})}{i}]\\
			&=&\hbfm\mapsto\begin{bmatrix}
				0&\rdelim\}{3}{-2pt}[$k$ cells]\\
				\vdots&\\
				0&\\
				J_{ \textbf{g}^\ast_m}\textbf{g}^\ast_{n}(\hbm_{k})&\\
				J_{ \textbf{g}^\ast_m}\textbf{g}^\ast_{n}(\hbm_{k+1})&\\
				\vdots&	
			\end{bmatrix}-\begin{bmatrix}
				J_{ \textbf{g}^\ast_n}\textbf{g}^\ast_{m}(\hbm_{0})&\\
				J_{ \textbf{g}^\ast_n}\textbf{g}^\ast_{m}(\hbm_{1})&\\
				\vdots&	
			\end{bmatrix}\\
			&=&\hbfm\mapsto\begin{bmatrix}
				[\textbf{g}^\ast_{m},\textbf{g}^\ast_{n}](\hbm_{0})&\\
				[\textbf{g}^\ast_{m},\textbf{g}^\ast_{n}](\hbm_{1})&\\
				\vdots&	
			\end{bmatrix}-\begin{bmatrix}
				J_{ \textbf{g}^\ast_m}\textbf{g}^\ast_{n}(\hbm_{0})&\\
				J_{ \textbf{g}^\ast_m}\textbf{g}^\ast_{n}(\hbm_{1})&\\
				\vdots&\\
				J_{ \textbf{g}^\ast_m}\textbf{g}^\ast_{n}(\hbm_{k-1})&\\
				0&\\
				\vdots&	
			\end{bmatrix}\\
			&=&\left[\hbfm\mapsto \vectinf{[\textbf{g}^\ast_{m},\textbf{g}^\ast_{n}](\hbm_{i})}{i} \right]-\left[\hbfm\mapsto\mathcal{M}_{\{0,\cdots,k-1\}^c}\left(\vectinf{J_{ \textbf{g}^\ast_m}\textbf{g}^\ast_{n}(\hbm_{i})}{i}\right)\right].
		\end{eqnarray*}

		\item[Case $l< k$]
		Here, we get first
		
		\hspace{-1.9cm}\scmbox{0.85}{19cm}{	
			\begin{eqnarray*}
			\lqq{D[\hbfm\mapsto\vectinf{\textbf{g}^\ast_{m}(\hbm_{i-k})}{i}]\vectinf{\textbf{g}^\ast_{n}(\hbm_{i+l})}{i}}\\
			&=& \begin{bmatrix}
				0&  0&0 & 0& \cdots &0&0 &\cdots   &\rdelim\}{3}{-2pt}[$k$ cells]              \\
				\vdots&  \vdots&\vdots& \vdots& \cdots &\vdots&\vdots &\vdots     &            \\
				0&  0&0 & 0& \cdots &0&0 &\cdots            &     \\
				J_{\textbf{g}^\ast_m} & 0& 0
				&0  & \cdots &0&0 & \cdots&
				\\
				0&  J_{ \textbf{g}^\ast_m} &0&  0
				&\cdots & \cdots& 0&\cdots &
				\\
				\vdots&\vdots &\ddots &\vdots &\vdots &\vdots&\vdots&\vdots &
				\\
				0            &0 &0
				&J_{ \textbf{g}^\ast_m}  
				&
				\cdots %J_{\textbf{g}^\ast_1 } 
				&0 &  0 &\cdots &
				\\
				\vdots            &\vdots &\vdots
				&\cdots 
				&\ddots &
				\cdots &\vdots &  \cdots&
				\\
				0&0&0&           \cdots                           &\cdots&
				J_{ \textbf{g}^\ast_m}&0 &\cdots&
				\\
				\vdots &\vdots&\vdots&           \vdots                           &\vdots&
				\vdots &\ddots  &\cdots &
			\end{bmatrix}
			\begin{bmatrix}
				\textbf{g}^\ast_{n}(\hbm_{l})\\
				\textbf{g}^\ast_{n}(\hbm_{l+1})\\
				\textbf{g}^\ast_{n}(\hbm_{l+2})\\
				\textbf{g}^\ast_{n}(\hbm_{l+3})\\
				\vdots
			\end{bmatrix}
			=  \begin{bmatrix}
				0&\rdelim\}{3}{-2pt}[$k$ cells]\\
				\vdots&\\
				0&\\
				J_{ \textbf{g}^\ast_m}\textbf{g}^\ast_{n}(\hbm_{l})&\\
				J_{ \textbf{g}^\ast_m}\textbf{g}^\ast_{n}(\hbm_{l+1})&\\
				\vdots&	
			\end{bmatrix}
		\end{eqnarray*}
	}
		and
		
		\hspace{-1.9cm}\scmbox{0.85}{19cm}{	
		\begin{eqnarray*}
			\lqq{D[\hbfm\mapsto\vectinf{\textbf{g}^\ast_{n}(\hbm_{i+l})}{i}]\vectinf{\textbf{g}^\ast_{m}(\hbm_{i-k})}{i}}\\
			&=& \begin{pmatrix}
				0&\cdots&0&J_{\textbf{g}^\ast_n} & 0& 0
				&0  & \cdots &0&\cdots 
				\\
				0&\cdots&0&0&  J_{ \textbf{g}^\ast_n} &0&  0
				&\cdots & 0& \cdots
				\\
				\vdots&\cdots&\vdots&\vdots&\vdots &\ddots &\vdots &\vdots &\vdots&\vdots
				\\
				0&\cdots&0&0            &0 &0
				&J_{ \textbf{g}^\ast_n}  
				&
				\cdots %J_{\textbf{g}^\ast_1 } 
				&0 &  0 
				\\
				\vdots&\cdots&\vdots&\vdots &\vdots &\vdots
				&\cdots 
				&\ddots &
				\cdots &\vdots 
				\\
				0&\cdots&0&0&0&0&           \cdots                           &\cdots&
				J_{ \textbf{g}^\ast_n}&0 
				\\
				\vdots & \sunderb{3.5em}{l \text{ cells }}&\vdots&\vdots &\vdots&\vdots&           \vdots                           &\vdots&
				\vdots &\ddots 
			\end{pmatrix}
			\begin{bmatrix}
				0&\rdelim\}{3}{-2pt}[$k$ cells]\\
				\vdots&\\
				0&\\
				\textbf{g}^\ast_{m}(\hbm_{0})&\\
				\textbf{g}^\ast_{m}(\hbm_{1})&\\
				\vdots&	
			\end{bmatrix}
			=  \begin{bmatrix}
				0&\rdelim\}{3}{-2pt}[$k-l$ cells]\\
				\vdots&\\
				0&\\
				J_{ \textbf{g}^\ast_n}\textbf{g}^\ast_{m}(\hbm_{0})&\\
				J_{ \textbf{g}^\ast_n}\textbf{g}^\ast_{m}(\hbm_{1})&\\
				\vdots&	
			\end{bmatrix}
		\end{eqnarray*}
	}

		Thus, we obtain, 
		\begin{eqnarray*}
			&&[\hbfm\mapsto\vectinf{\textbf{g}^\ast_{m}(\hbm_{i-k})}{i},\hbfm\mapsto\vectinf{\textbf{g}^\ast_{n}(\hbm_{i+l})}{i}]
		\phantom{\Bigg|}	\\
			&=&\hbfm\mapsto\begin{bmatrix}
				0&\rdelim\}{3}{-2pt}[$k$ cells]\\
				\vdots&\\
				0&\\
				J_{ \textbf{g}^\ast_m}\textbf{g}^\ast_{n}(\hbm_{l})&\\
				J_{ \textbf{g}^\ast_m}\textbf{g}^\ast_{n}(\hbm_{l+1})&\\
				\vdots&	
			\end{bmatrix}\qquad -\qquad \begin{bmatrix}
				0&\rdelim\}{3}{-2pt}[$k-l$ cells]\\
				\vdots&\\
				0&\\
				J_{ \textbf{g}^\ast_n}\textbf{g}^\ast_{m}(\hbm_{0})&\\
				J_{ \textbf{g}^\ast_n}\textbf{g}^\ast_{m}(\hbm_{1})&\\
				\vdots&	
			\end{bmatrix}\\
			&=&\hbfm\mapsto\begin{bmatrix}
				0&\rdelim\}{3}{-2pt}[$k-l$ cells]\\
				\vdots&\\
				0&\\
				[\textbf{g}^\ast_{m},\textbf{g}^\ast_{n}](\hbm_{0})&\\
				[\textbf{g}^\ast_{m},\textbf{g}^\ast_{n}](\hbm_{1})&\\
				\vdots&	
			\end{bmatrix}\qquad -\qquad\begin{bmatrix}
				0&&\rdelim\}{3}{-2pt}[$k-l$ cells]\\
				\vdots&\\
				0&\\
				J_{ \textbf{g}^\ast_m}\textbf{g}^\ast_{n}(\hbm_{0})&\\
				J_{ \textbf{g}^\ast_m}\textbf{g}^\ast_{n}(\hbm_{1})&\\
				\vdots&\\
				J_{ \textbf{g}^\ast_m}\textbf{g}^\ast_{n}(\hbm_{l-1})&\\
				0&\\
				\vdots&	
			\end{bmatrix}\\
			&=&\left[\hbfm\mapsto \vectinf{[\textbf{g}^\ast_{m},\textbf{g}^\ast_{n}](\hbm_{i-(k-l)})}{i} \right]-\left[\hbfm\mapsto\mathcal{M}_{\{k-l,\cdots,k-1\}^c}\left(\vectinf{J_{ \textbf{g}^\ast_m}\textbf{g}^\ast_{n}(\hbm_{i-(k-l)})}{i}\right)\right].\phantom{\Big|}
		\end{eqnarray*}

		\item[Case $l> k$]
		Here, we start with
		
				\hspace{-1.9cm}\scmbox{0.85}{19cm}{	
			\begin{eqnarray*}
			\lqq{D[\hbfm\mapsto\vectinf{\textbf{g}^\ast_{m}(\hbm_{i-k})}{i}]\vectinf{\textbf{g}^\ast_{n}(\hbm_{i+l})}{i}}\\
			&=& \begin{bmatrix}
				0&  0&0 & 0& \cdots &0&0 &\cdots   &\rdelim\}{3}{-2pt}[$k$ cells]              \\
				\vdots&  \vdots&\vdots& \vdots& \cdots &\vdots&\vdots &\vdots     &            \\
				0&  0&0 & 0& \cdots &0&0 &\cdots            &     \\
				J_{\textbf{g}^\ast_m} & 0& 0
				&0  & \cdots &0&0 & \cdots&
				\\
				0&  J_{ \textbf{g}^\ast_m} &0&  0
				&\cdots & \cdots& 0&\cdots &
				\\
				\vdots&\vdots &\ddots &\vdots &\vdots &\vdots&\vdots&\vdots &
				\\
				0            &0 &0
				&J_{ \textbf{g}^\ast_m}  
				&
				\cdots %J_{\textbf{g}^\ast_1 } 
				&0 &  0 &\cdots &
				\\
				\vdots            &\vdots &\vdots
				&\cdots 
				&\ddots &
				\cdots &\vdots &  \cdots&
				\\
				0&0&0&           \cdots                           &\cdots&
				J_{ \textbf{g}^\ast_m}&0 &\cdots&
				\\
				\vdots &\vdots&\vdots&           \vdots                           &\vdots&
				\vdots &\ddots  &\cdots &
			\end{bmatrix}
			\begin{bmatrix}
				\textbf{g}^\ast_{n}(\hbm_{l})\\
				\textbf{g}^\ast_{n}(\hbm_{l+1})\\
				\textbf{g}^\ast_{n}(\hbm_{l+2})\\
				\textbf{g}^\ast_{n}(\hbm_{l+3})\\
				\vdots
			\end{bmatrix}
			=  \begin{bmatrix}
				0&\rdelim\}{3}{-2pt}[$k$ cells]\\
				\vdots&\\
				0&\\
				J_{ \textbf{g}^\ast_m}\textbf{g}^\ast_{n}(\hbm_{l})&\\
				J_{ \textbf{g}^\ast_m}\textbf{g}^\ast_{n}(\hbm_{l+1})&\\
				\vdots&	
			\end{bmatrix}
		\end{eqnarray*}
	}
		and

		\hspace{-1.9cm}\scmbox{0.85}{19cm}{	
		\begin{eqnarray*}
			\lqq{D[\hbfm\mapsto\vectinf{\textbf{g}^\ast_{n}(\hbm_{i+l})}{i}]\vectinf{\textbf{g}^\ast_{m}(\hbm_{i-k})}{i}}\\
			&=& \begin{pmatrix}
				0&\cdots&0&J_{\textbf{g}^\ast_n} & 0& 0
				&0  & \cdots &0&\cdots 
				\\
				0&\cdots&0&0&  J_{ \textbf{g}^\ast_n} &0&  0
				&\cdots & 0& \cdots
				\\
				\vdots&\cdots&\vdots&\vdots&\vdots &\ddots &\vdots &\vdots &\vdots&\vdots
				\\
				0&\cdots&0&0            &0 &0
				&J_{ \textbf{g}^\ast_n}  
				&
				\cdots %J_{\textbf{g}^\ast_1 } 
				&0 &  0 
				\\
				\vdots&\cdots&\vdots&\vdots &\vdots &\vdots
				&\cdots 
				&\ddots &
				\cdots &\vdots 
				\\
				0&\cdots&0&0&0&0&           \cdots                           &\cdots&
				J_{ \textbf{g}^\ast_n}&0 
				\\
				\vdots & \sunderb{3.5em}{l \text{ cells }}&\vdots&\vdots &\vdots&\vdots&           \vdots                           &\vdots&
				\vdots &\ddots 
			\end{pmatrix}
			\begin{bmatrix}
				0&\rdelim\}{3}{-2pt}[$k$ cells]\\
				\vdots&\\
				0&\\
				\textbf{g}^\ast_{m}(\hbm_{0})&\\
				\textbf{g}^\ast_{m}(\hbm_{1})&\\
				\vdots&	
			\end{bmatrix}
			=  \begin{bmatrix}
				J_{ \textbf{g}^\ast_n}\textbf{g}^\ast_{m}(\hbm_{l-k})&\\
				J_{ \textbf{g}^\ast_n}\textbf{g}^\ast_{m}(\hbm_{l-k+1})&\\
				\vdots&	
			\end{bmatrix}.
		\end{eqnarray*}
	}
		
		Thus, we obtain, 
		\begin{eqnarray*}
			&&[\hbfm\mapsto\vectinf{\textbf{g}^\ast_{m}(\hbm_{i-k})}{i},\hbfm\mapsto\vectinf{\textbf{g}^\ast_{n}(\hbm_{i+l})}{i}]\\
			&=&\hbfm\mapsto\begin{bmatrix}
				0&\rdelim\}{3}{-2pt}[$k$ cells]\\
				\vdots&\\
				0&\\
				J_{ \textbf{g}^\ast_m}\textbf{g}^\ast_{n}(\hbm_{l})&\\
				J_{ \textbf{g}^\ast_m}\textbf{g}^\ast_{n}(\hbm_{l+1})&\\
				\vdots&	
			\end{bmatrix}-\begin{bmatrix}
				J_{ \textbf{g}^\ast_n}\textbf{g}^\ast_{m}(\hbm_{l-k})&\\
				J_{ \textbf{g}^\ast_n}\textbf{g}^\ast_{m}(\hbm_{l-k+1})&\\
				\vdots&	
			\end{bmatrix}\\
			&=&\hbfm\mapsto\begin{bmatrix}
				[\textbf{g}^\ast_{m},\textbf{g}^\ast_{n}](\hbm_{l-k})&\\
				[\textbf{g}^\ast_{m},\textbf{g}^\ast_{n}](\hbm_{l-k+1})&\\
				\vdots&	
			\end{bmatrix}-\begin{bmatrix}
				J_{ \textbf{g}^\ast_m}\textbf{g}^\ast_{n}(\hbm_{l-k})&&\\
				J_{ \textbf{g}^\ast_m}\textbf{g}^\ast_{n}(\hbm_{l-k+1})&&\\
				\vdots&\\
				J_{ \textbf{g}^\ast_m}\textbf{g}^\ast_{n}(\hbm_{l-1})&\\
				0&\\
				0&\\
				\vdots&	
			\end{bmatrix}\\
			&=&\left[\hbfm\mapsto \vectinf{[\textbf{g}^\ast_{m},\textbf{g}^\ast_{n}](\hbm_{i+(l-k)})}{i} \right]-\left[\hbfm\mapsto\mathcal{M}_{\{k-l,\cdots,k-1\}^c}\left(\vectinf{J_{ \textbf{g}^\ast_m}\textbf{g}^\ast_{n}(\hbm_{i+(l-k)})}{i}\right)\right].\phantom{\Big|}
		\end{eqnarray*}
	\end{description}
	
\end{proof}

In the next Lemma, we tackle the entry in the $k^{th}$ line is zero.

\begin{lem}\label{eq:exentity}
Let $m,n\in\{1,2,3\}$ and 	let $k\ge 0$ and $l\ge -k$. Then the following identities hold:
	\begin{eqnarray}
		&&\label{eq:id13}\Big[\hbfm\mapsto\mathcal{M}_{\{k\}^c}\left(\vectinf{\textbf{g}^\ast_{m}(\hbm_{i+l})}{i}\right),\hbfm\mapsto \vectinf{\textbf{g}^\ast_{n}(\hbm_{i})}{i}\Big]\\
		&&\nonumber\qquad=\hbfm\mapsto\left[\hbfm\mapsto\mathcal{M}_{\{k\}^c}\left(\vectinf{[\textbf{g}^\ast_m,\textbf{g}^\ast_n] (\hbm_{i+l})}{i}\right) \right]\\
		&&\label{eq:id14}\Big[\hbfm\mapsto\mathcal{M}_{\{k\}^c}\left(\vectinf{\textbf{g}^\ast_{m}(\hbm_{i+l})}{i}\right),\hbfm\mapsto \vectinf{\textbf{g}^\ast_{n}(\hbm_{i-1})}{i}\Big]\\
		&&
		\nonumber\qquad=\hbfm\mapsto\left[\hbfm\mapsto\mathcal{M}_{\{k\}^c}\left(\vectinf{J_{ \textbf{g}^\ast_m} \textbf{g}^\ast_n(\hbm_{i+l-1})}{i}\right) \right]-\left[\hbfm\mapsto\mathcal{M}_{\{k+1\}^c}\left(\vectinf{J_{ \textbf{g}^\ast_n} \textbf{g}^\ast_m(\hbm_{i+l-1})}{i}\right) \right]\\
		&&\label{eq:id15}\Big[\hbfm\mapsto\mathcal{M}_{\{k\}^c}\left(\vectinf{\textbf{g}^\ast_{m}(\hbm_{i+l})}{i}\right),\hbfm\mapsto \vectinf{\textbf{g}^\ast_{n}(\hbm_{i+1})}{i}\Big]\\
		&&\nonumber\qquad= \left[\hbfm\mapsto\mathcal{M}_{\{k\}^c}\left(\vectinf{J_{ \textbf{g}^\ast_m} \textbf{g}^\ast_n(\hbm_{i+l+1})}{i}\right) \right]-\left[\hbfm\mapsto\mathcal{M}_{\{k-1\}^c}\left(\vectinf{J_{ \textbf{g}^\ast_n} \textbf{g}^\ast_m(\hbm_{i+l+1})}{i}\right) \right]
	\end{eqnarray}
\end{lem}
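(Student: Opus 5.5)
The plan is to verify the three identities by direct computation, exactly as in Lemma~\ref{lem:identity1} and Lemma~\ref{lem:identity2}. I use the same bracket convention as there: for two fields $F,G$ on $\times_{i\ge0}\RR^3$, the $i$-th cell of $[F,G]$ is the $i$-th cell of the difference $DF\,G-DG\,F$ (with the sign convention already fixed in those lemmas).

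Two structural facts make everything explicit.
\begin{itemize}
\item Each $\textbf{g}^\ast_j$ is linear, with constant Jacobian $J_{\textbf{g}^\ast_j}$ as in \eqref{jacobigez}--\eqref{jacobigd}. Hence the block Jacobian of every field appearing here is a constant block matrix with entries $J_{\textbf{g}^\ast_j}$ or $0$.
\item By the definition of $\mathcal{M}_A$ in the footnote, $\mathcal{M}_{\{k\}^c}$ annihilates every cell except the $k$-th. So $F:=\hbfm\mapsto\mathcal{M}_{\{k\}^c}\bigl(\vectinf{\textbf{g}^\ast_{m}(\hbm_{i+l})}{i}\bigr)$ is a single-cell field: its $k$-th cell is $\textbf{g}^\ast_m(\hbm_{k+l})$ and all other cells vanish. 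The hypothesis $l\ge -k$ guarantees that $k+l\ge0$, so this is well defined.
\end{itemize}
Consequently $DF$ has exactly one nonzero block, namely $J_{\textbf{g}^\ast_m}$ in block row $k$ and block column $k+l$.

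Next I compute the two terms of each bracket. Let $G_s:=\hbfm\mapsto\vectinf{\textbf{g}^\ast_n(\hbm_{i+s})}{i}$ for $s\in\{0,-1,+1\}$, with the zero-filling convention for negative indices. Then $DG_s$ has the block $J_{\textbf{g}^\ast_n}$ in every position $(i,i+s)$ with $i+s\ge0$.
\begin{itemize}
\item The term $DF\,G_s$ picks the $(k+l)$-th cell of $G_s$. It is supported on cell $k$ with value $J_{\textbf{g}^\ast_m}\textbf{g}^\ast_n(\hbm_{k+l+s})$.
\item The term $DG_s\,F$ moves the single nonzero cell $k$ of $F$ to row $i=k-s$. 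It is supported on cell $k-s$ with value $J_{\textbf{g}^\ast_n}\textbf{g}^\ast_m(\hbm_{k+l})$. Since $(k-s)+l+s=k+l$, this value can be written $J_{\textbf{g}^\ast_n}\textbf{g}^\ast_m(\hbm_{i+l+s})$ evaluated at $i=k-s$.
\end{itemize}

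The three cases now read off as follows.
\begin{itemize}
\item For $s=0$ both contributions sit in cell $k$. Their difference is $[\textbf{g}^\ast_m,\textbf{g}^\ast_n](\hbm_{k+l})$ by the definition of the bracket of the linear fields $\textbf{g}^\ast_m,\textbf{g}^\ast_n$. Re-expressed through $\mathcal{M}_{\{k\}^c}$, this is \eqref{eq:id13}.
\item For $s=-1$ the first contribution is in cell $k$ and the second in cell $k+1$. Writing both with the uniform index $\hbm_{i+l-1}$ gives exactly the two masked terms of \eqref{eq:id14}.
\item For $s=+1$ the first contribution is in cell $k$ and the second in cell $k-1$. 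This yields \eqref{eq:id15}.
\end{itemize}

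The only step needing care, and the one I expect to be the main obstacle, is the boundary bookkeeping. I must check the following points.
\begin{itemize}
\item When $k+l-1<0$ in \eqref{eq:id14}, the zero-filling convention for negative indices makes the corresponding term vanish on both sides.
\item When $k=0$ in \eqref{eq:id15}, the cell $k-1=-1$ does not exist. The second term must then be read as zero: $G_{+1}$ has no row $i=-1$, so $DG_{+1}F$ vanishes.
\item The index conventions for $\mathcal{M}$ and for shifted sequences must agree with those in Lemma~\ref{lem:identity2}.
\end{itemize}
I would settle these by writing the block matrices explicitly for small $k$, as was done pictorially in Lemma~\ref{lem:identity1}, and then noting that the general case is identical.
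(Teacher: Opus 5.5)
Your proposal is correct and is essentially the paper's proof: the paper also computes $DF\,G-DG\,F$ directly, with $DF$ having the single block $J_{\textbf{g}^\ast_m}$ at block position $(k,k+l)$. The only difference is that the paper splits into $-k\le l\le 0$ and $l>0$, and those two cases are verbatim the same computation, whereas your uniform index bookkeeping covers both at once. Your boundary remarks (the case $l=-k$ in \eqref{eq:id14} and the case $k=0$ in \eqref{eq:id15}) go slightly beyond what the paper writes. For $k=0$ no special reading is actually needed: by the footnote definition, $\mathcal{M}_{\{-1\}^c}$ already annihilates every cell of a sequence indexed by $\NN_0$.
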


\begin{proof}
	\begin{description}
		\item[Proof of identity \eqref{eq:id13} for $-k\leq l\leq 0$] 
		\begin{eqnarray*}
			&&\left[\hbfm\mapsto\mathcal{M}_{\{k\}^c}\left(\vectinf{\textbf{g}^\ast_{m}(\hbm_{i+l})}{i}\right),\hbfm\mapsto \vectinf{\textbf{g}^\ast_{n}(\hbm_{i})}{i} \right]\\
			&&\quad= \hbfm\mapsto\begin{bmatrix}
				0&\cdots&0&  0&0 & 0& \cdots &\rdelim\}{3}{-2pt}[$k$ cells]              \\
				\vdots&\vdots&\vdots&  \vdots&\vdots& \vdots& \cdots &      \\
				0&\cdots&0&  0&0 & 0& \cdots &  \\
				0&\cdots&0 & J_{\textbf{g}^\ast_m}& 0
				&0&\cdots&
				\\
				0&\cdots&0&  0 &0&  0&\cdots&
				\\
				\vdots&\vdots&\vdots&\vdots &\ddots &\vdots&\cdots &\\
				0&\cdots&0            &0 &0
				&0 &\cdots &
				\\
				\vdots&\sunderb{3.5em}{k+l \text{ cells }}&\vdots            &\vdots &\vdots
				&\vdots&\ddots &
			\end{bmatrix} \begin{bmatrix}
				\textbf{g}^\ast_{n}(\hbm_{0})\\
				\textbf{g}^\ast_{n}(\hbm_{1})\\
				\vdots
			\end{bmatrix}\\
			&&\qquad-\begin{bmatrix}
				J_{\textbf{g}^\ast_n} & 0& 0
				&0  & \cdots
				\\
				0&  J_{ \textbf{g}^\ast_n} &0&  0
				&\cdots 
				\\
				\vdots&\vdots &\ddots &\vdots &\vdots 
				\\
				0            &0 &0
				&J_{ \textbf{g}^\ast_n}  
				&
				\cdots %J_{\textbf{g}^\ast_1 } 
				\\
				\vdots            &\vdots &\vdots
				&\cdots 
				&\ddots
			\end{bmatrix} \begin{bmatrix}
				0&\rdelim\}{3}{-2pt}[$k$ cells] \\
				\vdots&\\
				0&\\
				\textbf{g}^\ast_{m}(\hbm_{k+l})&\\
				0&\\
				0&\\
				\vdots&
			\end{bmatrix}\\
			&&\quad=\hbfm\mapsto\begin{bmatrix}
				0&\rdelim\}{3}{-2pt}[$k$ cells] \\
				\vdots&\\
				0&\\
				J_{ \textbf{g}^\ast_m}\textbf{g}^\ast_n(\hbm_{k+l})&\\
				0&\\
				\vdots
			\end{bmatrix}-\begin{bmatrix}
				0&\rdelim\}{3}{-2pt}[$k$ cells] \\
				\vdots&\\
				0&\\
				J_{ \textbf{g}^\ast_n}\textbf{g}^\ast_m(\hbm_{k+l})&\\
				0&\\
				\vdots
			\end{bmatrix}\\
			&&\quad=\hbfm\mapsto\left[\hbfm\mapsto\mathcal{M}_{\{k\}^c}\left(\vectinf{[\textbf{g}^\ast_m,\textbf{g}^\ast_n] (\hbm_{i+l})}{i}\right) \right].
		\end{eqnarray*}

		\item[Proof of identity \eqref{eq:id14} for $-k\leq l\leq 0$]
		\begin{eqnarray*}
			&&\left[\hbfm\mapsto\mathcal{M}_{\{k\}^c}\left(\vectinf{\textbf{g}^\ast_{m}(\hbm_{i+l})}{i}\right),\hbfm\mapsto \vectinf{\textbf{g}^\ast_{n}(\hbm_{i-1})}{i} \right]\\
			&&\quad= \hbfm\mapsto\begin{bmatrix}
				0&\cdots&0&  0&0 & 0& \cdots &\rdelim\}{3}{-2pt}[$k$ cells]              \\
				\vdots&\vdots&\vdots&  \vdots&\vdots& \vdots& \cdots &      \\
				0&\cdots&0&  0&0 & 0& \cdots &  \\
				0&\cdots&0 & J_{\textbf{g}^\ast_m}& 0
				&0&\cdots&
				\\
				0&\cdots&0&  0 &0&  0&\cdots&
				\\
				\vdots&\vdots&\vdots&\vdots &\ddots &\vdots&\cdots &\\
				0&\cdots&0            &0 &0
				&0 &\cdots &
				\\
				\vdots&\sunderb{3.5em}{k+l \text{ cells }}&\vdots            &\vdots &\vdots
				&\vdots&\ddots &
			\end{bmatrix} \begin{bmatrix}
				0\\
				\textbf{g}^\ast_{n}(\hbm_{0})\\
				\textbf{g}^\ast_{n}(\hbm_{1})\\
				\vdots
			\end{bmatrix}\\
			&&\qquad-\begin{bmatrix}
				0 & 0& 0
				&0  & \cdots 
				\\
				J_{\textbf{g}^\ast_n} & 0& 0
				&0  & \cdots
				\\
				0&  J_{ \textbf{g}^\ast_n} &0&  0
				&\cdots 
				\\
				\vdots&\vdots &\ddots &\vdots &\vdots 
				\\
				0            &0 &0
				&J_{ \textbf{g}^\ast_n}  
				&
				\cdots %J_{\textbf{g}^\ast_1 } 
				\\
				\vdots            &\vdots &\vdots
				&\cdots 
				&\ddots
			\end{bmatrix} \begin{bmatrix}
				0&\rdelim\}{3}{-2pt}[$k$ cells] \\
				\vdots&\\
				0&\\
				\textbf{g}^\ast_{m}(\hbm_{k+l})&\\
				0&\\
				0&\\
				\vdots&
			\end{bmatrix}\\
			&&\quad=\hbfm\mapsto\begin{bmatrix}
				0&\rdelim\}{3}{-2pt}[$k$ cells] \\
				\vdots&\\
				0&\\
				J_{ \textbf{g}^\ast_m}\textbf{g}^\ast_n(\hbm_{k+l-1})&\\
				0&\\
				\vdots
			\end{bmatrix}-\begin{bmatrix}
				0&\rdelim\}{3}{-2pt}[$k+1$ cells] \\
				\vdots&\\
				0&\\
				J_{ \textbf{g}^\ast_n}\textbf{g}^\ast_m(\hbm_{k+l})&\\
				0&\\
				\vdots
			\end{bmatrix}\\
			&&\quad=\hbfm\mapsto\left[\hbfm\mapsto\mathcal{M}_{\{k\}^c}\left(\vectinf{J_{ \textbf{g}^\ast_m} \textbf{g}^\ast_n(\hbm_{i+l-1})}{i}\right) \right]-\left[\hbfm\mapsto\mathcal{M}_{\{k+1\}^c}\left(\vectinf{J_{ \textbf{g}^\ast_n} \textbf{g}^\ast_m(\hbm_{i+l-1})}{i}\right) \right].
		\end{eqnarray*}

		\item[Proof of identity \eqref{eq:id15} for $-k\leq l\leq 0$]
		\begin{eqnarray*}
			&&\Big[\hbfm\mapsto\mathcal{M}_{\{k\}^c}\left(\vectinf{\textbf{g}^\ast_{m}(\hbm_{i+l})}{i}\right),\hbfm\mapsto \vectinf{\textbf{g}^\ast_{n}(\hbm_{i+1})}{i}\Big]\\
			&&\quad= \hbfm\mapsto\begin{bmatrix}
				0&\cdots&0&  0&0 & 0& \cdots &\rdelim\}{3}{-2pt}[$k$ cells]              \\
				\vdots&\vdots&\vdots&  \vdots&\vdots& \vdots& \cdots &      \\
				0&\cdots&0&  0&0 & 0& \cdots &  \\
				0&\cdots&0 & J_{\textbf{g}^\ast_m}& 0
				&0&\cdots&
				\\
				0&\cdots&0&  0 &0&  0&\cdots&
				\\
				\vdots&\vdots&\vdots&\vdots &\ddots &\vdots&\cdots &\\
				0&\cdots&0            &0 &0
				&0 &\cdots &
				\\
				\vdots&\sunderb{3.5em}{k+l \text{ cells }}&\vdots            &\vdots &\vdots
				&\vdots&\ddots &
			\end{bmatrix}  \begin{bmatrix}
				\textbf{g}^\ast_{n}(\hbm_{1})\\
				\textbf{g}^\ast_{n}(\hbm_{2})\\
				\vdots
			\end{bmatrix}\\
			&&\qquad-\begin{bmatrix}
				0&  J_{ \textbf{g}^\ast_n} &0&  0
				&\cdots 
				\\
				\vdots&\vdots &\ddots &\vdots &\vdots 
				\\
				0            &0 &0
				&J_{ \textbf{g}^\ast_n}  
				&
				\cdots 
				\\
				\vdots            &\vdots &\vdots
				&\cdots 
				&\ddots
				\\
				0&0&0&           \cdots                           &\cdots
				\\
				\vdots &\vdots&\vdots&           \vdots                           &\vdots
			\end{bmatrix} \begin{bmatrix}
				0&\rdelim\}{3}{-2pt}[$k$ cells] \\
				\vdots&\\
				0&\\
				\textbf{g}^\ast_{m}(\hbm_{k+l})&\\
				0&\\
				0&\\
				\vdots&
			\end{bmatrix}\\
			&&\quad=\hbfm\mapsto \begin{bmatrix}
				0&\rdelim\}{3}{-2pt}[$k$ cells] \\
				\vdots&\\
				0&\\
				J_{ \textbf{g}^\ast_m}\textbf{g}^\ast_n(\hbm_{k+l+1})\\
				0\\
				0\\
				\vdots
			\end{bmatrix}-\begin{bmatrix}
				0&\rdelim\}{3}{-2pt}[$k-1$ cells] \\
				\vdots&\\
				0&\\
				J_{ \textbf{g}^\ast_n}\textbf{g}^\ast_m(\hbm_{k+l})&\\
				0&\\
				0&\\
				\vdots
			\end{bmatrix}\\
			&&\quad=\left[\hbfm\mapsto\mathcal{M}_{\{k\}^c}\left(\vectinf{J_{ \textbf{g}^\ast_m} \textbf{g}^\ast_n(\hbm_{i+l+1})}{i}\right) \right]-\left[\hbfm\mapsto\mathcal{M}_{\{k-1\}^c}\left(\vectinf{J_{ \textbf{g}^\ast_n} \textbf{g}^\ast_m(\hbm_{i+l+1})}{i}\right) \right].
		\end{eqnarray*}

		\item[Proof of identity \eqref{eq:id13} for $l>0$]
		\begin{eqnarray*}
			&&\left[\hbfm\mapsto\mathcal{M}_{\{k\}^c}\left(\vectinf{\textbf{g}^\ast_{m}(\hbm_{i+l})}{i}\right),\hbfm\mapsto \vectinf{\textbf{g}^\ast_{n}(\hbm_{i})}{i} \right]\\
			&&\quad= \hbfm\mapsto\begin{bmatrix}
				0&\cdots&0&  0&0 & 0& \cdots &\rdelim\}{3}{-2pt}[$k$ cells]              \\
				\vdots&\vdots&\vdots&  \vdots&\vdots& \vdots& \cdots &      \\
				0&\cdots&0&  0&0 & 0& \cdots &  \\
				0&\cdots&0 & J_{\textbf{g}^\ast_m}& 0
				&0&\cdots&
				\\
				0&\cdots&0&  0 &0&  0&\cdots&
				\\
				\vdots&\vdots&\vdots&\vdots &\ddots &\vdots&\cdots &\\
				0&\cdots&0            &0 &0
				&0 &\cdots &
				\\
				\vdots&\sunderb{3.5em}{k+l \text{ cells }}&\vdots            &\vdots &\vdots
				&\vdots&\ddots &
			\end{bmatrix} \begin{bmatrix}
				\textbf{g}^\ast_{n}(\hbm_{0})\\
				\textbf{g}^\ast_{n}(\hbm_{1})\\
				\vdots
			\end{bmatrix}\\
			&&\qquad-\begin{bmatrix}
				J_{\textbf{g}^\ast_n} & 0& 0
				&0  & \cdots
				\\
				0&  J_{ \textbf{g}^\ast_n} &0&  0
				&\cdots 
				\\
				\vdots&\vdots &\ddots &\vdots &\vdots 
				\\
				0            &0 &0
				&J_{ \textbf{g}^\ast_n}  
				&
				\cdots %J_{\textbf{g}^\ast_1 } 
				\\
				\vdots            &\vdots &\vdots
				&\cdots 
				&\ddots
			\end{bmatrix} \begin{bmatrix}
				0&\rdelim\}{3}{-2pt}[$k$ cells] \\
				\vdots&\\
				0&\\
				\textbf{g}^\ast_{m}(\hbm_{k+l})&\\
				0&\\
				0&\\
				\vdots&
			\end{bmatrix}\\
			&&\quad=\hbfm\mapsto\begin{bmatrix}
				0&\rdelim\}{3}{-2pt}[$k$ cells] \\
				\vdots&\\
				0&\\
				J_{ \textbf{g}^\ast_m}\textbf{g}^\ast_n(\hbm_{k+l})&\\
				0&\\
				\vdots
			\end{bmatrix}-\begin{bmatrix}
				0&\rdelim\}{3}{-2pt}[$k$ cells] \\
				\vdots&\\
				0&\\
				J_{ \textbf{g}^\ast_n}\textbf{g}^\ast_m(\hbm_{k+l})&\\
				0&\\
				\vdots
			\end{bmatrix}\\
			&&\quad=\hbfm\mapsto\left[\hbfm\mapsto\mathcal{M}_{\{k\}^c}\left(\vectinf{[\textbf{g}^\ast_m,\textbf{g}^\ast_n] (\hbm_{i+l})}{i}\right) \right].
		\end{eqnarray*}

		\item[Proof of identity \eqref{eq:id14} for $l>0$]
		\begin{eqnarray*}
			&&\left[\hbfm\mapsto\mathcal{M}_{\{k\}^c}\left(\vectinf{\textbf{g}^\ast_{m}(\hbm_{i+l})}{i}\right),\hbfm\mapsto \vectinf{\textbf{g}^\ast_{n}(\hbm_{i-1})}{i} \right]\\
			&&\quad= \hbfm\mapsto\begin{bmatrix}
				0&\cdots&0&  0&0 & 0& \cdots &\rdelim\}{3}{-2pt}[$k$ cells]              \\
				\vdots&\vdots&\vdots&  \vdots&\vdots& \vdots& \cdots &      \\
				0&\cdots&0&  0&0 & 0& \cdots &  \\
				0&\cdots&0 & J_{\textbf{g}^\ast_m}& 0
				&0&\cdots&
				\\
				0&\cdots&0&  0 &0&  0&\cdots&
				\\
				\vdots&\vdots&\vdots&\vdots &\ddots &\vdots&\cdots &\\
				0&\cdots&0            &0 &0
				&0 &\cdots &
				\\
				\vdots&\sunderb{3.5em}{k+l \text{ cells }}&\vdots            &\vdots &\vdots
				&\vdots&\ddots &
			\end{bmatrix} \begin{bmatrix}
				0\\
				\textbf{g}^\ast_{n}(\hbm_{0})\\
				\textbf{g}^\ast_{n}(\hbm_{1})\\
				\vdots
			\end{bmatrix}\\
			&&\qquad-\begin{bmatrix}
				0 & 0& 0
				&0  & \cdots 
				\\
				J_{\textbf{g}^\ast_n} & 0& 0
				&0  & \cdots
				\\
				0&  J_{ \textbf{g}^\ast_n} &0&  0
				&\cdots 
				\\
				\vdots&\vdots &\ddots &\vdots &\vdots 
				\\
				0            &0 &0
				&J_{ \textbf{g}^\ast_n}  
				&
				\cdots %J_{\textbf{g}^\ast_1 } 
				\\
				\vdots            &\vdots &\vdots
				&\cdots 
				&\ddots
			\end{bmatrix} \begin{bmatrix}
				0&\rdelim\}{3}{-2pt}[$k$ cells] \\
				\vdots&\\
				0&\\
				\textbf{g}^\ast_{m}(\hbm_{k+l})&\\
				0&\\
				0&\\
				\vdots&
			\end{bmatrix}\\
			&&\quad=\hbfm\mapsto\begin{bmatrix}
				0&\rdelim\}{3}{-2pt}[$k$ cells] \\
				\vdots&\\
				0&\\
				J_{ \textbf{g}^\ast_m}\textbf{g}^\ast_n(\hbm_{k+l-1})&\\
				0&\\
				\vdots
			\end{bmatrix}-\begin{bmatrix}
				0&\rdelim\}{3}{-2pt}[$k+1$ cells] \\
				\vdots&\\
				0&\\
				J_{ \textbf{g}^\ast_n}\textbf{g}^\ast_m(\hbm_{k+l})&\\
				0&\\
				\vdots
			\end{bmatrix}\\
			&&\quad=\hbfm\mapsto\left[\hbfm\mapsto\mathcal{M}_{\{k\}^c}\left(\vectinf{J_{ \textbf{g}^\ast_m} \textbf{g}^\ast_n(\hbm_{i+l-1})}{i}\right) \right]-\left[\hbfm\mapsto\mathcal{M}_{\{k+1\}^c}\left(\vectinf{J_{ \textbf{g}^\ast_n} \textbf{g}^\ast_m(\hbm_{i+l-1})}{i}\right) \right].
		\end{eqnarray*}

		\item[Proof of identity \eqref{eq:id15} for $l>0$]
		\begin{eqnarray*}
			&&\Big[\hbfm\mapsto\mathcal{M}_{\{k\}^c}\left(\vectinf{\textbf{g}^\ast_{m}(\hbm_{i+l})}{i}\right),\hbfm\mapsto \vectinf{\textbf{g}^\ast_{n}(\hbm_{i+1})}{i}\Big]\\
			&&\quad= \hbfm\mapsto\begin{bmatrix}
				0&\cdots&0&  0&0 & 0& \cdots &\rdelim\}{3}{-2pt}[$k$ cells]              \\
				\vdots&\vdots&\vdots&  \vdots&\vdots& \vdots& \cdots &      \\
				0&\cdots&0&  0&0 & 0& \cdots &  \\
				0&\cdots&0 & J_{\textbf{g}^\ast_m}& 0
				&0&\cdots&
				\\
				0&\cdots&0&  0 &0&  0&\cdots&
				\\
				\vdots&\vdots&\vdots&\vdots &\ddots &\vdots&\cdots &\\
				0&\cdots&0            &0 &0
				&0 &\cdots &
				\\
				\vdots&\sunderb{3.5em}{k+l \text{ cells }}&\vdots            &\vdots &\vdots
				&\vdots&\ddots &
			\end{bmatrix}  \begin{bmatrix}
				\textbf{g}^\ast_{n}(\hbm_{1})\\
				\textbf{g}^\ast_{n}(\hbm_{2})\\
				\vdots
			\end{bmatrix}\\
			&&\qquad-\begin{bmatrix}
				0&  J_{ \textbf{g}^\ast_n} &0&  0
				&\cdots 
				\\
				\vdots&\vdots &\ddots &\vdots &\vdots 
				\\
				0            &0 &0
				&J_{ \textbf{g}^\ast_n}  
				&
				\cdots 
				\\
				\vdots            &\vdots &\vdots
				&\cdots 
				&\ddots
				\\
				0&0&0&           \cdots                           &\cdots
				\\
				\vdots &\vdots&\vdots&           \vdots                           &\vdots
			\end{bmatrix} \begin{bmatrix}
				0&\rdelim\}{3}{-2pt}[$k$ cells] \\
				\vdots&\\
				0&\\
				\textbf{g}^\ast_{m}(\hbm_{k+l})&\\
				0&\\
				0&\\
				\vdots&
			\end{bmatrix}\\
			&&\quad=\hbfm\mapsto \begin{bmatrix}
				0&\rdelim\}{3}{-2pt}[$k$ cells] \\
				\vdots&\\
				0&\\
				J_{ \textbf{g}^\ast_m}\textbf{g}^\ast_n(\hbm_{k+l+1})\\
				0\\
				0\\
				\vdots
			\end{bmatrix}-\begin{bmatrix}
				0&\rdelim\}{3}{-2pt}[$k-1$ cells] \\
				\vdots&\\
				0&\\
				J_{ \textbf{g}^\ast_n}\textbf{g}^\ast_m(\hbm_{k+l})&\\
				0&\\
				0&\\
				\vdots
			\end{bmatrix}\\
			&&\quad=\left[\hbfm\mapsto\mathcal{M}_{\{k\}^c}\left(\vectinf{J_{ \textbf{g}^\ast_m} \textbf{g}^\ast_n(\hbm_{i+l+1})}{i}\right) \right]-\left[\hbfm\mapsto\mathcal{M}_{\{k-1\}^c}\left(\vectinf{J_{ \textbf{g}^\ast_n} \textbf{g}^\ast_m(\hbm_{i+l+1})}{i}\right) \right].
		\end{eqnarray*}
		
	\end{description}
	
\end{proof}

\section{The Lie algebra generated by different sets $\mathcal{K}$ }\label{app:expandLie}

In this section, we analyse the Lie algebras generated by low-mode forcing.  
In the first Lemma, we consider a low mode forcing where the control over the constant function is included. To be more precise, we consider the set $\mathcal{K}$ given by
$\{ (0,1),(0,2),(1,1)\}$

	\begin{align}
		\nonumber&\mathcal{H}_{\textbf{g}_l^\ast,2k}(\hbfm)\\
		&:=\begin{pmatrix}
			\left(\combin{2k}{k}-\combin{2k}{k+1}\right)\textbf{g}_l^\ast(\hbm_{0})+\left(\combin{2k}{k+1}-\combin{2k}{k+2}\right)\textbf{g}_l^\ast(\hbm_{2})+\cdots+\left(\combin{2k}{2k}-\combin{2k}{2k+1}\right)\textbf{g}_l^\ast(\hbm_{2k})\\
			\left(\combin{2k}{k}-\combin{2k}{k+2}\right)\textbf{g}_l^\ast(\hbm_{1})+\left(\combin{2k}{k+1}-\combin{2k}{k+3}\right)\textbf{g}_l^\ast(\hbm_{3})+\cdots+\left(\combin{2k}{2k}-\combin{2k}{2k+2}\right)\textbf{g}_l^\ast(\hbm_{2k+1})\\
			\left(\combin{2k}{k-1}-\combin{2k}{k+2}\right)\textbf{g}_l^\ast(\hbm_{0})+\left(\combin{2k}{k}-\combin{2k}{k+3}\right)\textbf{g}_l^\ast(\hbm_{2})+\cdots+\left(\combin{2k}{2k}-\combin{2k}{2k+3}\right)\textbf{g}_l^\ast(\hbm_{2k+2})\\
			\left(\combin{2k}{k-1}-\combin{2k}{k+3}\right)\textbf{g}_l^\ast(\hbm_{1})+\left(\combin{2k}{k}-\combin{2k}{k+4}\right)\textbf{g}_l^\ast(\hbm_{3})+\cdots+\left(\combin{2k}{2k}-\combin{2k}{2k+4}\right)\textbf{g}_l^\ast(\hbm_{2k+3})\\
			\vdots\\
			\left(\combin{2k}{2}-\combin{2k}{2k-1}\right)\textbf{g}_l^\ast(\hbm_{0})+\left(\combin{2k}{3}-\combin{2k}{2k}\right)\textbf{g}_l^\ast(\hbm_{2})+\cdots+\left(\combin{2k}{2k}-\combin{2k}{4k-3}\right)\textbf{g}_l^\ast(\hbm_{4k-4})\\
			\left(\combin{2k}{2}-\combin{2k}{2k}\right)\textbf{g}_l^\ast(\hbm_{1})+\left(\combin{2k}{3}-\combin{2k}{2k+1}\right)\textbf{g}_l^\ast(\hbm_{3})+\cdots+\left(\combin{2k}{2k}-\combin{2k}{4k-2}\right)\textbf{g}_l^\ast(\hbm_{4k-3})\\
			\vdots
		\end{pmatrix}
		\label{eq:hgmsimple}
	\end{align}
	and
	\begin{align}
		\nonumber&\mathcal{H}_{\textbf{g}_l^\ast,2k+1}(\hbfm)\\
		&:=\begin{pmatrix}
			\left(\combin{2k+1}{k+1}-\combin{2k+1}{k+2}\right)\textbf{g}_l^\ast(\hbm_{1})+\left(\combin{2k+1}{k+2}-\combin{2k+1}{k+3}\right)\textbf{g}_l^\ast(\hbm_{3})+\cdots+\left(\combin{2k+1}{2k+1}-\combin{2k+1}{2k+2}\right)\textbf{g}_l^\ast(\hbm_{2k+1})\\
			\left(\combin{2k+1}{k}-\combin{2k+1}{k+2}\right)\textbf{g}_l^\ast(\hbm_{0})+\left(\combin{2k+1}{k+1}-\combin{2k+1}{k+3}\right)\textbf{g}_l^\ast(\hbm_{2})+\cdots+\left(\combin{2k+1}{2k+1}-\combin{2k+1}{2k+3}\right)\textbf{g}_l^\ast(\hbm_{2k+2})\\
			\left(\combin{2k+1}{k}-\combin{2k+1}{k+3}\right)\textbf{g}_l^\ast(\hbm_{1})+\left(\combin{2k+1}{k+1}-\combin{2k+1}{k+4}\right)\textbf{g}_l^\ast(\hbm_{3})+\cdots+\left(\combin{2k+1}{2k+1}-\combin{2k+1}{2k+4}\right)\textbf{g}_l^\ast(\hbm_{2k+3})\\
			\left(\combin{2k+1}{k-1}-\combin{2k+1}{k+3}\right)\textbf{g}_l^\ast(\hbm_{0})+\left(\combin{2k+1}{k}-\combin{2k+1}{k+4}\right)\textbf{g}_l^\ast(\hbm_{2})+\cdots+\left(\combin{2k+1}{2k+1}-\combin{2k+1}{2k+5}\right)\textbf{g}_l^\ast(\hbm_{2k+4})\\
			\vdots\\
			\left(\combin{2k+1}{2}-\combin{2k+1}{2k+1}\right)\textbf{g}_l^\ast(\hbm_{1})+\left(\combin{2k+1}{3}-\combin{2k+1}{2k+2}\right)\textbf{g}_l^\ast(\hbm_{2})+\cdots+\left(\combin{2k+1}{2k+1}-\combin{2k+1}{4k}\right)\textbf{g}_l^\ast(\hbm_{4k-1})\\
			\left(\combin{2k+1}{1}-\combin{2k+1}{2k+1}\right)\textbf{g}_l^\ast(\hbm_{0})+\left(\combin{2k+1}{2}-\combin{2k+1}{2k+2}\right)\textbf{g}_l^\ast(\hbm_{2})+\cdots+\left(\combin{2k+1}{2k+1}-\combin{2k+1}{4k+1}\right)\textbf{g}_l^\ast(\hbm_{4k})\\
			\vdots
		\end{pmatrix}
		\label{eq:hgmsimple2}
	\end{align}

%\end{appendix}
%\end{document}

	\begin{lem}\label{lem:identityH}
		The following identities hold for $k\geq 2$ and $m,n\in \{1,2,3\}$:
		\begin{eqnarray}
			\label{eq:idHgm1}\left[\mathcal{H}_{\textbf{g}^\ast_{n},2k},\sqrt{2}\mathbf{G}^{0,m}\right]&=&\mathcal{H}_{\left[\textbf{g}^\ast_{n},\textbf{g}^\ast_{m}\right],2k},\quad 
			\label{eq:idHgm3}\left[\mathcal{H}_{\textbf{g}^\ast_{n},2k},\sqrt{2}\mathbf{G}^{1,m}\right]=\mathcal{H}_{\left[\textbf{g}^\ast_{n},\textbf{g}^\ast_{m}\right],2k+1},\\
			\label{eq:idHgm4}\left[\mathcal{H}_{\textbf{g}^\ast_{n},2k+1},\sqrt{2}\mathbf{G}^{0,m}\right]&=&\mathcal{H}_{\left[\textbf{g}^\ast_{n},\textbf{g}^\ast_{m}\right],2k+1},\quad 
			\label{eq:idHgm6}\left[\mathcal{H}_{\textbf{g}^\ast_{n},2k+1},\sqrt{2}\mathbf{G}^{1,m}\right]=\mathcal{H}_{\left[\textbf{g}^\ast_{n},\textbf{g}^\ast_{m}\right],2k+2}.
		\end{eqnarray}
	\end{lem}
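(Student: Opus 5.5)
The plan is to exploit the fact that every $\mathcal{H}_{\textbf{g}^\ast_n,j}$ is, by its definition in \eqref{eq:hgmsimple}--\eqref{eq:hgmsimple2}, a finite linear combination of elementary shifted fields. Each such field has the form
\[
S_{p,n}:\hbfm\mapsto\vectinf{\textbf{g}^\ast_n(\hbm_{i+p})}{i},\qquad p\in\ZZ ,
\]
with the zero-padding convention of the footnote when $p<0$, and possibly truncated by some $\mathcal{M}_A$. Here $\textbf{g}^\ast_n$ is linear with constant Jacobian $J_{\textbf{g}^\ast_n}$. Accordingly, I would write
\[
\mathcal{H}_{\textbf{g}^\ast_n,j}=\sum_{p}a^{(j)}_{p}\,S_{p,n}+(\text{boundary corrections}),
\]
where the coefficients $a^{(j)}_p$ are the binomial differences $\combin{j}{\cdot}-\combin{j}{\cdot}$ appearing in the display. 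The Lie bracket is bilinear, and the coefficients do not depend on $n$. Hence each identity of Lemma~\ref{lem:identityH} reduces to computing $[S_{p,n},\sqrt2\mathbf{G}^{0,m}]$ and $[S_{p,n},\sqrt2\mathbf{G}^{1,m}]$, and then reassembling the result.

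\textbf{Step 1: brackets with $\mathbf{G}^{0,m}$.} We have $\sqrt2\mathbf{G}^{0,m}=S_{0,m}$. By Lemma~\ref{lem:identity1} (same-direction shifts), $[S_{p,n},S_{0,m}]=[\textbf{g}^\ast_n,\textbf{g}^\ast_m]$ applied with the same shift $p$. For $p<0$ the same holds, using the case $k,l\le 0$ of that lemma. For truncated terms, \eqref{eq:id13} of Lemma~\ref{eq:exentity} gives the analogous statement. Because the replacement $\textbf{g}^\ast_n\mapsto[\textbf{g}^\ast_n,\textbf{g}^\ast_m]$ is applied term by term with unchanged coefficients, this yields $\mathcal{H}_{[\textbf{g}^\ast_n,\textbf{g}^\ast_m],j}$ for both $j=2k$ and $j=2k+1$. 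This settles the first and third identities. Here $[\textbf{g}^\ast_n,\textbf{g}^\ast_m]$ is again $\pm\textbf{g}^\ast_r$ (or $0$) by \eqref{eq:identitiesg}, so the notation $\mathcal{H}_{[\cdot,\cdot],j}$ makes sense by linearity.

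\textbf{Step 2: brackets with $\mathbf{G}^{1,m}$.} By \eqref{defG11}, $\sqrt2\mathbf{G}^{1,m}=S_{-1,m}+S_{1,m}$. For $S_{p,n}$ with $p\ge0$ I would use Lemma~\ref{lem:identity1} for the $S_{1,m}$ part, which gives shift $p+1$. For the $S_{-1,m}$ part I would use Lemma~\ref{lem:identity2}, or Corollary~\ref{cor:identity1}, which gives shift $p-1$ plus an $\mathcal{M}$-truncated correction near index $0$. The corrections of the form $J_{\textbf{g}^\ast_m}\textbf{g}^\ast_n$ combine with their partners from \eqref{eq:id14}--\eqref{eq:id15} into $[\textbf{g}^\ast_n,\textbf{g}^\ast_m]$, which encodes the reflection $\varphi_{|i-1|}$ at the zero mode. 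The outcome is that bracketing with $\sqrt2\mathbf{G}^{1,m}$ acts on coefficient sequences as the operator $a_p\mapsto a_{p-1}+a_{p+1}$, with the reflection at the zero mode. This is Pascal's recursion applied to the differences, since
\[
\combin{j}{r-1}+\combin{j}{r}=\combin{j+1}{r},
\]
so the coefficients of index $j$ are sent exactly to those of index $j+1$. This gives the second and fourth identities. I would carry out the coefficient check separately for the even and odd rows of \eqref{eq:hgmsimple}--\eqref{eq:hgmsimple2}, since the parity of the occupied modes alternates. The base case $k=2$ can be verified against the explicit $\mathcal{H}_{\cdot,4}$ listed in the paper.

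\textbf{Main obstacle.} The hardest step is the boundary bookkeeping at the zero cell. Near $i=0$ the shifted field $S_{i-1}$ reflects: the term $\varphi_{|n-1|}$ feeds back into the low modes. As a result, the masked terms $\mathcal{M}_{\{0\}^c}$ and the subtracted binomial tails ($\combin{j}{r}-\combin{j}{r+s}$) must match exactly, and this is where the ``minus'' parts of the coefficients arise. I would first write the bracket with $\sqrt2\mathbf{G}^{1,m}$ as a convolution on coefficient sequences indexed by $\ZZ$, extended by the odd reflection that produces the differences of binomials. I would then check that the reflection commutes with the convolution, which turns the boundary computation into the plain Pascal identity.
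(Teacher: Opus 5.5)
Your plan is correct and is essentially the paper's own argument. You expand $\mathcal{H}_{\textbf{g}^\ast_n,j}$ into (masked) shifted fields, bracket term by term with Lemmas~\ref{lem:identity1}, \ref{lem:identity2} and \ref{eq:exentity}, and regroup with Pascal's rule $\combin{n}{k}=\combin{n-1}{k-1}+\combin{n-1}{k}$; this is what the $I,J,K,L$ bookkeeping in Appendix~\ref{app:expandLie} does.

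One correction to your heuristics. In the model \eqref{defG11}, the field $S_{-1,m}$ is zero-padded, not reflected as in $\varphi_{|i-1|}$; that even reflection would produce sums rather than differences of binomials. So the boundary terms are governed by the odd reflection of your last paragraph. Equivalently, $\mathcal{H}_{\textbf{g}^\ast_n,j}$ has coefficient matrix $\mathcal{T}^j$, where $\mathcal{T}$ is the tridiagonal matrix of $\sqrt2\,\mathbf{G}^{1,m}$. Since $\mathcal{T}^j$ commutes with $\mathcal{T}$ and with the identity matrix of $\sqrt2\,\mathbf{G}^{0,m}$, the two terms of each bracket combine into $[\textbf{g}^\ast_n,\textbf{g}^\ast_m]$, and all the $J_{\textbf{g}^\ast_m}\textbf{g}^\ast_n$ corrections cancel.
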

%	\begin{proof}
%		The proof is given in Appendix \ref{sec:prooflem}.
%	\end{proof}
	From Lemma \ref{lem:identityH}, it is straightforward to verify that
		\begin{eqnarray*}
			\left[\mathcal{H}_{\textbf{g}^\ast_{3},2n},h_{0,1}\right]=-\mathcal{H}_{\textbf{g}^\ast_{2},2n},
			\quad  &\quad &
				\left[\mathcal{H}_{\textbf{g}^\ast_{3},2n+1},h_{0,2}\right]= \mathcal{H}_{\textbf{g}^\ast_{1},2n+1}
				\\
			\left[\mathcal{H}_{\textbf{g}^\ast_{3},2n+1},h_{0,1}\right]=-\mathcal{H}_{\textbf{g}^\ast_{2},2n+1},
				\quad  &\quad & 	\left[\mathcal{H}_{\textbf{g}^\ast_{3},2n},h_{0,3}\right]=-\mathcal{H}_{\textbf{g}^\ast_{2},2n+1},
				\\
			\left[\mathcal{H}_{\textbf{g}^\ast_{3},2n},h_{0,2}\right]=\mathcal{H}_{\textbf{g}^\ast_{1},2n},
		\quad  &\quad & 	
			\left[\mathcal{H}_{\textbf{g}^\ast_{3},2n+1},h_{0,3}\right]= -\mathcal{H}_{\textbf{g}^\ast_{2},2n+2}.
		\end{eqnarray*}

%\end{stepp}

\del{Here, we verify in three cases:
	
	\begin{description}
		
		\item[Case $l=k$] Observe that 
		\begin{eqnarray*}
			\lqq{D[\hbfm\mapsto\vectinf{\textbf{g}^\ast_{m}(\hbm_{i-k})}{i}]\vectinf{\textbf{g}^\ast_{n}(\hbm_{i+k})}{i}}\\
			&=& \begin{bmatrix}
				0&  0&0 & 0& \cdots &0&0 &\cdots   &\rdelim\}{3}{-2pt}[$k$ cells]              \\
				\vdots&  \vdots&\vdots& \vdots& \cdots &\vdots&\vdots &\vdots     &            \\
				0&  0&0 & 0& \cdots &0&0 &\cdots            &     \\
				J_{\textbf{g}^\ast_m} & 0& 0
				&0  & \cdots &0&0 & \cdots&
				\\
				0&  J_{ \textbf{g}^\ast_m} &0&  0
				&\cdots & \cdots& 0&\cdots &
				\\
				\vdots&\vdots &\ddots &\vdots &\vdots &\vdots&\vdots&\vdots &
				\\
				0            &0 &0
				&J_{ \textbf{g}^\ast_m}  
				&
				\cdots %J_{\textbf{g}^\ast_1 } 
				&0 &  0 &\cdots &
				\\
				\vdots            &\vdots &\vdots
				&\cdots 
				&\ddots &
				\cdots &\vdots &  \cdots&
				\\
				0&0&0&           \cdots                           &\cdots&
				J_{ \textbf{g}^\ast_m}&0 &\cdots&
				\\
				\vdots &\vdots&\vdots&           \vdots                           &\vdots&
				\vdots &\ddots  &\cdots &
			\end{bmatrix}
			\begin{bmatrix}
				\textbf{g}^\ast_{n}(\hbm_{k})\\
				\textbf{g}^\ast_{n}(\hbm_{k+1})\\
				\textbf{g}^\ast_{n}(\hbm_{k+2})\\
				\textbf{g}^\ast_{n}(\hbm_{k+3})\\
				\vdots
			\end{bmatrix}\\
			&=&  \begin{bmatrix}
				0&\rdelim\}{3}{-2pt}[$k$ cells]\\
				\vdots&\\
				0&\\
				J_{ \textbf{g}^\ast_m}\textbf{g}^\ast_{n}(\hbm_{k})&\\
				J_{ \textbf{g}^\ast_m}\textbf{g}^\ast_{n}(\hbm_{k+1})&\\
				\vdots&	
			\end{bmatrix}
		\end{eqnarray*}
		and
		\begin{eqnarray*}
			\lqq{D[\hbfm\mapsto\vectinf{\textbf{g}^\ast_{n}(\hbm_{i+k})}{i}]\vectinf{\textbf{g}^\ast_{m}(\hbm_{i-k})}{i}}\\
			&=& \begin{pmatrix}
				0&\cdots&0&J_{\textbf{g}^\ast_n} & 0& 0
				&0  & \cdots &0&\cdots 
				\\
				0&\cdots&0&0&  J_{ \textbf{g}^\ast_n} &0&  0
				&\cdots & 0& \cdots
				\\
				\vdots&\cdots&\vdots&\vdots&\vdots &\ddots &\vdots &\vdots &\vdots&\vdots
				\\
				0&\cdots&0&0            &0 &0
				&J_{ \textbf{g}^\ast_n}  
				&
				\cdots %J_{\textbf{g}^\ast_1 } 
				&0 &  0 
				\\
				\vdots&\cdots&\vdots&\vdots &\vdots &\vdots
				&\cdots 
				&\ddots &
				\cdots &\vdots 
				\\
				0&\cdots&0&0&0&0&           \cdots                           &\cdots&
				J_{ \textbf{g}^\ast_n}&0 
				\\
				\vdots & \sunderb{3.5em}{k \text{ cells }}&\vdots&\vdots &\vdots&\vdots&           \vdots                           &\vdots&
				\vdots &\ddots 
			\end{pmatrix}
			\begin{bmatrix}
				0&\rdelim\}{3}{-2pt}[$k$ cells]\\
				\vdots&\\
				0&\\
				\textbf{g}^\ast_{m}(\hbm_{0})&\\
				\textbf{g}^\ast_{m}(\hbm_{1})&\\
				\vdots&	
			\end{bmatrix}\\
			&=&  \begin{bmatrix}
				J_{ \textbf{g}^\ast_n}\textbf{g}^\ast_{m}(\hbm_{0})&\\
				J_{ \textbf{g}^\ast_n}\textbf{g}^\ast_{m}(\hbm_{1})&\\
				\vdots&	
			\end{bmatrix}
		\end{eqnarray*}

		\begin{eqnarray*}
			\lqq{D[\hbfm\mapsto\vectinf{\textbf{g}^\ast_{m}(\hbm_{i+k})}{i}]\vectinf{\textbf{g}^\ast_{n}(\hbm_{i-k})}{i}}\\
			&=& \begin{pmatrix}
				0&\cdots&0&J_{\textbf{g}^\ast_m} & 0& 0
				&0  & \cdots &0&\cdots 
				\\
				0&\cdots&0&0&  J_{ \textbf{g}^\ast_m} &0&  0
				&\cdots & 0& \cdots
				\\
				\vdots&\cdots&\vdots&\vdots&\vdots &\ddots &\vdots &\vdots &\vdots&\vdots
				\\
				0&\cdots&0&0            &0 &0
				&J_{ \textbf{g}^\ast_m}  
				&
				\cdots %J_{\textbf{g}^\ast_1 } 
				&0 &  0 
				\\
				\vdots&\cdots&\vdots&\vdots &\vdots &\vdots
				&\cdots 
				&\ddots &
				\cdots &\vdots 
				\\
				0&\cdots&0&0&0&0&           \cdots                           &\cdots&
				J_{ \textbf{g}^\ast_m}&0 
				\\
				\vdots & \sunderb{3.5em}{k \text{ cells }}&\vdots&\vdots &\vdots&\vdots&           \vdots                           &\vdots&
				\vdots &\ddots 
			\end{pmatrix}
			\begin{bmatrix}
				0&\rdelim\}{3}{-2pt}[$k$ cells]\\
				\vdots&\\
				0&\\
				\textbf{g}^\ast_{n}(\hbm_{0})&\\
				\textbf{g}^\ast_{n}(\hbm_{1})&\\
				\vdots&	
			\end{bmatrix}\\
			&=&  \begin{bmatrix}
				J_{ \textbf{g}^\ast_m}\textbf{g}^\ast_{n}(\hbm_{0})&\\
				J_{ \textbf{g}^\ast_m}\textbf{g}^\ast_{n}(\hbm_{1})&\\
				\vdots&	
			\end{bmatrix}
		\end{eqnarray*}

		\begin{eqnarray*}
			\lqq{D[\hbfm\mapsto\vectinf{\textbf{g}^\ast_{n}(\hbm_{i-k})}{i}]\vectinf{\textbf{g}^\ast_{m}(\hbm_{i+k})}{i}}\\
			&=& \begin{bmatrix}
				0&  0&0 & 0& \cdots &0&0 &\cdots   &\rdelim\}{3}{-2pt}[$k$ cells]              \\
				\vdots&  \vdots&\vdots& \vdots& \cdots &\vdots&\vdots &\vdots     &            \\
				0&  0&0 & 0& \cdots &0&0 &\cdots            &     \\
				J_{\textbf{g}^\ast_n} & 0& 0
				&0  & \cdots &0&0 & \cdots&
				\\
				0&  J_{ \textbf{g}^\ast_n} &0&  0
				&\cdots & \cdots& 0&\cdots &
				\\
				\vdots&\vdots &\ddots &\vdots &\vdots &\vdots&\vdots&\vdots &
				\\
				0            &0 &0
				&J_{ \textbf{g}^\ast_n}  
				&
				\cdots %J_{\textbf{g}^\ast_1 } 
				&0 &  0 &\cdots &
				\\
				\vdots            &\vdots &\vdots
				&\cdots 
				&\ddots &
				\cdots &\vdots &  \cdots&
				\\
				0&0&0&           \cdots                           &\cdots&
				J_{ \textbf{g}^\ast_n}&0 &\cdots&
				\\
				\vdots &\vdots&\vdots&           \vdots                           &\vdots&
				\vdots &\ddots  &\cdots &
			\end{bmatrix}
			\begin{bmatrix}
				\textbf{g}^\ast_{m}(\hbm_{k})\\
				\textbf{g}^\ast_{m}(\hbm_{k+1})\\
				\textbf{g}^\ast_{m}(\hbm_{k+2})\\
				\textbf{g}^\ast_{m}(\hbm_{k+3})\\
				\vdots
			\end{bmatrix}\\
			&=&  \begin{bmatrix}
				0&\rdelim\}{3}{-2pt}[$k$ cells]\\
				\vdots&\\
				0&\\
				J_{ \textbf{g}^\ast_n}\textbf{g}^\ast_{m}(\hbm_{k})&\\
				J_{ \textbf{g}^\ast_n}\textbf{g}^\ast_{m}(\hbm_{k+1})&\\
				\vdots&	
			\end{bmatrix}
		\end{eqnarray*}
		Thus, we obtain, 
		\begin{eqnarray*}
			&&[\hbfm\mapsto\vectinf{\textbf{g}^\ast_{m}(\hbm_{i+k})}{i},\hbfm\mapsto\vectinf{\textbf{g}^\ast_{n}(\hbm_{i-k})}{i}]+[\hbfm\mapsto\vectinf{\textbf{g}^\ast_{m}(\hbm_{i-k})}{i},\hbfm\mapsto\vectinf{\textbf{g}^\ast_{n}(\hbm_{i+k})}{i}]\\
			&=&\hbfm\mapsto\begin{bmatrix}
				J_{ \textbf{g}^\ast_m}\textbf{g}^\ast_{n}(\hbm_{0})&\\
				J_{ \textbf{g}^\ast_m}\textbf{g}^\ast_{n}(\hbm_{1})&\\
				\vdots&	
			\end{bmatrix}-\begin{bmatrix}
				0&\rdelim\}{3}{-2pt}[$k$ cells]\\
				\vdots&\\
				0&\\
				J_{ \textbf{g}^\ast_n}\textbf{g}^\ast_{m}(\hbm_{k})&\\
				J_{ \textbf{g}^\ast_n}\textbf{g}^\ast_{m}(\hbm_{k+1})&\\
				\vdots&	
			\end{bmatrix}+\begin{bmatrix}
				0&\rdelim\}{3}{-2pt}[$k$ cells]\\
				\vdots&\\
				0&\\
				J_{ \textbf{g}^\ast_m}\textbf{g}^\ast_{n}(\hbm_{k})&\\
				J_{ \textbf{g}^\ast_m}\textbf{g}^\ast_{n}(\hbm_{k+1})&\\
				\vdots&	
			\end{bmatrix}-\begin{bmatrix}
				J_{ \textbf{g}^\ast_n}\textbf{g}^\ast_{m}(\hbm_{0})&\\
				J_{ \textbf{g}^\ast_n}\textbf{g}^\ast_{m}(\hbm_{1})&\\
				\vdots&	
			\end{bmatrix}\\
			&=&\hbfm\mapsto 2\begin{bmatrix}
				[\textbf{g}^\ast_{m},\textbf{g}^\ast_{n}](\hbm_{0})&\\
				[\textbf{g}^\ast_{m},\textbf{g}^\ast_{n}](\hbm_{1})&\\
				\vdots&	
			\end{bmatrix}-\begin{bmatrix}
				[\textbf{g}^\ast_{m},\textbf{g}^\ast_{n}](\hbm_{0})&\\
				\vdots&\\
				[\textbf{g}^\ast_{m},\textbf{g}^\ast_{n}](\hbm_{k-1})&\\
				0&\\
				0&\\
				\vdots&	
			\end{bmatrix}\\
			&=&2\left[\hbfm\mapsto \vectinf{[\textbf{g}^\ast_{m},\textbf{g}^\ast_{n}](\hbm_{i})}{i} \right]-\left[\hbfm\mapsto\mathcal{M}_{\{0,1,\cdots,k-1\}^c}\left(\vectinf{[\textbf{g}^\ast_{m},\textbf{g}^\ast_{n}](\hbm_{i})}{i}\right)\right].
		\end{eqnarray*}
		\item[Case $l< k$]\begin{eqnarray*}
			\lqq{D[\hbfm\mapsto\vectinf{\textbf{g}^\ast_{m}(\hbm_{i-k})}{i}]\vectinf{\textbf{g}^\ast_{n}(\hbm_{i+l})}{i}}\\
			&=& \begin{bmatrix}
				0&  0&0 & 0& \cdots &0&0 &\cdots   &\rdelim\}{3}{-2pt}[$k$ cells]              \\
				\vdots&  \vdots&\vdots& \vdots& \cdots &\vdots&\vdots &\vdots     &            \\
				0&  0&0 & 0& \cdots &0&0 &\cdots            &     \\
				J_{\textbf{g}^\ast_m} & 0& 0
				&0  & \cdots &0&0 & \cdots&
				\\
				0&  J_{ \textbf{g}^\ast_m} &0&  0
				&\cdots & \cdots& 0&\cdots &
				\\
				\vdots&\vdots &\ddots &\vdots &\vdots &\vdots&\vdots&\vdots &
				\\
				0            &0 &0
				&J_{ \textbf{g}^\ast_m}  
				&
				\cdots %J_{\textbf{g}^\ast_1 } 
				&0 &  0 &\cdots &
				\\
				\vdots            &\vdots &\vdots
				&\cdots 
				&\ddots &
				\cdots &\vdots &  \cdots&
				\\
				0&0&0&           \cdots                           &\cdots&
				J_{ \textbf{g}^\ast_m}&0 &\cdots&
				\\
				\vdots &\vdots&\vdots&           \vdots                           &\vdots&
				\vdots &\ddots  &\cdots &
			\end{bmatrix}
			\begin{bmatrix}
				\textbf{g}^\ast_{n}(\hbm_{l})\\
				\textbf{g}^\ast_{n}(\hbm_{l+1})\\
				\textbf{g}^\ast_{n}(\hbm_{l+2})\\
				\textbf{g}^\ast_{n}(\hbm_{l+3})\\
				\vdots
			\end{bmatrix}\\
			&=&  \begin{bmatrix}
				0&\rdelim\}{3}{-2pt}[$k$ cells]\\
				\vdots&\\
				0&\\
				J_{ \textbf{g}^\ast_m}\textbf{g}^\ast_{n}(\hbm_{l})&\\
				J_{ \textbf{g}^\ast_m}\textbf{g}^\ast_{n}(\hbm_{l+1})&\\
				\vdots&	
			\end{bmatrix},
		\end{eqnarray*}

		\begin{eqnarray*}
			\lqq{D[\hbfm\mapsto\vectinf{\textbf{g}^\ast_{n}(\hbm_{i-k})}{i}]\vectinf{\textbf{g}^\ast_{m}(\hbm_{i+l})}{i}}\\
			&=& \begin{bmatrix}
				0&  0&0 & 0& \cdots &0&0 &\cdots   &\rdelim\}{3}{-2pt}[$k$ cells]              \\
				\vdots&  \vdots&\vdots& \vdots& \cdots &\vdots&\vdots &\vdots     &            \\
				0&  0&0 & 0& \cdots &0&0 &\cdots            &     \\
				J_{\textbf{g}^\ast_n} & 0& 0
				&0  & \cdots &0&0 & \cdots&
				\\
				0&  J_{ \textbf{g}^\ast_n} &0&  0
				&\cdots & \cdots& 0&\cdots &
				\\
				\vdots&\vdots &\ddots &\vdots &\vdots &\vdots&\vdots&\vdots &
				\\
				0            &0 &0
				&J_{ \textbf{g}^\ast_n}  
				&
				\cdots %J_{\textbf{g}^\ast_1 } 
				&0 &  0 &\cdots &
				\\
				\vdots            &\vdots &\vdots
				&\cdots 
				&\ddots &
				\cdots &\vdots &  \cdots&
				\\
				0&0&0&           \cdots                           &\cdots&
				J_{ \textbf{g}^\ast_n}&0 &\cdots&
				\\
				\vdots &\vdots&\vdots&           \vdots                           &\vdots&
				\vdots &\ddots  &\cdots &
			\end{bmatrix}
			\begin{bmatrix}
				\textbf{g}^\ast_{m}(\hbm_{l})\\
				\textbf{g}^\ast_{m}(\hbm_{l+1})\\
				\textbf{g}^\ast_{m}(\hbm_{l+2})\\
				\textbf{g}^\ast_{m}(\hbm_{l+3})\\
				\vdots
			\end{bmatrix}\\
			&=&  \begin{bmatrix}
				0&\rdelim\}{3}{-2pt}[$k$ cells]\\
				\vdots&\\
				0&\\
				J_{ \textbf{g}^\ast_n}\textbf{g}^\ast_{m}(\hbm_{l})&\\
				J_{ \textbf{g}^\ast_n}\textbf{g}^\ast_{m}(\hbm_{l+1})&\\
				\vdots&	
			\end{bmatrix},
		\end{eqnarray*}
		\begin{eqnarray*}
			\lqq{D[\hbfm\mapsto\vectinf{\textbf{g}^\ast_{n}(\hbm_{i+l})}{i}]\vectinf{\textbf{g}^\ast_{m}(\hbm_{i-k})}{i}}\\
			&=& \begin{pmatrix}
				0&\cdots&0&J_{\textbf{g}^\ast_n} & 0& 0
				&0  & \cdots &0&\cdots 
				\\
				0&\cdots&0&0&  J_{ \textbf{g}^\ast_n} &0&  0
				&\cdots & 0& \cdots
				\\
				\vdots&\cdots&\vdots&\vdots&\vdots &\ddots &\vdots &\vdots &\vdots&\vdots
				\\
				0&\cdots&0&0            &0 &0
				&J_{ \textbf{g}^\ast_n}  
				&
				\cdots %J_{\textbf{g}^\ast_1 } 
				&0 &  0 
				\\
				\vdots&\cdots&\vdots&\vdots &\vdots &\vdots
				&\cdots 
				&\ddots &
				\cdots &\vdots 
				\\
				0&\cdots&0&0&0&0&           \cdots                           &\cdots&
				J_{ \textbf{g}^\ast_n}&0 
				\\
				\vdots & \sunderb{3.5em}{l \text{ cells }}&\vdots&\vdots &\vdots&\vdots&           \vdots                           &\vdots&
				\vdots &\ddots 
			\end{pmatrix}
			\begin{bmatrix}
				0&\rdelim\}{3}{-2pt}[$k$ cells]\\
				\vdots&\\
				0&\\
				\textbf{g}^\ast_{m}(\hbm_{0})&\\
				\textbf{g}^\ast_{m}(\hbm_{1})&\\
				\vdots&	
			\end{bmatrix}\\
			&=&  \begin{bmatrix}
				0&\rdelim\}{3}{-2pt}[$k-l$ cells]\\
				\vdots&\\
				0&\\
				J_{ \textbf{g}^\ast_n}\textbf{g}^\ast_{m}(\hbm_{0})&\\
				J_{ \textbf{g}^\ast_n}\textbf{g}^\ast_{m}(\hbm_{1})&\\
				\vdots&	
			\end{bmatrix}
		\end{eqnarray*}
		and
		\begin{eqnarray*}
			\lqq{D[\hbfm\mapsto\vectinf{\textbf{g}^\ast_{m}(\hbm_{i+l})}{i}]\vectinf{\textbf{g}^\ast_{n}(\hbm_{i-k})}{i}}\\
			&=& \begin{pmatrix}
				0&\cdots&0&J_{\textbf{g}^\ast_m} & 0& 0
				&0  & \cdots &0&\cdots 
				\\
				0&\cdots&0&0&  J_{ \textbf{g}^\ast_m} &0&  0
				&\cdots & 0& \cdots
				\\
				\vdots&\cdots&\vdots&\vdots&\vdots &\ddots &\vdots &\vdots &\vdots&\vdots
				\\
				0&\cdots&0&0            &0 &0
				&J_{ \textbf{g}^\ast_m}  
				&
				\cdots %J_{\textbf{g}^\ast_1 } 
				&0 &  0 
				\\
				\vdots&\cdots&\vdots&\vdots &\vdots &\vdots
				&\cdots 
				&\ddots &
				\cdots &\vdots 
				\\
				0&\cdots&0&0&0&0&           \cdots                           &\cdots&
				J_{ \textbf{g}^\ast_m}&0 
				\\
				\vdots & \sunderb{3.5em}{l \text{ cells }}&\vdots&\vdots &\vdots&\vdots&           \vdots                           &\vdots&
				\vdots &\ddots 
			\end{pmatrix}
			\begin{bmatrix}
				0&\rdelim\}{3}{-2pt}[$k$ cells]\\
				\vdots&\\
				0&\\
				\textbf{g}^\ast_{n}(\hbm_{0})&\\
				\textbf{g}^\ast_{n}(\hbm_{1})&\\
				\vdots&	
			\end{bmatrix}\\
			&=&  \begin{bmatrix}
				0&\rdelim\}{3}{-2pt}[$k-l$ cells]\\
				\vdots&\\
				0&\\
				J_{ \textbf{g}^\ast_m}\textbf{g}^\ast_{n}(\hbm_{0})&\\
				J_{ \textbf{g}^\ast_m}\textbf{g}^\ast_{n}(\hbm_{1})&\\
				\vdots&	
			\end{bmatrix}
		\end{eqnarray*}

		Thus, we obtain, 
		\begin{eqnarray*}
			&&[\hbfm\mapsto\vectinf{\textbf{g}^\ast_{m}(\hbm_{i+l})}{i},\hbfm\mapsto\vectinf{\textbf{g}^\ast_{n}(\hbm_{i-k})}{i}]+[\hbfm\mapsto\vectinf{\textbf{g}^\ast_{m}(\hbm_{i-k})}{i},\hbfm\mapsto\vectinf{\textbf{g}^\ast_{n}(\hbm_{i+l})}{i}]\\
			&=&\hbfm\mapsto\begin{bmatrix}
				0&\rdelim\}{3}{-2pt}[$k-l$ cells]\\
				\vdots&\\
				0&\\
				J_{ \textbf{g}^\ast_m}\textbf{g}^\ast_{n}(\hbm_{0})&\\
				J_{ \textbf{g}^\ast_m}\textbf{g}^\ast_{n}(\hbm_{1})&\\
				\vdots&	
			\end{bmatrix}-\begin{bmatrix}
				0&\rdelim\}{3}{-2pt}[$k$ cells]\\
				\vdots&\\
				0&\\
				J_{ \textbf{g}^\ast_n}\textbf{g}^\ast_{m}(\hbm_{l})&\\
				J_{ \textbf{g}^\ast_n}\textbf{g}^\ast_{m}(\hbm_{l+1})&\\
				\vdots&	
			\end{bmatrix}+\begin{bmatrix}
				0&\rdelim\}{3}{-2pt}[$k$ cells]\\
				\vdots&\\
				0&\\
				J_{ \textbf{g}^\ast_m}\textbf{g}^\ast_{n}(\hbm_{l})&\\
				J_{ \textbf{g}^\ast_m}\textbf{g}^\ast_{n}(\hbm_{l+1})&\\
				\vdots&	
			\end{bmatrix}-\begin{bmatrix}
				0&\rdelim\}{3}{-2pt}[$k-l$ cells]\\
				\vdots&\\
				0&\\
				J_{ \textbf{g}^\ast_n}\textbf{g}^\ast_{m}(\hbm_{0})&\\
				J_{ \textbf{g}^\ast_n}\textbf{g}^\ast_{m}(\hbm_{1})&\\
				\vdots&	
			\end{bmatrix}\\
			&=&\hbfm\mapsto2\begin{bmatrix}
				0&\rdelim\}{3}{-2pt}[$k-l$ cells]\\
				\vdots&\\
				0&\\
				[\textbf{g}^\ast_{m},\textbf{g}^\ast_{n}](\hbm_{0})&\\
				[\textbf{g}^\ast_{m},\textbf{g}^\ast_{n}](\hbm_{1})&\\
				\vdots&	
			\end{bmatrix}-\begin{bmatrix}
				0&&\rdelim\}{3}{-2pt}[$k-l$ cells]\\
				\vdots&\\
				0&\\
				[\textbf{g}^\ast_{m},\textbf{g}^\ast_{n}](\hbm_{0})&\\
				[\textbf{g}^\ast_{m},\textbf{g}^\ast_{n}](\hbm_{1})&\\
				\vdots&\\
				[\textbf{g}^\ast_{m},\textbf{g}^\ast_{n}](\hbm_{l-1})&\\
				0&\\
				\vdots&	
			\end{bmatrix}\\
			&=&2\left[\hbfm\mapsto \vectinf{[\textbf{g}^\ast_{m},\textbf{g}^\ast_{n}](\hbm_{i-(k-l)})}{i} \right]-\left[\hbfm\mapsto\mathcal{M}_{\{k-l,\cdots,k-1\}^c}\left(\vectinf{[\textbf{g}^\ast_{m},\textbf{g}^\ast_{n}](\hbm_{i-(k-l)})}{i}\right)\right].
		\end{eqnarray*}

		\item[Case $l> k$]\begin{eqnarray*}
			\lqq{D[\hbfm\mapsto\vectinf{\textbf{g}^\ast_{m}(\hbm_{i-k})}{i}]\vectinf{\textbf{g}^\ast_{n}(\hbm_{i+l})}{i}}\\
			&=& \begin{bmatrix}
				0&  0&0 & 0& \cdots &0&0 &\cdots   &\rdelim\}{3}{-2pt}[$k$ cells]              \\
				\vdots&  \vdots&\vdots& \vdots& \cdots &\vdots&\vdots &\vdots     &            \\
				0&  0&0 & 0& \cdots &0&0 &\cdots            &     \\
				J_{\textbf{g}^\ast_m} & 0& 0
				&0  & \cdots &0&0 & \cdots&
				\\
				0&  J_{ \textbf{g}^\ast_m} &0&  0
				&\cdots & \cdots& 0&\cdots &
				\\
				\vdots&\vdots &\ddots &\vdots &\vdots &\vdots&\vdots&\vdots &
				\\
				0            &0 &0
				&J_{ \textbf{g}^\ast_m}  
				&
				\cdots %J_{\textbf{g}^\ast_1 } 
				&0 &  0 &\cdots &
				\\
				\vdots            &\vdots &\vdots
				&\cdots 
				&\ddots &
				\cdots &\vdots &  \cdots&
				\\
				0&0&0&           \cdots                           &\cdots&
				J_{ \textbf{g}^\ast_m}&0 &\cdots&
				\\
				\vdots &\vdots&\vdots&           \vdots                           &\vdots&
				\vdots &\ddots  &\cdots &
			\end{bmatrix}
			\begin{bmatrix}
				\textbf{g}^\ast_{n}(\hbm_{l})\\
				\textbf{g}^\ast_{n}(\hbm_{l+1})\\
				\textbf{g}^\ast_{n}(\hbm_{l+2})\\
				\textbf{g}^\ast_{n}(\hbm_{l+3})\\
				\vdots
			\end{bmatrix}\\
			&=&  \begin{bmatrix}
				0&\rdelim\}{3}{-2pt}[$k$ cells]\\
				\vdots&\\
				0&\\
				J_{ \textbf{g}^\ast_m}\textbf{g}^\ast_{n}(\hbm_{l})&\\
				J_{ \textbf{g}^\ast_m}\textbf{g}^\ast_{n}(\hbm_{l+1})&\\
				\vdots&	
			\end{bmatrix},
		\end{eqnarray*}

		\begin{eqnarray*}
			\lqq{D[\hbfm\mapsto\vectinf{\textbf{g}^\ast_{n}(\hbm_{i-k})}{i}]\vectinf{\textbf{g}^\ast_{m}(\hbm_{i+l})}{i}}\\
			&=& \begin{bmatrix}
				0&  0&0 & 0& \cdots &0&0 &\cdots   &\rdelim\}{3}{-2pt}[$k$ cells]              \\
				\vdots&  \vdots&\vdots& \vdots& \cdots &\vdots&\vdots &\vdots     &            \\
				0&  0&0 & 0& \cdots &0&0 &\cdots            &     \\
				J_{\textbf{g}^\ast_n} & 0& 0
				&0  & \cdots &0&0 & \cdots&
				\\
				0&  J_{ \textbf{g}^\ast_n} &0&  0
				&\cdots & \cdots& 0&\cdots &
				\\
				\vdots&\vdots &\ddots &\vdots &\vdots &\vdots&\vdots&\vdots &
				\\
				0            &0 &0
				&J_{ \textbf{g}^\ast_n}  
				&
				\cdots %J_{\textbf{g}^\ast_1 } 
				&0 &  0 &\cdots &
				\\
				\vdots            &\vdots &\vdots
				&\cdots 
				&\ddots &
				\cdots &\vdots &  \cdots&
				\\
				0&0&0&           \cdots                           &\cdots&
				J_{ \textbf{g}^\ast_n}&0 &\cdots&
				\\
				\vdots &\vdots&\vdots&           \vdots                           &\vdots&
				\vdots &\ddots  &\cdots &
			\end{bmatrix}
			\begin{bmatrix}
				\textbf{g}^\ast_{m}(\hbm_{l})\\
				\textbf{g}^\ast_{m}(\hbm_{l+1})\\
				\textbf{g}^\ast_{m}(\hbm_{l+2})\\
				\textbf{g}^\ast_{m}(\hbm_{l+3})\\
				\vdots
			\end{bmatrix}\\
			&=&  \begin{bmatrix}
				0&\rdelim\}{3}{-2pt}[$k$ cells]\\
				\vdots&\\
				0&\\
				J_{ \textbf{g}^\ast_n}\textbf{g}^\ast_{m}(\hbm_{l})&\\
				J_{ \textbf{g}^\ast_n}\textbf{g}^\ast_{m}(\hbm_{l+1})&\\
				\vdots&	
			\end{bmatrix},
		\end{eqnarray*}
		\begin{eqnarray*}
			\lqq{D[\hbfm\mapsto\vectinf{\textbf{g}^\ast_{n}(\hbm_{i+l})}{i}]\vectinf{\textbf{g}^\ast_{m}(\hbm_{i-k})}{i}}\\
			&=& \begin{pmatrix}
				0&\cdots&0&J_{\textbf{g}^\ast_n} & 0& 0
				&0  & \cdots &0&\cdots 
				\\
				0&\cdots&0&0&  J_{ \textbf{g}^\ast_n} &0&  0
				&\cdots & 0& \cdots
				\\
				\vdots&\cdots&\vdots&\vdots&\vdots &\ddots &\vdots &\vdots &\vdots&\vdots
				\\
				0&\cdots&0&0            &0 &0
				&J_{ \textbf{g}^\ast_n}  
				&
				\cdots %J_{\textbf{g}^\ast_1 } 
				&0 &  0 
				\\
				\vdots&\cdots&\vdots&\vdots &\vdots &\vdots
				&\cdots 
				&\ddots &
				\cdots &\vdots 
				\\
				0&\cdots&0&0&0&0&           \cdots                           &\cdots&
				J_{ \textbf{g}^\ast_n}&0 
				\\
				\vdots & \sunderb{3.5em}{l \text{ cells }}&\vdots&\vdots &\vdots&\vdots&           \vdots                           &\vdots&
				\vdots &\ddots 
			\end{pmatrix}
			\begin{bmatrix}
				0&\rdelim\}{3}{-2pt}[$k$ cells]\\
				\vdots&\\
				0&\\
				\textbf{g}^\ast_{m}(\hbm_{0})&\\
				\textbf{g}^\ast_{m}(\hbm_{1})&\\
				\vdots&	
			\end{bmatrix}\\
			&=&  \begin{bmatrix}
				J_{ \textbf{g}^\ast_n}\textbf{g}^\ast_{m}(\hbm_{l-k})&\\
				J_{ \textbf{g}^\ast_n}\textbf{g}^\ast_{m}(\hbm_{l-k+1})&\\
				\vdots&	
			\end{bmatrix}
		\end{eqnarray*}
		and
		\begin{eqnarray*}
			\lqq{D[\hbfm\mapsto\vectinf{\textbf{g}^\ast_{m}(\hbm_{i+l})}{i}]\vectinf{\textbf{g}^\ast_{n}(\hbm_{i-k})}{i}}\\
			&=& \begin{pmatrix}
				0&\cdots&0&J_{\textbf{g}^\ast_m} & 0& 0
				&0  & \cdots &0&\cdots 
				\\
				0&\cdots&0&0&  J_{ \textbf{g}^\ast_m} &0&  0
				&\cdots & 0& \cdots
				\\
				\vdots&\cdots&\vdots&\vdots&\vdots &\ddots &\vdots &\vdots &\vdots&\vdots
				\\
				0&\cdots&0&0            &0 &0
				&J_{ \textbf{g}^\ast_m}  
				&
				\cdots %J_{\textbf{g}^\ast_1 } 
				&0 &  0 
				\\
				\vdots&\cdots&\vdots&\vdots &\vdots &\vdots
				&\cdots 
				&\ddots &
				\cdots &\vdots 
				\\
				0&\cdots&0&0&0&0&           \cdots                           &\cdots&
				J_{ \textbf{g}^\ast_m}&0 
				\\
				\vdots & \sunderb{3.5em}{l \text{ cells }}&\vdots&\vdots &\vdots&\vdots&           \vdots                           &\vdots&
				\vdots &\ddots 
			\end{pmatrix}
			\begin{bmatrix}
				0&\rdelim\}{3}{-2pt}[$k$ cells]\\
				\vdots&\\
				0&\\
				\textbf{g}^\ast_{n}(\hbm_{0})&\\
				\textbf{g}^\ast_{n}(\hbm_{1})&\\
				\vdots&	
			\end{bmatrix}\\
			&=&  \begin{bmatrix}
				J_{ \textbf{g}^\ast_m}\textbf{g}^\ast_{n}(\hbm_{l-k})&\\
				J_{ \textbf{g}^\ast_m}\textbf{g}^\ast_{n}(\hbm_{l-k+1})&\\
				\vdots&	
			\end{bmatrix}
		\end{eqnarray*}

		Thus, we obtain, 
		\begin{eqnarray*}
			&&[\hbfm\mapsto\vectinf{\textbf{g}^\ast_{m}(\hbm_{i+l})}{i},\hbfm\mapsto\vectinf{\textbf{g}^\ast_{n}(\hbm_{i-k})}{i}]+[\hbfm\mapsto\vectinf{\textbf{g}^\ast_{m}(\hbm_{i-k})}{i},\hbfm\mapsto\vectinf{\textbf{g}^\ast_{n}(\hbm_{i+l})}{i}]\\
			&=&\hbfm\mapsto\begin{bmatrix}
				J_{ \textbf{g}^\ast_m}\textbf{g}^\ast_{n}(\hbm_{l-k})&\\
				J_{ \textbf{g}^\ast_m}\textbf{g}^\ast_{n}(\hbm_{l-k+1})&\\
				\vdots&	
			\end{bmatrix}-\begin{bmatrix}
				0&\rdelim\}{3}{-2pt}[$k$ cells]\\
				\vdots&\\
				0&\\
				J_{ \textbf{g}^\ast_n}\textbf{g}^\ast_{m}(\hbm_{l})&\\
				J_{ \textbf{g}^\ast_n}\textbf{g}^\ast_{m}(\hbm_{l+1})&\\
				\vdots&	
			\end{bmatrix}+\begin{bmatrix}
				0&\rdelim\}{3}{-2pt}[$k$ cells]\\
				\vdots&\\
				0&\\
				J_{ \textbf{g}^\ast_m}\textbf{g}^\ast_{n}(\hbm_{l})&\\
				J_{ \textbf{g}^\ast_m}\textbf{g}^\ast_{n}(\hbm_{l+1})&\\
				\vdots&	
			\end{bmatrix}-\begin{bmatrix}
				J_{ \textbf{g}^\ast_n}\textbf{g}^\ast_{m}(\hbm_{l-k})&\\
				J_{ \textbf{g}^\ast_n}\textbf{g}^\ast_{m}(\hbm_{l-k+1})&\\
				\vdots&	
			\end{bmatrix}\\
			&=&\hbfm\mapsto2\begin{bmatrix}
				[\textbf{g}^\ast_{m},\textbf{g}^\ast_{n}](\hbm_{l-k})&\\
				[\textbf{g}^\ast_{m},\textbf{g}^\ast_{n}](\hbm_{l-k+1})&\\
				\vdots&	
			\end{bmatrix}-2\begin{bmatrix}
				J_{ \textbf{g}^\ast_n}\textbf{g}^\ast_{m}(\hbm_{l-k})&&\\
				J_{ \textbf{g}^\ast_n}\textbf{g}^\ast_{m}(\hbm_{l-k+1})&&\\
				\vdots&\\
				J_{ \textbf{g}^\ast_n}\textbf{g}^\ast_{m}(\hbm_{l-1})&\\
				0&\\
				0&\\
				\vdots&	
			\end{bmatrix}\\
			&=&\left[\hbfm\mapsto \vectinf{[\textbf{g}^\ast_{m},\textbf{g}^\ast_{n}](\hbm_{i+(l-k)})}{i} \right]-\left[\hbfm\mapsto\mathcal{M}_{\{k-l,\cdots,k-1\}^c}\left(\vectinf{[\textbf{g}^\ast_{m},\textbf{g}^\ast_{n}](\hbm_{i+(l-k)})}{i}\right)\right].
		\end{eqnarray*}
	\end{description}
}

		Using the identity
		$$\combin{n}{k}=\combin{n-1}{k-1}+\combin{n-1}{k},$$
		it is straightforward to verify that
		\begin{eqnarray*}
			K_3+I_3+J_2&=& \Bigg[\hbfm\mapsto\sum_{j=0}^{k-1}\combin{2k+1}{j} \Big[\mathcal{M}_{\{2(k-j)\}^c}
			\left(\vectinf{[\textbf{g}^\ast_{n},\textbf{g}^\ast_{m}](\hbm_{i-2(k-j)})}{i}\right)\Big] \Bigg]\\
			&&+\Bigg[\hbfm\mapsto\combin{2k+1}{k} \Big[\mathcal{M}_{\{0\}^c}
			\left(\vectinf{J_{ \textbf{g}^\ast_n}\textbf{g}^\ast_{m}(\hbm_{i})}{i}\right)\Big] \Bigg],\\
			K_4+J_3&=& \Bigg[\hbfm\mapsto\sum_{j=0}^{k-1}\combin{2k}{j}\Big[\mathcal{M}_{\{2(k-j)-1\}^c}
			\left(\vectinf{[\textbf{g}^\ast_{n},\textbf{g}^\ast_{m}](\hbm_{i-2(k-j)+2})}{i}\right)\\
			&&\qquad+\mathcal{M}_{\{2(k-j)-2\}^c}
			\left(\vectinf{[\textbf{g}^\ast_{n},\textbf{g}^\ast_{m}](\hbm_{i-2(k-j)+2})}{i}\right)\Big] \Bigg],\\
			K_6+I_2+J_4&=&\Bigg[\hbfm\mapsto\sum_{j=k+1}^{2k}\combin{2k}{j+1} \mathcal{M}_{\{0\}^c}
			\left(\vectinf{J_{ \textbf{g}^\ast_{n}}\textbf{g}^\ast_{m}(\hbm_{i+2(j-k)})}{i}\right) \Bigg]\\
			&&-\left[\hbfm\mapsto\sum_{j=k+1}^{2k+1}\combin{2k+1}{j}\mathcal{M}_{\{0\}^c}\left(\vectinf{J_{ \textbf{g}^\ast_{m}}\textbf{g}^\ast_{n}(\hbm_{i-2k+2j-2})}{i}\right)\right]\\
			&&+\left[\hbfm\mapsto\sum_{j=k+1}^{2k}\combin{2k}{j} \mathcal{M}_{\{0\}^c}\left(\vectinf{J_{ \textbf{g}^\ast_{n}}\textbf{g}^\ast_{m}(\hbm_{i+2(j-k)})}{i}\right)
			\right]\\
			&=& \Bigg[\hbfm\mapsto\sum_{j=k+1}^{2k}\combin{2k}{j+1} \mathcal{M}_{\{0\}^c}
			\left(\vectinf{J_{ \textbf{g}^\ast_{n}}\textbf{g}^\ast_{m}(\hbm_{i+2(j-k)})}{i}\right) \Bigg]\\
			&&-\left[\hbfm\mapsto\sum_{j=k}^{2k}\combin{2k+1}{j+1}\mathcal{M}_{\{0\}^c}\left(\vectinf{J_{ \textbf{g}^\ast_{m}}\textbf{g}^\ast_{n}(\hbm_{i+2(j-k)})}{i}\right)\right]\\
			&&+\left[\hbfm\mapsto\sum_{j=k+1}^{2k}\combin{2k}{j} \mathcal{M}_{\{0\}^c}\left(\vectinf{J_{ \textbf{g}^\ast_{n}}\textbf{g}^\ast_{m}(\hbm_{i+2(j-k)})}{i}\right)
			\right]\\
			&=&- \Bigg[\hbfm\mapsto\combin{2k+1}{k+1}\Big[\mathcal{M}_{\{0\}^c}
			\left(\vectinf{J_{ \textbf{g}^\ast_{m}}\textbf{g}^\ast_{n}(\hbm_{i})}{i}\right)\Big] \Bigg]\\
			&&+\left[\hbfm\mapsto\sum_{j=k+1}^{2k}\combin{2k+1}{j+1}\mathcal{M}_{\{0\}^c}\left(\vectinf{[\textbf{g}^\ast_{n},\textbf{g}^\ast_{m}](\hbm_{i+2(j-k)})}{i}\right)\right],\\
			K_5+K_7+I_4&=& \Bigg[\hbfm\mapsto\sum_{j=k+2}^{2k}\combin{2k}{j} \Big[\mathcal{M}_{\{1\}^c}
			\left(\vectinf{[\textbf{g}^\ast_{n},\textbf{g}^\ast_{m}](\hbm_{i+2(j-k)-2})}{i}\right)\\
			&&\qquad+\mathcal{M}_{\{0\}^c}
			\left(\vectinf{[\textbf{g}^\ast_{n},\textbf{g}^\ast_{m}](\hbm_{i+2(j-k)-2})}{i}\right)\Big] \Bigg]
		\end{eqnarray*}
		Thus, we get
		\begin{eqnarray}
			\nonumber\lqq{\left[\mathcal{H}_{\textbf{g}^\ast_{n},2k},h_{0,3}\right]}\\
			\nonumber&=&I_1-I_2-I_3-I_4-I_5-I_6+J_1-J_2-J_3-J_4-J_5-J_6\\
			\nonumber&=& I_1-I_2-I_3-I_4+J_1-J_2-J_3-J_4-K_1-K_2-K_3-K_4-K_5-K_6-K_7\\
			\nonumber&=& \left[\hbfm\mapsto\sum_{j=0}^{2k+2}\combin{2k+2}{j}\vectinf{[\textbf{g}^\ast_{n},\textbf{g}^\ast_{m}](\hbm_{i-2k+2j-2})}{i}\right]\\
			\nonumber&&-\Bigg[\hbfm\mapsto\sum_{j=1}^{k-1}\sum_{m=1}^{2(k-j)-1}\combin{2k}{j-1} \Big[\mathcal{M}_{\{m-1\}^c}
			\left(\vectinf{[\textbf{g}^\ast_{n},\textbf{g}^\ast_{m}](\hbm_{i+2(k-j-m)})}{i}\right)\\
			\nonumber&&\qquad+\mathcal{M}_{\{m\}^c}
			\left(\vectinf{[\textbf{g}^\ast_{n},\textbf{g}^\ast_{m}](\hbm_{i+2(k-j-m)})}{i}\right)\Big] \Bigg]\\
			\nonumber&&-\Bigg[\hbfm\mapsto\sum_{j=1}^{k-1}\sum_{m=1}^{2(k-j)-1}\combin{2k}{j-1}\Big[\mathcal{M}_{\{m\}^c}
			\left(\vectinf{[\textbf{g}^\ast_{n},\textbf{g}^\ast_{m}](\hbm_{i+2(k-j-m)})}{i}\right)\\
			\nonumber&&\qquad+\mathcal{M}_{\{m+1\}^c}
			\left(\vectinf{[\textbf{g}^\ast_{n},\textbf{g}^\ast_{m}](\hbm_{i+2(k-j-m)})}{i}\right)\Big] \Bigg]\\
			\nonumber&&-\Bigg[\hbfm\mapsto\sum_{j=0}^{k-1}\combin{2k+1}{j} \Big[\mathcal{M}_{\{2(k-j)\}^c}
			\left(\vectinf{[\textbf{g}^\ast_{n},\textbf{g}^\ast_{m}](\hbm_{i-2(k-j)})}{i}\right)\Big] \Bigg]\\
			\nonumber&&-\Bigg[\hbfm\mapsto\combin{2k+1}{k} \Big[\mathcal{M}_{\{0\}^c}
			\left(\vectinf{[\textbf{g}^\ast_{n},\textbf{g}^\ast_{m}](\hbm_{i})}{i}\right)\Big] \Bigg]\\
			\nonumber&&-\Bigg[\hbfm\mapsto\sum_{j=0}^{k-1}\combin{2k}{j}\Big[\mathcal{M}_{\{2(k-j)-1\}^c}
			\left(\vectinf{[\textbf{g}^\ast_{n},\textbf{g}^\ast_{m}](\hbm_{i-2(k-j)+2})}{i}\right)\\
			\nonumber&&\qquad+\mathcal{M}_{\{2(k-j)-2\}^c}
			\left(\vectinf{[\textbf{g}^\ast_{n},\textbf{g}^\ast_{m}](\hbm_{i-2(k-j)+2})}{i}\right)\Big] \Bigg]\\
			\nonumber&&-\left[\hbfm\mapsto\sum_{j=k+1}^{2k}\combin{2k+1}{j+1}\mathcal{M}_{\{0\}^c}\left(\vectinf{[\textbf{g}^\ast_{n},\textbf{g}^\ast_{m}](\hbm_{i+2(j-k)})}{i}\right)\right]\\
			\nonumber&&-\Bigg[\hbfm\mapsto\sum_{j=k+2}^{2k}\combin{2k}{j} \Big[\mathcal{M}_{\{1\}^c}
			\left(\vectinf{[\textbf{g}^\ast_{n},\textbf{g}^\ast_{m}](\hbm_{i+2(j-k)-2})}{i}\right)\\
			\nonumber&&\qquad+\mathcal{M}_{\{0\}^c}
			\left(\vectinf{[\textbf{g}^\ast_{n},\textbf{g}^\ast_{m}](\hbm_{i+2(j-k)-2})}{i}\right)\Big] \Bigg]\\
			\nonumber&=& \left[\hbfm\mapsto\sum_{j=0}^{2k+2}\combin{2k+2}{j}\vectinf{[\textbf{g}^\ast_{n},\textbf{g}^\ast_{m}](\hbm_{i-2k+2j-2})}{i}\right]\\
			\nonumber&&-\Bigg[\hbfm\mapsto\sum_{j=0}^{k-1}\combin{2k+1}{j} \Big[\mathcal{M}_{\{2(k-j)\}^c}
			\left(\vectinf{[\textbf{g}^\ast_{n},\textbf{g}^\ast_{m}](\hbm_{i-2(k-j)})}{i}\right)\Big] \Bigg]\\
			\nonumber&&-\Bigg[\hbfm\mapsto\combin{2k+1}{k} \Big[\mathcal{M}_{\{0\}^c}
			\left(\vectinf{[\textbf{g}^\ast_{n},\textbf{g}^\ast_{m}](\hbm_{i})}{i}\right)\Big] \Bigg]\\
			\nonumber&&-\left[\hbfm\mapsto\sum_{j=k+1}^{2k}\combin{2k+1}{j+1}\mathcal{M}_{\{0\}^c}\left(\vectinf{[\textbf{g}^\ast_{n},\textbf{g}^\ast_{m}](\hbm_{i+2(j-k)})}{i}\right)\right]\\
			\nonumber&&-\Bigg[\hbfm\mapsto\sum_{j=1}^{k-1}\sum_{m=1}^{2(k-j)-1}\combin{2k}{j-1} \Big[\mathcal{M}_{\{m-1\}^c}
			\left(\vectinf{[\textbf{g}^\ast_{n},\textbf{g}^\ast_{m}](\hbm_{i+2(k-j-m)})}{i}\right)\\
			\nonumber&&\qquad+\mathcal{M}_{\{m\}^c}
			\left(\vectinf{[\textbf{g}^\ast_{n},\textbf{g}^\ast_{m}](\hbm_{i+2(k-j-m)})}{i}\right)\Big] \Bigg]\\
			\nonumber&&-\Bigg[\hbfm\mapsto\sum_{j=1}^{k-1}\sum_{m=1}^{2(k-j)-1}\combin{2k}{j-1}\Big[\mathcal{M}_{\{m\}^c}
			\left(\vectinf{[\textbf{g}^\ast_{n},\textbf{g}^\ast_{m}](\hbm_{i+2(k-j-m)})}{i}\right)\\
			\nonumber&&\qquad+\mathcal{M}_{\{m+1\}^c}
			\left(\vectinf{[\textbf{g}^\ast_{n},\textbf{g}^\ast_{m}](\hbm_{i+2(k-j-m)})}{i}\right)\Big] \Bigg]\\
			\nonumber&&-\Bigg[\hbfm\mapsto\sum_{j=0}^{k-1}\combin{2k}{j}\Big[\mathcal{M}_{\{2(k-j)-1\}^c}
			\left(\vectinf{[\textbf{g}^\ast_{n},\textbf{g}^\ast_{m}](\hbm_{i-2(k-j)+2})}{i}\right)\\
			\nonumber&&\qquad+\mathcal{M}_{\{2(k-j)-2\}^c}
			\left(\vectinf{[\textbf{g}^\ast_{n},\textbf{g}^\ast_{m}](\hbm_{i-2(k-j)+2})}{i}\right)\Big] \Bigg]\\
			\nonumber&&-\Bigg[\hbfm\mapsto\sum_{j=k+2}^{2k}\combin{2k}{j} \Big[\mathcal{M}_{\{1\}^c}
			\left(\vectinf{[\textbf{g}^\ast_{n},\textbf{g}^\ast_{m}](\hbm_{i+2(j-k)-2})}{i}\right)\\
			\nonumber&&\qquad+\mathcal{M}_{\{0\}^c}
			\left(\vectinf{[\textbf{g}^\ast_{n},\textbf{g}^\ast_{m}](\hbm_{i+2(j-k)-2})}{i}\right)\Big] \Bigg]\\
			\nonumber&=& \left[\hbfm\mapsto\sum_{j=0}^{2k+2}\combin{2k+2}{j}\vectinf{[\textbf{g}^\ast_{n},\textbf{g}^\ast_{m}](\hbm_{i-2k+2j-2})}{i}\right]\\
			\nonumber&&-\Bigg[\hbfm\mapsto\sum_{j=0}^{k}\combin{2k+1}{j} \Big[\mathcal{M}_{\{2(k-j)\}^c}
			\left(\vectinf{[\textbf{g}^\ast_{n},\textbf{g}^\ast_{m}](\hbm_{i-2(k-j)})}{i}\right)\Big] \Bigg]\\
			\nonumber&&-\left[\hbfm\mapsto\sum_{j=k+2}^{2k+1}\combin{2k+1}{j}\mathcal{M}_{\{0\}^c}\left(\vectinf{[\textbf{g}^\ast_{n},\textbf{g}^\ast_{m}](\hbm_{i+2(j-1-k)})}{i}\right)\right]\\
			\nonumber&&-\Bigg[\hbfm\mapsto\sum_{j=1}^{k-1}\sum_{m=1}^{2(k-j)-1}\combin{2k}{j-1} \Big[\mathcal{M}_{\{m-1\}^c}
			\left(\vectinf{[\textbf{g}^\ast_{n},\textbf{g}^\ast_{m}](\hbm_{i+2(k-j-m)})}{i}\right)\\
			\nonumber&&\qquad+\mathcal{M}_{\{m\}^c}
			\left(\vectinf{[\textbf{g}^\ast_{n},\textbf{g}^\ast_{m}](\hbm_{i+2(k-j-m)})}{i}\right)\Big] \Bigg]\\
			\nonumber&&-\Bigg[\hbfm\mapsto\sum_{j=1}^{k-1}\sum_{m=1}^{2(k-j)-1}\combin{2k}{j-1}\Big[\mathcal{M}_{\{m\}^c}
			\left(\vectinf{[\textbf{g}^\ast_{n},\textbf{g}^\ast_{m}](\hbm_{i+2(k-j-m)})}{i}\right)\\
			\nonumber&&\qquad+\mathcal{M}_{\{m+1\}^c}
			\left(\vectinf{[\textbf{g}^\ast_{n},\textbf{g}^\ast_{m}](\hbm_{i+2(k-j-m)})}{i}\right)\Big] \Bigg]\\
			\nonumber&&-\Bigg[\hbfm\mapsto\sum_{j=0}^{k-1}\combin{2k}{j}\Big[\mathcal{M}_{\{2(k-j)-1\}^c}
			\left(\vectinf{[\textbf{g}^\ast_{n},\textbf{g}^\ast_{m}](\hbm_{i-2(k-j)+2})}{i}\right)\\
			\nonumber&&\qquad+\mathcal{M}_{\{2(k-j)-2\}^c}
			\left(\vectinf{[\textbf{g}^\ast_{n},\textbf{g}^\ast_{m}](\hbm_{i-2(k-j)+2})}{i}\right)\Big] \Bigg]\\
			\nonumber&&-\Bigg[\hbfm\mapsto\sum_{j=k+2}^{2k}\combin{2k}{j} \Big[\mathcal{M}_{\{1\}^c}
			\left(\vectinf{[\textbf{g}^\ast_{n},\textbf{g}^\ast_{m}](\hbm_{i+2(j-k)-2})}{i}\right)\\
			&&\qquad+\mathcal{M}_{\{0\}^c}
			\left(\vectinf{[\textbf{g}^\ast_{n},\textbf{g}^\ast_{m}](\hbm_{i+2(j-k)-2})}{i}\right)\Big] \Bigg]=:L_1+L_2+\cdots+L_7.\label{eq:a1}
		\end{eqnarray}
		Observe that
		\begin{eqnarray}
			\nonumber L_4&=& \Bigg[\hbfm\mapsto\sum_{j=2}^{k}\sum_{m=0}^{2(k-j)}\combin{2k}{j-2} \Big[\mathcal{M}_{\{m\}^c}
			\left(\vectinf{[\textbf{g}^\ast_{n},\textbf{g}^\ast_{m}](\hbm_{i+2(k-j-m)})}{i}\right)\\
			\nonumber&&\qquad+\mathcal{M}_{\{m+1\}^c}
			\left(\vectinf{[\textbf{g}^\ast_{n},\textbf{g}^\ast_{m}](\hbm_{i+2(k-j-m)})}{i}\right)\Big] \Bigg]\\
			&=&\nonumber\Bigg[\hbfm\mapsto\sum_{j=2}^{k-1}\sum_{m=0}^{2(k-j)}\combin{2k}{j-2} \Big[\mathcal{M}_{\{m\}^c}
			\left(\vectinf{[\textbf{g}^\ast_{n},\textbf{g}^\ast_{m}](\hbm_{i+2(k-j-m)})}{i}\right)\\
			\nonumber&&\qquad+\mathcal{M}_{\{m+1\}^c}
			\left(\vectinf{[\textbf{g}^\ast_{n},\textbf{g}^\ast_{m}](\hbm_{i+2(k-j-m)})}{i}\right)\Big] \Bigg]\\
			\nonumber&&+\Bigg[\hbfm\mapsto\combin{2k}{k-2} \Big[\mathcal{M}_{\{0\}^c}
			\left(\vectinf{[\textbf{g}^\ast_{n},\textbf{g}^\ast_{m}](\hbm_{i})}{i}\right)+\mathcal{M}_{\{m+1\}^c}
			\left(\vectinf{[\textbf{g}^\ast_{n},\textbf{g}^\ast_{m}](\hbm_{i})}{i}\right)\Big] \Bigg]\\\label{eq:a2}	
		\end{eqnarray}
		and
		\begin{eqnarray*}
			L_5&=&\Bigg[\hbfm\mapsto\sum_{j=1}^{k-1}\sum_{m=0}^{2(k-j)}\combin{2k}{j-1}\Big[\mathcal{M}_{\{m\}^c}
			\left(\vectinf{[\textbf{g}^\ast_{n},\textbf{g}^\ast_{m}](\hbm_{i+2(k-j-m)})}{i}\right)\\
			\nonumber&&\qquad+\mathcal{M}_{\{m+1\}^c}
			\left(\vectinf{[\textbf{g}^\ast_{n},\textbf{g}^\ast_{m}](\hbm_{i+2(k-j-m)})}{i}\right)\Big] \Bigg]\\
			&&-\Bigg[\hbfm\mapsto\sum_{j=1}^{k-1}\combin{2k}{j-1} \Big[\mathcal{M}_{\{0\}^c}
			\left(\vectinf{[\textbf{g}^\ast_{n},\textbf{g}^\ast_{m}](\hbm_{i+2(k-j)})}{i}\right)\\
			&&\qquad+\mathcal{M}_{\{1\}^c}
			\left(\vectinf{[\textbf{g}^\ast_{n},\textbf{g}^\ast_{m}](\hbm_{i+2(k-j)})}{i}\right)\Big] \Bigg]	\\
			&&-\Bigg[\hbfm\mapsto\sum_{j=1}^{k-1}\combin{2k}{j-1} \Big[\mathcal{M}_{\{2(k-j)\}^c}
			\left(\vectinf{[\textbf{g}^\ast_{n},\textbf{g}^\ast_{m}](\hbm_{i-2(k-j)})}{i}\right)\\
			&&\qquad+\mathcal{M}_{\{2(k-j)+1\}^c}
			\left(\vectinf{[\textbf{g}^\ast_{n},\textbf{g}^\ast_{m}](\hbm_{i-2(k-j)})}{i}\right)\Big] \Bigg]	\\
		\end{eqnarray*}
		Since 
		\begin{eqnarray*}
			&&\Bigg[\hbfm\mapsto\sum_{j=1}^{k-1}\combin{2k}{j-1} \Big[\mathcal{M}_{\{0\}^c}
			\left(\vectinf{[\textbf{g}^\ast_{n},\textbf{g}^\ast_{m}](\hbm_{i+2(k-j)})}{i}\right)\\
			&&\qquad+\mathcal{M}_{\{1\}^c}
			\left(\vectinf{[\textbf{g}^\ast_{n},\textbf{g}^\ast_{m}](\hbm_{i+2(k-j)})}{i}\right)\Big] \Bigg]	\\
			&&=\Bigg[\hbfm\mapsto\sum_{j=k+2}^{2k}\combin{2k}{j} \Big[\mathcal{M}_{\{0\}^c}
			\left(\vectinf{[\textbf{g}^\ast_{n},\textbf{g}^\ast_{m}](\hbm_{i+2(j-k)-2})}{i}\right)\\
			&&\qquad+\mathcal{M}_{\{1\}^c}
			\left(\vectinf{[\textbf{g}^\ast_{n},\textbf{g}^\ast_{m}](\hbm_{i+2(j-k)-2})}{i}\right)\Big] \Bigg]=L_7
		\end{eqnarray*} and
		\begin{eqnarray*}
			&&\Bigg[\hbfm\mapsto\sum_{j=1}^{k-1}\combin{2k}{j-1} \Big[\mathcal{M}_{\{2(k-j)\}^c}
			\left(\vectinf{[\textbf{g}^\ast_{n},\textbf{g}^\ast_{m}](\hbm_{i-2(k-j)})}{i}\right)\\
			&&\qquad+\mathcal{M}_{\{2(k-j)+1\}^c}
			\left(\vectinf{[\textbf{g}^\ast_{n},\textbf{g}^\ast_{m}](\hbm_{i-2(k-j)})}{i}\right)\Big] \Bigg]	\\
			&&=\Bigg[\hbfm\mapsto\sum_{j=0}^{k-2}\combin{2k}{j} \Big[\mathcal{M}_{\{2(k-j)-2\}^c}
			\left(\vectinf{[\textbf{g}^\ast_{n},\textbf{g}^\ast_{m}](\hbm_{i-2(k-j)+2})}{i}\right)\\
			&&\qquad+\mathcal{M}_{\{2(k-j)-1\}^c}
			\left(\vectinf{[\textbf{g}^\ast_{n},\textbf{g}^\ast_{m}](\hbm_{i-2(k-j)+2})}{i}\right)\Big] \Bigg] \\
			&&=\Bigg[\hbfm\mapsto\sum_{j=0}^{k-1}\combin{2k}{j} \Big[\mathcal{M}_{\{2(k-j)-2\}^c}
			\left(\vectinf{[\textbf{g}^\ast_{n},\textbf{g}^\ast_{m}](\hbm_{i-2(k-j)+2})}{i}\right)\\
			&&\qquad+\mathcal{M}_{\{2(k-j)-1\}^c}
			\left(\vectinf{[\textbf{g}^\ast_{n},\textbf{g}^\ast_{m}](\hbm_{i-2(k-j)+2})}{i}\right)\Big] \Bigg]\\
			&&-\Bigg[\hbfm\mapsto\combin{2k}{k-1} \Big[\mathcal{M}_{\{0\}^c}
			\left(\vectinf{[\textbf{g}^\ast_{n},\textbf{g}^\ast_{m}](\hbm_{i})}{i}\right)+\mathcal{M}_{\{1\}^c}
			\left(\vectinf{[\textbf{g}^\ast_{n},\textbf{g}^\ast_{m}](\hbm_{i})}{i}\right)\Big] \Bigg]\\
			&=&L_6-\Bigg[\hbfm\mapsto\combin{2k}{k-1} \Big[\mathcal{M}_{\{0\}^c}
			\left(\vectinf{[\textbf{g}^\ast_{n},\textbf{g}^\ast_{m}](\hbm_{i})}{i}\right)+\mathcal{M}_{\{1\}^c}
			\left(\vectinf{[\textbf{g}^\ast_{n},\textbf{g}^\ast_{m}](\hbm_{i})}{i}\right)\Big] \Bigg].
		\end{eqnarray*}
		we obtain
		\begin{eqnarray}
			\nonumber L_5&=&\Bigg[\hbfm\mapsto\sum_{j=1}^{k-1}\sum_{m=0}^{2(k-j)}\combin{2k}{j-1}\Big[\mathcal{M}_{\{m\}^c}
			\left(\vectinf{[\textbf{g}^\ast_{n},\textbf{g}^\ast_{m}](\hbm_{i+2(k-j-m)})}{i}\right)\\
			\nonumber&&\qquad+\mathcal{M}_{\{m+1\}^c}
			\left(\vectinf{[\textbf{g}^\ast_{n},\textbf{g}^\ast_{m}](\hbm_{i+2(k-j-m)})}{i}\right)\Big] \Bigg]\\
			\nonumber&&-L_7-L_6+\Bigg[\hbfm\mapsto\combin{2k}{k-1} \Big[\mathcal{M}_{\{0\}^c}
			\left(\vectinf{[\textbf{g}^\ast_{n},\textbf{g}^\ast_{m}](\hbm_{i})}{i}\right)+\mathcal{M}_{\{m+1\}^c}
			\left(\vectinf{[\textbf{g}^\ast_{n},\textbf{g}^\ast_{m}](\hbm_{i})}{i}\right)\Big] \Bigg].\\
			\label{eq:a3}
		\end{eqnarray}
		Substituting \eqref{eq:a2} and \eqref{eq:a3} into \eqref{eq:a1}, we obtain 
		\begin{eqnarray*}
			\left[\mathcal{H}_{\textbf{g}^\ast_{n},2k+1},h_{0,3}\right]=\mathcal{H}_{[\textbf{g}^\ast_{n},\textbf{g}^\ast_{m}],2k+2}.
		\end{eqnarray*}

\section{Geometric Constraints in the Fourier Space}
In this section, we derive structural restrictions on $\mathbb S^2$-valued functions with Neumann
(cosine) Fourier expansions. In particular, we show that a finite cosine expansion satisfying the
sphere constraint must be constant, and that a non-constant sphere-valued function necessarily has
Fourier coefficient vectors spanning at least a two-dimensional subspace.
%In this section we will proof some lemmata which we need treating the solution in term of frequencies.
%\coma{Please, how is exactly the notation in the lemma??}
\del{ \begin{lem}\label{lem:unit_trig_poly}
		Let
		\[
		a_k=(a_k^{(1)},a_k^{(2)},a_k^{(3)})^\top\in\mathbb{R}^3,\qquad k=0,1,\dots,n,
		\]
		with $n\ge 1$, and define
		\[
		f(x)=\sum_{k=0}^n a_k\cos(2k\pi x),\qquad x\in(0,1).
		\]
		If $\|f(x)\|_{\mathbb{R}^3}=1$ for all $x\in(0,1)$, then
		\[
		a_k=(0,0,0)^\top \quad\text{for all }k=1,\dots,n,
		\qquad\text{and}\qquad
		\|a_0\|_{\mathbb{R}^3}=1.
		\]
		In particular, $f(x)\equiv a_0$ is constant.
	\end{lem}
	
	\begin{proof}
		We compute the squared norm:
		$$
		\|f(x)\|^2=\langle f(x),f(x)\rangle
		=\sum_{i=1}^3\left(\sum_{k=0}^n a_k^{(i)}\cos(2k\pi x)\right)^2.
		$$
		Expanding,
		$$
		\|f(x)\|^2=\sum_{i=1}^3\sum_{k=0}^n\sum_{\ell=0}^n 
		a_k^{(i)}a_\ell^{(i)}\cos(2k\pi x)\cos(2\ell\pi x).
		$$
		Using the product-to-sum identity
		$$
		\cos A\cos B = \tfrac12\big(\cos(A+B)+\cos(A-B)\big),
		$$
		we find
		$$
		\|f(x)\|^2
		=\tfrac12\sum_{i=1}^3\sum_{k,\ell=0}^n a_k^{(i)}a_\ell^{(i)}\cos\!\big(2(k+\ell)\pi x\big)
		+\tfrac12\sum_{i=1}^3\sum_{k,\ell=0}^n a_k^{(i)}a_\ell^{(i)}\cos\!\big(2(k-\ell)\pi x\big).
		$$
		Hence $\|f(x)\|^2$ is a trigonometric polynomial with frequencies among 
		$$
		0, 2\pi, 4\pi, \dots, 2n\cdot 2\pi.
		$$
		Now consider the highest frequency $\cos(4n\pi x)$.  
		This term arises only when $k=\ell=n$, since $k+\ell=2n$ is possible only in this case. Then
		$$
		\cos(2n\pi x)\cos(2n\pi x) = \tfrac12\big(1+\cos(4n\pi x)\big),
		$$
		so the coefficient of $\cos(4n\pi x)$ in $\|f(x)\|^2$ is
		$$
		\tfrac12\sum_{i=1}^3 (a_n^{(i)})^2 = \tfrac12\|a_n\|^2.
		$$
		But by assumption, $\|f(x)\|^2\equiv 1$ is constant.  
		A constant trigonometric polynomial has zero coefficients for all nonzero frequencies, so
		$$
		\tfrac12\|a_n\|^2=0 \quad\implies\quad a_n=0.
		$$
		Now repeat the argument with
		$$
		f_1(x)=\sum_{k=0}^{n-1} a_k\cos(2k\pi x).
		$$
		The highest frequency in $\|f_1(x)\|^2$ is $\cos(4(n-1)\pi x)$, whose coefficient is $\tfrac12\|a_{n-1}\|^2$.  
		This must vanish, so $a_{n-1}=0$.  
		By downward induction, we deduce
		$$
		a_1=a_2=\cdots=a_n=0.
		$$
		Finally, $f(x)=v_0$, a constant vector of norm $1$.  
		Thus, the lemma is proved.
\end{proof}}

\medskip
In the following lemma, we show that for a \emph{finite} Neumann (cosine) Fourier expansion with the constraint being on a sphere  enforces constancy: if
\[
f(x)=\sum_{k=0}^n a_k\cos(2k\pi x)\in\mathbb{R}^3
\quad\text{satisfies}\quad |f(x)|_{\mathbb{R}^3}=1\ \ \text{for all }x\in(0,1),
\]
then necessarily $a_k=0$ for all $k\ge 1$ and $|a_0|_{\mathbb{R}^3}=1$.
Equivalently, a nontrivial finite cosine series cannot take values in $\mathbb{S}^2$; only the zero mode survives.
{\begin{lem}\label{lem:unit_cos_series}
		Let $(a_k)_{k\ge 0}\subset \mathbb{R}^3$ and consider the cosine series
		\[
		f(x)=\sum_{k=0}^\infty a_k\cos(2k\pi x),\qquad x\in(0,1).
		\]
		Assume that the series converges to $f$ in $L^2(0,1;\mathbb{R}^3)$ (for instance, it converges
		uniformly on $[0,1]$). If $|f(x)|_{\mathbb{R}^3}=1$ for a.e.\ $x\in(0,1)$, then
		\[
		a_k=0\quad\text{for all }k\ge 1,
		\qquad\text{and}\qquad
		|a_0|_{\mathbb{R}^3}=1.
		\]
		In particular, $f(x)\equiv a_0$ is constant.
	\end{lem}
	
	\begin{proof}
		Set $g(x):=|f(x)|_{\mathbb{R}^3}^2$. By assumption, $g(x)=1$ for a.e.\ $x\in(0,1)$, hence
		all cosine Fourier coefficients of $g$ vanish except the zero mode.
		For $m\ge 1$ consider the $m$-th cosine coefficient of $g$:
		\[
		\widehat g_m := 2\int_0^1 g(x)\cos(2m\pi x)\,dx.
		\]
		Since $g\equiv 1$, we have $\widehat g_m=0$ for all $m\ge 1$.
		
		On the other hand, using $f=\sum_{k\ge 0} a_k\cos(2k\pi x)$ and the product-to-sum identity,
		one checks (justified by the $L^2$ convergence) that the cosine coefficient at frequency $m\ge 1$
		is given by
		\[
		\widehat g_m
		=\sum_{k=0}^\infty \langle a_k, a_{k+m}\rangle_{\mathbb{R}^3}.
		\]
		Thus, for every $m\ge 1$,
		\begin{equation}\label{eq:autocorr}
			\sum_{k=0}^\infty \langle a_k, a_{k+m}\rangle_{\mathbb{R}^3}=0.
		\end{equation}
		In particular, taking $m$ so large that only one term survives is not possible in the infinite case;
		instead, we use \eqref{eq:autocorr} together with $m$-by-$m$ elimination as follows.
		First, note that the zero-mode of $g$ equals
		\[
		\int_0^1 g(x)\,dx=\int_0^1 1\,dx=1.
		\]
		But also
		\[
		\int_0^1 g(x)\,dx
		=\int_0^1 \|f(x)\|^2\,dx
		=\sum_{k=0}^\infty |a_k|_{\mathbb{R}^3}^2 \int_0^1 \cos^2(2k\pi x)\,dx
		= \|a_0\|^2 + \frac12\sum_{k=1}^\infty \|a_k\|^2,
		\]
		hence
		\begin{equation}\label{eq:energy}
			\|a_0\|^2 + \frac12\sum_{k=1}^\infty \|a_k\|^2 = 1.
		\end{equation}
		
		Now suppose, for contradiction, that there exists $k_0\ge 1$ with $a_{k_0}\neq 0$.
		Let $m\ge 1$ be such that $m$ is a multiple of $k_0$ and use \eqref{eq:autocorr}.
		Combining \eqref{eq:autocorr} for several values of $m$ with Cauchy--Schwarz and \eqref{eq:energy}
		forces $\sum_{k\ge 1}\|a_k\|^2=0$, hence $a_k=0$ for all $k\ge 1$.
		Then \eqref{eq:energy} yields $\|a_0\|=1$.
\end{proof}}

The second lemma states that a non-constant $\mathbb S^2$-valued function cannot have all its cosine Fourier
coefficients collinear: the set of coefficient vectors $\{a_k\}_{k\ge 0}$ must span at least a
two-dimensional subspace of $\mathbb R^3$ and 
$f$ cannot have only finitely many nonzero coefficients. 
%Equivalently, if all $a_k$ lie in a one-dimensional
%subspace, then $f$ must be constant (under the regularity assumptions of the lemma).
For a non-constant $f$, it is not necessary that each nonzero Fourier mode $k\ge 1$ involves two
independent directions. What is necessary is that the family of coefficient vectors
$\{a_k\}_{k\ge 0}$ is not contained in a one-dimensional subspace of $\mathbb R^3$, i.e.
$\dim\Span\{a_0,a_1,a_2,\dots\}\ge 2$.
An example is given, e.g.,  by
$$
f(x)=\big(\cos(\cos(2\pi x)),\ \sin(\cos(2\pi x)),\ 0\big)\in \mathbb S^2.
$$

\begin{lem}\label{lem:dimspan}
	Let $f:(0,1)\to\mathbb S^2\subset\mathbb R^3$ be non-constant and assume $f\in H^1(0,1;\mathbb R^3)$.
	Suppose that $f$ admits a cosine series representation
	\[
	f(x)=\sum_{k=0}^\infty a_k \cos(2\pi k x)
	\quad\text{in }L^2(0,1;\mathbb R^3),
	\qquad a_k\in\mathbb R^3.
	\]
	Then
	\[
	\dim\Bigl(\Span\{a_0,a_1,a_2,\dots\}\Bigr)\ge 2.
	\]
	Moreover, $f$ cannot have only finitely many nonzero cosine modes: if $a_k=0$ for all $k>n$ for some
	$n\in\mathbb N$, then $f$ is constant. Consequently, if $f$ is non-constant, then $a_k\neq 0$ for
	infinitely many $k\ge 1$.
\end{lem}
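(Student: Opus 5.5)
The cleanest route avoids Fourier-coefficient bookkeeping and argues with $f$ itself.

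\emph{Dimension bound.} Suppose, for contradiction, that $\dim\Span\{a_0,a_1,\dots\}\le 1$. Then every $a_k$ is a real multiple of one fixed unit vector $w$, say $a_k=c_k w$. Hence
\[
f(x)=\phi(x)\,w, \qquad \phi(x):=\sum_k c_k\cos(2\pi kx),
\]
with the series converging in $L^2(0,1)$. The map $f\mapsto \langle f,w\rangle$ is continuous on $L^2$, so $\phi$ is a well-defined scalar function. Since $f\in H^1(0,1;\RR^3)$, we have $\phi=\langle f,w\rangle\in H^1(0,1)$, and $\phi$ has a continuous representative.

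The sphere constraint gives $|\phi(x)|=1$ for a.e.\ $x$. By continuity this holds everywhere, so $\phi$ is a continuous function with values in $\{-1,1\}$. The interval $(0,1)$ is connected, so $\phi$ is constant. Therefore $f\equiv \pm w$ is constant, contradicting the hypothesis. The case $\dim=0$ (all $a_k=0$) is excluded even more directly, since then $f=0\notin\mathbb S^2$.

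The only point needing care is passing from the a.e.\ identity to a pointwise one. This is exactly where $H^1\subset C([0,1])$ in one dimension is used. Without it, a function jumping between $\pm w$ would be a counterexample.

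\emph{Finitely many modes.} If $a_k=0$ for $k>n$, then $f$ is a trigonometric polynomial, hence smooth, and $|f(x)|^2=1$ for all $x$ by continuity. I will expand
\[
|f|^2=\sum_{k,\ell}\langle a_k,a_\ell\rangle \cos(2\pi kx)\cos(2\pi\ell x)
\]
and apply the product-to-sum formula. Let $N$ be the largest index with $a_N\neq 0$, and suppose $N\ge 1$. The frequency $2N$ arises only from the pair $k=\ell=N$, so the coefficient of $\cos(4\pi Nx)$ in $|f|^2$ is $\tfrac12|a_N|^2$. Since $|f|^2\equiv1$, this coefficient must vanish by uniqueness of cosine coefficients, giving $a_N=0$ and a contradiction. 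Hence $N=0$ and $f\equiv a_0$ is constant. This is the downward argument in the paper's deleted lemma, and it needs only the top mode, not an induction.

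\emph{Consequence.} If $f$ is non-constant, the previous step shows $f$ cannot have finitely many nonzero coefficients. So $a_k\neq0$ for infinitely many $k$, and in particular for infinitely many $k\ge1$.

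I expect no serious obstacle. The one subtle step is the continuity/connectedness argument in the dimension bound, which must be stated explicitly. I would not route the proof through Lemma~\ref{lem:unit_cos_series}: its proof is incomplete, and as stated it would wrongly make every sphere-valued $L^2$ function constant.
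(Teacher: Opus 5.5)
Your proposal is correct and follows the paper's proof: the same reduction to $f=\phi w$ under collinearity, with $H^1$ regularity ruling out sign jumps, and the same top-frequency coefficient $\tfrac12|a_N|^2$ argument for finitely many modes. The differences are cosmetic. You conclude via the continuous representative and connectedness, where the paper differentiates $g^2$ to get $gg'=0$ a.e. You take the largest nonzero index directly instead of iterating downward. You also explicitly dispose of the dimension-zero case, which the paper leaves implicit.
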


\begin{proof}
	\emph{Step 1: proof of the rank statement.}
	Assume for contradiction that $\dim(\Span\{a_0,a_1,a_2,\dots\})=1$.
	Then there exist a nonzero vector $v\in\mathbb R^3$ and scalars $\alpha_k\in\mathbb R$ such that
	$a_k=\alpha_k v$ for all $k\ge 0$. Hence, we can write
	\[
	f(x)=\sum_{k=0}^\infty \alpha_k v\cos(2\pi kx)=v\,g(x)
	\quad\mbox{with}\quad
	g(x):=\sum_{k=0}^\infty \alpha_k\cos(2\pi kx).
	\]
	Since $f\in H^1(0,1;\mathbb R^3)$ and $v$ is constant, we have $g\in H^1(0,1)$.
	Moreover, $\|f(x)\|=1$ a.e.\ implies
	\[
	1=\|f(x)\|^2=|v|^2\,|g(x)|^2
	\quad\text{a.e. on }(0,1),
	\]
	so $|g(x)|\equiv 1/|v|$ a.e. In particular, $g$ never vanishes.
	Since $g\in H^1$, the identity $g^2\equiv 1/|v|^2$ a.e.\ yields
	\[
	0=(g^2)' = 2g\,g' \quad\text{a.e.},
	\]
	and because $g\neq 0$ a.e., we conclude $g'=0$ a.e., hence $g$ is constant a.e.
	Therefore, $f(x)=v\,g(x)$ is constant a.e., contradicting the assumption that $f$ is non-constant.
	This proves $\dim(\Span\{a_k\})\ge 2$.
	
	\medskip
	\emph{Step 2: non-constancy implies infinitely many nonzero modes.}
	Assume that only finitely many coefficients are nonzero, i.e.\ there exists $n\in\mathbb N$ such that
	\[
	a_k=0\qquad\text{for all }k>n.
	\]
	Then $f$ is a trigonometric polynomial of the form
	\[
	f(x)=\sum_{k=0}^n a_k\cos(2\pi kx).
	\]
	Since $\|f(x)\|_{\mathbb R^3}=1$ for all $x$, the scalar function $\|f(x)\|^2$ is a \emph{constant}
	trigonometric polynomial.
	However, the highest frequency term in $\|f(x)\|^2$ is $\cos(4\pi n x)$, and it can arise only from
	$\cos(2\pi n x)\cos(2\pi n x)=\tfrac12(1+\cos(4\pi n x))$. Hence, the coefficient of $\cos(4\pi n x)$
	in $\|f(x)\|^2$ equals $\tfrac12\|a_n\|^2$, which must be zero. Therefore $a_n=0$.
	Iterating the same argument for $n-1,n-2,\dots,1$ yields $a_k=0$ for all $k\ge 1$, so $f$ is constant.
	Consequently, if $f$ is non-constant, it must have infinitely many nonzero coefficients $a_k$ with
	$k\ge 1$.
\end{proof}

\section{Auxiliary equations for the proof of Theorem \ref{L2.app.cont2}}\label{sec:proofTheocont2}
In this step, we derive several auxiliary expressions to prove Theorem \ref{L2.app.cont2}.
We adopt the notations introduced in Appendix~\ref{app:expandLie}.  In this section we use Lemma  \ref{lem:identity1}, Lemma \ref{lem:identity2}, Corollary \ref{cor:identity1} and Lemma \ref{eq:exentity}. Let us first recall
\begin{align*}
	{\bf{G}}^{1,1}(\bfm)&= \vectinf{\frac{1}{\sqrt{2}} \textbf{g}^\ast_{1} (\bm_{i-1})+\frac{1}{\sqrt{2}} \textbf{g}^\ast_{1} (\bm_{i+1})}{i}\\
	{\bf{G}}^{1,2}(\bfm)&= \vectinf{\frac{1}{\sqrt{2}} \textbf{g}^\ast_{2} (\bm_{i-1})+\frac{1}{\sqrt{2}} \textbf{g}^\ast_{2} (\bm_{i+1})}{i}\\
	{\bf{G}}^{1,3}(\bfm)&= \vectinf{\frac{1}{\sqrt{2}} \textbf{g}^\ast_{3} (\bm_{i-1})+\frac{1}{\sqrt{2}} \textbf{g}^\ast_{3} (\bm_{i+1})}{i}\\
	{\bf{G}}^{2,1}(\bfm)&=\vectinf{\frac{1}{\sqrt{2}} \textbf{g}^\ast_{1} (\bm_{i-2})+\frac{1}{\sqrt{2}} \textbf{g}^\ast_{1} (\bm_{i+2})}{i}+\frac{1}{\sqrt{2}}\mathcal{M}_{\{1\}^c}\left(\vectinf{\textbf{g}^\ast_{1}(\hbm_{i})}{i}\right)\\
	{\bf{G}}^{2,2}(\bfm)&=\vectinf{\frac{1}{\sqrt{2}} \textbf{g}^\ast_{2} (\bm_{i-2})+\frac{1}{\sqrt{2}} \textbf{g}^\ast_{2} (\bm_{i+2})}{i}+\frac{1}{\sqrt{2}}\mathcal{M}_{\{1\}^c}\left(\vectinf{\textbf{g}^\ast_{2}(\hbm_{i})}{i}\right)\\
	{\bf{G}}^{2,3}(\bfm)&=\vectinf{\frac{1}{\sqrt{2}} \textbf{g}^\ast_{3} (\bm_{i-2})+\frac{1}{\sqrt{2}} \textbf{g}^\ast_{3} (\bm_{i+2})}{i}+\frac{1}{\sqrt{2}}\mathcal{M}_{\{1\}^c}\left(\vectinf{\textbf{g}^\ast_{3}(\hbm_{i})}{i}\right).
\end{align*}
Next, we compute the Lie bracket of the vector fields introduced above.

\noindent\textbf{Lie bracket of two vector fields:} We now compute the Lie bracket of two vector fields. Before proceeding, we introduce some necessary notation: by $\textbf{p}^\ast_i(\textbf{x})$, $i\in\{1,2,3\}$,  we denote the projection on the $i^{th}$ coordinate. Moreover, let us define 
	\begin{eqnarray*}
		&&J_{ \textbf{g}^\ast_1} \textbf{g}^\ast_2(\textbf{x})= \begin{bmatrix}
			0\\
			x_1\\
			0
		\end{bmatrix}=:\textbf{h}^\ast_1(\textbf{x}),
		J_{ \textbf{g}^\ast_1} \textbf{g}^\ast_3(\textbf{x})= \begin{bmatrix}
			0\\
			0\\
			x_1
		\end{bmatrix}=:\textbf{h}^\ast_2(\textbf{x}),
	J_{ \textbf{g}^\ast_2} \textbf{g}^\ast_1(\textbf{x})= \begin{bmatrix}
			x_2\\
			0\\
			0
		\end{bmatrix}=:\textbf{h}^\ast_3(\textbf{x}),
	\\&&	J_{ \textbf{g}^\ast_2} \textbf{g}^\ast_3(\textbf{x})= \begin{bmatrix}
			0\\
			0\\
			x_2
		\end{bmatrix}=:\textbf{h}^\ast_4(\textbf{x}), J_{ \textbf{g}^\ast_3} \textbf{g}^\ast_1(\textbf{x})= \begin{bmatrix}
			x_3\\
			0\\
			0
		\end{bmatrix}=:\textbf{h}^\ast_5(\textbf{x}),
		J_{ \textbf{g}^\ast_3} \textbf{g}^\ast_2(\textbf{x})= \begin{bmatrix}
			0\\
			x_3\\
			0
		\end{bmatrix}=\textbf{h}^\ast_6(\textbf{x}).
	\end{eqnarray*}
	\noindent \textbf{Computation of $\left[ \bf{G}^{1,1},\bf{G}^{2,3} \right]$:}
\begin{align*}
	&2\left[ \bf{G}^{1,1},\bf{G}^{2,3} \right]\\
	&=\left[\hbfm\mapsto \vectinf{\textbf{g}^\ast_{1}(\hbm_{i-1})}{i}+\vectinf{\textbf{g}^\ast_{1}(\hbm_{i+1})}{i},\hbfm\mapsto \vectinf{ \textbf{g}^\ast_{3} (\hbm_{i-2})+ \textbf{g}^\ast_{3} (\hbm_{i+2})}{i}+\mathcal{M}_{\{1\}^c}\left(\vectinf{\textbf{g}^\ast_{3}(\hbm_{i})}{i}\right)\right]\\
	& = \left[\hbfm\mapsto \vectinf{\textbf{g}^\ast_{1}(\hbm_{i-1})}{i},\hbfm\mapsto \vectinf{\textbf{g}^\ast_{3}(\hbm_{i-2})}{i}\right]+\left[\hbfm\mapsto\vectinf{\textbf{g}^\ast_{1}(\hbm_{i+1})}{i},\hbfm\mapsto\vectinf{\textbf{g}^\ast_{3}(\hbm_{i+2})}{i}\right]
	\\
	&\qquad +\Big(\left[\hbfm\mapsto \vectinf{\textbf{g}^\ast_{1}(\hbm_{i-1})}{i},\hbfm\mapsto\vectinf{\textbf{g}^\ast_{3}(\hbm_{i+2})}{i}\right]+\left[\hbfm\mapsto\vectinf{\textbf{g}^\ast_{1}(\hbm_{i+1})}{i},\hbfm\mapsto \vectinf{\textbf{g}^\ast_{3}(\hbm_{i-2})}{i}\right]\Big)\\
	&\qquad +\left[\hbfm\mapsto \vectinf{\textbf{g}^\ast_{1}(\hbm_{i-1})}{i},\hbfm\mapsto \mathcal{M}_{\{1\}^c}\left(\vectinf{\textbf{g}^\ast_{3}(\hbm_{i})}{i}\right)\right]+\left[\hbfm\mapsto\vectinf{\textbf{g}^\ast_{1}(\hbm_{i+1})}{i},\hbfm\mapsto\mathcal{M}_{\{1\}^c}\left(\vectinf{\textbf{g}^\ast_{3}(\hbm_{i})}{i}\right)\right]
	\\
	& = \left[\hbfm\mapsto \vectinf{\textbf{g}^\ast_{2}(\hbm_{i-3})}{i}\right]+\left[\hbfm\mapsto \vectinf{\textbf{g}^\ast_{2}(\hbm_{i+3})}{i}\right] +\left[\hbfm\mapsto \vectinf{\textbf{g}^\ast_{2}(\hbm_{i+1})}{i}\right]+\left[\hbfm\mapsto \vectinf{\textbf{g}^\ast_{2}(\hbm_{i-1})}{i}\right]
	\\
	&\qquad -\left[\hbfm\mapsto\mathcal{M}_{\{0\}^c}\left(\vectinf{J_{ \textbf{g}^\ast_1} \textbf{g}^\ast_3(\hbm_{i+1})}{i}\right)\right]+\left[\hbfm\mapsto\mathcal{M}_{\{1\}^c}\left(\vectinf{J_{ \textbf{g}^\ast_3} \textbf{g}^\ast_1(\hbm_{i-1})}{i}\right)\right]\\
	&\qquad-\left[\hbfm\mapsto\mathcal{M}_{\{1\}^c}\left(\vectinf{J_{ \textbf{g}^\ast_3} \textbf{g}^\ast_1(\hbm_{i-1})}{i}\right)\right]+\left[\hbfm\mapsto\mathcal{M}_{\{2\}^c}\left(\vectinf{J_{ \textbf{g}^\ast_1} \textbf{g}^\ast_3(\hbm_{i-1})}{i}\right)\right]\\
	&\qquad-\left[\hbfm\mapsto\mathcal{M}_{\{1\}^c}\left(\vectinf{J_{ \textbf{g}^\ast_3} \textbf{g}^\ast_1(\hbm_{i+1})}{i}\right)\right]+\left[\hbfm\mapsto\mathcal{M}_{\{0\}^c}\left(\vectinf{J_{ \textbf{g}^\ast_1} \textbf{g}^\ast_3(\hbm_{i+1})}{i}\right)\right]\\
	& = \left[\hbfm\mapsto \vectinf{\textbf{g}^\ast_{2}(\hbm_{i-3})}{i}\right]+\left[\hbfm\mapsto \vectinf{\textbf{g}^\ast_{2}(\hbm_{i+3})}{i}\right] +\left[\hbfm\mapsto \vectinf{\textbf{g}^\ast_{2}(\hbm_{i+1})}{i}\right]+\left[\hbfm\mapsto \vectinf{\textbf{g}^\ast_{2}(\hbm_{i-1})}{i}\right]
	\\
	&\qquad-\left[\hbfm\mapsto\mathcal{M}_{\{1\}^c}\left(\vectinf{ \textbf{h}^\ast_5(\hbm_{i+1})}{i}\right)\right]+\left[\hbfm\mapsto\mathcal{M}_{\{2\}^c}\left(\vectinf{ \textbf{h}^\ast_2(\hbm_{i-1})}{i}\right)\right],
\end{align*}

%	The  element $\left[\hbfm\mapsto \vectinf{\textbf{g}^\ast_{2}(\hbm_{i-1})}{i}\right]$ 
%	has an impact on the zero mode and $\left[ \bf{G}^{1,1},\bf{G}^{2,3} \right]\bf{G}^{1,1}
%	$ gives
%	$(\textbf{g}^\ast_{2}(\be_1)^\top,{\bf 0},{\bf 0},\cdots )^\top )$.
	
	\noindent \textbf{Computation of $\left[ \bf{G}^{1,3},\bf{G}^{2,1} \right]$:}
		\begin{align*}
			&2\left[ \bf{G}^{1,3},\bf{G}^{2,1} \right]\\
			&\left[\hbfm\mapsto \vectinf{\textbf{g}^\ast_{3}(\hbm_{i-1})}{i}+\vectinf{\textbf{g}^\ast_{3}(\hbm_{i+1})}{i},\hbfm\mapsto \vectinf{ \textbf{g}^\ast_{1} (\hbm_{i-2})+ \textbf{g}^\ast_{1} (\hbm_{i+2})}{i}+\mathcal{M}_{\{1\}^c}\left(\vectinf{\textbf{g}^\ast_{1}(\hbm_{i})}{i}\right)\right]\\
			& = \left[\hbfm\mapsto \vectinf{\textbf{g}^\ast_{3}(\hbm_{i-1})}{i},\hbfm\mapsto \vectinf{\textbf{g}^\ast_{1}(\hbm_{i-2})}{i}\right]+\left[\hbfm\mapsto\vectinf{\textbf{g}^\ast_{3}(\hbm_{i+1})}{i},\hbfm\mapsto\vectinf{\textbf{g}^\ast_{1}(\hbm_{i+2})}{i}\right]
			\\
			&\qquad +\Big(\left[\hbfm\mapsto \vectinf{\textbf{g}^\ast_{3}(\hbm_{i-1})}{i},\hbfm\mapsto\vectinf{\textbf{g}^\ast_{1}(\hbm_{i+2})}{i}\right]+\left[\hbfm\mapsto\vectinf{\textbf{g}^\ast_{3}(\hbm_{i+1})}{i},\hbfm\mapsto \vectinf{\textbf{g}^\ast_{1}(\hbm_{i-2})}{i}\right]\Big)\\
			&\qquad +\left[\hbfm\mapsto \vectinf{\textbf{g}^\ast_{3}(\hbm_{i-1})}{i},\hbfm\mapsto \mathcal{M}_{\{1\}^c}\left(\vectinf{\textbf{g}^\ast_{1}(\hbm_{i})}{i}\right)\right]+\left[\hbfm\mapsto\vectinf{\textbf{g}^\ast_{3}(\hbm_{i+1})}{i},\hbfm\mapsto\mathcal{M}_{\{1\}^c}\left(\vectinf{\textbf{g}^\ast_{1}(\hbm_{i})}{i}\right)\right]
			\\
			& = -\left[\hbfm\mapsto \vectinf{\textbf{g}^\ast_{2}(\hbm_{i-3})}{i}\right]-\left[\hbfm\mapsto \vectinf{\textbf{g}^\ast_{2}(\hbm_{i+3})}{i}\right] -\left[\hbfm\mapsto \vectinf{\textbf{g}^\ast_{2}(\hbm_{i+1})}{i}\right]-\left[\hbfm\mapsto \vectinf{\textbf{g}^\ast_{2}(\hbm_{i-1})}{i}\right]
			\\
			&\qquad -\left[\hbfm\mapsto\mathcal{M}_{\{0\}^c}\left(\vectinf{J_{ \textbf{g}^\ast_3} \textbf{g}^\ast_1(\hbm_{i+1})}{i}\right)\right]+\left[\hbfm\mapsto\mathcal{M}_{\{1\}^c}\left(\vectinf{J_{ \textbf{g}^\ast_1} \textbf{g}^\ast_3(\hbm_{i-1})}{i}\right)\right]\\
			&\qquad-\left[\hbfm\mapsto\mathcal{M}_{\{1\}^c}\left(\vectinf{J_{ \textbf{g}^\ast_1} \textbf{g}^\ast_3(\hbm_{i-1})}{i}\right)\right]+\left[\hbfm\mapsto\mathcal{M}_{\{2\}^c}\left(\vectinf{J_{ \textbf{g}^\ast_3} \textbf{g}^\ast_1(\hbm_{i-1})}{i}\right)\right]\\
			&\qquad-\left[\hbfm\mapsto\mathcal{M}_{\{1\}^c}\left(\vectinf{J_{ \textbf{g}^\ast_1} \textbf{g}^\ast_3(\hbm_{i+1})}{i}\right)\right]+\left[\hbfm\mapsto\mathcal{M}_{\{0\}^c}\left(\vectinf{J_{ \textbf{g}^\ast_3} \textbf{g}^\ast_1(\hbm_{i+1})}{i}\right)\right]\\
			& = -\left[\hbfm\mapsto \vectinf{\textbf{g}^\ast_{2}(\hbm_{i-3})}{i}\right]-\left[\hbfm\mapsto \vectinf{\textbf{g}^\ast_{2}(\hbm_{i+3})}{i}\right] -\left[\hbfm\mapsto \vectinf{\textbf{g}^\ast_{2}(\hbm_{i+1})}{i}\right]-\left[\hbfm\mapsto \vectinf{\textbf{g}^\ast_{2}(\hbm_{i-1})}{i}\right]
			\\
			&\qquad-\left[\hbfm\mapsto\mathcal{M}_{\{1\}^c}\left(\vectinf{ \textbf{h}^\ast_2(\hbm_{i+1})}{i}\right)\right]+\left[\hbfm\mapsto\mathcal{M}_{\{2\}^c}\left(\vectinf{ \textbf{h}^\ast_5(\hbm_{i-1})}{i}\right)\right],
		\end{align*}
		\noindent \textbf{Computation of $\left[ \bf{G}^{1,2},\bf{G}^{2,3} \right]$:}
		\begin{align*}
			&2\left[ \bf{G}^{1,2},\bf{G}^{2,3} \right]\\
			&=\left[\hbfm\mapsto \vectinf{\textbf{g}^\ast_{2}(\hbm_{i-1})}{i}+\vectinf{\textbf{g}^\ast_{2}(\hbm_{i+1})}{i},\hbfm\mapsto \vectinf{ \textbf{g}^\ast_{3} (\hbm_{i-2})+ \textbf{g}^\ast_{3} (\hbm_{i+2})}{i}+\mathcal{M}_{\{1\}^c}\left(\vectinf{\textbf{g}^\ast_{3}(\hbm_{i})}{i}\right)\right]\\
			& = \left[\hbfm\mapsto \vectinf{\textbf{g}^\ast_{2}(\hbm_{i-1})}{i},\hbfm\mapsto \vectinf{\textbf{g}^\ast_{3}(\hbm_{i-2})}{i}\right]+\left[\hbfm\mapsto\vectinf{\textbf{g}^\ast_{2}(\hbm_{i+1})}{i},\hbfm\mapsto\vectinf{\textbf{g}^\ast_{3}(\hbm_{i+2})}{i}\right]
			\\
			&\qquad +\Big(\left[\hbfm\mapsto \vectinf{\textbf{g}^\ast_{2}(\hbm_{i-1})}{i},\hbfm\mapsto\vectinf{\textbf{g}^\ast_{3}(\hbm_{i+2})}{i}\right]+\left[\hbfm\mapsto\vectinf{\textbf{g}^\ast_{2}(\hbm_{i+1})}{i},\hbfm\mapsto \vectinf{\textbf{g}^\ast_{3}(\hbm_{i-2})}{i}\right]\Big)\\
			&\qquad +\left[\hbfm\mapsto \vectinf{\textbf{g}^\ast_{2}(\hbm_{i-1})}{i},\hbfm\mapsto \mathcal{M}_{\{1\}^c}\left(\vectinf{\textbf{g}^\ast_{3}(\hbm_{i})}{i}\right)\right]+\left[\hbfm\mapsto\vectinf{\textbf{g}^\ast_{2}(\hbm_{i+1})}{i},\hbfm\mapsto\mathcal{M}_{\{1\}^c}\left(\vectinf{\textbf{g}^\ast_{3}(\hbm_{i})}{i}\right)\right]
			\\
			& = -\left[\hbfm\mapsto \vectinf{\textbf{g}^\ast_{1}(\hbm_{i-3})}{i}\right]-\left[\hbfm\mapsto \vectinf{\textbf{g}^\ast_{1}(\hbm_{i+3})}{i}\right] -\left[\hbfm\mapsto \vectinf{\textbf{g}^\ast_{1}(\hbm_{i+1})}{i}\right]-\left[\hbfm\mapsto \vectinf{\textbf{g}^\ast_{1}(\hbm_{i-1})}{i}\right]
			\\
			&\qquad -\left[\hbfm\mapsto\mathcal{M}_{\{0\}^c}\left(\vectinf{J_{ \textbf{g}^\ast_2} \textbf{g}^\ast_3(\hbm_{i+1})}{i}\right)\right]+\left[\hbfm\mapsto\mathcal{M}_{\{1\}^c}\left(\vectinf{J_{ \textbf{g}^\ast_3} \textbf{g}^\ast_2(\hbm_{i-1})}{i}\right)\right]\\
			&\qquad-\left[\hbfm\mapsto\mathcal{M}_{\{1\}^c}\left(\vectinf{J_{ \textbf{g}^\ast_3} \textbf{g}^\ast_2(\hbm_{i-1})}{i}\right)\right]+\left[\hbfm\mapsto\mathcal{M}_{\{2\}^c}\left(\vectinf{J_{ \textbf{g}^\ast_2} \textbf{g}^\ast_3(\hbm_{i-1})}{i}\right)\right]\\
			&\qquad-\left[\hbfm\mapsto\mathcal{M}_{\{1\}^c}\left(\vectinf{J_{ \textbf{g}^\ast_3} \textbf{g}^\ast_2(\hbm_{i+1})}{i}\right)\right]+\left[\hbfm\mapsto\mathcal{M}_{\{0\}^c}\left(\vectinf{J_{ \textbf{g}^\ast_2} \textbf{g}^\ast_3(\hbm_{i+1})}{i}\right)\right]\\
			& = -\left[\hbfm\mapsto \vectinf{\textbf{g}^\ast_{1}(\hbm_{i-3})}{i}\right]-\left[\hbfm\mapsto \vectinf{\textbf{g}^\ast_{1}(\hbm_{i+3})}{i}\right] -\left[\hbfm\mapsto \vectinf{\textbf{g}^\ast_{1}(\hbm_{i+1})}{i}\right]-\left[\hbfm\mapsto \vectinf{\textbf{g}^\ast_{1}(\hbm_{i-1})}{i}\right]
			\\
			&\qquad -\left[\hbfm\mapsto\mathcal{M}_{\{1\}^c}\left(\vectinf{ \textbf{h}^\ast_6(\hbm_{i+1})}{i}\right)\right]+\left[\hbfm\mapsto\mathcal{M}_{\{2\}^c}\left(\vectinf{ \textbf{h}^\ast_4(\hbm_{i-1})}{i}\right)\right],
		\end{align*}
	
	\noindent \textbf{Computation of $\left[ \bf{G}^{1,1},\bf{G}^{2,1} \right]$:}
	\begin{align*}
		&2\left[ \bf{G}^{1,1},\bf{G}^{2,1} \right]\\
		&=\left[\hbfm\mapsto \vectinf{\textbf{g}^\ast_{1}(\hbm_{i-1})}{i}+\vectinf{\textbf{g}^\ast_{1}(\hbm_{i+1})}{i},\hbfm\mapsto \vectinf{ \textbf{g}^\ast_{1} (\hbm_{i-2})+ \textbf{g}^\ast_{1} (\hbm_{i+2})}{i}+\mathcal{M}_{\{1\}^c}\left(\vectinf{\textbf{g}^\ast_{1}(\hbm_{i})}{i}\right)\right]\\
		& = \left[\hbfm\mapsto \vectinf{\textbf{g}^\ast_{1}(\hbm_{i-1})}{i},\hbfm\mapsto \vectinf{\textbf{g}^\ast_{1}(\hbm_{i-2})}{i}\right]+\left[\hbfm\mapsto\vectinf{\textbf{g}^\ast_{1}(\hbm_{i+1})}{i},\hbfm\mapsto\vectinf{\textbf{g}^\ast_{1}(\hbm_{i+2})}{i}\right]
		\\
		&\qquad +\Big(\left[\hbfm\mapsto \vectinf{\textbf{g}^\ast_{1}(\hbm_{i-1})}{i},\hbfm\mapsto\vectinf{\textbf{g}^\ast_{1}(\hbm_{i+2})}{i}\right]+\left[\hbfm\mapsto\vectinf{\textbf{g}^\ast_{1}(\hbm_{i+1})}{i},\hbfm\mapsto \vectinf{\textbf{g}^\ast_{1}(\hbm_{i-2})}{i}\right]\Big)\\
		&\qquad +\left[\hbfm\mapsto \vectinf{\textbf{g}^\ast_{1}(\hbm_{i-1})}{i},\hbfm\mapsto \mathcal{M}_{\{1\}^c}\left(\vectinf{\textbf{g}^\ast_{1}(\hbm_{i})}{i}\right)\right]+\left[\hbfm\mapsto\vectinf{\textbf{g}^\ast_{1}(\hbm_{i+1})}{i},\hbfm\mapsto\mathcal{M}_{\{1\}^c}\left(\vectinf{\textbf{g}^\ast_{1}(\hbm_{i})}{i}\right)\right]
		\\
		& =  -\left[\hbfm\mapsto\mathcal{M}_{\{0\}^c}\left(\vectinf{J_{ \textbf{g}^\ast_1} \textbf{g}^\ast_1(\hbm_{i+1})}{i}\right)\right]+\left[\hbfm\mapsto\mathcal{M}_{\{1\}^c}\left(\vectinf{J_{ \textbf{g}^\ast_1} \textbf{g}^\ast_1(\hbm_{i-1})}{i}\right)\right]\\
		&\qquad-\left[\hbfm\mapsto\mathcal{M}_{\{1\}^c}\left(\vectinf{J_{ \textbf{g}^\ast_1} \textbf{g}^\ast_1(\hbm_{i-1})}{i}\right)\right]+\left[\hbfm\mapsto\mathcal{M}_{\{2\}^c}\left(\vectinf{J_{ \textbf{g}^\ast_1} \textbf{g}^\ast_1(\hbm_{i-1})}{i}\right)\right]\\
		&\qquad-\left[\hbfm\mapsto\mathcal{M}_{\{1\}^c}\left(\vectinf{J_{ \textbf{g}^\ast_1} \textbf{g}^\ast_1(\hbm_{i+1})}{i}\right)\right]+\left[\hbfm\mapsto\mathcal{M}_{\{0\}^c}\left(\vectinf{J_{ \textbf{g}^\ast_1} \textbf{g}^\ast_1(\hbm_{i+1})}{i}\right)\right]\\
		& =  \left[\hbfm\mapsto\mathcal{M}_{\{1\}^c}\left(\vectinf{ \textbf{p}^\ast_2(\hbm_{i+1})+\textbf{p}^\ast_3(\hbm_{i+1})}{i}\right)\right]-\left[\hbfm\mapsto\mathcal{M}_{\{2\}^c}\left(\vectinf{ \textbf{p}^\ast_2(\hbm_{i-1})+\textbf{p}^\ast_3(\hbm_{i-1})}{i}\right)\right],
	\end{align*}
	\noindent \textbf{Computation of $\left[ \bf{G}^{1,2},\bf{G}^{2,2} \right]$:}
	\begin{align*}
		&2\left[ \bf{G}^{1,2},\bf{G}^{2,2} \right]\\
		&=\left[\hbfm\mapsto \vectinf{\textbf{g}^\ast_{2}(\hbm_{i-1})}{i}+\vectinf{\textbf{g}^\ast_{2}(\hbm_{i+1})}{i},\hbfm\mapsto \vectinf{ \textbf{g}^\ast_{2} (\hbm_{i-2})+ \textbf{g}^\ast_{2} (\hbm_{i+2})}{i}+\mathcal{M}_{\{1\}^c}\left(\vectinf{\textbf{g}^\ast_{2}(\hbm_{i})}{i}\right)\right]\\
		& = \left[\hbfm\mapsto \vectinf{\textbf{g}^\ast_{2}(\hbm_{i-1})}{i},\hbfm\mapsto \vectinf{\textbf{g}^\ast_{2}(\hbm_{i-2})}{i}\right]+\left[\hbfm\mapsto\vectinf{\textbf{g}^\ast_{2}(\hbm_{i+1})}{i},\hbfm\mapsto\vectinf{\textbf{g}^\ast_{2}(\hbm_{i+2})}{i}\right]
		\\
		&\qquad +\Big(\left[\hbfm\mapsto \vectinf{\textbf{g}^\ast_{2}(\hbm_{i-1})}{i},\hbfm\mapsto\vectinf{\textbf{g}^\ast_{2}(\hbm_{i+2})}{i}\right]+\left[\hbfm\mapsto\vectinf{\textbf{g}^\ast_{2}(\hbm_{i+1})}{i},\hbfm\mapsto \vectinf{\textbf{g}^\ast_{2}(\hbm_{i-2})}{i}\right]\Big)\\
		&\qquad +\left[\hbfm\mapsto \vectinf{\textbf{g}^\ast_{2}(\hbm_{i-1})}{i},\hbfm\mapsto \mathcal{M}_{\{1\}^c}\left(\vectinf{\textbf{g}^\ast_{2}(\hbm_{i})}{i}\right)\right]+\left[\hbfm\mapsto\vectinf{\textbf{g}^\ast_{2}(\hbm_{i+1})}{i},\hbfm\mapsto\mathcal{M}_{\{1\}^c}\left(\vectinf{\textbf{g}^\ast_{2}(\hbm_{i})}{i}\right)\right]
		\\
		& =  -\left[\hbfm\mapsto\mathcal{M}_{\{0\}^c}\left(\vectinf{J_{ \textbf{g}^\ast_2} \textbf{g}^\ast_2(\hbm_{i+1})}{i}\right)\right]+\left[\hbfm\mapsto\mathcal{M}_{\{1\}^c}\left(\vectinf{J_{ \textbf{g}^\ast_2} \textbf{g}^\ast_2(\hbm_{i-1})}{i}\right)\right]\\
		&\qquad-\left[\hbfm\mapsto\mathcal{M}_{\{1\}^c}\left(\vectinf{J_{ \textbf{g}^\ast_2} \textbf{g}^\ast_2(\hbm_{i-1})}{i}\right)\right]+\left[\hbfm\mapsto\mathcal{M}_{\{2\}^c}\left(\vectinf{J_{ \textbf{g}^\ast_2} \textbf{g}^\ast_2(\hbm_{i-1})}{i}\right)\right]\\
		&\qquad-\left[\hbfm\mapsto\mathcal{M}_{\{1\}^c}\left(\vectinf{J_{ \textbf{g}^\ast_2} \textbf{g}^\ast_2(\hbm_{i+1})}{i}\right)\right]+\left[\hbfm\mapsto\mathcal{M}_{\{0\}^c}\left(\vectinf{J_{ \textbf{g}^\ast_2} \textbf{g}^\ast_2(\hbm_{i+1})}{i}\right)\right]\\
		& =  \left[\hbfm\mapsto\mathcal{M}_{\{1\}^c}\left(\vectinf{ \textbf{p}^\ast_1(\hbm_{i+1})+\textbf{p}^\ast_3(\hbm_{i+1})}{i}\right)\right]-\left[\hbfm\mapsto\mathcal{M}_{\{2\}^c}\left(\vectinf{ \textbf{p}^\ast_1(\hbm_{i-1})+\textbf{p}^\ast_3(\hbm_{i-1})}{i}\right)\right],
	\end{align*}
	\noindent \textbf{Computation of $\left[ \bf{G}^{1,3},\bf{G}^{2,3} \right]$:}
	\begin{align*}
		&2\left[ \bf{G}^{1,3},\bf{G}^{2,3} \right]\\
		&=\left[\hbfm\mapsto \vectinf{\textbf{g}^\ast_{3}(\hbm_{i-1})}{i}+\vectinf{\textbf{g}^\ast_{3}(\hbm_{i+1})}{i},\hbfm\mapsto \vectinf{ \textbf{g}^\ast_{3} (\hbm_{i-2})+ \textbf{g}^\ast_{3} (\hbm_{i+2})}{i}+\mathcal{M}_{\{1\}^c}\left(\vectinf{\textbf{g}^\ast_{3}(\hbm_{i})}{i}\right)\right]\\
		& = \left[\hbfm\mapsto \vectinf{\textbf{g}^\ast_{3}(\hbm_{i-1})}{i},\hbfm\mapsto \vectinf{\textbf{g}^\ast_{3}(\hbm_{i-2})}{i}\right]+\left[\hbfm\mapsto\vectinf{\textbf{g}^\ast_{3}(\hbm_{i+1})}{i},\hbfm\mapsto\vectinf{\textbf{g}^\ast_{3}(\hbm_{i+2})}{i}\right]
		\\
		&\qquad +\Big(\left[\hbfm\mapsto \vectinf{\textbf{g}^\ast_{3}(\hbm_{i-1})}{i},\hbfm\mapsto\vectinf{\textbf{g}^\ast_{3}(\hbm_{i+2})}{i}\right]+\left[\hbfm\mapsto\vectinf{\textbf{g}^\ast_{3}(\hbm_{i+1})}{i},\hbfm\mapsto \vectinf{\textbf{g}^\ast_{3}(\hbm_{i-2})}{i}\right]\Big)\\
		&\qquad +\left[\hbfm\mapsto \vectinf{\textbf{g}^\ast_{3}(\hbm_{i-1})}{i},\hbfm\mapsto \mathcal{M}_{\{1\}^c}\left(\vectinf{\textbf{g}^\ast_{3}(\hbm_{i})}{i}\right)\right]+\left[\hbfm\mapsto\vectinf{\textbf{g}^\ast_{3}(\hbm_{i+1})}{i},\hbfm\mapsto\mathcal{M}_{\{1\}^c}\left(\vectinf{\textbf{g}^\ast_{3}(\hbm_{i})}{i}\right)\right]
		\\
		& =  -\left[\hbfm\mapsto\mathcal{M}_{\{0\}^c}\left(\vectinf{J_{ \textbf{g}^\ast_3} \textbf{g}^\ast_3(\hbm_{i+1})}{i}\right)\right]+\left[\hbfm\mapsto\mathcal{M}_{\{1\}^c}\left(\vectinf{J_{ \textbf{g}^\ast_3} \textbf{g}^\ast_3(\hbm_{i-1})}{i}\right)\right]\\
		&\qquad-\left[\hbfm\mapsto\mathcal{M}_{\{1\}^c}\left(\vectinf{J_{ \textbf{g}^\ast_3} \textbf{g}^\ast_3(\hbm_{i-1})}{i}\right)\right]+\left[\hbfm\mapsto\mathcal{M}_{\{2\}^c}\left(\vectinf{J_{ \textbf{g}^\ast_3} \textbf{g}^\ast_3(\hbm_{i-1})}{i}\right)\right]\\
		&\qquad-\left[\hbfm\mapsto\mathcal{M}_{\{1\}^c}\left(\vectinf{J_{ \textbf{g}^\ast_3} \textbf{g}^\ast_3(\hbm_{i+1})}{i}\right)\right]+\left[\hbfm\mapsto\mathcal{M}_{\{0\}^c}\left(\vectinf{J_{ \textbf{g}^\ast_3} \textbf{g}^\ast_3(\hbm_{i+1})}{i}\right)\right]\\
		& =  \left[\hbfm\mapsto\mathcal{M}_{\{1\}^c}\left(\vectinf{ \textbf{p}^\ast_1(\hbm_{i+1})+\textbf{p}^\ast_2(\hbm_{i+1})}{i}\right)\right]-\left[\hbfm\mapsto\mathcal{M}_{\{2\}^c}\left(\vectinf{ \textbf{p}^\ast_1(\hbm_{i-1})+\textbf{p}^\ast_2(\hbm_{i-1})}{i}\right)\right],
	\end{align*}

	\noindent \textbf{Computation of $\left[\left[ \bf{G}^{1,1},\bf{G}^{2,3} \right],\bf{G}^{1,2}\right]$:}
	\begin{align*}
		\lqq{2^{3/2}\left[\left[ \bf{G}^{1,1},\bf{G}^{2,3} \right],\bf{G}^{1,2}\right]}\\
		&=\Big[\hbfm\mapsto \vectinf{\textbf{g}^\ast_{2}(\hbm_{i-3})}{i}+\vectinf{\textbf{g}^\ast_{2}(\hbm_{i-1})}{i}+ \vectinf{\textbf{g}^\ast_{2}(\hbm_{i+1})}{i}+ \vectinf{\textbf{g}^\ast_{2}(\hbm_{i+3})}{i}\\
		&-\mathcal{M}_{\{1\}^c}\left(\vectinf{ \textbf{h}^\ast_5(\hbm_{i+1})}{i}\right)+\mathcal{M}_{\{2\}^c}\left(\vectinf{ \textbf{h}^\ast_2(\hbm_{i-1})}{i}\right),\hbfm\mapsto \vectinf{\textbf{g}^\ast_{2}(\hbm_{i-1})}{i}+\vectinf{\textbf{g}^\ast_{2}(\hbm_{i+1})}{i}\Big]\\
		&=\Big[\hbfm\mapsto\vectinf{\textbf{g}^\ast_{2}(\hbm_{i-3})}{i},\hbfm\mapsto \vectinf{\textbf{g}^\ast_{2}(\hbm_{i-1})}{i}\Big] +\Big[\hbfm\mapsto\vectinf{\textbf{g}^\ast_{2}(\hbm_{i-3})}{i},\hbfm\mapsto \vectinf{\textbf{g}^\ast_{2}(\hbm_{i+1})}{i}\Big]\\
		&\quad +\Big[\hbfm\mapsto\vectinf{\textbf{g}^\ast_{2}(\hbm_{i-1})}{i},\hbfm\mapsto \vectinf{\textbf{g}^\ast_{2}(\hbm_{i-1})}{i}\Big]+\Big[\hbfm\mapsto\vectinf{\textbf{g}^\ast_{2}(\hbm_{i-1})}{i},\hbfm\mapsto \vectinf{\textbf{g}^\ast_{2}(\hbm_{i+1})}{i}\Big]\\
		&\quad +\Big[\hbfm\mapsto\vectinf{\textbf{g}^\ast_{2}(\hbm_{i+1})}{i},\hbfm\mapsto \vectinf{\textbf{g}^\ast_{2}(\hbm_{i-1})}{i}\Big] +\Big[\hbfm\mapsto\vectinf{\textbf{g}^\ast_{2}(\hbm_{i+1})}{i},\hbfm\mapsto\vectinf{\textbf{g}^\ast_{2}(\hbm_{i+1})}{i}\Big]\\
		&\quad +\Big[\hbfm\mapsto\vectinf{\textbf{g}^\ast_{2}(\hbm_{i+3})}{i},\hbfm\mapsto \vectinf{\textbf{g}^\ast_{2}(\hbm_{i-1})}{i}\Big] +\Big[\hbfm\mapsto\vectinf{\textbf{g}^\ast_{2}(\hbm_{i+3})}{i},\hbfm\mapsto\vectinf{\textbf{g}^\ast_{2}(\hbm_{i+1})}{i}\Big]\\
		&\quad -\Big[\hbfm\mapsto\mathcal{M}_{\{1\}^c}\left(\vectinf{ \textbf{h}^\ast_5(\hbm_{i+1})}{i}\right),\hbfm\mapsto \vectinf{\textbf{g}^\ast_{2}(\hbm_{i-1})}{i}\Big]
		-\Big[\hbfm\mapsto\mathcal{M}_{\{1\}^c}\left(\vectinf{ \textbf{h}^\ast_5(\hbm_{i+1})}{i}\right),\hbfm\mapsto \vectinf{\textbf{g}^\ast_{2}(\hbm_{i+1})}{i}\Big]
		\\
		&\quad +\Big[\hbfm\mapsto\mathcal{M}_{\{2\}^c}\left(\vectinf{ \textbf{h}^\ast_2(\hbm_{i-1})}{i}\right),\hbfm\mapsto \vectinf{\textbf{g}^\ast_{2}(\hbm_{i-1})}{i}\Big]
		+\Big[\hbfm\mapsto\mathcal{M}_{\{2\}^c}\left(\vectinf{ \textbf{h}^\ast_2(\hbm_{i-1})}{i}\right),\hbfm\mapsto \vectinf{\textbf{g}^\ast_{2}(\hbm_{i+1})}{i}\Big]\\
		&=-\Big[\hbfm\mapsto \mathcal{M}_{\{2\}^c}\left(\vectinf{J_{ \textbf{g}^\ast_2} \textbf{g}^\ast_2(\hbm_{i-2})}{i}\right)\Big]+\Big[\hbfm\mapsto \mathcal{M}_{\{0\}^c}\left(\vectinf{J_{ \textbf{g}^\ast_2} \textbf{g}^\ast_2(\hbm_{i+2})}{i}\right)\Big]\\
		&\quad -\Big[\hbfm\mapsto\mathcal{M}_{\{1\}^c}\left(\vectinf{J_{ \textbf{h}^\ast_5} \textbf{g}^\ast_2(\hbm_{i})}{i}\right)\Big]+\Big[\hbfm\mapsto\mathcal{M}_{\{2\}^c}\left(\vectinf{J_{ \textbf{g}^\ast_2} \textbf{h}^\ast_5(\hbm_{i})}{i}\right)\Big] \\
		&\quad -\Big[\hbfm\mapsto\mathcal{M}_{\{1\}^c}\left(\vectinf{J_{ \textbf{h}^\ast_5} \textbf{g}^\ast_2(\hbm_{i+2})}{i}\right)\Big]+\Big[\hbfm\mapsto\mathcal{M}_{\{0\}^c}\left(\vectinf{J_{ \textbf{g}^\ast_2} \textbf{h}^\ast_5(\hbm_{i+2})}{i}\right)\Big] \\
		&\quad +\Big[\hbfm\mapsto\mathcal{M}_{\{2\}^c}\left(\vectinf{J_{ \textbf{h}^\ast_2} \textbf{g}^\ast_2(\hbm_{i-2})}{i}\right)\Big]-\Big[\hbfm\mapsto\mathcal{M}_{\{3\}^c}\left(\vectinf{J_{ \textbf{g}^\ast_2} \textbf{h}^\ast_2(\hbm_{i-2})}{i}\right)\Big] \\
		&\quad +\Big[\hbfm\mapsto\mathcal{M}_{\{2\}^c}\left(\vectinf{J_{ \textbf{h}^\ast_2} \textbf{g}^\ast_2(\hbm_{i})}{i}\right)\Big]-\Big[\hbfm\mapsto\mathcal{M}_{\{1\}^c}\left(\vectinf{J_{ \textbf{g}^\ast_2} \textbf{h}^\ast_2(\hbm_{i})}{i}\right)\Big]\\
		&=-\Big[\hbfm\mapsto \mathcal{M}_{\{2\}^c}\left(\vectinf{J_{ \textbf{g}^\ast_2} \textbf{g}^\ast_2(\hbm_{i-2})}{i}\right)\Big]+\Big[\hbfm\mapsto\mathcal{M}_{\{2\}^c}\left(\vectinf{J_{ \textbf{h}^\ast_2} \textbf{g}^\ast_2(\hbm_{i-2})}{i}\right)\Big]\\
		&\quad +\Big[\hbfm\mapsto \mathcal{M}_{\{0\}^c}\left(\vectinf{J_{ \textbf{g}^\ast_2} \textbf{g}^\ast_2(\hbm_{i+2})}{i}\right)\Big]+\Big[\hbfm\mapsto\mathcal{M}_{\{0\}^c}\left(\vectinf{J_{ \textbf{g}^\ast_2} \textbf{h}^\ast_5(\hbm_{i+2})}{i}\right)\Big] \\
		&\quad -\Big[\hbfm\mapsto\mathcal{M}_{\{1\}^c}\left(\vectinf{J_{ \textbf{h}^\ast_5} \textbf{g}^\ast_2(\hbm_{i+2})}{i}\right)\Big] -\Big[\hbfm\mapsto\mathcal{M}_{\{3\}^c}\left(\vectinf{J_{ \textbf{g}^\ast_2} \textbf{h}^\ast_2(\hbm_{i-2})}{i}\right)\Big] \\
		&=\Big[\hbfm\mapsto \mathcal{M}_{\{2\}^c}\left(\vectinf{ \textbf{p}^\ast_1(\hbm_{i-2})}{i}\right)\Big]-\Big[\hbfm\mapsto \mathcal{M}_{\{0\}^c}\left(\vectinf{ \textbf{p}^\ast_1(\hbm_{i+2})}{i}\right)\Big]-\Big[\hbfm\mapsto \mathcal{M}_{\{1\}^c}\left(\vectinf{ \textbf{p}^\ast_1(\hbm_{i+2})}{i}\right)\Big]\\
		&\qquad+\Big[\hbfm\mapsto \mathcal{M}_{\{3\}^c}\left(\vectinf{ \textbf{p}^\ast_1(\hbm_{i-2})}{i}\right)\Big],
	\end{align*}
	
	\noindent \textbf{Computation of $\left[\left[ \bf{G}^{1,2},\bf{G}^{2,3} \right],\bf{G}^{1,1}\right]$:}
	\begin{align*}
		\lqq{-2^{3/2}\left[\left[ \bf{G}^{1,2},\bf{G}^{2,3} \right],\bf{G}^{1,1}\right]}\\
		&=\Big[\hbfm\mapsto \vectinf{\textbf{g}^\ast_{1}(\hbm_{i-3})}{i}+\vectinf{\textbf{g}^\ast_{1}(\hbm_{i-1})}{i}+ \vectinf{\textbf{g}^\ast_{1}(\hbm_{i+1})}{i}+ \vectinf{\textbf{g}^\ast_{1}(\hbm_{i+3})}{i}\\
		&+\mathcal{M}_{\{1\}^c}\left(\vectinf{ \textbf{h}^\ast_6(\hbm_{i+1})}{i}\right)-\mathcal{M}_{\{2\}^c}\left(\vectinf{ \textbf{h}^\ast_4(\hbm_{i-1})}{i}\right),\hbfm\mapsto \vectinf{\textbf{g}^\ast_{1}(\hbm_{i-1})}{i}+\vectinf{\textbf{g}^\ast_{1}(\hbm_{i+1})}{i}\Big]\\
		&=\Big[\hbfm\mapsto\vectinf{\textbf{g}^\ast_{1}(\hbm_{i-3})}{i},\hbfm\mapsto \vectinf{\textbf{g}^\ast_{1}(\hbm_{i-1})}{i}\Big] +\Big[\hbfm\mapsto\vectinf{\textbf{g}^\ast_{1}(\hbm_{i-3})}{i},\hbfm\mapsto \vectinf{\textbf{g}^\ast_{1}(\hbm_{i+1})}{i}\Big]\\
		&\quad +\Big[\hbfm\mapsto\vectinf{\textbf{g}^\ast_{1}(\hbm_{i-1})}{i},\hbfm\mapsto \vectinf{\textbf{g}^\ast_{1}(\hbm_{i-1})}{i}\Big]+\Big[\hbfm\mapsto\vectinf{\textbf{g}^\ast_{1}(\hbm_{i-1})}{i},\hbfm\mapsto \vectinf{\textbf{g}^\ast_{1}(\hbm_{i+1})}{i}\Big]\\
		&\quad +\Big[\hbfm\mapsto\vectinf{\textbf{g}^\ast_{1}(\hbm_{i+1})}{i},\hbfm\mapsto \vectinf{\textbf{g}^\ast_{1}(\hbm_{i-1})}{i}\Big] +\Big[\hbfm\mapsto\vectinf{\textbf{g}^\ast_{1}(\hbm_{i+1})}{i},\hbfm\mapsto\vectinf{\textbf{g}^\ast_{1}(\hbm_{i+1})}{i}\Big]\\
		&\quad +\Big[\hbfm\mapsto\vectinf{\textbf{g}^\ast_{1}(\hbm_{i+3})}{i},\hbfm\mapsto \vectinf{\textbf{g}^\ast_{1}(\hbm_{i-1})}{i}\Big] +\Big[\hbfm\mapsto\vectinf{\textbf{g}^\ast_{1}(\hbm_{i+3})}{i},\hbfm\mapsto\vectinf{\textbf{g}^\ast_{1}(\hbm_{i+1})}{i}\Big]\\
		&\quad +\Big[\hbfm\mapsto\mathcal{M}_{\{1\}^c}\left(\vectinf{ \textbf{h}^\ast_6(\hbm_{i+1})}{i}\right),\hbfm\mapsto \vectinf{\textbf{g}^\ast_{1}(\hbm_{i-1})}{i}\Big]
		+\Big[\hbfm\mapsto\mathcal{M}_{\{1\}^c}\left(\vectinf{ \textbf{h}^\ast_6(\hbm_{i+1})}{i}\right),\hbfm\mapsto \vectinf{\textbf{g}^\ast_{1}(\hbm_{i+1})}{i}\Big]
		\\
		&\quad -\Big[\hbfm\mapsto\mathcal{M}_{\{2\}^c}\left(\vectinf{ \textbf{h}^\ast_4(\hbm_{i-1})}{i}\right),\hbfm\mapsto \vectinf{\textbf{g}^\ast_{1}(\hbm_{i-1})}{i}\Big]
		-\Big[\hbfm\mapsto\mathcal{M}_{\{2\}^c}\left(\vectinf{ \textbf{h}^\ast_4(\hbm_{i-1})}{i}\right),\hbfm\mapsto \vectinf{\textbf{g}^\ast_{1}(\hbm_{i+1})}{i}\Big]\\
		&=-\Big[\hbfm\mapsto \mathcal{M}_{\{2\}^c}\left(\vectinf{J_{ \textbf{g}^\ast_1} \textbf{g}^\ast_1(\hbm_{i-2})}{i}\right)\Big]-\Big[\hbfm\mapsto \mathcal{M}_{\{0\}^c}\left(\vectinf{J_{ \textbf{g}^\ast_1} \textbf{g}^\ast_1(\hbm_{i})}{i}\right)\Big]+\Big[\hbfm\mapsto \mathcal{M}_{\{0\}^c}\left(\vectinf{J_{ \textbf{g}^\ast_1} \textbf{g}^\ast_1(\hbm_{i})}{i}\right)\Big]\\
		&\quad +\Big[\hbfm\mapsto \mathcal{M}_{\{0\}^c}\left(\vectinf{J_{ \textbf{g}^\ast_1} \textbf{g}^\ast_1(\hbm_{i+2})}{i}\right)\Big] +\Big[\hbfm\mapsto\mathcal{M}_{\{1\}^c}\left(\vectinf{J_{ \textbf{h}^\ast_6} \textbf{g}^\ast_1(\hbm_{i})}{i}\right)\Big]-\Big[\hbfm\mapsto\mathcal{M}_{\{2\}^c}\left(\vectinf{J_{ \textbf{g}^\ast_1} \textbf{h}^\ast_6(\hbm_{i})}{i}\right)\Big] \\
		&\quad +\Big[\hbfm\mapsto\mathcal{M}_{\{1\}^c}\left(\vectinf{J_{ \textbf{h}^\ast_6} \textbf{g}^\ast_1(\hbm_{i+2})}{i}\right)\Big]-\Big[\hbfm\mapsto\mathcal{M}_{\{0\}^c}\left(\vectinf{J_{ \textbf{g}^\ast_1} \textbf{h}^\ast_6(\hbm_{i+2})}{i}\right)\Big]-\Big[\hbfm\mapsto\mathcal{M}_{\{2\}^c}\left(\vectinf{J_{ \textbf{h}^\ast_4} \textbf{g}^\ast_1(\hbm_{i-2})}{i}\right)\Big] \\
		&\quad +\Big[\hbfm\mapsto\mathcal{M}_{\{3\}^c}\left(\vectinf{J_{ \textbf{g}^\ast_1} \textbf{h}^\ast_4(\hbm_{i-2})}{i}\right)\Big]  -\Big[\hbfm\mapsto\mathcal{M}_{\{2\}^c}\left(\vectinf{J_{ \textbf{h}^\ast_4} \textbf{g}^\ast_1(\hbm_{i})}{i}\right)\Big]+\Big[\hbfm\mapsto\mathcal{M}_{\{1\}^c}\left(\vectinf{J_{ \textbf{g}^\ast_1} \textbf{h}^\ast_4(\hbm_{i})}{i}\right)\Big]\\
		&=-\Big[\hbfm\mapsto \mathcal{M}_{\{2\}^c}\left(\vectinf{J_{ \textbf{g}^\ast_1} \textbf{g}^\ast_1(\hbm_{i-2})}{i}\right)\Big]-\Big[\hbfm\mapsto\mathcal{M}_{\{2\}^c}\left(\vectinf{J_{ \textbf{h}^\ast_4} \textbf{g}^\ast_1(\hbm_{i-2})}{i}\right)\Big]\\
		&\quad +\Big[\hbfm\mapsto \mathcal{M}_{\{0\}^c}\left(\vectinf{J_{ \textbf{g}^\ast_1} \textbf{g}^\ast_1(\hbm_{i+2})}{i}\right)\Big]-\Big[\hbfm\mapsto\mathcal{M}_{\{0\}^c}\left(\vectinf{J_{ \textbf{g}^\ast_1} \textbf{h}^\ast_6(\hbm_{i+2})}{i}\right)\Big]\\ &\quad+\Big[\hbfm\mapsto\mathcal{M}_{\{1\}^c}\left(\vectinf{J_{ \textbf{h}^\ast_6} \textbf{g}^\ast_1(\hbm_{i})}{i}\right)\Big]+\Big[\hbfm\mapsto\mathcal{M}_{\{1\}^c}\left(\vectinf{J_{ \textbf{g}^\ast_1} \textbf{h}^\ast_4(\hbm_{i})}{i}\right)\Big]\\
		&\quad-\Big[\hbfm\mapsto\mathcal{M}_{\{2\}^c}\left(\vectinf{J_{ \textbf{g}^\ast_1} \textbf{h}^\ast_6(\hbm_{i})}{i}\right)\Big]-\Big[\hbfm\mapsto\mathcal{M}_{\{2\}^c}\left(\vectinf{J_{ \textbf{h}^\ast_4} \textbf{g}^\ast_1(\hbm_{i})}{i}\right)\Big] \\
		&\quad +\Big[\hbfm\mapsto\mathcal{M}_{\{1\}^c}\left(\vectinf{J_{ \textbf{h}^\ast_6} \textbf{g}^\ast_1(\hbm_{i+2})}{i}\right)\Big]  +\Big[\hbfm\mapsto\mathcal{M}_{\{3\}^c}\left(\vectinf{J_{ \textbf{g}^\ast_1} \textbf{h}^\ast_4(\hbm_{i-2})}{i}\right)\Big]  \\
		&=\Big[\hbfm\mapsto \mathcal{M}_{\{2\}^c}\left(\vectinf{ \textbf{p}^\ast_2(\hbm_{i-2})}{i}\right)\Big] -\Big[\hbfm\mapsto \mathcal{M}_{\{0\}^c}\left(\vectinf{ \textbf{p}^\ast_2(\hbm_{i+2})}{i}\right)\Big]\\
		&\quad -\Big[\hbfm\mapsto\mathcal{M}_{\{1\}^c}\left(\vectinf{ \textbf{p}^\ast_2(\hbm_{i+2})}{i}\right)\Big]  +\Big[\hbfm\mapsto\mathcal{M}_{\{3\}^c}\left(\vectinf{ \textbf{p}^\ast_2(\hbm_{i-2})}{i}\right)\Big],
	\end{align*}

	\noindent \textbf{Computation of $\left[\left[ \bf{G}^{1,3},\bf{G}^{2,1} \right],\bf{G}^{1,2}\right]$:}
	\begin{align*}
		&-2^{3/2}\left[\left[ \bf{G}^{1,3},\bf{G}^{2,1} \right],\bf{G}^{1,2}\right]\\
		&=\Big[\hbfm\mapsto \vectinf{\textbf{g}^\ast_{2}(\hbm_{i-3})}{i}+\vectinf{\textbf{g}^\ast_{2}(\hbm_{i-1})}{i}+ \vectinf{\textbf{g}^\ast_{2}(\hbm_{i+1})}{i}+ \vectinf{\textbf{g}^\ast_{2}(\hbm_{i+3})}{i}\\
		&+\mathcal{M}_{\{1\}^c}\left(\vectinf{ \textbf{h}^\ast_2(\hbm_{i+1})}{i}\right)-\mathcal{M}_{\{2\}^c}\left(\vectinf{ \textbf{h}^\ast_5(\hbm_{i-1})}{i}\right),\hbfm\mapsto \vectinf{\textbf{g}^\ast_{2}(\hbm_{i-1})}{i}+\vectinf{\textbf{g}^\ast_{2}(\hbm_{i+1})}{i}\Big]\\
		&=\Big[\hbfm\mapsto\vectinf{\textbf{g}^\ast_{2}(\hbm_{i-3})}{i},\hbfm\mapsto \vectinf{\textbf{g}^\ast_{2}(\hbm_{i-1})}{i}\Big] +\Big[\hbfm\mapsto\vectinf{\textbf{g}^\ast_{2}(\hbm_{i-3})}{i},\hbfm\mapsto \vectinf{\textbf{g}^\ast_{2}(\hbm_{i+1})}{i}\Big]\\
		&\quad +\Big[\hbfm\mapsto\vectinf{\textbf{g}^\ast_{2}(\hbm_{i-1})}{i},\hbfm\mapsto \vectinf{\textbf{g}^\ast_{2}(\hbm_{i-1})}{i}\Big]+\Big[\hbfm\mapsto\vectinf{\textbf{g}^\ast_{2}(\hbm_{i-1})}{i},\hbfm\mapsto \vectinf{\textbf{g}^\ast_{2}(\hbm_{i+1})}{i}\Big]\\
		&\quad +\Big[\hbfm\mapsto\vectinf{\textbf{g}^\ast_{2}(\hbm_{i+1})}{i},\hbfm\mapsto \vectinf{\textbf{g}^\ast_{2}(\hbm_{i-1})}{i}\Big] +\Big[\hbfm\mapsto\vectinf{\textbf{g}^\ast_{2}(\hbm_{i+1})}{i},\hbfm\mapsto\vectinf{\textbf{g}^\ast_{2}(\hbm_{i+1})}{i}\Big]\\
		&\quad +\Big[\hbfm\mapsto\vectinf{\textbf{g}^\ast_{2}(\hbm_{i+3})}{i},\hbfm\mapsto \vectinf{\textbf{g}^\ast_{2}(\hbm_{i-1})}{i}\Big] +\Big[\hbfm\mapsto\vectinf{\textbf{g}^\ast_{2}(\hbm_{i+3})}{i},\hbfm\mapsto\vectinf{\textbf{g}^\ast_{2}(\hbm_{i+1})}{i}\Big]\\
		&\quad +\Big[\hbfm\mapsto\mathcal{M}_{\{1\}^c}\left(\vectinf{ \textbf{h}^\ast_2(\hbm_{i+1})}{i}\right),\hbfm\mapsto \vectinf{\textbf{g}^\ast_{2}(\hbm_{i-1})}{i}\Big]
		+\Big[\hbfm\mapsto\mathcal{M}_{\{1\}^c}\left(\vectinf{ \textbf{h}^\ast_2(\hbm_{i+1})}{i}\right),\hbfm\mapsto \vectinf{\textbf{g}^\ast_{2}(\hbm_{i+1})}{i}\Big]
		\\
		&\quad -\Big[\hbfm\mapsto\mathcal{M}_{\{2\}^c}\left(\vectinf{ \textbf{h}^\ast_5(\hbm_{i-1})}{i}\right),\hbfm\mapsto \vectinf{\textbf{g}^\ast_{2}(\hbm_{i-1})}{i}\Big]
		-\Big[\hbfm\mapsto\mathcal{M}_{\{2\}^c}\left(\vectinf{ \textbf{h}^\ast_5(\hbm_{i-1})}{i}\right),\hbfm\mapsto \vectinf{\textbf{g}^\ast_{2}(\hbm_{i+1})}{i}\Big]\\
		&=-\Big[\hbfm\mapsto \mathcal{M}_{\{2\}^c}\left(\vectinf{J_{ \textbf{g}^\ast_2} \textbf{g}^\ast_2(\hbm_{i-2})}{i}\right)\Big]+\Big[\hbfm\mapsto \mathcal{M}_{\{0\}^c}\left(\vectinf{J_{ \textbf{g}^\ast_2} \textbf{g}^\ast_2(\hbm_{i+2})}{i}\right)\Big]\\
		&\quad +\Big[\hbfm\mapsto\mathcal{M}_{\{1\}^c}\left(\vectinf{J_{ \textbf{h}^\ast_2} \textbf{g}^\ast_2(\hbm_{i})}{i}\right)\Big]-\Big[\hbfm\mapsto\mathcal{M}_{\{2\}^c}\left(\vectinf{J_{ \textbf{g}^\ast_2} \textbf{h}^\ast_2(\hbm_{i})}{i}\right)\Big] \\
		&\quad +\Big[\hbfm\mapsto\mathcal{M}_{\{1\}^c}\left(\vectinf{J_{ \textbf{h}^\ast_2} \textbf{g}^\ast_2(\hbm_{i+2})}{i}\right)\Big]-\Big[\hbfm\mapsto\mathcal{M}_{\{0\}^c}\left(\vectinf{J_{ \textbf{g}^\ast_2} \textbf{h}^\ast_2(\hbm_{i+2})}{i}\right)\Big] \\
		&\quad -\Big[\hbfm\mapsto\mathcal{M}_{\{2\}^c}\left(\vectinf{J_{ \textbf{h}^\ast_5} \textbf{g}^\ast_2(\hbm_{i-2})}{i}\right)\Big]+\Big[\hbfm\mapsto\mathcal{M}_{\{3\}^c}\left(\vectinf{J_{ \textbf{g}^\ast_2} \textbf{h}^\ast_5(\hbm_{i-2})}{i}\right)\Big] \\
		&\quad -\Big[\hbfm\mapsto\mathcal{M}_{\{2\}^c}\left(\vectinf{J_{ \textbf{h}^\ast_5} \textbf{g}^\ast_2(\hbm_{i})}{i}\right)\Big]+\Big[\hbfm\mapsto\mathcal{M}_{\{1\}^c}\left(\vectinf{J_{ \textbf{g}^\ast_2} \textbf{h}^\ast_5(\hbm_{i})}{i}\right)\Big]\\
		&=-\Big[\hbfm\mapsto \mathcal{M}_{\{2\}^c}\left(\vectinf{J_{ \textbf{g}^\ast_2} \textbf{g}^\ast_2(\hbm_{i-2})}{i}\right)\Big]-\Big[\hbfm\mapsto\mathcal{M}_{\{2\}^c}\left(\vectinf{J_{ \textbf{h}^\ast_5} \textbf{g}^\ast_2(\hbm_{i-2})}{i}\right)\Big]\\
		&\quad +\Big[\hbfm\mapsto \mathcal{M}_{\{0\}^c}\left(\vectinf{J_{ \textbf{g}^\ast_2} \textbf{g}^\ast_2(\hbm_{i+2})}{i}\right)\Big]-\Big[\hbfm\mapsto\mathcal{M}_{\{0\}^c}\left(\vectinf{J_{ \textbf{g}^\ast_2} \textbf{h}^\ast_2(\hbm_{i+2})}{i}\right)\Big] \\
		&\quad +\Big[\hbfm\mapsto\mathcal{M}_{\{1\}^c}\left(\vectinf{J_{ \textbf{h}^\ast_2} \textbf{g}^\ast_2(\hbm_{i+2})}{i}\right)\Big] +\Big[\hbfm\mapsto\mathcal{M}_{\{3\}^c}\left(\vectinf{J_{ \textbf{g}^\ast_2} \textbf{h}^\ast_5(\hbm_{i-2})}{i}\right)\Big] \\
		&=\Big[\hbfm\mapsto \mathcal{M}_{\{2\}^c}\left(\vectinf{ \textbf{p}^\ast_3(\hbm_{i-2})}{i}\right)\Big]-\Big[\hbfm\mapsto \mathcal{M}_{\{0\}^c}\left(\vectinf{ \textbf{p}^\ast_3(\hbm_{i+2})}{i}\right)\Big]-\Big[\hbfm\mapsto \mathcal{M}_{\{1\}^c}\left(\vectinf{ \textbf{p}^\ast_3(\hbm_{i+2})}{i}\right)\Big]\\
		&\qquad+\Big[\hbfm\mapsto \mathcal{M}_{\{3\}^c}\left(\vectinf{ \textbf{p}^\ast_3(\hbm_{i-2})}{i}\right)\Big],
	\end{align*}

	\noindent \textbf{Computation of $\left[\left[ \bf{G}^{1,1},\bf{G}^{2,1} \right],\bf{G}^{1,2}\right]$:}
	\begin{align*}
		\lqq{2^{3/2}\left[\left[ \bf{G}^{1,1},\bf{G}^{2,1} \right],\bf{G}^{1,2}\right]}\\
		&=\Big[\hbfm\mapsto\mathcal{M}_{\{1\}^c}\left(\vectinf{ \textbf{p}^\ast_2(\hbm_{i+1})+\textbf{p}^\ast_3(\hbm_{i+1})}{i}\right)-\mathcal{M}_{\{2\}^c}\left(\vectinf{ \textbf{p}^\ast_2(\hbm_{i-1})+\textbf{p}^\ast_3(\hbm_{i-1})}{i}\right),\\
		&\quad\hbfm\mapsto \vectinf{\textbf{g}^\ast_{2}(\hbm_{i-1})}{i}+\vectinf{\textbf{g}^\ast_{2}(\hbm_{i+1})}{i}\Big]\\
		&=\Big[\hbfm\mapsto\mathcal{M}_{\{1\}^c}\vectinf{ \textbf{h}^\ast_2(\hbm_{i})}{i}+\mathcal{M}_{\{2\}^c}\vectinf{ \textbf{h}^\ast_5(\hbm_{i})}{i}+\mathcal{M}_{\{1\}^c}\vectinf{ \textbf{h}^\ast_2(\hbm_{i+2})}{i}+\mathcal{M}_{\{0\}^c}\vectinf{ \textbf{h}^\ast_5(\hbm_{i+2})}{i}\\
		&\quad\quad -\mathcal{M}_{\{2\}^c}\vectinf{ \textbf{h}^\ast_2(\hbm_{i-2})}{i}-\mathcal{M}_{\{3\}^c}\vectinf{ \textbf{h}^\ast_5(\hbm_{i-2})}{i}-\mathcal{M}_{\{2\}^c}\vectinf{ \textbf{h}^\ast_2(\hbm_{i})}{i}-\mathcal{M}_{\{1\}^c}\vectinf{ \textbf{h}^\ast_5(\hbm_{i})}{i}\Big]\\
		&=\Big[\hbfm\mapsto\mathcal{M}_{\{0\}^c}\vectinf{ \textbf{h}^\ast_5(\hbm_{i+2})}{i}+\mathcal{M}_{\{1\}^c}\vectinf{ \textbf{h}^\ast_2(\hbm_{i+2})}{i}+\mathcal{M}_{\{1\}^c}\vectinf{ \textbf{g}^\ast_2(\hbm_{i})}{i}\\
		&\quad\quad -\mathcal{M}_{\{2\}^c}\vectinf{ \textbf{g}^\ast_2(\hbm_{i})}{i}-\mathcal{M}_{\{2\}^c}\vectinf{ \textbf{h}^\ast_2(\hbm_{i-2})}{i}-\mathcal{M}_{\{3\}^c}\vectinf{ \textbf{h}^\ast_5(\hbm_{i-2})}{i}\Big],
	\end{align*}
	
	\noindent \textbf{Computation of $\left[\left[ \bf{G}^{1,1},\bf{G}^{2,1} \right],\bf{G}^{1,3}\right]$:}
	\begin{align*}
		\lqq{2^{3/2}\left[\left[ \bf{G}^{1,1},\bf{G}^{2,1} \right],\bf{G}^{1,3}\right]}\\
		&=\Big[\hbfm\mapsto\mathcal{M}_{\{1\}^c}\left(\vectinf{ \textbf{p}^\ast_2(\hbm_{i+1})+\textbf{p}^\ast_3(\hbm_{i+1})}{i}\right)-\mathcal{M}_{\{2\}^c}\left(\vectinf{ \textbf{p}^\ast_2(\hbm_{i-1})+\textbf{p}^\ast_3(\hbm_{i-1})}{i}\right),\\
		&\quad\hbfm\mapsto \vectinf{\textbf{g}^\ast_{3}(\hbm_{i-1})}{i}+\vectinf{\textbf{g}^\ast_{3}(\hbm_{i+1})}{i}\Big]\\
		&=\Big[\hbfm\mapsto-\mathcal{M}_{\{1\}^c}\vectinf{ \textbf{h}^\ast_1(\hbm_{i})}{i}-\mathcal{M}_{\{2\}^c}\vectinf{ \textbf{h}^\ast_3(\hbm_{i})}{i}-\mathcal{M}_{\{1\}^c}\vectinf{ \textbf{h}^\ast_1(\hbm_{i+2})}{i}-\mathcal{M}_{\{0\}^c}\vectinf{ \textbf{h}^\ast_3(\hbm_{i+2})}{i}\\
		&\quad\quad +\mathcal{M}_{\{2\}^c}\vectinf{ \textbf{h}^\ast_1(\hbm_{i-2})}{i}+\mathcal{M}_{\{3\}^c}\vectinf{ \textbf{h}^\ast_3(\hbm_{i-2})}{i}+\mathcal{M}_{\{2\}^c}\vectinf{ \textbf{h}^\ast_1(\hbm_{i})}{i}+\mathcal{M}_{\{1\}^c}\vectinf{ \textbf{h}^\ast_3(\hbm_{i})}{i}\Big]\\
		&=\Big[\hbfm\mapsto-\mathcal{M}_{\{0\}^c}\vectinf{ \textbf{h}^\ast_3(\hbm_{i+2})}{i}-\mathcal{M}_{\{1\}^c}\vectinf{ \textbf{h}^\ast_1(\hbm_{i+2})}{i}+\mathcal{M}_{\{1\}^c}\vectinf{ \textbf{g}^\ast_3(\hbm_{i})}{i}\\
		&\quad\quad +\mathcal{M}_{\{2\}^c}\vectinf{ \textbf{h}^\ast_1(\hbm_{i-2})}{i}-\mathcal{M}_{\{2\}^c}\vectinf{ \textbf{g}^\ast_3(\hbm_{i})}{i}+\mathcal{M}_{\{3\}^c}\vectinf{ \textbf{h}^\ast_3(\hbm_{i-2})}{i}\Big].
	\end{align*}
	
	\noindent \textbf{Computation of $\left[\left[ \bf{G}^{1,2},\bf{G}^{2,2} \right],\bf{G}^{1,1}\right]$:}
	\begin{align*}
		\lqq{2^{3/2}\left[\left[ \bf{G}^{1,2},\bf{G}^{2,2} \right],\bf{G}^{1,1}\right]}\\
		&=\Big[\hbfm\mapsto\mathcal{M}_{\{1\}^c}\left(\vectinf{ \textbf{p}^\ast_1(\hbm_{i+1})+\textbf{p}^\ast_3(\hbm_{i+1})}{i}\right)-\mathcal{M}_{\{2\}^c}\left(\vectinf{ \textbf{p}^\ast_1(\hbm_{i-1})+\textbf{p}^\ast_3(\hbm_{i-1})}{i}\right),\\
		&\quad\hbfm\mapsto \vectinf{\textbf{g}^\ast_{1}(\hbm_{i-1})}{i}+\vectinf{\textbf{g}^\ast_{1}(\hbm_{i+1})}{i}\Big]\\
		&=\Big[\hbfm\mapsto-\mathcal{M}_{\{1\}^c}\vectinf{ \textbf{h}^\ast_4(\hbm_{i})}{i}-\mathcal{M}_{\{2\}^c}\vectinf{ \textbf{h}^\ast_6(\hbm_{i})}{i}-\mathcal{M}_{\{1\}^c}\vectinf{ \textbf{h}^\ast_4(\hbm_{i+2})}{i}-\mathcal{M}_{\{0\}^c}\vectinf{ \textbf{h}^\ast_6(\hbm_{i+2})}{i}\\
		&\quad\quad +\mathcal{M}_{\{2\}^c}\vectinf{ \textbf{h}^\ast_4(\hbm_{i-2})}{i}+\mathcal{M}_{\{3\}^c}\vectinf{ \textbf{h}^\ast_6(\hbm_{i-2})}{i}+\mathcal{M}_{\{2\}^c}\vectinf{ \textbf{h}^\ast_4(\hbm_{i})}{i}+\mathcal{M}_{\{1\}^c}\vectinf{ \textbf{h}^\ast_6(\hbm_{i})}{i}\Big]\\
		&=\Big[\hbfm\mapsto-\mathcal{M}_{\{0\}^c}\vectinf{ \textbf{h}^\ast_6(\hbm_{i+2})}{i}-\mathcal{M}_{\{1\}^c}\vectinf{ \textbf{h}^\ast_4(\hbm_{i+2})}{i}+\mathcal{M}_{\{1\}^c}\vectinf{ \textbf{g}^\ast_1(\hbm_{i})}{i}\\
		&\quad\quad +\mathcal{M}_{\{2\}^c}\vectinf{ \textbf{h}^\ast_4(\hbm_{i-2})}{i}-\mathcal{M}_{\{2\}^c}\vectinf{ \textbf{g}^\ast_1(\hbm_{i})}{i}+\mathcal{M}_{\{3\}^c}\vectinf{ \textbf{h}^\ast_6(\hbm_{i-2})}{i}\Big].
	\end{align*}

	\noindent \textbf{Computation of $\left[\left[ \bf{G}^{1,2},\bf{G}^{2,2} \right],\bf{G}^{1,3}\right]$:}
	\begin{align*}
		\lqq{2^{3/2}\left[\left[ \bf{G}^{1,2},\bf{G}^{2,2} \right],\bf{G}^{1,3}\right]}\\
		&=\Big[\hbfm\mapsto\mathcal{M}_{\{1\}^c}\left(\vectinf{ \textbf{p}^\ast_1(\hbm_{i+1})+\textbf{p}^\ast_3(\hbm_{i+1})}{i}\right)-\mathcal{M}_{\{2\}^c}\left(\vectinf{ \textbf{p}^\ast_1(\hbm_{i-1})+\textbf{p}^\ast_3(\hbm_{i-1})}{i}\right),\\
		&\quad\hbfm\mapsto \vectinf{\textbf{g}^\ast_{3}(\hbm_{i-1})}{i}+\vectinf{\textbf{g}^\ast_{3}(\hbm_{i+1})}{i}\Big]\\
		&=\Big[\hbfm\mapsto\mathcal{M}_{\{1\}^c}\vectinf{ \textbf{h}^\ast_3(\hbm_{i})}{i}+\mathcal{M}_{\{2\}^c}\vectinf{ \textbf{h}^\ast_1(\hbm_{i})}{i}+\mathcal{M}_{\{1\}^c}\vectinf{ \textbf{h}^\ast_3(\hbm_{i+2})}{i}+\mathcal{M}_{\{0\}^c}\vectinf{ \textbf{h}^\ast_1(\hbm_{i+2})}{i}\\
		&\quad\quad -\mathcal{M}_{\{2\}^c}\vectinf{ \textbf{h}^\ast_3(\hbm_{i-2})}{i}-\mathcal{M}_{\{3\}^c}\vectinf{ \textbf{h}^\ast_1(\hbm_{i-2})}{i}-\mathcal{M}_{\{2\}^c}\vectinf{ \textbf{h}^\ast_3(\hbm_{i})}{i}-\mathcal{M}_{\{1\}^c}\vectinf{ \textbf{h}^\ast_1(\hbm_{i})}{i}\Big]\\
		&=\Big[\hbfm\mapsto\mathcal{M}_{\{0\}^c}\vectinf{ \textbf{h}^\ast_1(\hbm_{i+2})}{i}+\mathcal{M}_{\{1\}^c}\vectinf{ \textbf{h}^\ast_3(\hbm_{i+2})}{i}+\mathcal{M}_{\{1\}^c}\vectinf{ \textbf{g}^\ast_3(\hbm_{i})}{i}\\
		&\quad\quad -\mathcal{M}_{\{2\}^c}\vectinf{ \textbf{g}^\ast_3(\hbm_{i})}{i}-\mathcal{M}_{\{2\}^c}\vectinf{ \textbf{h}^\ast_3(\hbm_{i-2})}{i}-\mathcal{M}_{\{3\}^c}\vectinf{ \textbf{h}^\ast_1(\hbm_{i-2})}{i}\Big],
	\end{align*}

	\noindent \textbf{Computation of $\left[\left[ \bf{G}^{1,3},\bf{G}^{2,3} \right],\bf{G}^{1,1}\right]$:}
	\begin{align*}
		\lqq{2^{3/2}\left[\left[ \bf{G}^{1,3},\bf{G}^{2,3} \right],\bf{G}^{1,1}\right]}\\
		&=\Big[\hbfm\mapsto\mathcal{M}_{\{1\}^c}\left(\vectinf{ \textbf{p}^\ast_1(\hbm_{i+1})+\textbf{p}^\ast_2(\hbm_{i+1})}{i}\right)-\mathcal{M}_{\{2\}^c}\left(\vectinf{ \textbf{p}^\ast_1(\hbm_{i-1})+\textbf{p}^\ast_2(\hbm_{i-1})}{i}\right),\\
		&\quad\hbfm\mapsto \vectinf{\textbf{g}^\ast_{1}(\hbm_{i-1})}{i}+\vectinf{\textbf{g}^\ast_{1}(\hbm_{i+1})}{i}\Big]\\
		&=\Big[\hbfm\mapsto\mathcal{M}_{\{1\}^c}\vectinf{ \textbf{h}^\ast_6(\hbm_{i})}{i}+\mathcal{M}_{\{2\}^c}\vectinf{ \textbf{h}^\ast_4(\hbm_{i})}{i}+\mathcal{M}_{\{1\}^c}\vectinf{ \textbf{h}^\ast_6(\hbm_{i+2})}{i}+\mathcal{M}_{\{0\}^c}\vectinf{ \textbf{h}^\ast_4(\hbm_{i+2})}{i}\\
		&\quad\quad -\mathcal{M}_{\{2\}^c}\vectinf{ \textbf{h}^\ast_6(\hbm_{i-2})}{i}-\mathcal{M}_{\{3\}^c}\vectinf{ \textbf{h}^\ast_4(\hbm_{i-2})}{i}-\mathcal{M}_{\{2\}^c}\vectinf{ \textbf{h}^\ast_6(\hbm_{i})}{i}-\mathcal{M}_{\{1\}^c}\vectinf{ \textbf{h}^\ast_4(\hbm_{i})}{i}\Big]\\
		&=\Big[\hbfm\mapsto\mathcal{M}_{\{0\}^c}\vectinf{ \textbf{h}^\ast_4(\hbm_{i+2})}{i}+\mathcal{M}_{\{1\}^c}\vectinf{ \textbf{h}^\ast_6(\hbm_{i+2})}{i}+\mathcal{M}_{\{1\}^c}\vectinf{ \textbf{g}^\ast_1(\hbm_{i})}{i}\\
		&\quad\quad -\mathcal{M}_{\{2\}^c}\vectinf{ \textbf{g}^\ast_1(\hbm_{i})}{i}-\mathcal{M}_{\{2\}^c}\vectinf{ \textbf{h}^\ast_6(\hbm_{i-2})}{i}-\mathcal{M}_{\{3\}^c}\vectinf{ \textbf{h}^\ast_4(\hbm_{i-2})}{i}\Big],
	\end{align*}
	
	\noindent \textbf{Computation of $\left[\left[ \bf{G}^{1,3},\bf{G}^{2,3} \right],\bf{G}^{1,2}\right]$:}
	\begin{align*}
		\lqq{2^{3/2}\left[\left[ \bf{G}^{1,3},\bf{G}^{2,3} \right],\bf{G}^{1,2}\right]}\\
		&=\Big[\hbfm\mapsto\mathcal{M}_{\{1\}^c}\left(\vectinf{ \textbf{p}^\ast_1(\hbm_{i+1})+\textbf{p}^\ast_2(\hbm_{i+1})}{i}\right)-\mathcal{M}_{\{2\}^c}\left(\vectinf{ \textbf{p}^\ast_1(\hbm_{i-1})+\textbf{p}^\ast_2(\hbm_{i-1})}{i}\right),\\
		&\quad\hbfm\mapsto \vectinf{\textbf{g}^\ast_{2}(\hbm_{i-1})}{i}+\vectinf{\textbf{g}^\ast_{2}(\hbm_{i+1})}{i}\Big]\\
		&=\Big[\hbfm\mapsto-\mathcal{M}_{\{1\}^c}\vectinf{ \textbf{h}^\ast_5(\hbm_{i})}{i}-\mathcal{M}_{\{2\}^c}\vectinf{ \textbf{h}^\ast_2(\hbm_{i})}{i}-\mathcal{M}_{\{1\}^c}\vectinf{ \textbf{h}^\ast_5(\hbm_{i+2})}{i}-\mathcal{M}_{\{0\}^c}\vectinf{ \textbf{h}^\ast_2(\hbm_{i+2})}{i}\\
		&\quad\quad +\mathcal{M}_{\{2\}^c}\vectinf{ \textbf{h}^\ast_5(\hbm_{i-2})}{i}+\mathcal{M}_{\{3\}^c}\vectinf{ \textbf{h}^\ast_2(\hbm_{i-2})}{i}+\mathcal{M}_{\{2\}^c}\vectinf{ \textbf{h}^\ast_5(\hbm_{i})}{i}+\mathcal{M}_{\{1\}^c}\vectinf{ \textbf{h}^\ast_2(\hbm_{i})}{i}\Big]\\
		&=\Big[\hbfm\mapsto-\mathcal{M}_{\{0\}^c}\vectinf{ \textbf{h}^\ast_2(\hbm_{i+2})}{i}-\mathcal{M}_{\{1\}^c}\vectinf{ \textbf{h}^\ast_5(\hbm_{i+2})}{i}+\mathcal{M}_{\{1\}^c}\vectinf{ \textbf{g}^\ast_2(\hbm_{i})}{i}\\
		&\quad\quad +\mathcal{M}_{\{2\}^c}\vectinf{ \textbf{h}^\ast_5(\hbm_{i-2})}{i}-\mathcal{M}_{\{2\}^c}\vectinf{ \textbf{g}^\ast_2(\hbm_{i})}{i}+\mathcal{M}_{\{3\}^c}\vectinf{ \textbf{h}^\ast_2(\hbm_{i-2})}{i}\Big].
	\end{align*}

\noindent \textbf{Computation of $\left[\left[\left[ \bf{G}^{1,1},\bf{G}^{2,1} \right],\bf{G}^{1,3}\right],\bf{G}^{1,3}\right]$:}
\begin{eqnarray*}
	&&2^{2}\left[\left[\left[ \bf{G}^{1,1},\bf{G}^{2,1} \right],\bf{G}^{1,3}\right],\bf{G}^{1,3}\right]\\
	&=&\Big[\hbfm\mapsto-\mathcal{M}_{\{0\}^c}\vectinf{ \textbf{h}^\ast_3(\hbm_{i+2})}{i}-\mathcal{M}_{\{1\}^c}\vectinf{ \textbf{h}^\ast_1(\hbm_{i+2})}{i}+\mathcal{M}_{\{1\}^c}\vectinf{ \textbf{g}^\ast_3(\hbm_{i})}{i}-\mathcal{M}_{\{2\}^c}\vectinf{ \textbf{g}^\ast_3(\hbm_{i})}{i}\\
	&&+\mathcal{M}_{\{2\}^c}\vectinf{ \textbf{h}^\ast_1(\hbm_{i-2})}{i}+\mathcal{M}_{\{3\}^c}\vectinf{ \textbf{h}^\ast_3(\hbm_{i-2})}{i},\hbfm\mapsto \vectinf{\textbf{g}^\ast_{3}(\hbm_{i-1})}{i}+\vectinf{\textbf{g}^\ast_{3}(\hbm_{i+1})}{i}\Big]\\
	&=&-\Big[\hbfm\mapsto\mathcal{M}_{\{0\}^c}\left(\vectinf{ \textbf{h}^\ast_3(\hbm_{i+2})}{i}\right),\hbfm\mapsto \vectinf{\textbf{g}^\ast_{3}(\hbm_{i-1})}{i}\Big]\\ 
	&&-\Big[\hbfm\mapsto\mathcal{M}_{\{0\}^c}\left(\vectinf{ \textbf{h}^\ast_3(\hbm_{i+2})}{i}\right),\hbfm\mapsto \vectinf{\textbf{g}^\ast_{3}(\hbm_{i+1})}{i}\Big]\\
	&&-\Big[\hbfm\mapsto\mathcal{M}_{\{1\}^c}\left(\vectinf{ \textbf{h}^\ast_1(\hbm_{i+2})}{i}\right),\hbfm\mapsto \vectinf{\textbf{g}^\ast_{3}(\hbm_{i-1})}{i}\Big]\\ 
	&&-\Big[\hbfm\mapsto\mathcal{M}_{\{1\}^c}\left(\vectinf{ \textbf{h}^\ast_1(\hbm_{i+2})}{i}\right),\hbfm\mapsto \vectinf{\textbf{g}^\ast_{3}(\hbm_{i+1})}{i}\Big]\\
	&&+\Big[\hbfm\mapsto\mathcal{M}_{\{2\}^c}\left(\vectinf{ \textbf{h}^\ast_1(\hbm_{i-2})}{i}\right),\hbfm\mapsto \vectinf{\textbf{g}^\ast_{3}(\hbm_{i-1})}{i}\Big]\\ 
	&&+\Big[\hbfm\mapsto\mathcal{M}_{\{2\}^c}\left(\vectinf{ \textbf{h}^\ast_1(\hbm_{i-2})}{i}\right),\hbfm\mapsto \vectinf{\textbf{g}^\ast_{2}(\hbm_{i+1})}{i}\Big]\\
	&&+\Big[\hbfm\mapsto\mathcal{M}_{\{3\}^c}\left(\vectinf{ \textbf{h}^\ast_3(\hbm_{i-2})}{i}\right),\hbfm\mapsto \vectinf{\textbf{g}^\ast_{3}(\hbm_{i-1})}{i}\Big]\\ 
	&&+\Big[\hbfm\mapsto\mathcal{M}_{\{3\}^c}\left(\vectinf{ \textbf{h}^\ast_3(\hbm_{i-2})}{i}\right),\hbfm\mapsto \vectinf{\textbf{g}^\ast_{3}(\hbm_{i+1})}{i}\Big]\\
	&&+\Big[\hbfm\mapsto\mathcal{M}_{\{1\}^c}\left(\vectinf{ \textbf{g}^\ast_3(\hbm_{i})}{i}\right),\hbfm\mapsto \vectinf{\textbf{g}^\ast_{3}(\hbm_{i-1})}{i}\Big]\\ 
	&&+\Big[\hbfm\mapsto\mathcal{M}_{\{1\}^c}\left(\vectinf{ \textbf{g}^\ast_3(\hbm_{i})}{i}\right),\hbfm\mapsto \vectinf{\textbf{g}^\ast_{3}(\hbm_{i+1})}{i}\Big]\\
	&&-\Big[\hbfm\mapsto\mathcal{M}_{\{2\}^c}\left(\vectinf{ \textbf{g}^\ast_3(\hbm_{i})}{i}\right),\hbfm\mapsto \vectinf{\textbf{g}^\ast_{3}(\hbm_{i-1})}{i}\Big]\\ 
	&&-\Big[\hbfm\mapsto\mathcal{M}_{\{2\}^c}\left(\vectinf{ \textbf{g}^\ast_3(\hbm_{i})}{i}\right),\hbfm\mapsto \vectinf{\textbf{g}^\ast_{3}(\hbm_{i+1})}{i}\Big]\\
	&=&\Big[\hbfm\mapsto\mathcal{M}_{\{0\}^c}\left(\vectinf{ \textbf{p}^\ast_1(\hbm_{i+1})}{i}\right)-\mathcal{M}_{\{1\}^c}\left(\vectinf{ \textbf{p}^\ast_2(\hbm_{i+1})}{i}\right)\Big]\\ 
	&&+\Big[\hbfm\mapsto\mathcal{M}_{\{0\}^c}\left(\vectinf{ \textbf{p}^\ast_1(\hbm_{i+3})}{i}\right)\Big]\\
	&&-\Big[\hbfm\mapsto\mathcal{M}_{\{1\}^c}\left(\vectinf{ \textbf{p}^\ast_2(\hbm_{i+1})}{i}\right)-\mathcal{M}_{\{2\}^c}\left(\vectinf{ \textbf{p}^\ast_1(\hbm_{i+1})}{i}\right)\Big]\\  
	&&-\Big[\hbfm\mapsto\mathcal{M}_{\{1\}^c}\left(\vectinf{ \textbf{p}^\ast_2(\hbm_{i+3})}{i}\right)-\mathcal{M}_{\{0\}^c}\left(\vectinf{ \textbf{p}^\ast_1(\hbm_{i+3})}{i}\right)\Big]\\  
	&&+\Big[\hbfm\mapsto \mathcal{M}_{\{2\}^c}\left(\vectinf{ \textbf{p}^\ast_2(\hbm_{i-3})}{i}\right)-\mathcal{M}_{\{3\}^c}\left(\vectinf{ \textbf{p}^\ast_1(\hbm_{i-3})}{i}\right)\Big]\\
	&&+\Big[\hbfm\mapsto\mathcal{M}_{\{2\}^c}\left(\vectinf{ \textbf{p}^\ast_2(\hbm_{i-1})}{i}\right)-\mathcal{M}_{\{1\}^c}\left(\vectinf{ \textbf{p}^\ast_1(\hbm_{i-1})}{i}\right)\Big]\\
	&&-\Big[\hbfm\mapsto\mathcal{M}_{\{3\}^c}\left(\vectinf{ \textbf{p}^\ast_1(\hbm_{i-3})}{i}\right)-\mathcal{M}_{\{4\}^c}\left(\vectinf{ \textbf{p}^\ast_2(\hbm_{i-3})}{i}\right)\Big]\\ 
	&&-\Big[\hbfm\mapsto\mathcal{M}_{\{3\}^c}\left(\vectinf{ \textbf{p}^\ast_1(\hbm_{i-1})}{i}\right)-\mathcal{M}_{\{2\}^c}\left(\vectinf{ \textbf{p}^\ast_2(\hbm_{i-1})}{i}\right)\Big]\\
	&&+\Big[\hbfm\mapsto-\mathcal{M}_{\{1\}^c}\left(\vectinf{\textbf{p}^\ast_1(\hbm_{i-1})+ \textbf{p}^\ast_2(\hbm_{i-1})}{i}\right)+\mathcal{M}_{\{2\}^c}\left(\vectinf{\textbf{p}^\ast_1(\hbm_{i-1})+ \textbf{p}^\ast_2(\hbm_{i-1})}{i}\right)\Big]\\  
	&&+\Big[\hbfm\mapsto-\mathcal{M}_{\{1\}^c}\left(\vectinf{ \textbf{p}^\ast_1(\hbm_{i+1})+ \textbf{p}^\ast_2(\hbm_{i+1})}{i}\right)+\mathcal{M}_{\{0\}^c}\left(\vectinf{ \textbf{p}^\ast_1(\hbm_{i+1})+ \textbf{p}^\ast_2(\hbm_{i+1})}{i}\right)\Big]\\ 
	&&-\Big[\hbfm\mapsto-\mathcal{M}_{\{2\}^c}\left(\vectinf{\textbf{p}^\ast_1(\hbm_{i-1})+ \textbf{p}^\ast_2(\hbm_{i-1})}{i}\right)+\mathcal{M}_{\{3\}^c}\left(\vectinf{\textbf{p}^\ast_1(\hbm_{i-1})+ \textbf{p}^\ast_2(\hbm_{i-1})}{i}\right)\Big]\\  
	&&-\Big[\hbfm\mapsto-\mathcal{M}_{\{2\}^c}\left(\vectinf{ \textbf{p}^\ast_1(\hbm_{i+1})+ \textbf{p}^\ast_2(\hbm_{i+1})}{i}\right)+\mathcal{M}_{\{1\}^c}\left(\vectinf{ \textbf{p}^\ast_1(\hbm_{i+1})+ \textbf{p}^\ast_2(\hbm_{i+1})}{i}\right)\Big]\\
	&=&\Big[\hbfm\mapsto2\mathcal{M}_{\{0\}^c}\left(\vectinf{ \textbf{p}^\ast_1(\hbm_{i+1})}{i}\right)+2\mathcal{M}_{\{0\}^c}\left(\vectinf{ \textbf{p}^\ast_1(\hbm_{i+3})}{i}\right)+\mathcal{M}_{\{0\}^c}\left(\vectinf{ \textbf{p}^\ast_2(\hbm_{i+1})}{i}\right)\Big]\\ 
	&&-\Big[\hbfm\mapsto4\mathcal{M}_{\{1\}^c}\left(\vectinf{ \textbf{p}^\ast_2(\hbm_{i+1})}{i}\right)+\mathcal{M}_{\{1\}^c}\left(\vectinf{ \textbf{p}^\ast_2(\hbm_{i+3})}{i}\right)+2\mathcal{M}_{\{1\}^c}\left(\vectinf{ \textbf{p}^\ast_1(\hbm_{i-1})}{i}\right)\\
	&&\qquad+2\mathcal{M}_{\{1\}^c}\left(\vectinf{ \textbf{p}^\ast_1(\hbm_{i+1})}{i}\right)+\mathcal{M}_{\{1\}^c}\left(\vectinf{ \textbf{p}^\ast_2(\hbm_{i-1})}{i}\right)\Big]\\
	&&+\Big[\hbfm\mapsto2\mathcal{M}_{\{2\}^c}\left(\vectinf{ \textbf{p}^\ast_1(\hbm_{i+1})}{i}\right)+\mathcal{M}_{\{2\}^c}\left(\vectinf{ \textbf{p}^\ast_2(\hbm_{i-3})}{i}\right)+4\mathcal{M}_{\{2\}^c}\left(\vectinf{ \textbf{p}^\ast_2(\hbm_{i-1})}{i}\right)\\
	&&\qquad+2\mathcal{M}_{\{2\}^c}\left(\vectinf{ \textbf{p}^\ast_1(\hbm_{i-1})}{i}\right)+\mathcal{M}_{\{2\}^c}\left(\vectinf{ \textbf{p}^\ast_2(\hbm_{i+1})}{i}\right)\Big]\\
	&&-\Big[\hbfm\mapsto2\mathcal{M}_{\{3\}^c}\left(\vectinf{ \textbf{p}^\ast_1(\hbm_{i-3})}{i}\right)+2\mathcal{M}_{\{3\}^c}\left(\vectinf{ \textbf{p}^\ast_1(\hbm_{i-1})}{i}\right)+\mathcal{M}_{\{3\}^c}\left(\vectinf{ \textbf{p}^\ast_2(\hbm_{i-1})}{i}\right)\\ 
	&&+\Big[\hbfm\mapsto\mathcal{M}_{\{4\}^c}\left(\vectinf{ \textbf{p}^\ast_2(\hbm_{i-3})}{i}\right)\Big] 
\end{eqnarray*}

\noindent \textbf{Computation of $\left[\left[\left[ \bf{G}^{1,2},\bf{G}^{2,2} \right],\bf{G}^{1,3}\right],\bf{G}^{1,3}\right]$:}
\begin{eqnarray*}
	&&2^{2}\left[\left[\left[ \bf{G}^{1,2},\bf{G}^{2,2} \right],\bf{G}^{1,3}\right],\bf{G}^{1,3}\right]\\
	&=&\Big[\hbfm\mapsto\mathcal{M}_{\{0\}^c}\vectinf{ \textbf{h}^\ast_1(\hbm_{i+2})}{i}+\mathcal{M}_{\{1\}^c}\vectinf{ \textbf{h}^\ast_3(\hbm_{i+2})}{i}+\mathcal{M}_{\{1\}^c}\vectinf{ \textbf{g}^\ast_3(\hbm_{i})}{i}-\mathcal{M}_{\{2\}^c}\vectinf{ \textbf{g}^\ast_3(\hbm_{i})}{i}\\
	&&-\mathcal{M}_{\{2\}^c}\vectinf{ \textbf{h}^\ast_3(\hbm_{i-2})}{i}-\mathcal{M}_{\{3\}^c}\vectinf{ \textbf{h}^\ast_1(\hbm_{i-2})}{i},\hbfm\mapsto \vectinf{\textbf{g}^\ast_{3}(\hbm_{i-1})}{i}+\vectinf{\textbf{g}^\ast_{3}(\hbm_{i+1})}{i}\Big]\\
	&=&\Big[\hbfm\mapsto\mathcal{M}_{\{0\}^c}\left(\vectinf{ \textbf{h}^\ast_1(\hbm_{i+2})}{i}\right),\hbfm\mapsto \vectinf{\textbf{g}^\ast_{3}(\hbm_{i-1})}{i}\Big]\\ 
	&&+\Big[\hbfm\mapsto\mathcal{M}_{\{0\}^c}\left(\vectinf{ \textbf{h}^\ast_1(\hbm_{i+2})}{i}\right),\hbfm\mapsto \vectinf{\textbf{g}^\ast_{3}(\hbm_{i+1})}{i}\Big]\\
	&&+\Big[\hbfm\mapsto\mathcal{M}_{\{1\}^c}\left(\vectinf{ \textbf{h}^\ast_3(\hbm_{i+2})}{i}\right),\hbfm\mapsto \vectinf{\textbf{g}^\ast_{3}(\hbm_{i-1})}{i}\Big]\\ 
	&&+\Big[\hbfm\mapsto\mathcal{M}_{\{1\}^c}\left(\vectinf{ \textbf{h}^\ast_3(\hbm_{i+2})}{i}\right),\hbfm\mapsto \vectinf{\textbf{g}^\ast_{3}(\hbm_{i+1})}{i}\Big]\\
	&&-\Big[\hbfm\mapsto\mathcal{M}_{\{2\}^c}\left(\vectinf{ \textbf{h}^\ast_3(\hbm_{i-2})}{i}\right),\hbfm\mapsto \vectinf{\textbf{g}^\ast_{3}(\hbm_{i-1})}{i}\Big]\\ 
	&&-\Big[\hbfm\mapsto\mathcal{M}_{\{2\}^c}\left(\vectinf{ \textbf{h}^\ast_3(\hbm_{i-2})}{i}\right),\hbfm\mapsto \vectinf{\textbf{g}^\ast_{2}(\hbm_{i+1})}{i}\Big]\\
	&&-\Big[\hbfm\mapsto\mathcal{M}_{\{3\}^c}\left(\vectinf{ \textbf{h}^\ast_1(\hbm_{i-2})}{i}\right),\hbfm\mapsto \vectinf{\textbf{g}^\ast_{3}(\hbm_{i-1})}{i}\Big]\\ 
	&&-\Big[\hbfm\mapsto\mathcal{M}_{\{3\}^c}\left(\vectinf{ \textbf{h}^\ast_1(\hbm_{i-2})}{i}\right),\hbfm\mapsto \vectinf{\textbf{g}^\ast_{3}(\hbm_{i+1})}{i}\Big]\\
	&&+\Big[\hbfm\mapsto\mathcal{M}_{\{1\}^c}\left(\vectinf{ \textbf{g}^\ast_3(\hbm_{i})}{i}\right),\hbfm\mapsto \vectinf{\textbf{g}^\ast_{3}(\hbm_{i-1})}{i}\Big]\\ 
	&&+\Big[\hbfm\mapsto\mathcal{M}_{\{1\}^c}\left(\vectinf{ \textbf{g}^\ast_3(\hbm_{i})}{i}\right),\hbfm\mapsto \vectinf{\textbf{g}^\ast_{3}(\hbm_{i+1})}{i}\Big]\\
	&&-\Big[\hbfm\mapsto\mathcal{M}_{\{2\}^c}\left(\vectinf{ \textbf{g}^\ast_3(\hbm_{i})}{i}\right),\hbfm\mapsto \vectinf{\textbf{g}^\ast_{3}(\hbm_{i-1})}{i}\Big]\\ 
	&&-\Big[\hbfm\mapsto\mathcal{M}_{\{2\}^c}\left(\vectinf{ \textbf{g}^\ast_3(\hbm_{i})}{i}\right),\hbfm\mapsto \vectinf{\textbf{g}^\ast_{3}(\hbm_{i+1})}{i}\Big]\\
	&=&\Big[\hbfm\mapsto\mathcal{M}_{\{0\}^c}\left(\vectinf{ \textbf{p}^\ast_2(\hbm_{i+1})}{i}\right)-\mathcal{M}_{\{1\}^c}\left(\vectinf{ \textbf{p}^\ast_1(\hbm_{i+1})}{i}\right)\Big]\\ 
	&&+\Big[\hbfm\mapsto\mathcal{M}_{\{0\}^c}\left(\vectinf{ \textbf{p}^\ast_2(\hbm_{i+3})}{i}\right)\Big]\\
	&&-\Big[\hbfm\mapsto\mathcal{M}_{\{1\}^c}\left(\vectinf{ \textbf{p}^\ast_1(\hbm_{i+1})}{i}\right)-\mathcal{M}_{\{2\}^c}\left(\vectinf{ \textbf{p}^\ast_2(\hbm_{i+1})}{i}\right)\Big]\\  
	&&-\Big[\hbfm\mapsto\mathcal{M}_{\{1\}^c}\left(\vectinf{ \textbf{p}^\ast_1(\hbm_{i+3})}{i}\right)-\mathcal{M}_{\{0\}^c}\left(\vectinf{ \textbf{p}^\ast_2(\hbm_{i+3})}{i}\right)\Big]\\  
	&&+\Big[\hbfm\mapsto \mathcal{M}_{\{2\}^c}\left(\vectinf{ \textbf{p}^\ast_1(\hbm_{i-3})}{i}\right)-\mathcal{M}_{\{3\}^c}\left(\vectinf{ \textbf{p}^\ast_2(\hbm_{i-3})}{i}\right)\Big]\\
	&&+\Big[\hbfm\mapsto\mathcal{M}_{\{2\}^c}\left(\vectinf{ \textbf{p}^\ast_1(\hbm_{i-1})}{i}\right)-\mathcal{M}_{\{1\}^c}\left(\vectinf{ \textbf{p}^\ast_2(\hbm_{i-1})}{i}\right)\Big]\\
	&&-\Big[\hbfm\mapsto\mathcal{M}_{\{3\}^c}\left(\vectinf{ \textbf{p}^\ast_2(\hbm_{i-3})}{i}\right)-\mathcal{M}_{\{4\}^c}\left(\vectinf{ \textbf{p}^\ast_1(\hbm_{i-3})}{i}\right)\Big]\\ 
	&&-\Big[\hbfm\mapsto\mathcal{M}_{\{3\}^c}\left(\vectinf{ \textbf{p}^\ast_2(\hbm_{i-1})}{i}\right)-\mathcal{M}_{\{2\}^c}\left(\vectinf{ \textbf{p}^\ast_1(\hbm_{i-1})}{i}\right)\Big]\\
	&&+\Big[\hbfm\mapsto-\mathcal{M}_{\{1\}^c}\left(\vectinf{\textbf{p}^\ast_1(\hbm_{i-1})+ \textbf{p}^\ast_2(\hbm_{i-1})}{i}\right)+\mathcal{M}_{\{2\}^c}\left(\vectinf{\textbf{p}^\ast_1(\hbm_{i-1})+ \textbf{p}^\ast_2(\hbm_{i-1})}{i}\right)\Big]\\  
	&&+\Big[\hbfm\mapsto-\mathcal{M}_{\{1\}^c}\left(\vectinf{ \textbf{p}^\ast_1(\hbm_{i+1})+ \textbf{p}^\ast_2(\hbm_{i+1})}{i}\right)+\mathcal{M}_{\{0\}^c}\left(\vectinf{ \textbf{p}^\ast_1(\hbm_{i+1})+ \textbf{p}^\ast_2(\hbm_{i+1})}{i}\right)\Big]\\ 
	&&-\Big[\hbfm\mapsto-\mathcal{M}_{\{2\}^c}\left(\vectinf{\textbf{p}^\ast_1(\hbm_{i-1})+ \textbf{p}^\ast_2(\hbm_{i-1})}{i}\right)+\mathcal{M}_{\{3\}^c}\left(\vectinf{\textbf{p}^\ast_1(\hbm_{i-1})+ \textbf{p}^\ast_2(\hbm_{i-1})}{i}\right)\Big]\\  
	&&-\Big[\hbfm\mapsto-\mathcal{M}_{\{2\}^c}\left(\vectinf{ \textbf{p}^\ast_1(\hbm_{i+1})+ \textbf{p}^\ast_2(\hbm_{i+1})}{i}\right)+\mathcal{M}_{\{1\}^c}\left(\vectinf{ \textbf{p}^\ast_1(\hbm_{i+1})+ \textbf{p}^\ast_2(\hbm_{i+1})}{i}\right)\Big]\\
	&=&\Big[\hbfm\mapsto2\mathcal{M}_{\{0\}^c}\left(\vectinf{ \textbf{p}^\ast_2(\hbm_{i+1})}{i}\right)+2\mathcal{M}_{\{0\}^c}\left(\vectinf{ \textbf{p}^\ast_2(\hbm_{i+3})}{i}\right)+\mathcal{M}_{\{0\}^c}\left(\vectinf{ \textbf{p}^\ast_1(\hbm_{i+1})}{i}\right)\Big]\\ 
	&&-\Big[\hbfm\mapsto4\mathcal{M}_{\{1\}^c}\left(\vectinf{ \textbf{p}^\ast_1(\hbm_{i+1})}{i}\right)+\mathcal{M}_{\{1\}^c}\left(\vectinf{ \textbf{p}^\ast_1(\hbm_{i+3})}{i}\right)+2\mathcal{M}_{\{1\}^c}\left(\vectinf{ \textbf{p}^\ast_2(\hbm_{i-1})}{i}\right)\\
	&&\qquad+2\mathcal{M}_{\{1\}^c}\left(\vectinf{ \textbf{p}^\ast_2(\hbm_{i+1})}{i}\right)+\mathcal{M}_{\{1\}^c}\left(\vectinf{ \textbf{p}^\ast_1(\hbm_{i-1})}{i}\right)\Big]\\
	&&+\Big[\hbfm\mapsto2\mathcal{M}_{\{2\}^c}\left(\vectinf{ \textbf{p}^\ast_2(\hbm_{i+1})}{i}\right)+\mathcal{M}_{\{2\}^c}\left(\vectinf{ \textbf{p}^\ast_1(\hbm_{i-3})}{i}\right)+4\mathcal{M}_{\{2\}^c}\left(\vectinf{ \textbf{p}^\ast_1(\hbm_{i-1})}{i}\right)\\
	&&\qquad+2\mathcal{M}_{\{2\}^c}\left(\vectinf{ \textbf{p}^\ast_2(\hbm_{i-1})}{i}\right)+\mathcal{M}_{\{2\}^c}\left(\vectinf{ \textbf{p}^\ast_1(\hbm_{i+1})}{i}\right)\Big]\\
	&&-\Big[\hbfm\mapsto2\mathcal{M}_{\{3\}^c}\left(\vectinf{ \textbf{p}^\ast_2(\hbm_{i-3})}{i}\right)+2\mathcal{M}_{\{3\}^c}\left(\vectinf{ \textbf{p}^\ast_2(\hbm_{i-1})}{i}\right)+\mathcal{M}_{\{3\}^c}\left(\vectinf{ \textbf{p}^\ast_1(\hbm_{i-1})}{i}\right)\\ 
	&&+\Big[\hbfm\mapsto\mathcal{M}_{\{4\}^c}\left(\vectinf{ \textbf{p}^\ast_1(\hbm_{i-3})}{i}\right)\Big]  
\end{eqnarray*}

\end{appendix}

%\comanew{Please remove "sc" from the authors names. Use it only in book titles as in \cite{Bis} below. }

\end{document}